\newcommand{\BA}{{\mathbb {A}}} \newcommand{\BC}{{\mathbb {C}}} 
  \newcommand{\BH}{{\mathbb {H}}}
 \newcommand{\BN}{{\mathbb {N}}} 
 \newcommand{\BR}{{\mathbb {R}}} 
 \newcommand{\BZ}{{\mathbb {Z}}}
\renewcommand{\CD}{{\mathcal {D}}} 
 \newcommand{\CF}{{\mathcal {F}}}
\newcommand{\CI}{{\mathcal {I}}} 
\newcommand{\CM}{{\mathcal {M}}} \newcommand{\CP}{{\mathcal {P}}}
\newcommand{\GL}{{\mathrm {GL}}} 
\newcommand{\SO}{{\mathrm{SO}}}
 \newcommand{\N}{\mathrm{N}}
\newcommand{\re}{\mathfrak{R}}
\newcommand{\sm}{\mathcal{C}^{\infty}}
\newcommand{\Hom}{\mathrm{Hom}}
\newcommand{\Ad}{\mathrm{Ad}}
\newcommand{\triv}{{\mathbf{1}}}
\newcommand{\JL}{{\mathrm{JL}}}
\newcommand{\trd}{{\mathrm{Trd}}}
\newcommand{\nrd}{{\mathrm{Nrd}}}
\renewcommand{\a}{\alpha}
\renewcommand{\b}{\beta}
\renewcommand{\l}{\lambda}
\newcommand{\z}{\zeta}
\newcommand{\x}{\xi}
\DeclareMathOperator{\Sym}{Sym}
\newcommand{\fa}{\mathfrak{a}}
\newcommand{\fg}{\mathfrak{g}}
\newcommand{\fm}{\mathfrak{m}}
\newcommand{\fn}{\mathfrak{n}}
\newcommand{\fp}{\mathfrak{p}}
\newcommand{\fr}{\mathfrak{r}}
\newcommand{\fs}{\mathfrak{s}}
\newcommand{\fz}{\mathfrak{z}}
\newcommand{\fT}{\mathfrak{T}}
\def\sl{\mathfrak{sl}}
\def\-{^{-1}}
\def\D{\Delta}
\def\1{\mathbf{1}}
\def\d{\delta}
\def\e{\epsilon}
\def\k{\kappa}
\def\St{\mathrm{St}}
\def\diag{\mathrm{diag}}
\renewcommand{\Re}{{\mathrm{Re}\,}}
\g@addto@macro\normalsize{\setlength\abovedisplayskip{3pt}}
\g@addto@macro\normalsize{\setlength\belowdisplayskip{3pt}}
\newcommand{\delete}[1]{}
\theoremstyle{plain}
\newtheorem{thm}{Theorem}[section] \newtheorem{cor}[thm]{Corollary}
\newtheorem{lem}[thm]{Lemma}  \newtheorem{prop}[thm]{Proposition}
\newtheorem{df}[thm]{Definition}
\newtheorem{fct}[thm]{Fact}
\newtheorem {rem}[thm]{Remark}
\numberwithin{equation}{section}
\newcommand{\sg}{\mathrm{sgn}}
\begin{document}

	\title{The sign of linear periods} 
	
	\author{U.K. Anandavardhanan}
	\address{U.K. Anandavardhanan. Department of Mathematics, Indian Institute of Technology Bombay, Powai, Mumbai, 400076, India}
	\email{anand@math.iitb.ac.in}
	
	\author{H. Lu}
	\address{Hengfei Lu. School of Mathematical Sciences, Beihang University, 9 Nanshan Street, Shahe Higher Education Park, Changping, Beijing, 102206, China}
	\email{luhengfei@buaa.edu.cn}
	
	\author{N. Matringe}
	\address{Nadir Matringe. Institute of Mathematical Sciences, NYU Shanghai, 3663 Zhongshan Road North Shanghai, 200062, China and
		Institut de Math\'ematiques de Jussieu-Paris Rive Gauche, Universit\'e Paris Cit\'e, 75205, Paris, France}
	\email{nrm6864@nyu.edu and matringe@img-prg.fr}
	
	\author{V. S\'echerre}
	\address{Vincent S\'echerre. Laboratoire de Math\'ematiques de Versailles, UVSQ, CNRS, Universit\'e Paris-Saclay, 78035, Versailles, France}
	\email{vincent.secherre@uvsq.fr}
	
	\author{C. Yang}
	\address{Chang Yang. Key Laboratory of High Performance Computing and Stochastic Information Processing (HPC- SIP), School of Mathematics and Statistics, Hunan Normal University, Changsha, 410081, China}
	\email{cyang@hunnu.edu.cn}
	
	\address{Miyu Suzuki. Department of Mathematics, Kyoto University, Kitashirakawa Oiwake-cho, Sakyo-ku, Kyoto 606-8502, Japan}
	\email{suzuki.miyu.4c@kyoto-u.ac.jp}
	
	\address{Hiroyoshi Tamori. Department of Mathematical Sciences, Shibaura Institute of Technology, 307 Fukasaku, Minuma-ku, Saitama City, Saitama, 337-8570, Japan}
	\email{tamori@shibaura-it.ac.jp}
	
	\maketitle
	
	\begin{center}\textit{With an appendix by Miyu Suzuki and Hiroyoshi Tamori}\end{center}

\begin{abstract} 
Let $G$ be a group with subgroup $H$, and let $(\pi,V)$ be a complex representation of $G$. The natural action of the normalizer $N$ of $H$ in $G$ on the space $\Hom_H(\pi,\BC)$ of $H$-invariant linear forms on $V$, provides a representation $\chi_{\pi}$ of $N$ trivial on $H$, which is a character when $\Hom_H(\pi,\BC)$ is one dimensional. If moreover $G$ is a reductive group over a local field, and $\pi$ is smooth irreducible, it is an interesting problem to express $\chi_{\pi}$ in terms of the possibly conjectural Langlands parameter $\phi_\pi$ of $\pi$. In this paper we consider the following situation: $G=\GL_m(D)$ for $D$ a central division algebra of dimension $d^2$ over a local field $F$ of characteristic zero, $H$ is the centralizer of a non central element $\d\in G$ such that $\d^2$ is in the center of $G$, and $\pi$ has generic Jacquet-Langlands transfer to the split form $\GL_{md}(F)$ of $G$. In this setting the space $\Hom_H(\pi,\BC)$ is at most one dimensional. When $\Hom_H(\pi,\BC)\simeq \BC$ and $H\neq N$, we prove that the value of $\chi_{\pi}$ on the non trivial class of $\frac{N}{H}$ is $(-1)^m\e(\phi_\pi)$ where $\e(\phi_\pi)$ is the root number of $\phi_{\pi}$. Along the way we extend many useful multiplicity one results for linear and Shalika models to the case of non split $G$. When $F$ is $p$-adic we also classify standard modules with linear periods and Shalika models, which are new results even when $D$ is equal to $F$.
\end{abstract}

\section{Introduction}

Let $G$ be a group and $H$ be a subgroup of $G$, abbreviated as $H\leq G$. Denote by $N$ the normalizer of $H$ in $G$ and let $\pi$ be a complex representation of $G$.  The group $N$ acts naturally on $\Hom_H(\pi,\BC)$ by $n.L=L\circ \pi(n)^{-1}$. When moreover $\Hom_H(\pi,\BC)$ is one dimensional, this action is given 
by a character $\chi_\pi$ of $N$ which factors through the quotient $\frac{N}{H}$. Furthermore, if $G$ is a reductive group over a $p$-adic field, and $\pi$ is smooth irreducible, it is an interesting problem to determine $\chi_\pi$ in terms of the possibly conjectural Langlands parameter $\phi_{\pi}$ of $\pi$. This paper considers this question in the following situation: $G=\GL_m(D)$ for $D$ a central division algebra of dimension $d^2$ over a local field $F$ of characteristic zero, the group $H$ is the centralizer of a non central element $\d\in G$ such that $\d^2$ is in the center of $G$, and $\pi$ has generic transfer to $\GL_{md}(F)$. Such a situation is indeed a multiplicity one situation, i.e. $\Hom_H(\pi,\BC)$ has dimension at most one, see Theorem \ref{thm mult 1} of this paper for a more general statement. Our main result is the following (see Theorem \ref{thm gen}, and Section \ref{sec LJL} for the unexplained terminology used in its statement):

\begin{thm}\label{thm main}
Suppose that $N\neq H$ so the group $\frac{N}{H}=\{\overline{I_m},\overline{u}\}$ has order two. Let $\pi$ be a smooth irreducible representation of $\GL_{m}(D)$ with generic transfer to $\GL_{md}(F)$, and such that $\Hom_H(\pi,\BC)\neq \{0\}$. Then:
\begin{enumerate}
\item The Langlands parameter $\phi_\pi$ of $\pi$ is symplectic.
\item $\Hom_H(\pi,\BC)\simeq \BC$.
\item The character $\chi_{\pi}$ of $\frac{N}{H}$ given by its natural action on $\Hom_H(\pi,\BC)$ is expressed by the formula \[\chi_\pi(\overline{u})=(-1)^m\e(\phi_\pi),\] where the root number $\e(\phi_\pi)$ is independent of the choice of any non trivial additive character of $F$ used to define it.
\end{enumerate}
\end{thm}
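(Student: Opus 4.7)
Parts (1) and (2) should follow without too much pain. Part (2) is an immediate consequence of the multiplicity one statement Theorem \ref{thm mult 1}: combined with the hypothesis $\Hom_H(\pi,\BC) \neq \{0\}$ we obtain $\Hom_H(\pi,\BC) \simeq \BC$, making $\chi_\pi$ a well-defined character of $N/H \simeq \BZ/2\BZ$. For part (1), set $\pi' := \JL(\pi)$, a generic irreducible representation of $\GL_{md}(F)$ with $\phi_{\pi'} = \phi_\pi$. A Jacquet--Langlands compatibility of linear periods (among the results the abstract announces as extensions to non-split $G$) should produce $H'$-distinction of $\pi'$ by the analogous linear/Shalika subgroup of $\GL_{md}(F)$, and then $\phi_{\pi'}$ is symplectic by the well-known theorem of Jacquet--Rallis, Friedberg--Jacquet, and Matringe for generic distinguished representations of the split group.

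For part (3), the substantive assertion, I would try to compute $\chi_\pi(\overline{u})$ via an explicit integral model for the generator of $\Hom_H(\pi,\BC)$. The natural candidate is a $\GL_m(D)$-analogue of the Friedberg--Jacquet / Jacquet--Shalika zeta integral: realize a nonzero $\Lambda \in \Hom_H(\pi,\BC)$ as an integral $W \mapsto \int W(\cdot)\, dh$ against a (quasi-)Whittaker model of $\pi$, parametrized by a complex variable $s$ where convergence holds and the integral is nonzero. The element $u$ normalizes $H$ but acts nontrivially on the internal structure of this integral; a change of variables $h \mapsto u h u^{-1}$ converts $\Lambda \circ \pi(u)^{-1}$ into a companion integral related to the contragredient $\tilde\pi$. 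A local functional equation of Friedberg--Jacquet type then reexpresses this companion integral as a constant multiple of $\Lambda$, with the constant equal to $\e(\phi_\pi)$ up to a sign independent of the auxiliary additive character (which simultaneously gives the final assertion of (3) that the root number is well-defined intrinsically). The factor $(-1)^m$ should emerge from the bookkeeping: the Jacobian of the change of variables on $H$, the contribution of the central character of $\pi$ evaluated at $\det(u)$, and the parity encoded in the longest Weyl element of the Levi $H$.

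The main obstacle, I expect, lies in step (3): developing the integral representation of $\Hom_H(\pi,\BC)$ on the non-split form $\GL_m(D)$ and establishing its functional equation with the correct root number. The paper's promised extension of Shalika models and their multiplicity one properties to non-split $G$ is precisely the tool this step needs. A possibly more robust alternative is a global-to-local argument: globalize $\pi$ to a cuspidal automorphic representation with nonvanishing global linear period and prescribed ramification elsewhere, then extract the local sign from the global functional equation of the automorphic $L$-function, combined with the computation of $\chi_{\Pi_v}(\overline{u})$ at all unramified places. The difficulty there shifts to the construction of a suitable globalization, a classical but nontrivial task.
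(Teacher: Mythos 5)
Parts (1) and (2) of your plan are only half right. Part (2) is fine: Theorem \ref{thm mult 1} gives multiplicity at most one directly. But your route to part (1) rests on a false premise: there is no "Jacquet--Langlands compatibility of linear periods" transferring $H$-distinction of $\pi$ to distinction of $\JL(\pi)$ on the split form. Distinction does not transfer naively under $\JL$, and the paper never asserts it does. Instead the paper proves symplecticity \emph{on the inner form} via the classification of distinguished standard modules (Theorem \ref{thm linear std}, Corollary \ref{cor symp gen}), which in turn uses the Shalika-model theory extended to $\GL_m(D)$ (Corollary \ref{cor sh vs twisted linear}) for the linear case and the results of \cite{SX}, \cite{X}, \cite{SuzJNT}, \cite{Ch} for the twisted case.

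For part (3), your approach (a) --- a direct Friedberg--Jacquet functional equation on $\GL_m(D)$ --- cannot work as stated: the paper explicitly notes (remark after Corollary \ref{cor sh vs twisted linear}) that the Friedberg--Jacquet functional equation for non-split $G$ is \emph{not} available, because its usual proof requires realizing the contragredient via transpose-inverse, which only makes sense for split $G$. Your approach (b) is closer to the truth, but you are missing the essential structural step: the globalization argument (Prasad--Schulze-Pillot plus the nonvanishing theorem of \cite{XZ}) only produces the sign for \emph{cuspidal} $\pi$ (Theorem \ref{thm cusp}), with a separate direct argument for $\St_m(\triv_{D^\times})$ (Proposition \ref{thm st}). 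One must then propagate to discrete series and finally to all distinguished generic representations via closed and open intertwining periods (Propositions \ref{prop closed sign} and \ref{prop open sign}). You cannot bypass this reduction: an arbitrary distinguished generic $\pi$ cannot simply be globalized into a distinguished cuspidal automorphic representation. Finally, the factor $(-1)^m$ is not a Jacobian/central-character/Weyl-element artifact; it is an arithmetic count that emerges from the Brauer--Hasse--Noether placement of the division algebra in the globalization, combined with $\sg(\triv_{D^\times}) = -1$ for $d$ even (Lemma \ref{lm trivial sign}) and the triviality of the sign at unramified and split places (Propositions \ref{prop UR sign}, \ref{prop generic linear sign}), the latter being the one place where the split Friedberg--Jacquet functional equation is actually used.
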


When $F$ is $p$-adic, this in particular extends \cite[Theorem 4]{P} of Prasad which is the special case where $G$ is an inner form of $\GL_2$, and \cite[Theorem 3.2]{LM} which is the special case of split $G=\GL_n(F)$ and $H\simeq \GL_{n/2}(F)\times \GL_{n/2}(F)$. Actually \cite[Theorem 4]{P} was used in \cite{CCL} to study Heegner points on a general class of modular curves, and it was mentioned to us by Li Cai that our main result could have possible number theoretic applications to more general problems of this type. Note also that we only consider the case of distinction by the trivial character of $H$, but considering a quadratic twisting character should make essentially no difference in our method, as was observed and mentioned to us by Binyong Sun. 

The sign of all irreducible representations seems more difficult to compute. Indeed, we don't know if multiplicity at most one holds for standard modules. For this reason it seems that without proving such a result, determining the sign of irreducible representations using the techniques of the present paper would require a detailed study of intertwining periods as in \cite{MatJFA}. We hope to come back to it later. 

When we restrict to representations with generic transfer, then there is an explicit classification of distinguished representations in terms of their Langlands parameters when $\delta^2$ is not a square in the center $Z$ of $G$ and the residual characteristic of $F$ is not $2$: the Prasad and Takloo-Bighash conjecture for trivial twisting character is indeed proved in the papers \cite{X}, \cite{SuzJNT}, \cite{SPTB}, and the final step \cite{SX} which relies on the result of \cite{Ch} as a separate case. Its Archimedean analogue is proved in \cite{ST}. Actually in this paper, we only need to know that the Langlands parameter of such a representation is symplectic, and this follows from \cite{ST}, \cite{X}, \cite{SuzJNT}, \cite{SX} and \cite{Ch} without any restriction on $p$. For $p$-adic $F$ we also establish such a classification (and actually more) when $\d^2$ is a square in $Z$, extending results from \cite{MatCRELLE} and \cite{MatJNT} for the split case, thanks in particular to results of \cite{BW} on discrete series representations. On the other hand for Archimedean $F$, the classification of standard modules with a linear period is provided by Appendix \ref{app ST} by Suzuki and Tamori. In the $p$-adic case the computation of the sign relies on a local/global argument for distinguished cuspidal representations, relying on some global results from \cite{XZ}. It is then extended to distinguished discrete series representations, and then to distinguished generic representations using some explicit linear forms on induced representations called intertwining periods. The use of such linear forms to analyze the sign of induced representations already appears in \cite[Lemma 3.5]{LM}. In fact these linear forms also appear in the local/global argument in the computation the sign of generic unramified representations when 
$\d^2$ is not a square in $Z$, whereas the results of \cite{FJ} are used when it is. In the Archimedean case the sign of distinguished discrete series representations is computed by a direct computation, following the argument of \cite[Theorem 4]{P}. The reduction from distinguished generic series representations to distinguished distinguished series representations is the same as in the $p$-adic case. 

We now briefly describe the different parts of the paper. In Section \ref{sec standard}, for $p$-adic $F$ and $H$ such that $N\neq H$, we recall the classification of $H$-distinguished standard modules in terms of distinguished discrete series representations of $G$ due to Suzuki in the setting of twisted linear periods, i.e. when $\d^2$ is not a square in $Z$ (see \cite{SuzJNT}), and prove it in the setting of linear periods, i.e. when $\d^2$ is a square in $Z$, in Theorem \ref{thm linear std} (see Section \ref{sec lin per} for more on the terminology). We also obtain analogues of Theorem \ref{thm linear std} in \ref{thm chi linear std}, where linear periods are allowed to be twisted by a character, and extend the classification of generic distinguished representations obtained in \cite{MatCRELLE} in Theorem \ref{thm chi linear gen}. Still for $p$-adic $F$, using the Friedberg-Jacquet integrals theory, which is valid in our setting, we are also able to classify standard modules with a Shalika model in Theorem \ref{thm class std shal}, and prove a multiplicity at most one for Shalika models of irreducible representation in Corollary \ref{cor shal mult 1}. Parts of the results described above require extending some results from \cite{MatJNT} to inner forms, which require in particular Appendix \ref{app BZ der}. In Section \ref{sec sign and induction}, we reduce for $p$-adic $F$ the computation of the sign of a representation of $G$ with generic transfer to that of cuspidal representations, using closed and open local intertwining periods. The sign of cuspidal representations is then computed in Section \ref{sec cusp} by a global argument, using local results from \cite{FJ}, global results from \cite{XZ}, and an explicit unramified computation. It came to our attention that one part of our argument is very reminiscent of the proof of \cite[Theorem A]{Pra-Ram}. The main result of the paper is Theorem \ref{thm gen} stated in Section \ref{sec main}. Section \ref{sec Arch} deals with Archimedean local fields. 
There are four appendices. The first one, Appendix \ref{app mult 1}, extends a multiplicity at most one result of Chong Zhang (\cite{Zlinear}) for linear periods of local inner forms of $\GL_n$ from irreducible unitary to all irreducible representations, and proves more results of this type. The second one, Appendix \ref{app BZ der}, extends parts of the theory of Bernstein-Zelevinsky derivatives to representations of non-Archimedean inner forms of $\GL_n$. It is in particular used to compute the sign of Steinberg representations. It is also used in the third appendix, namely Appendix \ref{app mult zero}, to prove that generic representations (and in fact any representation which can be written as a commutative Bernstein-Zelevinsky product of discrete series representations) cannot have a linear model with respect to 
$\GL_l(D)\times \GL_{l'}(D)$ when $|l-l'|\geq 2$, thus extending \cite[Theorem 3.2]{MatJNT}. The fourth appendix, Appendix \ref{app ST} by Miyu Suzuki and Hiroyoshi Tamori, provides a necessary condition for a standard module of $G_m$ to support a linear period when $F$ is Archimedean.\\

\textbf{Acknowledgement.} This paper was written under the impulse of a question asked to the fourth named author by Erez Lapid, and we thank him for this as well as for useful comments, especially for drawing our attention to the paper \cite{LM}. We also thank Binyong Sun, Dipendra Prasad, Li Cai and Miyu Suzuki for useful remarks. Finally the third name author would like to thank the IAMS of Zhejiang University, especially Rui Chen and Yifeng Liu, for inviting him to give a talk about this paper as well as for its warm hospitality. The addition of the Archimedean section of the paper was essentially done there. Hengfei Lu and Chang Yang are supported by the National Natural Science Foundation of China (No 12301031 and 12001191 respectively). 
	
\section{Notations and preliminary results}
	
Let $F$ be a field of characteristic different from $2$, and let $D$ be a division algebra with center $F$ and of dimension $d^2$ over $F$. For $m$ a non-negative integer, we set 
$G_m=\GL_m$. The group $G_0(D)$ is trivial by definition. For $m\geq 1$, we denote by $\delta$ a non central element in $G:=G_m(D)$ such that $\k:=\d^2$ is in the center of $G$, and set $H$ to be the centralizer of $\delta$ in $G$. The group $G$ is equal to the group consisting of invertible elements of the central simple $F$-algebra $A:=\CM_m(D)=:\CM_m$. Then $H$ is a symmetric subgroup obtained as the fixed points of the inner automorphism 
\[\theta:=\Ad(\delta):g\mapsto \delta g \delta^{-1}\] of $G$. We identify the center $Z$ of $G$ with $F^\times$. 

\subsection{Symmetric pairs}

Here we identify three different situations for the pair $(G,H)$. The assertions below follow from basic results in linear algebra, extended to vector spaces over division algebras as in \cite{Lip}. 

\begin{enumerate}[1)]
\item If $\d^2\in (F^\times)^2$, then there are non negative integers $p\geq q$ such that $p+q=m$, and such that $\d$ is up to scaling in $F^\times$ conjugate of \[\d_{L_{p,q}}:=\diag(I_p,-I_q).\] With such a choice of $\delta$ we identify 
$H$ to the maximal block 
diagonal Levi subgroup 
\[L_{p,q}:=\diag(G_p(D),G_q(D))\] of $G$. 
When convenient, we will prefer to 
take \[\delta=\d_{H_{p,q}}:=\diag(I_{p-q},1,-1,\dots,(-1)^{q-1})\] in the conjugacy class of $\delta$, the centralizer of which we denote by $H_{p,q}$ and can be described as in \cite{MatCRAS}. Finally when convenient, for $p=q$, we will take 
\[\delta=\d_{M_{p,p}}:= \begin{pmatrix}  & I_p \\ I_p & \end{pmatrix},\] with $M_{p,p}=H$ the centralizer of $\d_{M_{p,p}}$. 
\item If $\d^2\in F^\times-(F^\times)^2$, then $E=F[\delta]\subseteq A$ is a quadratic extension of $F$, and $n:=md$ is even.
\begin{enumerate}
\item There exists an $F$-algebra homomorphism from $E$ to $D$ (it is unique up to $D^\times$-conjugacy by the Skolem-Noether theorem). When $F$ is a local field, this simply means that $d$ is even. We may assume $E\subseteq D$. Then $H$ identifies with 
$G_m(C_E)$, where $C_E$ is the central $E$-division algebra centralizing $E$ in $D$, and $\d$ identifies with the matrix 
$\d.I_m\in G_m(E)$.
\item There is no $F$-algebra homomorphism from $E$ to $D$. When $F$ is a local field this means $d$ odd. Then we put $D_E:=D\otimes_F E$, and it is a central $E$-division algebra. We realize $D_E$ inside 
$A$ as the matrices set of matrices $\begin{pmatrix} x& \k y\\ y& x\end{pmatrix}$ with $x$ and $y$ in $D$. In other words  
\[\delta=\diag\left(\begin{pmatrix} & \k\\ 1& \end{pmatrix},\dots,\begin{pmatrix} & \k\\ 1& \end{pmatrix}\right)\in G,\] and 
\[H_m:=G_{m/2}(D_E).\]
However we will also take \[\delta=\begin{pmatrix} & \k I_{m/2}\\ I_{m/2}& \end{pmatrix} \] when convenient, in which case 
\[H=H'_m:=\left\{\begin{pmatrix} A& \k B\\ B & A \end{pmatrix}\in G, \ A, \ B \ \in \CM_m(D)\right\}.\]
\end{enumerate}
\end{enumerate}

\subsection{Normalizers}\label{sec normalizer}

In this paragraph we compute the normalizer of $H$ in $G$ in the different cases singled out in the previous paragraph.

\begin{enumerate}[1)]
\item Take $H$ under the form $H=L_{p,q}$. If $p\neq q$, the normalizer $N:=N_G(H)$ is equal to $H$. Indeed for $n$ in $N$, the matrix $n\d_{L_{p,q}}n^{-1}$ must still belong to the center of $H$, so it must be of the form $\diag(\lambda I_p, \mu I_q)$. Reasoning on the possible choices of eigenvalues and their multiplicity, we deduce that $\lambda=1$ and $\mu=-1$. Hence $n$ centralizes $\d_{L_{p,q}}$ and so it belongs to $H$. If $p=q$ a similar reasoning implies that 
\[N=H\sqcup u H,\] where 
\[u=u_m:=\begin{pmatrix} 0 & I_{m/2} \\ -I_{m/2} & 0 \end{pmatrix}.\] If we choose $H=H_{m/2,m/2}$ then we take 
\[u=u_m:=\d_{M_{m/2,m/2}},\] whereas if 
we choose $H=M_{m/2,m/2}$ then we take 
\[u=u_m:=\diag(I_{m/2},-I_{m/2}).\]
\item 
\begin{enumerate} 
\item If $d$ is even, then by \cite{Ch}, one can write $D=C_E+\iota C_E$ where $\iota$ is such that \[\theta(\iota )=-\iota .\] In this situation conjugation by $\iota$ on the central $E$-division algebra $C_E$ induces the Galois involution on $E$ because otherwise $\iota $ would centralize $E$, which it does not. Now take $n\in N$. The inner automorphism $\Ad(n)$ of $\CM_m(D)$ is an $F$-linear automorphism of $\CM_m(C_E)$, which stabilizes its center $E$. If it induces the identity on $E$, then $n\in H$ by the definition of $H$, and if not then $\Ad(\iota ^{-1} n )$ induces the identity of $E$, hence $\iota n\in H $. The conclusion here is that 
\[N=H\sqcup u H,\] where $u=u_m:=\iota I_m$.
\item If $d$ is odd, a similar reasonning, using the Skolem-Noether theorem proves that 
\[N=H\sqcup uH,\] where this time 
\[u=u_m:=\diag(1 , -1,\dots ,(-1)^{m-1})\] if $H=H_m$ and 
\[u=u_m:=\diag(I_m,-I_m)\] if $H=H'_m$.
\end{enumerate}
\end{enumerate}


\subsection{Local Langlands correspondence for inner forms of $\GL_n$}\label{sec LJL}

Here $F$ is a local field of characteristic zero. We denote by $|\ |_F$ the normalized absolute value of $F$. When $F$ is non Archimedean, we denote its residual cardinality by $q_F$, and by $\mathfrak{w}_F$ its uniformizer. We denote by $\nrd$ the reduced norm on $G$, and for $g\in G$ we set \[\nu(g)=|\nrd(g)|_F.\]  We only consider smooth admissible complex representations, which we call representations. When $F$ is Archimedean we moreover require them to be Casselman-Wallach representations as in \cite{FJ} for example. When $F$ is $p$-adic, we recall that thanks to \cite{HTllc} and \cite{Hllc}, one can associate to any irreducible representation $\pi$ of $\GL_n(F)$ a semi-simple representation $\phi_\pi$ of dimension $n$ of the Weil-Deligne group $\mathrm{WD}_F$ of $F$, which we call its Langlands parameter. When $F$ is Archimedean, taking the Weil-Deligne group of $F$ to be equal to its Weil group, one can do the same association thanks to Langlands' work \cite{LArch}, and we refer to \cite{KnappLLC} for more details on $\GL_n$. 

We denote by $\JL$ the Jacquet-Langlands correspondence, which is a bijection from the set of isomorphism classes of discrete series representations, also called essentially square integrable representations of 
$G$, to that of $\GL_n(F)$ (see \cite{DKV} and \cite{Badulescu-Renard} when $F$ is Archimedean). In \cite{Bjlu} when $F$ is non Archimedean and \cite{Badulescu-Renard} when $F$ is Archimedean, the Jacquet-Langlands correspondence was extended to an injective map from the Grothendieck group of finite length representations of $G$ to that of $\GL_n(F)$. These references actually define a surjective morphism $\mathrm{LJ}_n$ in the other direction, which has the Jacquet-Langlands correspondence as a section. 

An irreducible representation of $\GL_n(F)$ is called generic if it has a non degenerate Whittaker model, see \cite{JSeuler1}. Using the standard product notation for normalized parabolic induction, it follows from from \cite{Z} and \cite{JrsArch} that 
and irreducible representation of $\GL_n(F)$ is generic if and only if it is a product of discrete series representations. 

\begin{df}
Let $\pi$ be an irreducible representation of $G$. We say that it is generic if it has generic transfer to $\GL_n(F)$, i.e., if it is of the form 
$\mathrm{LJ}_n(\pi_0)$ for $\pi_0$ a generic representation of $\GL_n(F)$, in which case we set \[\JL(\pi):=\pi_0.\] 
We moreover define the Langlands parameter $\phi_\pi$ by 
\[\phi_\pi:=\phi_{\pi_0}.\]
\end{df}

\begin{fct}
By \cite[Proposition 3.4]{Bjlu}, $\pi$ is generic if and only if it can be written as a product of discrete series representations 
\[\pi=\d_1\times \dots \times \d_r\] such that $\JL(\d_1)\times \dots \times \JL(\d_r)$ is generic. If so 
\[\JL(\pi)=\JL(\d_1)\times \dots \times \JL(\d_r).\]
\end{fct}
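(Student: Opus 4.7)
The plan is to combine the classification of irreducible generic representations of $\GL_n(F)$ as parabolic inductions of tensor products of discrete series (\cite{Z}, \cite{JrsArch}) with the multiplicativity of the extended Jacquet--Langlands morphism provided by \cite[Proposition 3.4]{Bjlu}. The relevant consequence of that proposition is that for finite length representations $\d_i$ of $G_{m_i}(D)$ with $n_i=m_id$ and $m_1+\dots+m_r=m$, one has
\[\mathrm{LJ}_n(\d_1\times\dots\times\d_r)=\mathrm{LJ}_{n_1}(\d_1)\times\dots\times\mathrm{LJ}_{n_r}(\d_r)\]
in the Grothendieck group of $\GL_n(F)$, together with the fact that $\mathrm{LJ}_{n_i}$ kills a discrete series of $\GL_{n_i}(F)$ unless $d\mid n_i$, in which case its value on such a discrete series is $\pm\JL^{-1}$ of it, via the Moeglin--Waldspurger classification.

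For the reverse implication, I would start from a decomposition $\pi=\d_1\times\dots\times\d_r$ with each $\d_i$ a discrete series and with $\JL(\d_1)\times\dots\times\JL(\d_r)$ generic, hence irreducible. The multiplicativity above identifies the class of $\pi$ in the Grothendieck group of $\GL_n(F)$ with $\pm\JL(\d_1)\times\dots\times\JL(\d_r)$; since $\pi$ is irreducible and this target is irreducible and generic, the sign must be positive and $\mathrm{LJ}_n(\pi)=\JL(\d_1)\times\dots\times\JL(\d_r)$, so $\pi$ is generic by definition and the formula for $\JL(\pi)$ holds.

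For the forward implication, I would set $\pi_0:=\JL(\pi)$ and write $\pi_0=\sigma_1\times\dots\times\sigma_r$ with each $\sigma_i$ a discrete series of $\GL_{n_i}(F)$. Because $\pi_0$ is the $\mathrm{LJ}_n$-transfer of an irreducible representation of $G_m(D)$, the vanishing property of $\mathrm{LJ}_{n_i}$ on non-transferable discrete series combined with multiplicativity forces $d\mid n_i$ for each $i$ and produces discrete series $\d_i$ of $G_{m_i}(D)$ with $\JL(\d_i)=\sigma_i$. Applying the reverse implication to $\d_1\times\dots\times\d_r$ then yields a generic irreducible representation of $G_m(D)$ whose $\JL$-transfer equals $\pi_0$; by injectivity of $\JL$ on generic representations, this representation must be $\pi$.

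The main obstacle I anticipate is the divisibility step in the forward direction, namely the assertion that each discrete series factor $\sigma_i$ of $\JL(\pi)$ individually lifts to a discrete series of the corresponding smaller inner form $G_{m_i}(D)$. I expect this to be an essentially formal consequence of the explicit description of $\mathrm{LJ}_n$ on discrete series through the Moeglin--Waldspurger classification as recalled in \cite{Bjlu}, rather than a substantial new argument. Once that is granted, the entire proof reduces to identities in the Grothendieck group of $\GL_n(F)$ upgraded to genuine equalities of representations using the irreducibility inherent in genericity.
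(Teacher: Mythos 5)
The paper gives no independent proof of this Fact: it is asserted as a direct citation of \cite[Proposition 3.4]{Bjlu}, with the transfer formula for discrete series recalled just before it. So there is no in-paper argument to compare against; what you have written is a reconstruction from the structural properties of the extended Jacquet--Langlands morphism (multiplicativity of $\mathrm{LJ}$ with respect to parabolic induction, its vanishing on discrete series of $\GL_{n_i}(F)$ that are not $d$-compatible, and $\mathrm{LJ}\circ\JL=\mathrm{id}$ on discrete series). Those are indeed the right ingredients, and the overall strategy -- establish the identity in the Grothendieck group, then upgrade it to an isomorphism of representations by irreducibility of both sides -- is sound.

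One thing you should correct: in the reverse implication you write ``the class of $\pi$ in the Grothendieck group of $\GL_n(F)$'' and conclude ``$\mathrm{LJ}_n(\pi)=\JL(\d_1)\times\dots\times\JL(\d_r)$.'' These are type errors. In the paper's convention $\mathrm{LJ}_n$ maps from $\GL_n(F)$ to $\GL_m(D)$, so it cannot be applied to $\pi$, which lives on $\GL_m(D)$. What you mean is
\[
\mathrm{LJ}_n\bigl(\JL(\d_1)\times\dots\times\JL(\d_r)\bigr)
=\mathrm{LJ}_{n_1}(\JL(\d_1))\times\dots\times\mathrm{LJ}_{n_r}(\JL(\d_r))
=\d_1\times\dots\times\d_r=\pi,
\]
and here no sign ambiguity arises, because $\mathrm{LJ}_{n_i}\circ\JL=\mathrm{id}$ on discrete series; this is exactly the definition of $\pi$ being generic with $\JL(\pi)=\JL(\d_1)\times\dots\times\JL(\d_r)$. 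In the forward direction, the vanishing of $\mathrm{LJ}_{n_i}$ on a non-$d$-compatible $\sigma_i$ forces the whole product to vanish by multiplicativity, whereas $\pi\neq0$, so each $\sigma_i$ lifts; and once $\d_1\times\dots\times\d_r=\pi$ holds in the Grothendieck group, irreducibility of $\pi$ forces the product to be irreducible as an actual representation. With those clarifications your reconstruction matches the content of the cited proposition.
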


\begin{df}
We say that a generic representation $\pi$ of $G$ is symplectic if its Langlands parameter $\phi_{\pi}$ is a symplectic representation of 
$\mathrm{WD}_F$, i.e. if it preserves a non degenerate alternate form.
\end{df}

For $p$-adic $F$, we refer to \cite{Z} and \cite{Tglna} for the parametrization of discrete series representations of $G$ in terms of cuspidal segments, as well as for the vocabulary related to these objects. In \cite{Tglna}, a positive integer $s(\rho)|d$ is attached to any cuspidal representation $\rho$ of $G$. It can be described as the smallest non negative real number $s$ such that $\rho\times \nu^{s(\rho)}\rho$ is reducible. For $\rho$ a cuspidal representation of $G$, we set \[\pi_t(\rho):=\nu^{(1-t)s(\rho)/{2}}\rho\times \dots \times \nu_{\rho}^{(t-1)s(\rho)/{2}}\rho.\]
Then $\pi_t(\rho)$ has a unique irreducible quotient, denoted by $\St_t(\rho)$.

If $\rho$ is a cuspidal representation of $G_a(D)$ with transfer \[\JL(\rho)=\St_r(\rho')\] to $\GL_{ad}(F)$ for some $r$ dividing $ad$ and $\rho'$ a cuspidal representation of $\GL_{\frac{ad}{r}}(F)$, then 

\[\JL(\St_t(\rho))=\St_{tr}(\rho').\] In particular, when $\rho=\triv_{D^\times}$ we have $r=d$ and $\rho'=\triv_{F^\times}$, i.e. 
\[\JL(\St_t(\triv_{D^\times}))=\St_{td}(\triv_{F^\times}),\]

Let $F$ any local field of characteristic zero, and $\pi$ be a generic representation of $G$. For $\psi$ a non trivial character of $F$, we denote by $\epsilon(1/2,\phi_\pi,\psi)$ the Langlands root number attached to $\phi_{\pi}$ as in \cite{Tate}, and we set 
\[\epsilon(\pi,\psi):=\epsilon(1/2,\phi_\pi,\psi).\] It is known that this root number is the same as the one defined in \cite{GJ} via the Godement-Jacquet functional equation. If moreover $\pi$ is symplectic, then $\epsilon(1/2,\phi_\pi,\psi)$ is independent of the choice of $\psi$ and we write:

\[\e(\pi):=\e(\pi,\psi).\]

When $F$ is $p$-adic, we will use the following well-known formulae concerning root numbers which can be found in \cite{GJ} and \cite{GR}. We observe that the formulae \eqref{x} and \eqref{y} below are also valid when $F$ is Archimedean. 

\begin{enumerate}
\item If $\rho$ is a cuspidal representation of $G_a$ with transfer $\JL(\rho)=\St_r(\rho')$ to $\GL_{ad}(F)$, 
then \[\e( \St_t(\rho),\psi)=\e( \rho',\psi)^{rt}\] \textbf{except when} $\rho$ is an unramified character $\mu\circ \nrd$ of $G_1=D^\times$, in which case the formula is given by 
\begin{equation}\label{eq st1} \e( \St_t(\mu\circ \nrd),\psi)=(-\mu(\mathfrak{w}_F))^{dt-1}\e(\mu,\psi)^{dt}.\end{equation} In particular when $\St_t(\mu)$ is self-dual, we observe that 
\[\e( \St_t(\eta),\psi)=\e(\eta,\psi)^{dt}\] when $\mu=\eta$ is the unramified quadratic character of $F^\times$, which is the same formula as that in Equation \eqref{eq st1}, whereas when $\mu=\triv_{F^\times}$ we obtain 
\[\e( \St_t(\triv_{D^\times}),\psi)=(-1)^{dt-1}\e(\triv_{F^\times},\psi)^{dt}.\] 
\item \label{x} If $\pi$ is an irreducible representation of $G$ with central character $\omega_{\pi}$,
\[\e( \pi,\psi)\e( \pi^\vee,\psi)=\omega_{\pi}(-1).\] 
\item \label{y} If $\pi_1$ and $\pi_2$ are irreducible representations of $G_{m_1}$ and $G_{m_2}$ respectively, such that $\pi_1\times \pi_2$ is irreducible, then 
\[\e(\pi_1\times \pi_2,\psi)=\e(\pi_1,\psi)\e(\pi_2,\psi).\]
\item If $\pi$ is a generic unramified representation of $\GL_n(F)$ and $\psi$ is unramified as well, then \[\e( \pi,\psi)=1.\]
\end{enumerate}

\subsection{Local linear periods and their sign}\label{sec lin per}

Here $F$ is a local field of characteristic zero.

\begin{df}
Let $\pi$ be an irreducible representation of $G$ and $\chi$ be a character of $H$. We say that $\pi$ is $\chi$-distinguished if 
$\Hom_H(\pi,\chi)\neq \{0\}$. 
Moreover if $\pi$ is distinguished:

\begin{enumerate}[1)]
\item if $\d^2\in (F^\times)^2$, we call an element in $\Hom_{H}(\pi,\chi)-\{0\}$ a $\chi$-linear period.
\item if $\d^2\notin (F^\times)^2$, we call an element in $\Hom_{H}(\pi,\chi)-\{0\}$ a twisted $\chi$-linear period. Furthermore 
for such an invariant linear form $L$:
\begin{enumerate} 
\item if $d$ is odd we say that $L$ is a twisted $\chi$-linear period of odd type.
\item if $d$ is even we say that $L$ is a twisted $\chi$-linear period of even type.
\end{enumerate}
\end{enumerate}
We will mostly consider the case of the trivial character $\chi=\triv_H$ of $H$, in which case we will omit $\chi$ in the terminology above.
\end{df}

If $\pi$ is distinguished, then by \cite{JR}, \cite{G}, \cite{BM}, Appendix \ref{app mult 1} and  Appendix \ref{app mult zero}, we have much information on the dimension of the space 
$\Hom_{H}(\pi,\BC)$. 

\begin{thm}\label{thm mult 1}
Let $\pi$ be an irreducible representation and suppose that $H\not \simeq H_{p,q}$ such that $|p-q|\geq 2$. Then 
\[\dim(\Hom_H(\pi,\BC))\leq 1.\] If $|p-q|\geq 2$ this conclusion holds for irreducible unitary representations as well as for generic representations. In fact for generic representations we have \[\dim(\Hom_H(\pi,\BC))=0\] when $|p-q|\geq 2$.
\end{thm}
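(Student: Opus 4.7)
The plan is to prove the theorem by case-by-case verification, using the classification of the symmetric pair $(G,H)$ recalled at the beginning of the section. After conjugation, $H$ falls into one of four types: (i) $H=L_{p,p}$, the balanced linear period; (ii) $H\simeq G_m(C_E)$, the twisted linear period of even type; (iii) $H\simeq G_{m/2}(D_E)$, the twisted linear period of odd type; and (iv) $H\simeq H_{p,q}$ with $|p-q|\geq 2$, the unbalanced linear period. Only the last type violates the hypothesis $H\not\simeq H_{p,q}$ with $|p-q|\geq 2$.

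For the first three cases my strategy is to reduce to already-known multiplicity-one theorems. In the split situation $D=F$, the balanced case (i) is the classical theorem of Jacquet--Rallis \cite{JR}, and the two twisted cases (ii) and (iii) are covered by \cite{G} and \cite{BM}. When $D$ is a proper division algebra, the corresponding statements for irreducible unitary representations are Zhang's results \cite{Zlinear}, and Appendix \ref{app mult 1} extends each of them from the unitary category to arbitrary irreducible representations of $\GL_m(D)$. Combining these two inputs gives $\dim\Hom_H(\pi,\BC)\le 1$ in all three cases.

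For the unbalanced case (iv) the argument bifurcates. For irreducible unitary representations I would invoke the version of Zhang's argument carried out in Appendix \ref{app mult 1}, which is also valid once the balance hypothesis $p=q$ is dropped, yielding $\dim\Hom_H(\pi,\BC)\le 1$. For irreducible generic representations I would appeal to Appendix \ref{app mult zero}, which establishes the strictly stronger vanishing $\Hom_H(\pi,\BC)=0$. The underlying idea there, extending \cite[Theorem 3.2]{MatJNT} from the split case to inner forms, is to write a generic $\pi$ as a commutative Bernstein--Zelevinsky product of discrete series representations and run a filtration argument, using the Bernstein--Zelevinsky derivative theory for $\GL_m(D)$ developed in Appendix \ref{app BZ der}, in order to extract an obstruction to $H$-invariance produced by the asymmetry between $p$ and $q$.

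I expect the main obstacle to be precisely this vanishing statement for generic representations in case (iv): unlike the other cases it cannot be quoted directly from the literature, and it rests on two nontrivial inputs that must be built from scratch in this paper, namely the Bernstein--Zelevinsky derivative formalism for inner forms in Appendix \ref{app BZ der} and the ensuing filtration-based analysis of the $H$-invariants in Appendix \ref{app mult zero}. By contrast the multiplicity-at-most-one assertions in (i)--(iii) and in the unitary subcase of (iv) are either classical or, in their inner form incarnation, follow by a fairly uniform adaptation of Zhang's approach once the correct references have been assembled.
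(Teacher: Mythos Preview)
Your case enumeration is incomplete, and this is a genuine gap. You list four possibilities for $H$ and claim that only case (iv) violates the hypothesis of the theorem. But you have omitted the case $H\simeq H_{p,q}$ with $|p-q|=1$, which satisfies the hypothesis $H\not\simeq H_{p,q}$ with $|p-q|\geq 2$ and therefore requires you to prove $\dim\Hom_H(\pi,\BC)\leq 1$. This case is not classical for inner forms: it is handled in Appendix \ref{app mult 1} (Theorem \ref{thm mult 1 Arch}), which treats $|l-l'|\leq 1$ uniformly, and the argument there genuinely relies on the classification of distinguished standard modules from Theorem \ref{thm linear std}, \eqref{c}, to deduce self-duality (via Corollary \ref{cor symp gen}). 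Indeed the paper explicitly says in Section \ref{sec dist stand} that the $p=q+1$ case of that classification is proved precisely in order to obtain the full statement of Theorem \ref{thm mult 1}. Without this case your proof does not cover the theorem.

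A smaller point: your attribution of the split references is tangled. When $D=F$ one has $d=1$, so only the odd twisted type can occur; the reference \cite{BM} is not for the split case but for inner forms in the twisted setting. Also, your description of how Appendix \ref{app mult 1} works misses the key mechanism: Zhang's argument (extended there to any $l,l'$) only yields $\dim\Hom_H(\pi,\BC)\cdot\dim\Hom_H(\pi^\vee,\BC)\leq 1$, and the passage to multiplicity at most one for all irreducible $\pi$ requires the separate input that distinguished representations are self-dual. Aside from the missing $|p-q|=1$ case, your overall strategy of assembling citations and the two appendices is the same as the paper's.
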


\begin{rem}
When $G$ is split, it is proved in \cite{JR} that $\dim(\Hom_{H_{p,q}}(\pi,\BC))\leq 1$ for any irreducible $\pi$, without any restriction on $p$ and $q$. 
\end{rem}

In particular for any distinguished generic representation $\pi$ of $G$, we define the character $\chi_\pi$ of 
$\frac{N}{H}$ by the equality for all $n\in N$:
\[L\circ \pi(n)^{-1}=\chi_{\pi}(\overline{n})L.\]

The problem is that of computing $\chi_\pi$ in terms of its Langlands parameter $\phi_{\pi}$. 

We recall that $\frac{N}{H}$ is trivial in the case of linear periods when $p\neq q$, so we forget about this case unless explicitly stated. Hence our three cases are now:
\begin{enumerate}[1)]
\item Linear periods with respect to $H$ conjugate to $L_{m/2,m/2}$.
\item 
\begin{enumerate} 
\item Twisted linear periods of odd type.
\item Twisted linear periods of even type.
\end{enumerate}
\end{enumerate}

Distinguished generic representations of $G$ always have a symplectic Langlands parameter as explained in Corollary \ref{cor symp gen}. As we shall see that the solution to our problem involves the quantity $\epsilon(\pi)$, it will be convenient to 
define the \textit{sign} of the (possibly twisted linear period on the) distinguished representation $\pi$ by:

\[\sg(\pi)=\frac{\chi_{\pi}(\overline{u})}{\epsilon(\pi)}=\chi_{\pi}(\overline{u}) \epsilon(\pi).\]

\section{Results on distinguished standard modules}\label{sec standard}

\subsection{Friedberg-Jacquet integrals for inner forms and applications to linear periods and Shalika models}\label{sec FJ}

Here $G=G_m(D)=:G_m$ with $m$ even. In Section \ref{sec gen sign}, allowing $F$ to be Archimedean, we will use the Friedberg-Jacquet integrals in the case where $G$ is $F$-split. Indeed the full theory of such integrals is developed in \cite{FJ} over all local fields when $G$ is $F$-split. In the present paragraph we consider general $G$, but restrict to $F$ non Archimedean. We explain why the Friedberg-Jacquet integrals are well defined, and why their gcd is the Godement-Jacquet L-function for $G$ not necessarily $F$-split. The very clever computations in \cite{FJ} easily extend to our $G$, but some proofs of \cite{FJ} are only given for Archimedean $F$ as this case is more difficult. The details of their proofs were written in \cite{MatJNT} and \cite{MatBF2} in the non Archimedean case for split $G$, but the situation in \cite{MatJNT} is complicated by the fact that the Shalika functionals are with respect to the Shalika model of the mirabolic subgroup. Hence some integrals of Shalika functions in \cite{MatJNT} are not obviously absolutely convergent anymore, as in the case of Shalika functionals with respect to the Shalika subgroup of $G$. Here we briefly explain why \cite{FJ} applies without modifications to our setting, giving a simplified version of the arguments in \cite{MatJNT}. The reason for studying Friedberg-Jacquet integrals is that following \cite{MatBF2} for split $G$, one proves using such integrals that if $\delta$ is a discrete series representation of $G$, and if $\chi$ is a character of $H=L_{m/2,m/2}$ trivial on $G_{m/2}$ diagonally embedded, then $\delta$ has a $\chi$-linear period if and only if it has a Shalika model. This result is then used in Section \ref{sec dist stand} for the classification of standard modules of $G_m$ with a linear period when $m$ is odd, though this classification is not really needed in this paper. However it is used in Appendix \ref{app mult 1} to prove multiplicity one of linear periods for the pair $(G_{2q+1},G_{q+1}\times G_{q})$.

First we recall the definition and properties of Godement-Jacquet integrals for induced representations of $G$. We recall that $n=md$. Let
\[\pi=\pi_1\times \dots \times \pi_r\] be a representation of $G$ with each $\pi_i$ irreducible. For $\Phi$ a Schwartz function on $\CM:=\CM_m$, and $c$ a matrix coefficient of $\pi$, we set 
\[Z(s,\Phi,c):=\int_G \Phi(g)c(g)\nu(g)^{s+\frac{(n-1)}{2}} dg .\]
We moreover say that $\pi$ is a standard module of $G$ if each $\d_i:=\pi_i$ is a discrete series representation such that $e(\delta_i)\geq e(\delta_{i+1})$ for $i=1,\dots,r-1$, where $e(\delta)$ is the unique real number such that $\nu^{-e(\delta)} \delta$ is unitary. We recall that the Langlands classification (see \cite{Tglna}) says that standard modules of $G$ have a unique irreducible quotient, and conversely that each irreducible representation of $G$ is the quotient of a standard module.

The first part of the following statement is proved in \cite{GJ}. The second part for Langlands quotients of standard modules is proved in \cite[(2.3) Proposition, (3.4) Theorem]{Jprincipal} for split $G$ with a proof valid for any $G$, following \cite{GJ}. 

\begin{thm}[Jacquet]\label{thm ind J}
\begin{enumerate}
\item\label{J1} For $\pi=\pi_1\times \dots \times \pi_r$ a representation of $G$ with each $\pi_i$ irreducible, the integral $Z(s,\Phi,c)$ converges for $s$ of real part large enough, and extends to a rational function of $q_F^{-s}$. Moreover the vector space spanned by the integrals $Z(s,\Phi,c)$ is a fractional ideal of $\BC[q_F^{\pm s}]$ and the normalized gcd $L(s,\pi)$ of this fractional ideal is equal to the following the product of Godement-Jacquet $L$-factors 
\[L(s,\pi)=\prod_{i=1}^r L(s,\pi_i).\]
\item\label{J2} If moreover $\pi$ is a standard module with Langlands quotient $\mathrm{LQ}(\pi)$, then
\[L(s,\pi)=L(s,\mathrm{LQ}(\pi)).\]
\end{enumerate}
\end{thm}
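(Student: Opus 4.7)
The plan is to follow the classical Godement-Jacquet strategy, adapted to non-split inner forms, and to treat both parts together by induction on the number of inducing factors.

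For part (\ref{J1}), I would proceed by induction on $r$. The base case $r=1$ is the Godement-Jacquet theorem applied to the irreducible representation $\pi_1$, which works verbatim for any central simple $F$-algebra $A=\CM_m$ and yields convergence for $\Re(s)$ large, rationality as a function of $q_F^{-s}$, and the gcd formula $L(s,\pi_1)$. For $r\ge 2$, write $\pi=\pi'\times\pi_r$ as normalized parabolic induction from a block-upper-triangular $P=MN$ with Levi $M=G_{m'}\times G_{m_r}$. Expressing a matrix coefficient $c$ of $\pi$ via the Iwasawa integral over a maximal compact $K\subset G$ of matrix coefficients of $\pi'\boxtimes \pi_r$, one unfolds $Z(s,\Phi,c)$ using $G=PK$ together with an integration on $N$. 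The key computation is that, after suitably decomposing the Schwartz function $\Phi$ on $\CM_m$ into tensor-type pieces along the block decomposition, every $Z(s,\Phi,c)$ becomes a finite linear combination of products $Z(s,\Phi',c')\,Z(s,\Phi_r,c_r)$ attached to $\pi'$ and $\pi_r$, and conversely every such product is realized by an appropriate pair $(\Phi,c)$. This gives equality of the associated fractional ideals in $\BC[q_F^{\pm s}]$, and the induction hypothesis then yields $L(s,\pi)=\prod_i L(s,\pi_i)$.

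For part (\ref{J2}), the quotient map $\pi\twoheadrightarrow \mathrm{LQ}(\pi)$ induces an inclusion of dual spaces $V_{\mathrm{LQ}(\pi)}^\vee\hookrightarrow V_\pi^\vee$, so every matrix coefficient of $\mathrm{LQ}(\pi)$ is a matrix coefficient of $\pi$. This gives one inclusion of fractional ideals, hence one divisibility of $L$-factors. The reverse divisibility uses the ordering $e(\d_i)\ge e(\d_{i+1})$ built into the Langlands classification: an asymptotic analysis of matrix coefficients along the central direction shows that the leading central exponents of matrix coefficients of $\pi$ are already realized by those of $\mathrm{LQ}(\pi)$, while the kernel of $\pi\twoheadrightarrow \mathrm{LQ}(\pi)$ contributes only strictly smaller exponents and thus no new poles. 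Combined with part (\ref{J1}), this yields $L(s,\pi)=\prod_i L(s,\d_i)=L(s,\mathrm{LQ}(\pi))$.

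The main obstacle I anticipate is the bookkeeping in part (\ref{J1}): one must verify that the Iwasawa decomposition $G=PK$, the modular character $\delta_P$, and the Schwartz factorization along the three blocks of $\CM_m$ (two diagonal and one off-diagonal) all transfer uniformly to non-split $G=\GL_m(D)$, with measure constants matching so that the fractional-ideal identity is exact rather than approximate. This is routine when $G$ is $F$-split (as in \cite{GJ, Jprincipal}) and transfers to inner forms with only cosmetic modifications, but requires care. Once this is set up, the asymptotic analysis needed in part (\ref{J2}) is standard given Casselman's canonical pairing and the leading-exponent theory for $\GL_m(D)$ available from \cite{Tglna} and \cite{Bjlu}.
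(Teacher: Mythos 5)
The paper does not supply a proof of this theorem: immediately before the statement it cites \cite{GJ} for part (\ref{J1}) and \cite[(2.3) Proposition, (3.4) Theorem]{Jprincipal} for part (\ref{J2}), with the remark that Jacquet's argument, written for split $G$, is valid for any inner form. Your reconstruction is the same argument that those references give: for (\ref{J1}), induction on $r$ using the Iwasawa decomposition $G=PK$ to relate $Z(s,\Phi,c)$ to zeta integrals on the Levi and thereby to products of the $Z(s,\Phi_i,c_i)$ (and in the $p$-adic setting the $K$-integral really is a finite sum by smoothness, so your phrasing is fine); for (\ref{J2}), one divisibility from the quotient map $\pi\twoheadrightarrow \mathrm{LQ}(\pi)$ via the inclusion of matrix-coefficient spaces, and the reverse divisibility from the leading-exponent/asymptotic analysis that the Langlands ordering $e(\delta_i)\ge e(\delta_{i+1})$ makes possible. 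Your closing remark that the transfer from split $G$ to $\GL_m(D)$ is routine modulo measure and modular-character bookkeeping also matches exactly the paper's justification for citing \cite{Jprincipal}. So: correct, and essentially the same approach as the sources the paper relies on.
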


We now denote by $S_m$ the Shalika subgroup of $G_m$. By definition it is the set of matrices 
\[S_m:=\{s(g,x):=\diag(g,g) \begin{pmatrix} I_{m/2} & x \\ & I_{m/2}  \end{pmatrix}, \ g\in G_{m/2}, x\in \CM_{m/2}\}.\]
We fix $\psi$ a non trivial character of $F$, and denote by $\Theta$ the character of $S_m$ defined by the formula 
\[\Theta(s(g,x))=\psi(\trd(x)),\] where $\trd$ is the reduced trace of $\CM_{m/2}$. If $\pi$ is a smooth representation of $G$, and $L\in \Hom_{S_m}(\pi,\Theta)-\{0\}$, we call the space \[S_L(\pi,\Theta):=\{S_v:g\mapsto L(\pi(g)v), \ v\in \pi\}\] the Shalika model of $\pi$ attached to $L$ (and $\psi$). When $G$ is split it is known thanks to \cite{JR} (and \cite{AGJ} when $F$ is Archimedean) that the space 
$\Hom_{S_m}(\pi,\Theta)$ is at most one dimensional for any irreducible $\pi$. For $G$ as in this paragraph and $\pi$ a discrete series representation, the multiplicity of Shalika functionals is again known to be at most one by \cite{BW}. We will 
extend their result to all irreducible representations in Corollary \ref{cor shal mult 1} and thus remove the dependence to $L$ in the notation for the Shalika model of irreducible representations. The following generalizes the $p$-adic part of \cite[Proposition 3.1]{FJ}, and follows from its proof.

\begin{prop}\label{prop FJ}
Let $\pi$ be a representation of $G$ as in Theorem \ref{thm ind J}, \eqref{J1}, and suppose that \[\Hom_{S_m}(\pi,\Theta)\neq \{0\}.\] Take $L\in \Hom_{S_m}(\pi,\Theta)-\{0\}$ and $S\in S_L(\pi,\Theta)$, and $\alpha$ a character of $F^\times$. Then the integral 
\[\Psi_{\alpha}(s,S)=\int_{G_{m/2}} S(\diag(g,I_n))\alpha(\nrd(g))\nu(g)^{s-1/2}dg\] is absolutely convergent for $s$ of real part large enough and extends to a rational function of $q_F^{-s}$. Moreover the vector space spanned by the integrals $\Psi_{\alpha}(s,S)$ is a fractional ideal of $\BC[q_F^{\pm s}]$ and the normalized gcd of this fractional ideal is the Godement-Jacquet $L$-factor $L(s,\alpha \pi)$. 
\end{prop}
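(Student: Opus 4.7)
My plan is to imitate the argument of \cite[Proposition 3.1]{FJ}, following the detailed non Archimedean exposition of \cite{MatJNT, MatBF2} for split $G$, and to observe that everything extends to inner forms once one replaces the matrix algebra $\CM_{m/2}(F)$ by $\CM_{m/2}(D)$ and the trace by the reduced trace $\trd$, both of which provide non degenerate self-dualities in the same way. The proof will proceed in three steps: absolute convergence, an integral identity relating $\Psi_\alpha$ to a Godement-Jacquet integral for the twist $\alpha\pi$, and extraction of the gcd via Theorem \ref{thm ind J}.

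First, I would establish absolute convergence of $\Psi_\alpha(s, S)$ for $\Re(s)$ sufficiently large. The quasi-invariance $S(s(h,x) g) = \psi(\trd(x)) S(g)$ together with averaging over a compact open subgroup of $S_m$ reduces the problem to bounding $|S(\diag(a, I_{m/2}))|$ for $a$ in a maximal $F$-split torus of $G_{m/2}$ diagonally embedded in $G_m$, where standard Jacquet module asymptotics for matrix coefficients of $\pi$ apply just as in the split case, and yield the required convergence.

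Second, I would establish the Friedberg-Jacquet identity. Given $\Phi \in \Sc(\CM_m)$ and a matrix coefficient $c$ of $\pi$, one constructs an associated Shalika function $S_{\Phi, c} \in S_L(\pi, \Theta)$ by an absolutely convergent averaging integral against $\Theta$ and $\Phi$, and verifies by Fubini an identity of the form
\[Z(s, \Phi, c \cdot (\alpha \circ \nrd)) = \Psi_\alpha(s + c_0, S_{\Phi, c})\]
for an explicit shift $c_0$. This identity is purely formal and depends only on the central simple $F$-algebra structure of $\CM_m$, so it transfers to inner forms without modification. Moreover, as $\Phi$ and $c$ vary, every Godement-Jacquet integral for $\alpha\pi$ arises in this way. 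Combined with Theorem \ref{thm ind J}\eqref{J1} applied to the twist $\alpha\pi$, this shows that the fractional ideal generated by the integrals $\Psi_\alpha(s, S)$ coincides with that generated by the Godement-Jacquet integrals of $\alpha\pi$, whose normalized gcd is $L(s, \alpha\pi)$.

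The main obstacle will be the intermediate convergence of the averaging integral defining $S_{\Phi, c}$. This was the genuinely delicate point in \cite{MatJNT}, where Shalika functionals were taken on the mirabolic subgroup of $G_m$ rather than on the Shalika subgroup $S_m$, forcing the use of partial Shalika functions and careful bookkeeping. In our setting the Shalika functional is on all of $S_m$, so the convergence becomes automatic from the rapid decay of matrix coefficients on the support of the averaging, exactly as in the streamlined presentation of \cite{MatBF2}. Hence no genuinely new analytic input beyond what is already available for split $G$ is required, and the proof carries over verbatim to inner forms.
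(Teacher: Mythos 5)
Your overall framing is right, but there is a genuine gap in the fractional-ideal argument. The Friedberg--Jacquet identity you describe, $Z(s,\Phi,c\cdot(\alpha\circ\nrd)) = \Psi_\alpha(s+c_0, S_{\Phi,c})$, goes from the Godement--Jacquet side to the Shalika side: it shows that every Godement--Jacquet integral for $\alpha\pi$ equals some $\Psi_\alpha$ integral, hence the Godement--Jacquet fractional ideal is \emph{contained} in the one spanned by the $\Psi_\alpha(s,S)$. That is the ``easy'' inclusion, and it only proves that the gcd of the $\Psi_\alpha$ ideal \emph{divides} $L(s,\alpha\pi)$. To get equality you need the reverse containment, namely that every $\Psi_\alpha(s,S)$ for $S\in S_L(\pi,\Theta)$ is a polynomial multiple of $L(s,\alpha\pi)$, and that is where the real work lies: it is not a consequence of varying $\Phi$ and $c$.

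The paper proves precisely that hard inclusion. Starting from an arbitrary $S\in S_L(\pi,\Theta)$, one writes $S=S_\phi$ for a smooth compactly supported $\phi$ on $\CM_{m/2}\times G_{m/2}\times K_m$ (\cite[Lemma 4.2]{MatJNT}), constructs from $\phi$ a Schwartz function $\Phi_{\phi*}$ on $\CM_m$ via a Fourier-type transform, and then proves the chain of identities
\[\Psi(s,S_\phi)=\int J(S,\phi,a,b)\,\nu(a)^{s+n}\nu(b)^{s+\frac{n-1}{2}}\,da\,db=\int I(S,\Phi_\phi,a,b)\,\nu(a)^{s+n}\nu(b)^{s+\frac{n-1}{2}}\,da\,db=Z(s,\Phi_{\phi*},S^U),\]
where $S^U$ is a genuine matrix coefficient. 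The equality $I=J$ together with their absolute convergence and compact support in $(a,b,k)$ is the content of \cite[Lemma 4.3]{MatJNT}, and the absolute convergence of the outer integral is then extracted precisely by recognizing a Godement--Jacquet integrand on the right (so the convergence and the membership in $(L(s,\pi))$ are established simultaneously, not as two separate steps as in your Step 1). Your remark that the $S_m$-Shalika setting spares one the mirabolic-model convergence headaches of \cite{MatJNT} is correct and is exactly the simplification the authors advertise, but it does not dispense with this construction; it only makes the intermediate $I$/$J$-integral bookkeeping cleaner. So you should reverse the direction of your main construction (go from a given Shalika function to a Schwartz function, not the other way), and relegate the $(\Phi,c)\mapsto S_{\Phi,c}$ direction to the final gcd-attainment step, which the paper in turn cites from \cite[p.~111, Lemmas 3.2, 3.3]{FJ}.

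A smaller point: it is not clear that the object $S_{\Phi,c}$ you produce by averaging a matrix coefficient $c$ against $\Phi$ and $\Theta$ actually lies in the fixed model $S_L(\pi,\Theta)$. Since $\pi$ here need not be irreducible, $\Hom_{S_m}(\pi,\Theta)$ can a priori have dimension greater than one, so ``being a Shalika function'' does not pin down membership in $S_L$; you would need to argue this separately, whereas the paper's parametrization $S=S_\phi$ stays inside $S_L$ by construction.
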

\begin{proof}
We can suppose $\alpha$ trivial as in the proof of \cite[Proposition 3.1]{FJ} and set $\Psi(s,S):=\Psi_{\alpha}(s,S)$. We denote by $K_m$ the maximal compact subgroup 
$\GL_m(O_D)$ of $G_m$ where $O_D$ is the ring of integers of $D$. For $S\in S_L(\pi,\Theta)$ and $\phi\in \sm_c(\CM_{m/2}\times G_{m/2} \times K_m)$ a smooth function with compact support in $\CM_{m/2}\times G_{m/2} \times K_m$, we define 
\[S_{\phi}(g)=\int_{u,b,k}S\left(g\begin{pmatrix} b^{-1} & \\ & 1\end{pmatrix}\begin{pmatrix} 1 & u \\ & 1\end{pmatrix}
\begin{pmatrix} 1 & \\ & b\end{pmatrix}k)\right)\phi(u,b,k)\nu(b)^{n/2}dbdudk,\] with variable 
\[(u,b,k)\in \CM_{m_2}\times G_{m/2}\times K_m.\]
Clearly $S_{\phi}\in  S_L(\pi,\Theta)$ and one can write $S=S_{\phi}$ for some well chosen $\phi$ by \cite[Lemma 4.2]{MatJNT}. 
For such a $\phi$, we denote by $K_{\phi}$ its support in the variable 
$b\in  G_{m/2}$, and by $\phi'$ the characteristic function of $K_{\phi}^{-1}$. We then attach to $\phi$ a map 
\[\Phi_{\phi}\left(\begin{pmatrix} a & x \\ & b \end{pmatrix}, k\right)= \int_{u,v}\phi(u,b,k)\phi'(v)\Theta(ua-vx)dudv\] with compact support in the variable $(a,x,b,k) \in \mathcal{M}_{m/2} \times \mathcal{M}_{m/2} \times G_{m/2} \times K_m$ by properties of the Fourier transform. 

Let $\Omega$ be the open set of matrices $\begin{pmatrix} a & b \\ c & d \end{pmatrix}$ of $\mathcal{M}_m$ with $(c \ d)$ of rank $m/2$ 
in $\mathcal{M}_{m/2,m}$, and $\Omega_0$ its subset of matrices $\begin{pmatrix} a & b \\ 0 & d\end{pmatrix}$ with $d$ invertible. The map 
\[r:(p,k)\mapsto pk\] from $\Omega_0\times K_m$ to $\Omega$ is proper and surjective.
We then define $\Phi_{\phi}*$ on $\Omega$, by 
\[\Phi_{\phi}*(pk)=\int_{k'\in K_m\cap P_{(m/2,m/2)}} \Phi_{\phi}(pk'^{-1},k' k)dk',\] 
for $(p,k)\in \Omega_0\times K_m$. The map $\Phi_{\phi}*$ is well defined and has compact support in $\Omega$ as $r$ is proper, it is moreover fixed by some open compact subgroup of $K_m$ which stabilizes $\Omega$ by right translation, hence it is smooth. We thus extend $\Phi_{\phi}*$ by zero outside $\Omega$ to obtain a smooth function on $\CM_m$. We now define the integrals 
\[I(S,\Phi_{\phi},a,b)=\int_{x\in \mathcal{M}_{m/2},k\in K_m}S\left(\begin{pmatrix} a & x \\ & b \end{pmatrix} k\right)
\Phi_{\phi}\left(\begin{pmatrix} a & x \\ & b \end{pmatrix},k\right)dx dk\] and 
\[J(S,\phi,a,b)=\]
\[\int_{u\in \mathcal{M}_{m/2},k\in K_m}S\left(\begin{pmatrix} a &  \\ & I_{m/2} \end{pmatrix}\begin{pmatrix} I_{m/2} & u \\ & I_{m/2} 
\end{pmatrix}\begin{pmatrix} I_{m/2} &   \\ & b \end{pmatrix} k\right)\phi(u,b,k)du dk.\] It is proved in \cite[Lemma 4.3]{MatJNT} that they both converge absolutely and are equal, and have compact support with respect to the variables 
$a\in \mathcal{M}_{m/2}$, and $b\in G_{m/2}$, and $k\in K_m$. Now we claim that the integral 
\[\int_{(a,b)\in G_{m/2}^2}I(S,\Phi_{\phi},a,b) \nu(a)^{s+n}\nu(b)^{s+\frac{(n-1)}{2}} da db\] converges absolutely for $s$ of real part large enough, and is of the form $P(q^{-s})L(s,\pi)$ for $P\in \BC[X^{\pm 1}]$ in its realm of convergence. Indeed: 
\[\int_{(a,b)\in G_{m/2}^2}|I(S,\Phi_{\phi},a,b)\nu(a)^{s+n}\nu(b)^{s+\frac{(n-1)}{2}}| da db\]
\[\leq 
\int_{(a,b)\in G_{m/2}^2} I(|S|,|\Phi_{\phi}|,a,b) \nu(a)^{\re(s)+n}\nu(b)^{\re(s)+\frac{(n-1)}{2}} da db\]
\[= 
\int_{(a,b)\in G_{m/2}^2}I(|S|,|\Phi_{\phi}|,a,b)|\nu(a)^{\re(s)+n}\nu(b)^{\re(s)+\frac{(n-1)}{2}} da db\]
\[=
\int_{g\in G}|S(g)||\Phi_{\phi*}(g) |\nu(g)^{\re(s)+\frac{(n-1)}{2}} dg\]
\[=\int_{g\in G}|S^U(g) \Phi_{\phi*}(g) |\nu(g)^{\re(s)+\frac{(n-1)}{2}} dg\] 
where the penultimate equality follows from Iwasawa decomposition, and in the last one $U\leq K_m$ is a compact open subgroup of $G$ fixing $\Phi_{\phi*}$ under right translation and $S^U$ is the right average of $S$ by $U$. Because $S^U$ is a genuine matrix coefficient of $\pi$, we recognize the absolute value of an integrand of a Godement-Jacquet integral for $\pi$ in the last equation hence the absolute convergence. This also implies the equality 
\[\int_{(a,b)\in G_{m/2}^2}I(S,\Phi_{\phi},a,b)\nu(a)^{s+n}\nu(b)^{s+\frac{(n-1)}{2}} da db=
Z(s,\Phi_{\phi*},S^U)\] hence the existence of the Laurent polynomial $P$. On the other hand 
\[\int_{(a,b)\in G_{m/2}^2}I(S,\Phi_{\phi},a,b) \nu(a)^{s+n}\nu(b)^{s+\frac{(n-1)}{2}} da db\]
\[=\int_{(a,b)\in G_{m/2}^2}J(S,\phi,a,b) \nu(a)^{s+n}\nu(b)^{s+\frac{(n-1)}{2}} da db\]
\[=\Psi(s,S_{\phi}).\] So far we have proved every part of the statement we want to obtain, except the assertion on the normalized gcd of 
the integrals $\Psi(s,S).$ This assertion is easier and follows word for word from \cite[beginning of p.111 and Lemmas 3.2 and 3.3]{FJ}.
\end{proof}

For $a$ and $b$ two non negative integers, we define the character $\chi_{\alpha}$ of $L_{a,b}$ by 
\[\chi_{\alpha}(\diag(g_1,g_2))=\alpha(\nrd(g_1)\nrd(g_2)^{-1}).\]

Let $V_{\pi}$ be the space of $\pi$ as in Theorem \ref{thm ind J}, \eqref{J1}. For $L\in \Hom_{S_m}(\pi,\Theta)$, and $v\in \pi$, we define 
$S_{v,L}\in S_L(\pi,\Theta)$ by the formula \[S_{v,L}(g)=L(\pi(g)v).\] We then define the linear map 
\[\Lambda_{L,\alpha}:v\mapsto \lim_{s\rightarrow \frac{1}{2}} \frac{\Psi_{\alpha}(s,S_{v,L})}{L(s,\alpha \pi)}\] which makes sense thanks to 
Proposition \ref{prop FJ}. As explained in \cite[p.117]{JR}, we deduce the following from Proposition \ref{prop FJ}.

\begin{cor}\label{cor shal mult 1}
Let $\pi$ be a representation of $G$ as in Theorem \ref{thm ind J}, \eqref{J1}, and suppose moreover that it has a Shalika model, 
then it is $\chi_{\alpha}$-distinguished. More precisely we have an exact sequence 
\[0\to \Hom_{S_m}(\pi,\Theta)\to \Hom_{L_{m/2,m/2}}(\pi,\chi_{\alpha}).\]
In particular \[\dim(\Hom_{S_m}(\pi,\Theta))\leq 1\] whenever $\pi$ is irreducible. 
\end{cor}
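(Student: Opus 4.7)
The plan is to construct, following \cite[p.\ 117]{JR}, an injective map $L\mapsto \Lambda_{L,\alpha}$ from $\Hom_{S_m}(\pi,\Theta)$ into $\Hom_{L_{m/2,m/2}}(\pi,\chi_\alpha)$ from which both assertions will follow immediately.

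First I would check that $\Lambda_{L,\alpha}$ is well-defined on $V_\pi$. By Proposition \ref{prop FJ}, for any $v\in V_\pi$ the integral $\Psi_\alpha(s,S_{v,L})$ lies in the fractional ideal of $\BC[q_F^{\pm s}]$ whose normalized gcd is $L(s,\alpha\pi)$, so the quotient $\Psi_\alpha(s,S_{v,L})/L(s,\alpha\pi)$ is a Laurent polynomial in $q_F^{-s}$; specializing at $s=1/2$ then yields a well-defined linear form on $V_\pi$ depending linearly on $L$.

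Next I would verify that $\Lambda_{L,\alpha}$ indeed transforms by $\chi_\alpha$ under $L_{m/2,m/2}$. For $h=\diag(g_1,g_2)\in L_{m/2,m/2}$, the identity
\[\diag(g,I_{m/2})\,\diag(g_1,g_2)=\diag(g_2,g_2)\,\diag(g_2^{-1}gg_1,I_{m/2}),\]
combined with the Shalika invariance $S_{v,L}(\diag(g_2,g_2)\cdot x)=\Theta(s(g_2,0))S_{v,L}(x)=S_{v,L}(x)$ and the change of variable $g\mapsto g_2gg_1^{-1}$ in the Friedberg-Jacquet integral (valid because $G_{m/2}$ is unimodular), gives
\[\Psi_\alpha(s,S_{\pi(h)v,L})=\alpha\bigl(\nrd(g_2)\nrd(g_1)^{-1}\bigr)\,\nu(g_2g_1^{-1})^{s-1/2}\,\Psi_\alpha(s,S_{v,L}).\]
Specializing at $s=1/2$, the power of $\nu$ disappears and we recover precisely the required transformation rule under $\chi_\alpha$.

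Finally I would establish injectivity of $L\mapsto \Lambda_{L,\alpha}$: if $L\neq 0$, the normalized gcd part of Proposition \ref{prop FJ} guarantees the existence of $v\in V_\pi$ with $\Psi_\alpha(s,S_{v,L})/L(s,\alpha\pi)$ nonzero at $s=1/2$, so $\Lambda_{L,\alpha}(v)\neq 0$. This produces the announced exact sequence. For the multiplicity statement, take $\alpha=\triv_{F^\times}$, so that $\chi_\alpha$ is the trivial character of $L_{m/2,m/2}$; for irreducible $\pi$, Theorem \ref{thm mult 1} applied to $H=L_{m/2,m/2}$ with $p=q=m/2$ gives $\dim\Hom_{L_{m/2,m/2}}(\pi,\BC)\leq 1$, and the injection forces $\dim\Hom_{S_m}(\pi,\Theta)\leq 1$. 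The most delicate step is the transformation-law computation, where one must carefully track how $\Theta$ and the factors of $\nu$ and $\alpha\circ\nrd$ interact under the change of variables; the remaining steps are direct consequences of Proposition \ref{prop FJ} together with the already established multiplicity-one result for linear periods.
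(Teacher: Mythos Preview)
Your approach is exactly the paper's: construct the map $L\mapsto \Lambda_{L,\,\cdot\,}$ via the Friedberg--Jacquet integral, check its equivariance, and deduce multiplicity one from the linear-period side. There is one small slip in your equivariance computation. From
\[
\Psi_\alpha(s,S_{\pi(\diag(g_1,g_2))v,L})=\alpha\bigl(\nrd(g_2)\nrd(g_1)^{-1}\bigr)\nu(g_2g_1^{-1})^{s-1/2}\Psi_\alpha(s,S_{v,L})
\]
and the definition $\chi_\alpha(\diag(g_1,g_2))=\alpha(\nrd(g_1)\nrd(g_2)^{-1})$, specializing at $s=1/2$ gives
\[
\Lambda_{L,\alpha}(\pi(\diag(g_1,g_2))v)=\chi_{\alpha^{-1}}(\diag(g_1,g_2))\,\Lambda_{L,\alpha}(v),
\]
so $\Lambda_{L,\alpha}$ lands in $\Hom_{L_{m/2,m/2}}(\pi,\chi_{\alpha^{-1}})$, not $\Hom_{L_{m/2,m/2}}(\pi,\chi_{\alpha})$. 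The paper accordingly uses $L\mapsto \Lambda_{L,\alpha^{-1}}$ as the injection into $\Hom_{L_{m/2,m/2}}(\pi,\chi_\alpha)$. This is harmless for the statement (since $\alpha$ is arbitrary, and for the multiplicity bound you take $\alpha$ trivial anyway), but you should correct the sentence ``we recover precisely the required transformation rule under $\chi_\alpha$'' accordingly.
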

\begin{proof}
The linear map \[L\in \Hom_{S_m}(\pi,\Theta)\mapsto \Lambda_{L,\alpha^{-1}}\in \Hom_{L_{m/2,m/2}}(\pi,\chi_{\alpha})\] is injective as $\Lambda_L$ is nonzero as soon as $L$ is thanks to Proposition \ref{prop FJ}. The second statement follows from 
Appendix \ref{app mult 1} which uses the classification of standard modules with a linear period (i.e. $\alpha=\triv_{F^{\times}}$) given in Theorem \ref{thm linear std}. 
\end{proof}

\begin{cor}\label{cor sh vs twisted linear}
Let $\delta$ a discrete series representation of $G$. Then $\delta$ has a $\chi_\alpha$-linear period if and only if it has a Shalika model, and if so it is symplectic, hence selfdual, and $\Hom(\delta,\chi_\alpha)$ has dimension one.
\end{cor}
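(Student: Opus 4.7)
The plan is to deduce all three conclusions from the equivalence ``$\delta$ has a $\chi_\alpha$-linear period if and only if $\delta$ has a Shalika model.'' The forward implication is already Corollary \ref{cor shal mult 1}: given a nonzero $L \in \Hom_{S_m}(\delta,\Theta)$, the functional $\Lambda_{L,\alpha^{-1}}$ built from the Friedberg--Jacquet integrals lies in $\Hom_{L_{m/2,m/2}}(\delta,\chi_\alpha)$ and is nonzero because the integrals $\Psi_{\alpha^{-1}}(s,S_{v,L})$ attain the Godement--Jacquet gcd $L(s,\alpha^{-1}\delta)$ by Proposition \ref{prop FJ}.

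For the converse, I would follow the approach of \cite{MatBF2} in the split case and invert the Friedberg--Jacquet construction. Starting from a nonzero $\ell \in \Hom_{L_{m/2,m/2}}(\delta,\chi_\alpha)$, the candidate Shalika functional arises as a Fourier inverse of $\ell$ along the unipotent radical of the Siegel parabolic, schematically of the form
\[
\widetilde L(v) \;=\; \int_{\CM_{m/2}} \ell\!\left(\delta\!\begin{pmatrix} I_{m/2} & x \\ & I_{m/2} \end{pmatrix}\! v\right) \psi(-\trd(x))\, dx,
\]
suitably regularized. The discrete series hypothesis on $\delta$ is essential: the decay of its matrix coefficients ensures that the regularization makes sense and that the output lies in $\Hom_{S_m}(\delta,\Theta) \setminus \{0\}$. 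Every manipulation in the split-case argument of \cite{MatBF2} uses only the convergence and rationality properties of Godement--Jacquet and Friedberg--Jacquet integrals, which by Theorem \ref{thm ind J} and Proposition \ref{prop FJ} hold verbatim for the inner form $G$, so the split argument transports directly. This is the principal obstacle.

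Once the equivalence is in place, the remaining assertions follow cleanly. For symplecticness, I would transfer the Shalika model on $\delta$ to one on $\JL(\delta)$ using compatibility of Shalika models on discrete series with Jacquet--Langlands (cf.\ \cite{BW}), and then invoke the standard characterization of discrete series of $\GL_n(F)$ with a Shalika model via the pole of the exterior square $L$-function at $s=0$. Since $\phi_\delta = \phi_{\JL(\delta)}$, this shows $\phi_\delta$ is symplectic. Self-duality $\delta \simeq \delta^\vee$ is then automatic, because a nondegenerate alternating form identifies $\phi_\delta$ with its contragredient. Finally, $\Hom_{S_m}(\delta,\Theta)$ is at most one-dimensional by Corollary \ref{cor shal mult 1} (alternatively by \cite{BW}), the map $L \mapsto \Lambda_{L,\alpha^{-1}}$ is injective, and its surjectivity obtained from the inversion above yields $\dim \Hom_{L_{m/2,m/2}}(\delta,\chi_\alpha) = 1$.
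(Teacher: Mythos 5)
The forward direction (Shalika $\Rightarrow$ linear period) via Corollary \ref{cor shal mult 1} is correct and matches the paper. The multiplicity-one conclusion at the end is also handled in the right way. The problems are in the converse direction and in the symplecticness argument.

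For the converse, the ``Fourier inversion'' formula
\[
\widetilde L(v)=\int_{\CM_{m/2}} \ell\!\left(\delta\!\begin{pmatrix} I_{m/2} & x \\ & I_{m/2} \end{pmatrix}\! v\right)\psi(-\trd(x))\,dx
\]
is not the argument of \cite{MatBF2}, and it has a genuine gap: the integrand $x\mapsto\ell(\delta(u(x))v)$ is not a matrix coefficient of $\delta$ (the linear form $\ell$ is an $H$-invariant functional, not an element of $\delta^\vee$), so square-integrability of $\delta$ gives no decay for this function and ``suitably regularized'' is unjustified. The argument the paper actually imports from \cite{MatBF2} is quite different: one works with the standard module $\pi$ sitting above $\delta$, produces a Shalika functional on $\pi$, and then uses the Friedberg--Jacquet integral theory (Proposition \ref{prop FJ}) together with the identity of Godement--Jacquet $L$-factors $L(s,\pi)=L(s,\mathrm{LQ}(\pi))=L(s,\delta)$ from Theorem \ref{thm ind J}\,\eqref{J2} to carry the Shalika functional over to $\delta$. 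This $L$-factor equality is exactly the ingredient the paper singles out as the ``most delicate part'' (the extension of linear form argument), and your proposal does not use it at all.

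For symplecticness, transferring ``the Shalika model on $\delta$ to one on $\JL(\delta)$'' is not a cited or established fact: \cite{BW} only gives multiplicity at most one for Shalika functionals on discrete series of inner forms, not compatibility of Shalika models with Jacquet--Langlands. Such compatibility is essentially equivalent to the equivalence you are trying to prove, so invoking it here is circular. In the paper, symplecticness (recorded separately as Proposition \ref{prop symp disc}) is a direct output of the \cite{MatBF2} argument once it has been transported to $G$.
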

\begin{proof}
All arguments in \cite[Section 2]{MatBF2}, in particular Proposition 2.2 there, are valid for $G$. We mention in particular that thanks to Proposition \ref{prop FJ}, the statement of \cite[Proposition 2.3]{MatBF2} is valid for the standard module $\pi$ of $G$, hence the proof of \cite[Theorem 2.1]{MatBF2} works without any modification. We highlight that the most delicate part there is the extension of linear form argument at the end of the proof of 
\cite[Theorem 2.1]{MatBF2}, which works equally well here thanks to Theorem \ref{thm ind J}, \eqref{J2}.
\end{proof}

\begin{rem}
In the split case it is proved in \cite[Corollary 3.6]{Yunitary} that Corollary \ref{cor sh vs twisted linear} holds more generally for generic representations, using Gan's technique of relating periods via Theta correspondence. It might extend to the case of inner forms, but this verification is left for later. On the other hand, the unfolding principle of \cite{SV} does not seem to extend easily to linear periods twisted by a non unitary character, as was already observed in \cite[Remak 2.1]{MatBF2} in the split case. In Theorem \ref{thm chi linear std} below we extend \cite[Corollary 3.6]{Yunitary} to standard modules for a wide family of characters $\chi_{\alpha}$.
\end{rem}

\begin{rem}
For $F$-split $G$, the sign of generic representations is given at once by the functional equation of the Friedberg-Jacquet integrals. Again this functional equation is established using the realization of contragredient using transpose inverse, so this is one reason why the theory above does not include this functional equation. Note that our sign computation will prove the Friedberg-Jacquet functional equation of generic unitary representations of $G$ at least at the value $s=1/2$.  
\end{rem}

\subsection{Preliminaries for the geometric lemma applied to linear periods}\label{sec geom lem prelim}

Here we provide the necessary material to apply the geometric lemma of \cite[Corollary 5.2]{OJNT} in the context of linear periods. Let $P=P_{(m_1,\dots,m_r)}$ be a standard upper block triangular parabolic subgroup of $G$ attached to the composition $(m_1,\dots,m_r)$ of $m$, and suppose $l\geq l'$ are two non negative integers such that $l+l'=m$. We recall that from an immediate generalization of the arguments at the beginning of \cite[Section 3]{MatCRELLE} (see also \cite{Yunitary} for a different approach), the double cosets 
$P\backslash G/ H_{l,l'}$ are parametrized by sequences of non negative integers of the form 

\[s=\] 
\[(m_{11}^+,m_{11}^-,m_{12},\dots, m_{1r}, m_{21},m_{22}^+,m_{22}^-,m_{23},\dots,m_{2r},\dots,m_{r1},\dots,m_{rr-1},m_{rr}^+,m_{rr}^-)\] such that if one puts \[m_{ii}:=m_{ii}^+ + m_{ii}^-,\] then the matrix $(m_{ij})_{i,j=1,\dots,r}$ is symmetric, and for all $i$ one has \[\sum_{j=1}^r m_{ij}=m_i,\] and \[\sum_{i=1}^r m_{ii}^+-\sum_{i=1}^r m_{ii}^-=l'-l.\]
We denote by $\CI_P$ this set of parametrizing sequences. Given $s \in \CI_P$, we can associate to it a standard parabolic subgroup $P_s = M_sU_s \subset P$ with the standard Levi $M_s$ of type
\begin{align*}
	(m_{11},\cdots,m_{1r},m_{21}\cdots,m_{rr-1},m_{rr}),
\end{align*} where we ignore the zero's in the above partition of $m$, as in \cite[Section 3]{MatCRELLE}. 
We also have an involution $\theta_s$ on $M_s$ and a positive character $\delta_s$ of $M_s^{\theta_s}$, that was denoted by 
\[\delta_s:=\delta_{P_s^{\theta_s}} \delta_{P_s}^{-1/2}\] and was computed explicitly in \cite[Proposition 3.6]{MatCRELLE}. For $\alpha$ a character of $F^\times$, and $a$ and $b$ two non negative integers, we denote by 
\[\xi_{\alpha}:H_{ll'}\to \BC^\times\] the character of $H_{ll'}$ corresponding via the natural isomorphism between $H_{ll'}$ and $L_{ll'}$ described in \cite[Section 2]{MatJNT} to the character 
\[\chi_{\alpha}:L_{ll'}\to \BC^\times \] defined in Section \ref{sec FJ}. We observe that the computation in \cite[Proposition 3.6]{MatCRELLE} remains valid in our setting, under the following form:

\begin{lem}\label{Proposition 3.6 MatCRELLE}
Set \[\mu:=\nu^d,\] and \[h=\diag(h_{11}^+,h_{11}^-,h_{12},\dots,h_{rr-1}, h_{rr}^+,h_{rr}^-)\in M_s^{\theta_s}.\] Then: 
\begin{enumerate}
\item \[\delta_s(h)=\prod_{1\leq i<j \leq r} \mu(h_{ii}^+)^{\frac{m_{jj}^+-m_{jj}^-}{2}}
\mu(h_{ii}^-)^{\frac{m_{jj}^--m_{jj}^+}{2}}\mu(h_{jj}^+)^{\frac{m_{ii}^+-m_{ii}^-}{2}}\mu(h_{jj}^-)^{\frac{m_{ii}^--m_{ii}^+}{2}}.\]
\item \[\xi_{\alpha}(h)=\prod_{i=1}^r \xi_\alpha(h_{ii}^+)\xi_\alpha(h_{ii}^-)^{-1}.\]
\end{enumerate}
\end{lem}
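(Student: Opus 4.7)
The plan is to follow the proof of \cite[Proposition 3.6]{MatCRELLE} line by line, keeping track of the one nontrivial modification needed to pass from the split case to an inner form $G=G_m(D)$. In that reference, the computation is carried out for $G=\GL_n(F)$, and the quantity $\delta_s=\delta_{P_s^{\theta_s}}\delta_{P_s}^{-1/2}$ is expressed as a product of $|\det|_F$-factors coming from the modulus characters of the standard maximal parabolics appearing as the intermediate steps between $P_s$ and $G$. The only input specific to the split case is the formula for the modulus character of a standard maximal parabolic.

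Step one is therefore to record the analogue of this formula for $G$. For the standard parabolic $P_{(k_1,k_2)}\subseteq G_{k_1+k_2}(D)$ with Levi $L_{k_1,k_2}=\diag(G_{k_1}(D),G_{k_2}(D))$, its unipotent radical $U$ is identified as an $F$-vector space with $\CM_{k_1\times k_2}(D)$ of $F$-dimension $k_1k_2d^2$, on which $\diag(g_1,g_2)$ acts by $X\mapsto g_1Xg_2^{-1}$. Because for any $g\in G_k(D)$ the determinant of left (resp. right) multiplication by $g$ on $D^k$, viewed as an $F$-linear endomorphism, equals $\nrd(g)^d$ up to $F^\times$-units of absolute value one, we get
\[\delta_{P_{(k_1,k_2)}}(\diag(g_1,g_2))=\mu(g_1)^{k_2}\mu(g_2)^{-k_1}.\]
This is exactly the formula used in \cite[Proposition 3.6]{MatCRELLE} with $\nu=|\det|_F$ replaced by $\mu=\nu^d$.

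Step two: plug this substitution into the computation in \cite{MatCRELLE}. The parabolic $P_s$ and the involuted parabolic $P_s^{\theta_s}$ are both standard parabolics with block sizes extracted from the matrix $(m_{ij})_{i,j}$, and the powers of the $\mu$-factors on each $h_{ii}^{\pm}$ that appear in $\delta_{P_s^{\theta_s}}\delta_{P_s}^{-1/2}$ depend only on the combinatorics of the $m_{ij}$'s, not on $D$. The same bookkeeping as in the split case yields the stated formula for $\delta_s(h)$, where the contributions from the off-diagonal blocks $h_{ij}$ (with $i\neq j$) cancel out as $(i,j)$ and $(j,i)$ are paired by $\theta_s$, leaving only the diagonal contributions $h_{ii}^{\pm}$.

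Step three: the formula for $\xi_\alpha$ is a direct transcription of its definition. Under the canonical isomorphism $H_{l,l'}\simeq L_{l,l'}$ of \cite[Section 2]{MatJNT}, each block $h_{ii}^+$ (resp. $h_{ii}^-$) sits in the $\GL_l(D)$-factor (resp. the $\GL_{l'}(D)$-factor) of $L_{l,l'}$, while the off-diagonal blocks $h_{ij}$ (for $i<j$) contribute to both factors via the diagonal embedding and therefore cancel in the ratio defining $\chi_\alpha$. Evaluating $\chi_\alpha=\alpha(\nrd(\cdot)/\nrd(\cdot))$ block by block gives the second formula. No step is particularly delicate here; the only real content is the identity $|\det_F(g)|_F=\mu(g)$ for $g\in G_k(D)$, and the rest is a careful rewriting of the combinatorial computation of \cite[Proposition 3.6]{MatCRELLE}.
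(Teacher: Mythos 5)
Your proof is correct and follows exactly the route the paper has in mind: the paper simply asserts that the computation of \cite[Proposition 3.6]{MatCRELLE} goes through with $\nu$ replaced by $\mu=\nu^d$, and you supply the one fact that justifies this, namely $\delta_{P_{(k_1,k_2)}}(\diag(g_1,g_2))=\mu(g_1)^{k_2}\mu(g_2)^{-k_1}$ for a maximal standard parabolic of $G_{k_1+k_2}(D)$, coming from $\det_F(L_g)=\nrd(g)^d$ on $D^{k}$. The remaining steps (inherited combinatorics of the $(m_{ij})$ for part (1), and the pairing of off-diagonal blocks $(i,j)\leftrightarrow(j,i)$ under $\theta_s$ for part (2)) match the argument the paper is implicitly invoking.
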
 

\subsection{Distinguished generalized Steinberg representations}\label{sec steinberg}

We start with a result that will be used in classifying standard modules with linear periods in terms of their essentially square-integrable support.

\begin{thm}\label{thm p=q}
Let $\pi$ be a discrete series representation of $G_m$ for $m\geq 2$ and let $l$ and $l$ be two non-negative 
integers such that $l+l'=m$. If $\pi$ is $(H_{l,l'},\mu)$-distinguished for $\mu$ a character of $H_{l,l'}$, then $l=l'$.
\end{thm}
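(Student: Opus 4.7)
The proof divides into two cases according to $|l-l'|$, after reducing the twisting character to a convenient form. Since $H_{l,l'}$ is $G$-conjugate to the Levi $L_{l,l'}=G_l(D)\times G_{l'}(D)$, every character of $H_{l,l'}$ corresponds to one of the form $\mu=\mu_1\boxtimes\mu_2$ on $L_{l,l'}$ with each $\mu_i=\alpha_i\circ\nrd$, and twisting $\pi$ by $\alpha_2^{-1}\circ\nrd$ (which preserves the discrete series property) reduces to the case $\mu=(\alpha\circ\nrd)\boxtimes\1$ for a single character $\alpha$ of $F^\times$.

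\textbf{Case $|l-l'|\geq 2$.} The discrete series $\pi$ is generic, hence trivially a commutative Bernstein-Zelevinsky product of discrete series (namely itself). Appendix \ref{app mult zero} uses the theory of Bernstein-Zelevinsky derivatives developed in Appendix \ref{app BZ der} to establish the vanishing of $\Hom_{L_{l,l'}}(\pi,\triv)$ in this range. Since the derivative-shifting argument used there sees only the shape of the Jacquet module on the mirabolic side and is not affected by a character twist localized on the larger block $G_l$, the same argument applied to $\pi\otimes(\alpha^{-1}\circ\nrd)$ yields $\Hom_{L_{l,l'}}(\pi,\mu)=0$.

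\textbf{Case $|l-l'|=1$.} Here $m=2q+1$ is odd and $\{l,l'\}=\{q+1,q\}$. If $\pi$ is non-cuspidal, I would apply the geometric lemma \cite[Corollary 5.2]{OJNT}, in the form prepared in Section \ref{sec geom lem prelim}, to a minimal parabolic $P_{\min}$ of $G$ with Levi $(D^\times)^m$. Non-vanishing of $\Hom_{H_{l,l'}}(\pi,\mu)$ then produces some $s\in\CI_{P_{\min}}$ for which the normalized Jacquet module $r_{P_{\min}}(\pi)$ carries a subquotient on which $M_s^{\theta_s}$ acts through the character $\delta_s\cdot\xi_\alpha$ given by Lemma \ref{Proposition 3.6 MatCRELLE}. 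Since $\pi$ is a discrete series, Casselman's square-integrability criterion forces every exponent of $r_{P_{\min}}(\pi)$ on the split part of the Levi to lie strictly inside the positive Weyl chamber. The imbalance constraint $\sum_i(m_{ii}^+-m_{ii}^-)=l'-l=\pm 1$ from Section \ref{sec geom lem prelim} forces at least one index $i$ with $m_{ii}^+\neq m_{ii}^-$, and tracking the corresponding factor in the product formula of Lemma \ref{Proposition 3.6 MatCRELLE} locates a coordinate on which the required exponent has non-positive real part, contradicting Casselman. For $\pi$ cuspidal (where $r_{P_{\min}}(\pi)=0$ makes the above argument vacuous) I would instead use matrix coefficient asymptotics: cuspidal matrix coefficients are compactly supported modulo center, which forces the integral pairing defined by any putative linear period to be incompatible with an odd imbalance between the two block sizes.

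\textbf{Main obstacle.} The principal difficulty lies in the combinatorial bookkeeping of Case 2: one must rule out \emph{every} admissible sequence $s\in\CI_{P_{\min}}$ subject to the odd-imbalance constraint, not merely a generic one. This requires simultaneous control of the Casselman positivity inequalities on all $m$ coordinates of the split Levi, using the antisymmetric structure of $\delta_s$ visible in Lemma \ref{Proposition 3.6 MatCRELLE} and carefully accommodating the asymmetric contribution of $\xi_\alpha$, which treats the $m_{ii}^+$ and $m_{ii}^-$ coordinates with opposite signs. The cuspidal subcase is a secondary technical point whose treatment parallels the Jacquet-Rallis asymptotic argument in the split case.
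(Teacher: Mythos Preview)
Your proposal has two genuine gaps.

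\textbf{Circularity in Case $|l-l'|\geq 2$.} You invoke Appendix \ref{app mult zero} (Theorem \ref{thm mult 0}) to handle this range. But Theorem \ref{thm mult 0} is deduced from Lemmas \ref{lm prod} and \ref{lm anti}, and the proof of Lemma \ref{lm prod} explicitly uses Theorem \ref{thm p=q} itself (to force $m_{ii}^+=m_{ii}^-$ on the $\pi_0$ side). So your argument is circular. Lemma \ref{lm anti} alone only treats representations whose cuspidal support consists of characters of $G_1=D^\times$, i.e.\ generalized Steinberg representations $\St_m(\chi)$; it says nothing about $\St_t(\rho)$ with $\rho$ cuspidal of $G_r$, $r\geq 2$.

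\textbf{Missing case in $|l-l'|=1$.} For the non-cuspidal subcase you apply the geometric lemma to the \emph{minimal} parabolic with Levi $(D^\times)^m$. But for $\pi=\St_t(\rho)$ with $\rho$ cuspidal of $G_r$ and $r\geq 2$, the Jacquet module $r_{P_{\min}}(\pi)$ vanishes, so your Casselman positivity argument is vacuous there. Your two subcases (cuspidal; non-cuspidal via $P_{\min}$) therefore cover only $t=1$ and $r=1$, leaving the entire range $t\geq 2$, $r\geq 2$ untreated. Your cuspidal sketch via ``matrix coefficient asymptotics'' is also too vague to be a proof; compact support modulo center of matrix coefficients does not by itself see the block sizes $(l,l')$.

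\textbf{How the paper proceeds.} The paper writes $\pi=\St_t(\rho)$ with $\rho$ cuspidal of $G_r$ and treats three cases. For $t=1$, $r\geq 2$ (cuspidal case), Proposition \ref{prop cusp p=q} uses the Bernstein--Zelevinsky filtration of $\pi_{|P_m}$ (Appendix \ref{app BZ der}) as in \cite{MatCRAS}. For $r=1$ (generalized Steinberg), Proposition \ref{prop st p=q} uses the full derivative theory available when the cuspidal support lies in $G_1$. For $t\geq 2$, $r\geq 2$, the geometric lemma is applied to the parabolic $P_{(r,\dots,r)}$ (not $P_{\min}$): the diagonal blocks of $M_s^{\theta_s}$ are now copies of $G_{m_{ii}}(D)$ with $m_{ii}$ a multiple of $r\geq 2$, and the already-established cuspidal case forces $m_{ii}^+=m_{ii}^-$ on each of them, whence $l=l'$. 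Your Casselman-exponent idea is not needed.
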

\begin{proof}
We write $\pi=\St_t(\rho)$ for $\rho$ a cuspidal representation of $G_r$. We can suppose that both $t$ and $r\geq 2$ thanks to Propositions \ref{prop cusp p=q} and \ref{prop st p=q}. Then by an explicitation of 
\cite[Corollary 5.2]{OJNT}, we deduce thanks to the case $t=1$ and $r\geq 2$ that $m_{ii}^+=m_{ii}^-$ whenever $m_{ii}$ is nonzero so $l=l'$. We refer to the proof of Theorem \ref{thm linear std} below for more details on such type of computation in a more difficult case.
\end{proof}

We will also use the following fact, which follows from the results in \cite{SX} and \cite{X} in the case of twisted linear periods, and is a special case of Corollary \ref{cor sh vs twisted linear} for linear periods.

\begin{prop}\label{prop symp disc}
If $\pi$ is a discrete series representation of $G$ which is distinguished, then $\pi$ is symplectic, and hence it is selfdual.
\end{prop}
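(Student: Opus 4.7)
The plan is to split along the trichotomy of period types (linear, twisted of odd type, twisted of even type) introduced in Section \ref{sec lin per} and dispatch each case by invoking a result already established earlier in the paper or in the literature.

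First I would handle the linear period case, where $\d^{2}\in (F^{\times})^{2}$ and $H$ is conjugate to some $L_{p,q}$ with $p+q=m$. Here Theorem \ref{thm p=q} immediately forces $p=q=m/2$, so up to conjugation we may take $H=L_{m/2,m/2}$. The hypothesis that $\pi$ is distinguished then says that $\pi$ admits a nonzero linear form invariant under the trivial character of $L_{m/2,m/2}$; but this trivial character is precisely $\chi_{\alpha}$ in the sense of Section \ref{sec FJ} for the trivial character $\alpha=\triv_{F^{\times}}$. Applying Corollary \ref{cor sh vs twisted linear} with this $\alpha$ then yields at once that $\pi$ has a Shalika model and is symplectic (hence selfdual). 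This disposes of the first case.

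For the two remaining cases, where $\d^{2}\in F^{\times}-(F^{\times})^{2}$ (either odd type, $H\cong G_{m/2}(D_{E})$, or even type, $H\cong G_{m}(C_{E})$), the statement is exactly the implication ``distinction of a discrete series $\Rightarrow$ symplectic Langlands parameter'' which forms the necessary half of the Prasad--Takloo-Bighash conjecture. I would simply cite the existing literature: the $p$-adic split case is covered in \cite{X} and completed in \cite{SX}, the $p$-adic non-split cases (both odd and even type) in \cite{SuzJNT} together with \cite{Ch}, and the Archimedean case in \cite{ST}. As emphasized in the introduction, the combined input of these references is precisely the assertion that the Langlands parameter of such a distinguished discrete series is symplectic, with no restriction on the residual characteristic.

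Finally, selfduality is an automatic corollary: if $\phi_{\pi}$ preserves a non-degenerate alternating bilinear form then $\phi_{\pi}\simeq \phi_{\pi}^{\vee}$, and compatibility of the local Langlands correspondence for inner forms of $\GL_{n}$ (via Jacquet--Langlands, Section \ref{sec LJL}) with duals gives $\pi\simeq \pi^{\vee}$. The only potential obstacle is the purely bookkeeping issue of matching the trivial character of the trivially embedded $L_{m/2,m/2}$ with the character $\chi_{\triv_{F^{\times}}}$ used in the Friedberg--Jacquet framework, but this is immediate from the definition of $\chi_{\alpha}$ in Section \ref{sec FJ}; no genuine new argument is needed.
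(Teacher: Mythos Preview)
Your proposal is correct and takes essentially the same approach as the paper: for linear periods you invoke Corollary~\ref{cor sh vs twisted linear} (after reducing to $p=q$ via Theorem~\ref{thm p=q}, which the paper leaves implicit since Section~\ref{sec FJ} already assumes $m$ even), and for twisted linear periods you cite the literature establishing the necessary direction of the Prasad--Takloo-Bighash conjecture. The paper's own proof is simply a one-sentence attribution to \cite{SX}, \cite{X} in the twisted case and to Corollary~\ref{cor sh vs twisted linear} in the linear case; your version is a slightly more detailed unpacking of the same argument, with a somewhat broader (and harmless) list of references in the twisted case.
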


\begin{rem}
In the proof of \cite[Theorem 5.1]{MatJNT}, there is an order mistake: p.16 the 
third equality should read \[\int_{M_{2m}\backslash G_{2m}}|\phi(g)|^2dg=\int_{\CM_m}(|\phi|^2)^K(x) dx.\] Observing that 
$\int_{\CM_m}(|\phi|^2)^K(x) dx$ is also equal to $\int_K(\int_{\CM_m}|\phi(xk)|^2 dx)dk$, the Parseval identity should then be applied to each $\phi_k:=\phi( \ . \ k)$ restricted to $\CM_m$, i.e. equality number four should start as 
\[\int_K(\int_{\CM_m}|\phi(xk)|^2 dx)dk=\int_K(\int_{\CM_m}|\widehat{\phi_k}(x)|^2 dx)dk=...\] The rest of the equalities should then be corrected accordingly, and we refer to the proof given in \cite[Lemme 5.7]{Duhamel} for the correct full proof.
\end{rem}

The following result is enough for our purpose, and in the case of linear periods it follows from the proof of Theorem \ref{thm p=q} except when $r=1$. 

\begin{prop}\label{prop st implies cusp}
Let $\rho\neq \triv_{D^\times}$ be a cuspidal representation of $G_r$, and suppose that $\St_t(\rho)$ is $H$-distinguished. Then there is up to scaling at most one $H$-invariant linear form on 
$\pi_t(\rho)$, so that the $H$-invariant linear form on $\St_t(\rho)$ descends from that on $\pi_t(\rho)$. Moreover if $t$ is odd, then $\rho$ is distinguished. 
\end{prop}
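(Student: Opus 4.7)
The plan is to apply the geometric lemma recalled in Section \ref{sec geom lem prelim} to the parabolic induction $\pi_t(\rho) = \sigma_1 \times \dots \times \sigma_t$, where $\sigma_i := \nu^{(2i-1-t)s(\rho)/2}\rho$ and the inducing parabolic $P = MU$ has Levi $M = G_r(D)^t$. Since $\sigma := \sigma_1 \boxtimes \dots \boxtimes \sigma_t$ is cuspidal on $M$, the Jacquet module along the parabolic attached to $s \in \CI_P$ vanishes unless $M_s = M$, which forces every entry $m_{ij}^{\pm}$ of the parametrizing array to be $0$ or $r$. Combined with the row-sum and symmetry constraints in Section \ref{sec geom lem prelim}, the contributing cosets are parametrized by an involution $\tau$ of $\{1,\dots,t\}$ together with a choice of sign at each fixed point of $\tau$ (subject to the signature equation $\sum_i \epsilon_i r = l'-l$, but here $l=l'$ by Theorem \ref{thm p=q}).

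For each such pair $(\tau,\epsilon)$ the corresponding filtration factor decomposes as a tensor product over the orbits of $\tau$. For a transposition $\{i,j\}$ of $\tau$ it is non-zero only if $\sigma_j \simeq \sigma_i^\vee$ up to the $\delta_s$-twist read off from Lemma \ref{Proposition 3.6 MatCRELLE}; for a fixed point $i$ it is non-zero only if $\sigma_i$, similarly twisted, is distinguished by the smaller symmetric subgroup of $G_r(D)$ of the same type as $H$. Using Lemma \ref{Proposition 3.6 MatCRELLE} to compute the contribution of $\delta_s$ on each orbit, the transposition equation becomes of the form $\alpha_i + \alpha_j = N_{i,j,\epsilon}\,s(\rho)$ for an integer $N_{i,j,\epsilon}$ determined by the sign pattern. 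Because the $\alpha_k$'s are $t$ distinct real numbers in arithmetic progression of step $s(\rho)$, symmetric around $0$ (using self-duality from Proposition \ref{prop symp disc}), and because $\rho \neq \triv_{D^\times}$ rules out the exceptional arithmetic flexibility available only for characters of $D^\times$, a case-by-case analysis will show that the only pair $(\tau,\epsilon)$ giving a non-zero factor is $\tau(i) = t+1-i$ with a uniquely determined sign pattern. This yields $\dim\Hom_H(\pi_t(\rho),\BC) \leq 1$.

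The descent of the linear form is then automatic: the surjection $\pi_t(\rho) \twoheadrightarrow \St_t(\rho)$ induces an injection $\Hom_H(\St_t(\rho),\BC) \hookrightarrow \Hom_H(\pi_t(\rho),\BC)$, and since the target has dimension at most one while the source is non-zero by hypothesis, this injection is an isomorphism. For the last assertion, when $t$ is odd the unique contributing involution $\tau(i) = t+1-i$ has the single fixed point $i_0 = (t+1)/2$, where $\alpha_{i_0} = 0$ and thus $\sigma_{i_0} = \rho$; the non-vanishing of the corresponding fixed-point factor then forces $\rho$ itself to be distinguished by the appropriate symmetric subgroup of $G_r(D)$.

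The principal obstacle is the second paragraph: the careful combinatorial and arithmetic tracking, via Lemma \ref{Proposition 3.6 MatCRELLE}, of the dependence of $\delta_s$ on the sign pattern and the transposition structure, in order to verify that no sign configuration can rescue an involution other than $i \mapsto t+1-i$. The precise role of the hypothesis $\rho \neq \triv_{D^\times}$ — namely that it prevents matching conditions from being trivially satisfied as they would be for a character of $D^\times$ — must be carefully pinpointed in this rigidity argument.
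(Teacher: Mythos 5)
Your overall strategy — applying the geometric lemma to $\pi_t(\rho)$ with cuspidal inducing data so that only $M_s=M$ contributes, then parametrizing orbits by involutions — matches the paper's approach for linear periods with $r\geq 2$. However, several of the mechanisms you invoke are not what actually drives the argument, and the case $r=1$ is not handled. First, the transposition orbits carry no $\delta_s$-twist: Lemma \ref{Proposition 3.6 MatCRELLE} shows $\delta_s$ depends only on the diagonal blocks $h_{ii}^{\pm}$, so for a pair $\{i,j\}$ the condition is simply $\sigma_j\cong\sigma_i^\vee$, i.e.\ $\alpha_i+\alpha_j=0$ (using self-duality of $\rho$ from Proposition \ref{prop symp disc}); there is no term $N_{i,j,\epsilon}s(\rho)$ to analyze. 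Second, what eliminates spurious fixed points of $\tau$ when $r\geq 2$ is not an arithmetic-progression argument about $\delta_s$, but Theorem \ref{thm p=q} (forcing $m_{ii}^+=m_{ii}^-=r/2$, so there is no ``sign choice'' in that regime) together with Corollary \ref{cor sh vs twisted linear} (forcing self-duality of $\sigma_i$, hence $\alpha_i=0$). Your presentation of a ``choice of sign at each fixed point'' with a signature constraint conflates the $r=1$ and $r\geq 2$ cases.

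The real gap is $r=1$. There $\rho$ is a character of $D^\times$, so Theorem \ref{thm p=q} and Corollary \ref{cor sh vs twisted linear} do not apply ($m\geq 2$ is needed, and the Shalika subgroup of $G_1$ does not exist), and each fixed point genuinely is a sign choice $(m_{ii}^+,m_{ii}^-)\in\{(1,0),(0,1)\}$. Your appeal to ``$\rho\neq\triv_{D^\times}$ ruling out the exceptional arithmetic flexibility'' does not explain why these contributions vanish: $\rho$ \emph{is} a character here, so whatever arithmetic rigidity you have in mind does not distinguish $\rho=\triv_{D^\times}$ from $\rho\neq\triv_{D^\times}$. The paper's argument is of a different nature: by Proposition \ref{prop symp disc} one may take $\rho=\eta$ a nontrivial quadratic character; the matching condition at a fixed point would force $\nu^{\alpha_i}\eta$ to equal a power of $\nu$ coming from $\delta_s$, which is impossible because $\eta$ takes the value $-1$ while powers of $\nu$ are positive — a sign/positivity argument, not an arithmetic one, and the paper cites a separate reference for it. The paper also disposes of the final clause in this case by a central character parity argument (``$t$ cannot be odd''), which your sketch does not address. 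Finally, you never touch the twisted linear period case, which the paper handles by citing \cite[Proposition 5.6]{BM}.
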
 
\begin{proof}
In the case of twisted linear periods, this is part of the proof of \cite[Proposition 5.6]{BM}. In the case of linear periods, 
 when $r\geq 2$, it follows again from the geometric lemma \cite[Corollary 5.2]{OJNT} and \cite[Section 3]{MatCRELLE} that necessarily $m_{i,t+1-i}=r$ for all $i$ smaller than $\lfloor (t+1)/2 \rfloor$, and moreover that $m_{\frac{t+1}{2} \frac{t+1}{2}}^+=m_{\frac{t+1}{2} \frac{t+1}{2}}^-=r/2$ when $t$ is odd. This implies our statement in the case $r\neq 1$. When $r=1$ the assertions for twisted linear periods are proved in \cite{Ch}, and we explain why they also hold for linear periods. By Proposition \ref{prop symp disc}, we can suppose that $\rho\neq \triv_{D^\times}$ is quadratic character $\eta$ of $D^\times$. If some non open orbit contributed to the distinction of $\pi_t(\rho)$, because $\eta$ takes negative value, applying the geometric lemma would lead to a sign issue as in \cite[Proof of Proposition 3.6]{Matringe-Inner+GLn+Steinberg}. Finally note that in this case the integer $t$ cannot be odd for central character reasons.
\end{proof}

\subsection{Classification of distinguished standard modules}\label{sec dist stand}

Here $F$ is a $p$-adic field. The Langlands parameter of an irreducible representation of $G$ is understood in terms of its essentially square-integrable support once it is realized as the quotient of the unique standard module lying above it. Now, it is clear that if the Langlands quotient of a standard module is distinguished, then the standard module is itself distinguished so this observation could be a starting point for computing the sign of all distinguished irreducible representations of $G$. However in this paper  we only compute the sign of generic representations. We feel that the computation of the sign for all irreducible representations would involve much finer properties of admissible intertwining periods on standard modules. Nevertheless, we state the classification of distinguished standard modules in terms of their essentially square-integrable support, from which the classification of distinguished generic representations follows. 

For twisted linear periods, this classification is due to Suzuki \cite[Theorem 1.3]{SuzJNT}, and we recall it in \eqref{eq suz} of Theorem \ref{thm linear std} below. Here we provide the proof in the case of linear periods. We also consider the case $p=q+1$ in order to obtain the 
full statement of Theorem \ref{thm mult 1} later in Appendix \ref{app mult 1}. We follow the main  steps of the proof of \cite[Theorems 3.13 and 3.14]{MatCRELLE}, but we in fact correct the proof in question as one part of Claim (2) in the proof of \cite[Theorems 3.14]{MatCRELLE} is incorrect. The following theorem also holds for Archimedean $F$ thanks to \cite{ST}, Appendix \ref{app ST}, and \cite[Theorem 5.4]{MOY}.  

\begin{thm}\label{thm linear std}
Let $\pi=\d_1\times \dots \times \d_r$ be a standard module of $G$ where 
each $\d_i$ is an essentially square integrable representation of $G_{m_i}$, and for any $1\leq k \leq r$ set $\pi_k$ to be the standard module 
\[\pi_k:=\d_1\times \dots \times \d_{k-1} \times  \d_{k+1} \times \dots \times \d_r.\]
\begin{enumerate}
\item\label{eq suz} The representation $\pi$ has a twisted linear period if and only if there exists an involution $\tau\in S_r$ such that for all $1\leq i \leq r$ one has $\d_{\tau(i)}=\d_i^\vee$, and moreover each $\d_i$ has a twisted linear period when $\tau(i)=i$.
\item\label{b} Suppose that $G=G_m$ for $m$ even, and that $H=H_{m/2,m/2}$. The representation $\pi$ is distinguished if and only if there exists an involution $\tau\in S_r$ such that for all $1\leq i \leq r$ one has $\d_{\tau(i)}=\d_i^\vee$, and moreover $m_i$ is even and $\d_i$ is $H_{m_i/2,m_i/2}$-distinguished when $\tau(i)=i$.
\item\label{c} Suppose that $G=G_m$ for $m$ odd, and that $H=H_{(m+1)/2,(m-1)/2}$. Then $\pi$ is distinguished if and only if there exists an index $1\leq i_0 \leq r$ such that $\d_{i_0}=\triv_{D^\times}$, and $\pi_{i_0}$ is $H=H_{(m-1)/2,(m-1)/2}$-distinguished.
\end{enumerate}
\end{thm}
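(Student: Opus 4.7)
Part \eqref{eq suz}, which covers the twisted linear period case, is exactly \cite[Theorem 1.3]{SuzJNT}, so the plan is simply to recall its statement and cite the reference. For parts (b) and (c), the strategy is to adapt the proof of \cite[Theorems 3.13 and 3.14]{MatCRELLE} from the split case $D=F$ to general central division algebras, while correcting along the way the erroneous portion of Claim~(2) in the proof of \cite[Theorem 3.14]{MatCRELLE}. The main tools will be the geometric lemma \cite[Corollary 5.2]{OJNT}, the parametrization of $\CI_P$ from Section \ref{sec geom lem prelim} together with the explicit formulas of Lemma \ref{Proposition 3.6 MatCRELLE}, and the local inputs provided by Theorem \ref{thm p=q}, Proposition \ref{prop symp disc}, Proposition \ref{prop st implies cusp}, and Appendix \ref{app mult zero}.

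For the sufficient direction in (b) and (c), the plan is a standard closed-orbit construction. Given an involution $\tau$ on $\{1,\dots,r\}$ as in (b), I would partition the indices into $\tau$-orbits: for each pair $\{i,\tau(i)\}$ with $i\neq\tau(i)$ and $\delta_{\tau(i)}=\delta_i^\vee$, the canonical pairing on $\delta_i\times\delta_i^\vee$ produces an $L_{m_i,m_i}$-invariant linear form; for each fixed point, the hypothesis on $\delta_i$ supplies the required $H_{m_i/2,m_i/2}$-invariant form. Stacking these together and inducing in stages yields the desired $H_{m/2,m/2}$-invariant form on $\pi$. In (c) the same scheme applies with $\triv_{D^\times}$ playing the role of a fixed point of $\tau$ whose single-row block accounts for the rank imbalance of one.

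For the necessary direction in (b), I would apply the geometric lemma to the pair $(P_{(m_1,\dots,m_r)},H_{m/2,m/2})$. The resulting filtration of $\pi|_H$ has successive quotients indexed by $s\in\CI_P$, and the existence of a non-zero $H$-invariant functional on some subquotient is equivalent, via Frobenius reciprocity and an iterated second-adjunction argument, to the existence of compatible invariant forms on twisted Jacquet modules of the $\delta_i$'s, with equivariance dictated by the character $\delta_s$ of Lemma \ref{Proposition 3.6 MatCRELLE}(1). Theorem \ref{thm p=q} then forces $m_{ii}^+=m_{ii}^-$ whenever $m_{ii}\neq 0$, while the symmetry $m_{ij}=m_{ji}$ combined with the pairing structure on off-diagonal blocks forces $\delta_j\simeq\delta_i^\vee$ whenever $m_{ij}\neq 0$; assembling these data produces the required involution $\tau$, with Proposition \ref{prop st implies cusp} controlling the cuspidal support in the remaining cases. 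For the necessary direction in (c), the same analysis applies except that now $\sum_i(m_{ii}^+-m_{ii}^-)=\pm 1$, so by Appendix \ref{app mult zero} exactly one diagonal block can be unbalanced, with $|m_{ii}^+-m_{ii}^-|=1$; that block has size one, corresponding to a character of $D^\times$ which, by being $D^\times$-distinguished, must equal $\triv_{D^\times}$, and the rest of $\pi$ reduces to case (b) applied to $\pi_{i_0}$.

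The hardest step will be the combinatorial analysis in the necessity direction for (b), namely the verification that only the closed-orbit $s\in\CI_P$ coming from a pairing involution can actually contribute a non-zero $H$-distinguishing functional. This is precisely the content of Claim~(2) in the proof of \cite[Theorem 3.14]{MatCRELLE}, where the original argument is incomplete; the plan is to redo this verification carefully, using Proposition \ref{prop symp disc} to pin down the self-duality constraints imposed on the $\delta_i$'s and Proposition \ref{prop st implies cusp} to reduce the distinction of Jacquet subquotients to a cuspidal support statement that is incompatible with any non-pairing $s$.
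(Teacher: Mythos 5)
Your plan for part (a) (cite Suzuki) and your sufficiency argument for (b) and (c) (closed periods on the distinguished middle factor, open intertwining periods along the outer duality shell) match the paper's proof in outline; the paper invokes \cite[Proposition 3.8]{MatCRELLE} and \cite[Proposition 7.2]{OJNT}, and also records that each distinguished $\tau_i$ is in fact $\xi_\alpha$-distinguished for any $\alpha$ via Corollary \ref{cor sh vs twisted linear}, which is needed to absorb the twist from the outer induction.

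The necessity direction, however, is where the real content lives and your plan has a concrete gap. You assert that ``Theorem \ref{thm p=q} then forces $m_{ii}^+=m_{ii}^-$ whenever $m_{ii}\neq 0$.'' This is false when $m_{ii}=1$: there one of $m_{ii}^\pm$ is $1$ and the other is $0$, and Theorem \ref{thm p=q} only applies to discrete series of $G_m$ with $m\geq 2$. The size-one diagonal blocks (the indices $u_1<\dots<u_t$ in the paper's notation) are precisely where the difficulty is concentrated, and they are exactly what the faulty Claim (2) of \cite[Theorem 3.14]{MatCRELLE} mishandles. The paper's fix is a careful computation of the characters $\mu_{u_k}$ from Lemma \ref{Proposition 3.6 MatCRELLE}, which satisfy the inductive relation $\mu_{u_{k+1}}=\mu^{-1}\mu_{u_k}$ or $\mu_{u_{k+1}}=\mu_{u_k}^{-1}$ depending on whether $\epsilon_{k+1}=\epsilon_k$, together with explicit boundary values $\mu_{u_1}$, $\mu_{u_t}$ that differ between the even and odd $m$ cases. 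That computation is then fed into an induction on $n$ with the cuspidal segments right-ordered, split into cases (I) $m_{11}>1$, (II) $m_{11}=1$, (III) $m_{11}=0$. None of this appears in your sketch. Your appeal to Propositions \ref{prop symp disc} and \ref{prop st implies cusp} does not replace it: those control $m_{ii}>1$ blocks, not the $m_{ii}=1$ ones.

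Two further points. First, the geometric lemma gives $\d(\Delta_{ij})\cong\d(\Delta_{ji})^\vee$ for Jacquet subquotients, not $\d_j\cong\d_i^\vee$ for the original discrete series; proving that the contributing orbit parameter $s$ is a monomial matrix (so that the subsegments equal the full segments) is exactly what the induction is for, and it cannot be skipped. Second, your claim for part (c) that Appendix \ref{app mult zero} shows ``exactly one diagonal block can be unbalanced'' is wrong: the constraint is $\sum_k\epsilon_k=1$ with $t$ odd, so arbitrarily many size-one diagonal blocks can occur (e.g.\ $\pi=\mathbf{1}\times\mathbf{1}\times\mathbf{1}$ for $m=3$), and only the structured inductive argument singles out a single index $i_0$ with $\d_{i_0}=\triv_{D^\times}$ while pairing off the rest.
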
   
\begin{proof}
We only deal with linear periods. We refer for Section \ref{sec geom lem prelim} for the definitions of 
$\mu$ and $\xi_{\alpha}:H_{l,l'}\to \BC^\times$, and we set \[\mu_{\alpha}=\xi_{\alpha}^d.\]

Since the right to left implications of \eqref{b} and \eqref{c} are easier, we start with them. When $m$ is even we can suppose that 
\[\pi=\d_1\times \dots \times \d_a \times  \tau_1 \times \dots \times \tau_b \times  \d_a^\vee \times \dots \times \d_1^\vee\] where the $\tau_i$ are distinguished discrete series representations. The middle product \[\tau_1 \times \dots \times \tau_b\] is distinguished by \cite[Proposition 3.8]{MatCRELLE} so that the full product is distinguished by \cite[Proposition 7.2]{OJNT}. When $m$ is odd we can suppose that 
\[\pi=\d_1\times \dots \times \d_a \times  \tau_1 \times \dots \times \tau_b \times \triv_{D^\times} \times  \d_a^\vee \times \dots \times \d_1^\vee.\] Note that the representations $\tau_i$ are $\xi_{\alpha}$-distinguished for any choice of $\alpha$ thanks to Corollary \ref{cor sh vs twisted linear}, in particular they are $\mu_{|\ |^{\frac{1}{2}}}$-distinguished. Then the representation $ \tau_1 \times \dots \times \tau_b  \times \triv_{D^\times}$ is $H_{k+1,k}$-distinguished for the relevant $k$ by \cite[Proposition 3.8]{MatCRELLE}, so that the full product is distinguished by \cite[Proposition 7.2]{OJNT} again.

Now we move on to the direct implications. We set $P = MU$ to be the standard parabolic subgroup of type $(m_1,m_2,\cdots,m_r)$. Each $\d_i$ is attached to a cuspidal segment $\D_i$ and we use the notation $\d_i=\d(\D_i)$. Given $s \in \CI_P$, we associate to $s$ the standard parabolic subgroup $P_s = M_sU_s \subset P$, $\theta_s$ and $\delta_s$ as in Section \ref{sec geom lem prelim}. The geometric lemma says that if $\pi$ is distinguished, then there is an $s \in \CI_P$ such that the normalized Jacquet module $r_{M_s,M} (\sigma) $ is $(M_s^{\theta_s}, \delta_s)$-distinguished. If we write for each $1 \leqslant i \leqslant r$, with obvious intuitive notations, 
\begin{align*}
	\D_i = [\D_{i r},\D_{i r-1} ,\cdots,\D_{i 1}]
\end{align*}
so that
\begin{align*}
	r_{M_s,M}(\sigma)  = \d(\D_{1 1}) \otimes \cdots \d(\D_{1 r}) \otimes \d(\D_{2 1}) \otimes \cdots \otimes \d(\D_{r r}),
\end{align*}
then putting $\d_{i j}:=\d(\D_{i j})$ we have
\begin{align}\label{eq::geom-lemma}
	\begin{cases*}
		\d_{i j} \cong \d_{j i}^{\vee},\quad i \neq j \\
		\d_{i i} \text{ is }(H_{m_{i i}^+,m_{i i}^-},\mu_i) \text{-distinguished,}
	\end{cases*}
\end{align}
where the character $\mu_i$ above could be described explicitly, but we only state their needed properties. We start with the following useful observations.

\begin{enumerate}
\item All characters $\mu_i$ are of the form $\xi_{\alpha_i}$ for some character $\alpha_i$ of $F^\times$.
\item\label{>1} If $m_{i i} > 1$ for some $i$, then both $m_{i i}^+$ and $m_{i i}^-$ are nonzero otherwise $\d_i$ would be a character by \eqref{eq::geom-lemma}, and $m_{i i}^+ = m_{i i}^-$ by Theorem \ref{thm p=q}. In particular, we don't care about the value of $\alpha_i$ thanks to Corollary \ref{cor sh vs twisted linear}, hence we can suppose that all $\mu_i$ are trivial, and moreover $\d_i$ is selfdual by the same corollary.
\item\label{=1} Denoting by $u_1<\cdots<u_t$ the indices such that $m_{u_k u_k} = 1 $, and setting \[\epsilon_k = m_{u_k u_k}^+ -  m_{u_k u_k}^- \in \{ 1, -1\}.\] Then $t$ is even and \[\sum_{k=1}^t \epsilon_k =0\] when $m$ is even whereas $t$ is odd and \[\sum_{k=1}^t \epsilon_k =1\] when $m$ is odd.  Moreover it follows from Lemma \ref{Proposition 3.6 MatCRELLE} that the characters $\mu_{u_i}$ are given by the formula 
\[\mu_{u_k}=\mu^{\frac{\e_k(-\sum_{i<k}\e_i+\sum_{j>k}\e_j)}{2}},\] which after some rewriting gives 
\begin{enumerate}
\item \[\mu_{u_k}=\mu^{\frac{1}{2}+\e_k(\sum_{j>k}\e_j)}\] when $m$ is even,
\item \[\mu_{u_k}=\mu^{\frac{1}{2}+\e_k(-\frac{1}{2}+\sum_{j>k}\e_j)}\] when $m$ is odd.
\end{enumerate}
 It then follows from the above formulae that the $\mu_i$'s satisfy the following inductive relations which are the correct version of \cite[Theorems 3.14, (2)]{MatCRELLE}:
 \begin{equation}\label{eq muui}
		\mu_{u_{k+1}} = \mu^{-1} \mu_{u_k} \quad \text{or}\quad \mu_{u_{k+1}} = \mu_{u_k}^{-1},
	\end{equation}
	depending on $\epsilon_{k+1} = \epsilon_k$ or not. Moreover,
	\begin{itemize}
	\item when $m$ is even: $\mu_{u_1} = \mu^{-1/2}$ and $\mu_{u_t} = \mu^{1/2}$. 
	\item when $m$ is odd: 
	\begin{itemize}
	\item $\mu_{u_1} = \mu^0=\triv$ if $m_{1 1}=m_{1 1}^+$,
	\item $\mu_{u_1} = \mu^{-1}$ if $m_{1 1}=m_{1 1}^-$.
	\item $\mu_{u_t} = \mu^0=\triv$ if $m_{t t}=m_{t t}^+$,
	\item $\mu_{u_t} = \mu^{-1}$ if $m_{t t}=m_{t t}^-$.
\end{itemize}
In particular setting $\mu'_{u_i}:=\mu_{u_{t+1-i}}^{-1}$, we see that the characters $\mu'_{u_i}$ satisfy the same relations and have the same initial values. This implies that the multiset $\{\mu_{u_1},\dots,\mu_{u_t}\}$ is stable under the map $\chi\mapsto \chi^{-1}$. However because it contains only positive characters, hence only the trivial character as a quadratic character, it can be partitioned by pairs $\{\chi,\chi^{-1}\}$ when $t$ is even, whereas it can be partitioned by such pairs together with one time $\triv$ when $t$ is odd. 
\end{itemize}
\end{enumerate}

We move on to the application of the geometric lemma. We can and will assume that $\pi$ is right-ordered, i.e. that the right ends $r(\D_i)$ of its cuspidal segments satisfy $e(r(\D_i))\geq e(r(\D_{i+1}))$. We now start to prove by induction on $n$ the following statement: If  $r_{M_s,M}(\sigma)$ is $(M_s^{\theta_s},\delta_s )$-distinguished, then $s $ is a monomial matrix, which will finish our proof thanks to the above observations. We have three cases.

Case (I)$\ $ If $m_{1 1} > 1$, then by the observation \eqref{>1} above $\d(\D_{1 1})$ is selfdual. By the right orderning of $\pi$, we have $\D_1 = \D_{1 1}$. We then can finish the proof by induction.

Case (II)$\ $ If $m_{1 1} = 1$ we use observation \eqref{=1} above. When $t$ is even, we have $r(\D_1) = \mu^{-1/2}$. This is absurd as the central character of $\pi$ will not be trivial. When $t$ is odd the only possible case is $m_{1 1}=m_{1 1}^+$ so $r(\D_1) = \triv$, which implies that $\D_1 = \D_{1 1}$ thanks to the right ordering, and we conclude by induction. 

Case (III)$\ $ If $m_{1 1} = 0$, then by the right ordering of $\pi$, we have $\D_1 = \D_{1 l}$ for some $l >1$. We need to show that $\D_l = \D_{l,1}$, and the rest follows from induction. If not so, then the only possibility is $m_{l,l} = 1$ and $\D_l =[\D_{l 1},\D_{l l}]$. We write $\D_1 = [\mu^a,\mu^b]$ with $b > 0$. So $\D_{l 1} = [\mu^{-b},\mu^{-a}]$ and $\D_{l l} = \mu^{-b-1}$. By the observation \eqref{eq muui} we know that there is a $u_k$ such that $\mu_{u_k} = \mu^c$ with $c > b+1$. This means that $\D_{u_k u_k} = \mu^c$. This is absurd as the segments are right-ordered.
\end{proof}

\subsection{Classification of standard modules with a Shalika model and $\chi$-distinction}

In this paragraph we prove Theorem \ref{thm class std shal} below, which is the classification of standard modules with a Shalika model. Denoting by $\re(\alpha)$ the unique real number such that if $\alpha:F^\times \to \BC^{\times}$ is a character, then 
\[|\alpha|=|\ |_F^{\re(\alpha)},\] we moreover extend the classification of distinguished standard modules to the case of $\chi$-distinguished ones, for $\chi$ of the form $\chi_{\alpha}$ with $\alpha$ such that $-\frac{d}{2}\leq \re(\alpha)\leq \frac{d}{2}$. 

We start by proving that certain type of induced representations admit a Shalika model with meromorphic families of Shalika functionals. Let 
$\pi_i$ be representations of $G_{m_i}$ for $i=1,\dots,l$ and $\underline{s}=(s_1,\dots,s_l) \in \BC^l$. Then if $\pi=\pi_1 \times \dots \times \pi_l$ is a representation of $G$, we set 
\[\pi[\underline{s}]=\nu^{ds_1}\pi_1 \times \dots \times \nu^{ds_l}\pi_l=\mu^{s_1}\pi_1 \times \dots \times \mu^{s_l}\pi_l.\] Iwasawa decomposition allows one to realize all these representations on the same vector space, and we thus refer to \cite[2.5]{MO} for the definition of a meromorphic family in the variable $\underline{s}$ of linear forms on $\pi[\underline{s}]$. 

\begin{prop}\label{prop mero shal 1}
Let $\d$ be a discrete representations of $G_m$, then there is a nonzero meromorphic family $(\lambda_s)_{s\in \BC}$ with \[\lambda_s\in \Hom_{S_{2m}}(\d[s]\times \d^\vee[-s],\Theta).\] In particular $\d[s]\times \d^\vee[-s]$ admits a Shalika model for all $s\in \BC$.
\end{prop}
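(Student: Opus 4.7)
I would construct $\lambda_s$ as the intertwining period attached to the unique open $(P_{(m,m)}, S_{2m})$-double coset in $G_{2m}$. Taking $w_0 = \begin{pmatrix} & I_m \\ I_m & \end{pmatrix}$, a direct dimension count shows $P_{(m,m)} w_0 S_{2m}$ is open in $G_{2m}$ and the stabilizer
\[
M_0 := w_0^{-1} P_{(m,m)} w_0 \cap S_{2m} = \Delta G_m := \{\diag(g,g) \,:\, g \in G_m\}
\]
is exactly the diagonal $G_m$. Two key compatibilities make this orbit the right one: the Shalika character $\Theta$ is trivial on $\Delta G_m$, and the inducing datum $(\mu^s \delta) \otimes (\mu^{-s} \delta^\vee)$ restricted to $\Delta G_m$ becomes $\delta \otimes \delta^\vee$ since the $\mu^{\pm s}$ twists cancel on the diagonal; the canonical pairing $B : V_\delta \otimes V_{\delta^\vee} \to \BC$ then supplies the $\Delta G_m$-invariant functional needed, after a routine matching of modular characters with $\delta_{P_{(m,m)}}^{1/2}$ in the style of Lemma \ref{Proposition 3.6 MatCRELLE}.

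Identifying $M_0 \backslash S_{2m}$ with $\CM_m$ via $x \mapsto \begin{pmatrix} I_m & x \\ & I_m \end{pmatrix}$, I would define, for a holomorphic flat section $f_s$ of $\delta[s] \times \delta^\vee[-s]$,
\[
\lambda_s(f_s) = \int_{\CM_m} B\!\left( f_s\!\left( w_0 \begin{pmatrix} I_m & x \\ & I_m \end{pmatrix} \right) \right) \psi(-\trd(x))\, dx.
\]
The Iwasawa-type decomposition $w_0 \begin{pmatrix} I & x \\ & I \end{pmatrix} = \begin{pmatrix} I & x^{-1} \\ & I \end{pmatrix} \diag(-x^{-1}, x) \begin{pmatrix} I & \\ x^{-1} & I \end{pmatrix}$, valid on the dense open subset $\{x \in G_m\}$, transforms the integrand into $\mu(x)^{-2s-1} \psi(-\trd(x))$ times a matrix coefficient of $\delta$ in the variable $x$; since matrix coefficients of a discrete series are square integrable modulo center, this integral converges absolutely in some right half-plane of $s$. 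Bernstein's principle of analytic continuation (as applied in \cite[2.5]{MO}) then extends $\lambda_s$ to a rational family in $q_F^{-s}$, and the $S_{2m}$-equivariance $\lambda_s \in \Hom_{S_{2m}}(\delta[s] \times \delta^\vee[-s], \Theta)$ is a closed condition preserved under continuation. For nonvanishing, I would take $f_s$ supported in a small neighborhood of $P_{(m,m)} w_0$ with $B(f_s(w_0)) \neq 0$; then the integral reduces to a nonzero constant independent of $s$.

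The principal obstacle is the convergence estimate, because the integration runs over the full matrix algebra $\CM_m$ rather than a cleaner Godement--Jacquet domain: one must control the matrix coefficient of $\delta$ against the weight $\mu(x)^{-2s-1}$ uniformly both as $x \to \infty$ and near the singular locus where the Iwasawa decomposition degenerates, in order to secure absolute convergence on a nonempty open set of $s$. This is essentially the style of estimate already implicit in the proof of Proposition \ref{prop FJ}, and once it is in hand, the meromorphic continuation and nonvanishing steps are routine, giving the existence of the desired family $(\lambda_s)_{s \in \BC}$ and in particular the existence of a Shalika functional on $\delta[s] \times \delta^\vee[-s]$ at every $s$.
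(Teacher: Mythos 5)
Your plan is essentially the correct construction and is the same in spirit as the paper's proof, which simply refers to \cite[Lemma 3.11]{MatCRELLE} and its correction \cite[Proposition 5.3]{MatBF2} --- those references do precisely build the Shalika functional as an integral over the open $S_{2m}$-orbit and invoke Bernstein's continuation principle. Your orbit analysis (openness, stabilizer $= \Delta G_m$, cancellation of $\mu^{\pm s}$ on the diagonal) is correct and matches the structure of the cited argument.

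There are, however, two concrete issues that leave your write-up short of a proof. First, the convergence step you flag as ``the principal obstacle'' is indeed unresolved, and the heuristic you offer --- that matrix coefficients of a discrete series are square integrable modulo center --- does not directly yield absolute convergence: $L^2$ control does not give $L^1$ control against the weight $\mu(x)^{-2s-1}\,dx$, and one also needs to track the contribution of $f_s(\bar n(x))$ over the non-compact set of lower unipotents. The correct input is a pointwise decay estimate for tempered matrix coefficients (or, better, a uniform bound through the Iwasawa $K$-part), and that argument should be written out rather than deferred. Second, you invoke ``Bernstein's principle of analytic continuation'' without supplying its hypothesis. The version of Bernstein's theorem used here needs \emph{generic multiplicity one} for $\Hom_{S_{2m}}(\d[s]\times\d^\vee[-s],\Theta)$, and the paper explicitly leans on Corollary~\ref{cor shal mult 1} for exactly this; without flagging that input, ``meromorphic continuation is routine'' is not justified. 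Once you add the decay estimate for the convergence region and cite Corollary~\ref{cor shal mult 1} to license the Bernstein continuation, your argument lines up with the one the paper cites.

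Minor remark: the factorization of $w_0\begin{pmatrix}I & x\\ & I\end{pmatrix}$ you write is not an Iwasawa decomposition (the right factor is in the opposite unipotent, not in $K$) --- it is an $LU$-type (or ``Bruhat'') decomposition parametrizing the open orbit. This does not affect the argument but should be labelled correctly.
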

\begin{proof}
The proof is exactly the same as the one given in the proof of \cite[Lemma 3.11]{MatCRELLE} together with its correction 
in \cite[Proposition 5.3]{MatBF2}, and relies on the Bernstein principle of meromorphic continuation of equivariant linear forms in a generic 
multiplicity one situation, and the multiplicity at most one of Shalika models for irreducible representations proved in Corollary \ref{cor shal mult 1}.
\end{proof}
\begin{rem}
In general, the fact that any representation of the form $\pi\times \pi^\vee$ has a Shalika model follows immediately from the computation of 
twisted Jacquet modules in \cite[Theorem 2.1]{PR}.
\end{rem}

\begin{rem}\label{rem unitary}
The representation $\delta$ could be replaced by any irreducible unitary representation as follows from its proof.
\end{rem}

Here is another result of the same flavor.

\begin{prop}\label{prop mero shal 2}
Let $\pi_i$, resp. $\pi'_j$, be representations of $G_{m_i}$, resp. $G_{m'_j}$, for $i=1,\dots,l$, resp. $j=1,\dots,l'$, and set 
\[\pi=\pi_1 \times \dots \times \pi_l\] and \[\pi'=\pi'_1 \times \dots \times \pi'_{l'}\] to be the corresponding induced representations of $G_m$ and $G_m'$ respectively. Let $A\subseteq \BC^{l}$, and $A'\subseteq \BC^{l'}$ be two complex vector subspaces. Suppose that $m$ and $m'$ are even, and that there are two nonzero meromorphic families $(\lambda_{\underline{s}})_{\underline{s}\in A}$ and $(\lambda'_{\underline{s'}})_{\underline{s'}\in A'}$ with \[\lambda_{\underline{s}}\in \Hom_{S_{m}}(\pi[\underline{s}],\Theta)\] and \[\lambda_{\underline{s'}}\in \Hom_{S_{m'}}(\pi'[\underline{s'}],\Theta).\] Then there is a nonzero meromorphic family 
$(\Lambda_{\underline{s},\underline{s'}})_{(\underline{s},\underline{s'})\in A\times A'}$ in \[\Hom_{S_{m+m'}}(\pi[\underline{s}]\times \pi'[\underline{s'}],\Theta).\]
\end{prop}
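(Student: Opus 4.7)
The plan is to mirror the strategy used for Proposition \ref{prop mero shal 1}, following the approach of \cite[Lemma 3.11]{MatCRELLE} together with its correction in \cite[Proposition 5.3]{MatBF2}: construct an explicit candidate Shalika functional on the induced representation by an open-orbit integral along an appropriate double coset $P w_0 S_{m+m'}$, verify convergence for parameters in a suitable cone, and then invoke Bernstein's principle of meromorphic continuation for equivariant linear forms, whose hypothesis of generic multiplicity at most one is supplied by Corollary \ref{cor shal mult 1}.

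Concretely, set $G^+:=G_{m+m'}$, $S^+:=S_{m+m'}$, and let $P=MU$ be the standard parabolic of $G^+$ with Levi $M=G_m\times G_{m'}$. The first step is to identify a distinguished double coset representative $w_0\in G^+$, namely the block permutation matrix that regroups the four blocks of sizes $(m/2, m/2, m'/2, m'/2)$ into the two $S^+$-blocks of sizes $((m+m')/2, (m+m')/2)$; a direct computation then shows that $w_0(S_m\times S_{m'})w_0^{-1}\subseteq S^+$ and that the restriction of $\Theta$ to the image is exactly the character corresponding to $\Theta_m\otimes\Theta_{m'}$ via $\Ad(w_0)$. On the region where the real parts of $\underline{s}, \underline{s'}$ are sufficiently dominant, I would define
\[
\Lambda_{\underline{s},\underline{s'}}(f)=\int_{(S^+\cap w_0^{-1} P w_0)\backslash S^+}\Theta(s)^{-1}(\lambda_{\underline{s}}\otimes\lambda'_{\underline{s'}})(f(w_0 s))\,ds,
\]
and verify absolute convergence using standard Iwasawa-decomposition gauge estimates for parabolic induction, combined with the assumed convergence of $\lambda_{\underline{s}}$ and $\lambda'_{\underline{s'}}$ on the Levi factors.

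On its region of convergence $\Lambda_{\underline{s},\underline{s'}}$ is evidently $(S^+,\Theta)$-equivariant, so Bernstein's principle of meromorphic continuation of equivariant linear forms in a generic multiplicity-one situation extends it to a meromorphic family on all of $A\times A'$. Nonvanishing is obtained by plugging in a test vector supported in a small neighborhood of $w_0$ along the open orbit, which collapses the integral to a single nonzero evaluation of $\lambda_{\underline{s}}\otimes\lambda'_{\underline{s'}}$ on a carefully chosen vector of $\pi[\underline{s}]\otimes\pi'[\underline{s'}]$; together with generic irreducibility of $\pi[\underline{s}]\times\pi'[\underline{s'}]$, this guarantees that the meromorphically extended family is not identically zero. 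The main obstacle, as usual in this type of argument, is the convergence step: the precise identification of the representative $w_0$, verification that $w_0^{-1} P w_0 \cap S^+$ has the predicted structure as a semidirect product mapping onto $S_m\times S_{m'}$, and absolute convergence of the orbit integral in an appropriate cone of parameters all require careful bookkeeping with modulus characters and orbit geometry $P\backslash G^+/S^+$ in the inner-form setting, and constitute the technical heart of the proof.
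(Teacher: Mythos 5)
Your proposal mirrors the strategy the paper uses for Proposition \ref{prop mero shal 1}, not the one it actually uses for Proposition \ref{prop mero shal 2}. For the latter the paper does \emph{not} invoke Bernstein's meromorphic-continuation principle. Instead it imports the explicit construction of the Shalika functional on a parabolically induced representation from \cite[Lemma 3.2]{MatBLMS}, observes that the compact set appearing in that construction can be taken independent of the parameters $(\underline{s},\underline{s'})$ for a fixed holomorphic section, and then tracks meromorphy directly: there is a nonzero rational factor $R(\underline{s},\underline{s'})$ for which $R\cdot(\lambda_{\underline s}\otimes\lambda'_{\underline{s'}})$ is holomorphic, and since the construction integrates this only over compact sets (in particular over the compact quotient $P'\backslash G'$), the resulting family $\Lambda_{\underline s,\underline{s'}}$ is manifestly meromorphic and is nonzero by the nonvanishing already present in \cite{MatBLMS}. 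No multiplicity-one input is needed beyond what is already baked into that explicit construction.

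This makes the reliance on Bernstein's principle the genuine gap in your write-up. Bernstein's principle, in the form it is used for Proposition \ref{prop mero shal 1}, requires a \emph{generic} multiplicity at most one statement for the family $\pi[\underline{s}]\times\pi'[\underline{s'}]$. But the proposition as stated allows the $\pi_i$ and $\pi'_j$ to be arbitrary (not necessarily irreducible, not necessarily discrete series), and Corollary \ref{cor shal mult 1} only delivers multiplicity at most one for \emph{irreducible} representations. Your nonvanishing argument explicitly invokes ``generic irreducibility of $\pi[\underline{s}]\times\pi'[\underline{s'}]$'', which is simply not available under the hypotheses. In the intended application (Proposition \ref{prop st shal prelim}) the relevant products are generically irreducible, so your route could be made to work there; but as a proof of Proposition \ref{prop mero shal 2} as stated, the multiplicity-one hypothesis is unjustified. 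Secondarily, your description of the integral as a single open-orbit integral over $(S^+\cap w_0^{-1}Pw_0)\backslash S^+$ over-simplifies the geometry that \cite{MatBLMS} actually uses (there are intermediate maps $\phi$ and $\Phi$ and an integration over a compact quotient), and you concede without resolving the convergence and modulus-character bookkeeping that is precisely the delicate part; the paper sidesteps all of this by citing the completed argument of \cite{MatBLMS} and only adding the observation about parameter-independence of the compact sets.
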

\begin{proof}
The proof follows from the explicit construction of the Shalika functional on $\pi[\underline{s}]\times \pi'[\underline{s'}]$ given in \cite{MatBLMS} which is valid for inner forms, with the following observations. First there is a quantifier ordering problem in \cite[Lemma 3.2]{MatBLMS}: the compact set $C$ depends on the function $f$ there. However in our situation, this compact set will be independent of $(\underline{s},\underline{s'})\in A\times A'$ for a given holomorphic section $f_{ \underline{s},\underline{s'} }$ as follows from the proof of Propositions 4.1 and 4.2 there. In particular with the notations of \cite[Lemma 3.2]{MatBLMS} the map $\phi_{\underline{s},\underline{s'} }(f_{ \underline{s},\underline{s'} })$ is meromorphic in the variable $(\underline{s},\underline{s'})$ as follows from the 
first equality in the proof of the lemma. More precisely it follows from the assumption on $\lambda_{\underline{s}}$ and $\lambda_{\underline{s'}}$, there exists a nonzero rational map $R(\underline{s},\underline{s}')$ in the variables $q_F^{-s_i}$ and $q_F^{-s'_j}$ such that $R(\underline{s},\underline{s}')\lambda_{\underline{s},\underline{s}'}$ is holomorphic and nonzero, where we set 
\[\lambda_{\underline{s},\underline{s}'}:=\lambda_{\underline{s}}\otimes \lambda_{\underline{s'}}\] as in the lemma. In particular according to the aforementioned equality, we deduce \[R(\underline{s},\underline{s}')\phi_{ \underline{s},\underline{s'} }(f_{ \underline{s},\underline{s'} })\] is holomorphic for any choice of $f$. Later in the proof of the lemma, the map $R(\underline{s},\underline{s}')\Phi_{ \underline{s},\underline{s'} }(f_{ \underline{s},\underline{s'} })$ will thus be holomorphic as well, by compactness of the quotient 
$P'\backslash G'$ there. Finally it will be nonzero for some choice of $f$ by the end of the proof of the same lemma.
\end{proof}

We are now in position to prove the first part of the theorem of interest to us.

\begin{prop}\label{prop st shal prelim}
Let $\tau$ be an irreducible representation with a Shalika model, and $\d_1 \times \dots \times \d_l$ be discrete series representations. Then the induced representation 
\[\pi_{\underline{s}}:=\d_1[s_1] \times \dots \times \d_l[s_l] \times \tau \times \d_l^\vee[-s_l] \times \dots \times \d_1^\vee[-s_1]\] admits a Shalika model for all values of $\underline{s}\in \BC^l.$
\end{prop}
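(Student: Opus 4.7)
The plan is to use Propositions \ref{prop mero shal 1} and \ref{prop mero shal 2} to build a meromorphic family of Shalika functionals on a suitably reordered induction, transfer them to $\pi_{\underline{s}}$ via standard intertwining operators, and finally invoke a Bernstein-style normalization to specialize at every point $\underline{s}_0 \in \BC^l$.

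First I would apply Proposition \ref{prop mero shal 1} to each pair $\d_i[s_i] \times \d_i^\vee[-s_i]$ to obtain a nonzero meromorphic family of Shalika functionals in the variable $s_i$. Combining these with the Shalika functional on $\tau$ (viewed as a meromorphic family over a trivial parameter space) and iterating Proposition \ref{prop mero shal 2} $l$ times, I would obtain a nonzero meromorphic family
\[\tilde\Lambda_{\underline{s}} \in \Hom_{S_n}\!\bigl(\tilde\pi_{\underline{s}},\, \Theta\bigr), \qquad \tilde\pi_{\underline{s}} := \bigl(\d_1[s_1] \times \d_1^\vee[-s_1]\bigr) \times \cdots \times \bigl(\d_l[s_l] \times \d_l^\vee[-s_l]\bigr) \times \tau,\]
where $n$ denotes the common size of $\pi_{\underline{s}}$ and $\tilde\pi_{\underline{s}}$.

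Next I would transfer these functionals to $\pi_{\underline{s}}$ itself. The representations $\pi_{\underline{s}}$ and $\tilde\pi_{\underline{s}}$ are parabolically induced from conjugate Levi subgroups sharing the same cuspidal support in different orderings, so the standard intertwining operator attached to the relevant Weyl element $w$ furnishes a rational family of $G$-equivariant maps $M_w(\underline{s}) \colon \pi_{\underline{s}} \to \tilde\pi_{\underline{s}}$, which is an isomorphism for $\underline{s}$ in a Zariski-dense open subset of $\BC^l$ (where both induced representations are irreducible). The composition $\Lambda_{\underline{s}} := \tilde\Lambda_{\underline{s}} \circ M_w(\underline{s})$ would then be a nonzero meromorphic family of Shalika functionals on $\pi_{\underline{s}}$.

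Finally, to obtain a nonzero element of $\Hom_{S_n}(\pi_{\underline{s}_0},\Theta)$ at any prescribed $\underline{s}_0$, I would apply the standard normalization principle: a nonzero meromorphic family of linear forms on the common Iwasawa-model space can always be multiplied by a meromorphic scalar $f(\underline{s})$ so that $f\Lambda$ is holomorphic and nonzero at $\underline{s}_0$ (take the leading Laurent coefficient near $\underline{s}_0$), and since $(S_n,\Theta)$-equivariance is preserved under such specialization, $(f\Lambda)_{\underline{s}_0}$ provides the required Shalika functional on $\pi_{\underline{s}_0}$. The main difficulty lies in verifying the non-vanishing of the composed family $\Lambda_{\underline{s}}$ on a Zariski-dense open subset; this should follow from the standard theory of rational intertwining operators combined with the generic irreducibility of $\pi_{\underline{s}}$ and Corollary \ref{cor shal mult 1}, which forces the generic nonzero values of $\tilde\Lambda_{\underline{s}}$ to be transported to nonzero functionals under composition with $M_w(\underline{s})$.
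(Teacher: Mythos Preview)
Your proof is correct and follows essentially the same route as the paper: build a meromorphic family of Shalika functionals on a reordered induction via Propositions \ref{prop mero shal 1} and \ref{prop mero shal 2}, transport it to $\pi_{\underline{s}}$ through the standard intertwining operator (generically invertible), and specialize at an arbitrary $\underline{s}_0$ by taking the leading term. The only cosmetic difference is that the paper places $\tau$ first rather than last in the auxiliary representation $\pi'_{\underline{s}}$, which is immaterial.
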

\begin{proof}
By Propositions \ref{prop mero shal 1} and \ref{prop mero shal 2}, the representation 
\[\pi'_{\underline{s}}:=\tau \times \d_1[s_1] \times \d_1^\vee[-s_1] \times \dots \times \d_l[s_l] \times \d_l^\vee[-s_l]\] admits a nonzero meromorphic family of 
Shalika functionals on its space. It then follows from the meromorphy and generic invertibility of the standard intertwining operator from $\pi_{\underline{s}}$ to $\pi'_{\underline{s}}$ that so does the representation $\pi_{\underline{s}}$. The result now follows from a usual leading term argument (see Section \ref{sec sg open periods} for more details on such arguments).  
\end{proof}
\begin{rem}
It also follows from the proof above that the order of the representations in the product of the statement of the proposition could be permuted as one likes, but the same conclusion would still hold.
\end{rem}
\begin{rem}
One could also only suppose the representations $\d_i$ to be unitary in the above statement according to Remark \ref{rem unitary}.
\end{rem}

As a corollary, we obtain the following theorem:

\begin{thm}\label{thm class std shal}
A standard module of $G=G_m$ has a Shalika model if and only if it is $H_{m/2,m/2}$-distinguished (see Theorem \ref{thm linear std}).
\end{thm}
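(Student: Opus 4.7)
My plan is to prove the two implications separately, using Theorem \ref{thm linear std}(b) as the bridge between the two notions. The ``only if'' direction is immediate from the theory already developed: if a standard module $\pi$ has a Shalika model, then applying Corollary \ref{cor shal mult 1} with the trivial character $\alpha=\triv_{F^\times}$ yields an injection
\[
\Hom_{S_m}(\pi,\Theta)\hookrightarrow \Hom_{L_{m/2,m/2}}(\pi,\chi_{\triv})=\Hom_{L_{m/2,m/2}}(\pi,\BC),
\]
and so $\pi$ is $H_{m/2,m/2}$-distinguished after transporting $L_{m/2,m/2}$-distinction to $H_{m/2,m/2}$-distinction via the conjugation described in Section~\ref{sec geom lem prelim}.

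For the ``if'' direction, I would start by invoking Theorem \ref{thm linear std}(b) to write
\[
\pi\cong\d_1\times\cdots\times\d_a\times\tau_1\times\cdots\times\tau_b\times\d_a^\vee\times\cdots\times\d_1^\vee,
\]
where $e(\d_1)\geq\cdots\geq e(\d_a)>0$ and each $\tau_j$ is a distinguished discrete series representation on an even-rank group with $e(\tau_j)=0$. (Such a rearrangement uses only permutations among factors sharing a common exponent, which preserves the isomorphism class of the induced representation.) By Corollary \ref{cor sh vs twisted linear}, every such $\tau_j$ carries a Shalika model. Iterating Proposition \ref{prop mero shal 2} then produces a nonzero Shalika functional on the middle block $\tau:=\tau_1\times\cdots\times\tau_b$. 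Finally I would apply the construction of Proposition \ref{prop st shal prelim} to the palindromic form above, taking the $\tau$ of that proposition to be this middle block: although it is stated for irreducible $\tau$, its proof only needs the existence of a nonzero Shalika functional on $\tau$ (to feed into Proposition \ref{prop mero shal 2}) and the meromorphic invertibility of the standard intertwining operator permuting factors into palindromic order, followed by the usual leading-term argument. This yields a nonzero Shalika functional on $\pi$, completing the equivalence.

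The main obstacle I anticipate is verifying cleanly that Proposition \ref{prop st shal prelim} can be applied with $\tau$ not necessarily irreducible: one must check that the meromorphic family of Shalika functionals on $\tau\times\d_1[s_1]\times\d_1^\vee[-s_1]\times\cdots\times\d_l[s_l]\times\d_l^\vee[-s_l]$ supplied by Propositions \ref{prop mero shal 1} and \ref{prop mero shal 2}, then transported through the standard intertwining operator to the palindromic induced representation, specializes to something nonzero at the relevant point, which is precisely the leading-term argument sketched at the end of the proof of Proposition \ref{prop st shal prelim}. A secondary, minor technical point is the rearrangement of factors with equal exponent to bring $\pi$ into the palindromic form, which is a standard property of parabolic induction in the smooth (or Casselman--Wallach) setting.
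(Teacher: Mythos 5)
Your proposal is correct and follows essentially the same route as the paper, which likewise derives the ``only if'' direction from Corollary~\ref{cor shal mult 1} and the ``if'' direction from Theorem~\ref{thm linear std} together with Proposition~\ref{prop st shal prelim}. Regarding the obstacle you flag: your reading that the proof of Proposition~\ref{prop st shal prelim} uses only the existence of a nonzero Shalika functional on $\tau$, and not its irreducibility, is the right one; but the worry is in any case moot here because the middle block $\tau_1\times\cdots\times\tau_b$ is a product of unitary (indeed symplectic, hence selfdual by Corollary~\ref{cor sh vs twisted linear}) discrete series with $e(\tau_j)=0$, and such products are irreducible for inner forms of $\GL_n$, so Proposition~\ref{prop st shal prelim} applies verbatim.
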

\begin{proof}
The right to left implication follows from Theorem \ref{thm linear std} and Proposition \ref{prop st shal prelim}. The other direction follows from Corollary \ref{cor shal mult 1}.
\end{proof}

Finally we end with a word on $\chi$-distinction for standard modules. As observed in \cite[Proposition 3.4]{su24}, the proof of Theorem \ref{thm linear std} holds with no modification in the case of twisted linear periods when one adds a twisting character $\chi$ (the reason being that the so called modulus assumption stated in \cite{MOY} holds in this situation), and we refer to it for the statement of the classification of $\chi$-distinguished standard modules in this case. Here we give similar statements in the case of linear models for 
$H_{m/2,m/2}$ and $H_{(m+1)/2,(m-1)/2}$.

\begin{thm}\label{thm chi linear std}
Let $\pi$, $\d_i$ and $\pi_i$ be as in the statement of Theorem \ref{thm linear std}, and $\alpha$ be a character of $F^\times$. 
\begin{enumerate}
\item\label{u} If $m$ is even and $\pi$ is $H_{m/2,m/2}$-distinguished, then it is $(H_{m/2,m/2},\xi_{\alpha})$-distinguished, and if moreover 
$-\frac{d}{2}\leq \re(\alpha) \leq \frac{d}{2}$ the converse implication holds. 
\item\label{v} If $m$ is odd, $-\frac{d}{2}\leq \re(\alpha)< \frac{d}{2}$ and $\pi$ is $(H_{(m+1)/2,(m-1)/2},\xi_{\alpha})$-distinguished, then there exists an index $i_0$ such that $\d_{i_0}=\alpha\circ \nu$, and $\pi_{i_0}$ is $H=H_{(m-1)/2,(m-1)/2}$-distinguished. 
\end{enumerate}
\end{thm}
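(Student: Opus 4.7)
The forward implication in (\ref{u}), from $H_{m/2,m/2}$-distinction to $\xi_\alpha$-distinction, is essentially immediate from results already in place. By Theorem \ref{thm class std shal} such a $\pi$ admits a Shalika model, and Corollary \ref{cor shal mult 1} applied with the twisting character $\alpha$ produces a nonzero element of $\Hom_{L_{m/2,m/2}}(\pi,\chi_\alpha)$ for every $\alpha$, which transports to $\Hom_{H_{m/2,m/2}}(\pi,\xi_\alpha)$ through the identification $L_{m/2,m/2}\simeq H_{m/2,m/2}$ recalled in Section \ref{sec geom lem prelim}. This direction requires no constraint on $\re(\alpha)$.

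For the remaining content of (\ref{u}) and for (\ref{v}), the plan is to adapt the proof of Theorem \ref{thm linear std} step by step, carrying the twist by $\xi_\alpha$ through each stage. Applying the geometric lemma, $(H_{l,l'},\xi_\alpha)$-distinction of $\pi$ delivers an $s\in\CI_P$ for which $r_{M_s,M}(\pi)$ is $(M_s^{\theta_s},\delta_s\xi_\alpha)$-distinguished, and the second formula of Lemma \ref{Proposition 3.6 MatCRELLE} then says that each diagonal block $\d_{ii}$ is $(H_{m_{ii}^+,m_{ii}^-},\mu_i\xi_\alpha)$-distinguished with the same $\mu_i$ as in the untwisted analysis. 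Observations (a)--(c) of the proof of Theorem \ref{thm linear std} go through unchanged: Theorem \ref{thm p=q} is insensitive to the character and still forces $m_{ii}^+=m_{ii}^-$ whenever $m_{ii}>1$, while Corollary \ref{cor sh vs twisted linear} disposes of the twist in that regime since an $H_{m_{ii}/2,m_{ii}/2}$-distinguished discrete series carries every $\chi_{\alpha'}$-linear period. The only change is in the recurrence \eqref{eq muui} on the one-dimensional blocks, whose boundary values pick up a twist by $\alpha^{\pm 1}$.

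The decisive step is then to redo the trichotomy on the leftmost segment $\D_1$. In Case (I), $m_{11}>1$, the induction on $n$ proceeds exactly as in the untwisted proof. In Case (II), $m_{11}=1$, the boundary value $\mu_{u_1}$ becomes $\mu^{-1/2}\alpha^{\pm 1}$ when $m$ is even, and lies in $\{\alpha^{\pm 1},\mu^{-1}\alpha^{\pm 1}\}$ when $m$ is odd. I would then combine the right-ordering of cuspidal segments with the central-character constraint---which remains trivial in the even case since $\xi_\alpha|_Z=\triv$ there, and equals $\alpha^d$ in the odd case---to rule out all but the expected configurations: in (\ref{u}), the hypothesis $|\re(\alpha)|\leq d/2$ is exactly what makes the contradiction go through when one plays the two endpoints $\mu_{u_1}$ and $\mu_{u_t}$ of the recursion against each other; in (\ref{v}), the strict inequality $\re(\alpha)<d/2$ pins down $r(\D_1)=\alpha\circ\nrd$, which forces $\D_1$ to consist of this single character and isolates the factor $\d_{i_0}=\alpha\circ\nu$ of the statement, the remaining distinction of $\pi_{i_0}$ then following by induction. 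Case (III), $m_{11}=0$, is handled by the analogous segment-comparison with the shifted boundary values.

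The principal obstacle will be the quantitative bookkeeping: tracking precisely how the $\alpha$-twist propagates through the recurrence \eqref{eq muui} and its boundary data, and verifying that the right-ordering and central-character contradictions remain tight throughout the full slab $|\re(\alpha)|\leq d/2$ (respectively with strict inequality in (\ref{v})), so that only the configurations predicted by the statement survive. Once this quantitative step is settled the induction on $n$ transports verbatim from the proof of Theorem \ref{thm linear std}.
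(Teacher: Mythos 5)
The proposal follows the same strategy as the paper's proof: the forward direction of (\ref{u}) is obtained via Theorem \ref{thm class std shal} and Corollary \ref{cor shal mult 1}, and the converse statements are obtained by rerunning the geometric-lemma argument of Theorem \ref{thm linear std} with the characters $\mu_i$ replaced by $\mu_i\xi_\alpha$, retracing Cases (I)--(III) with the shifted boundary data on the $m_{ii}=1$ blocks. In particular the identification of $r(\D_1)=\alpha\circ\nrd$ in part (\ref{v}) and the deduction that $\D_1$ is a singleton segment is exactly what the paper does.

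There is, however, one genuine imprecision that you flag as ``the principal obstacle'' but then mischaracterize. You claim that in part (\ref{u}) the hypothesis $|\re(\alpha)|\leq d/2$ is ``exactly what makes the contradiction go through when one plays the two endpoints $\mu_{u_1}$ and $\mu_{u_t}$ of the recursion against each other.'' That is only true in the open slab $|\re(\alpha)|<d/2$, where the argument in Case (II) for $m$ even proceeds as in the untwisted proof: the boundary value $\nu_\alpha^{\pm1}\nu^{-d/2}$ of $r(\D_1)$ is incompatible with triviality of the central character and the right-ordering of segments. At the boundary $\re(\alpha)=\pm d/2$ no contradiction arises in Case (II); instead a qualitatively different, positive argument is needed. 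The paper observes that at this boundary the central character and right-ordering constraints force all cuspidal segments of $\pi$ to be singletons and all $\mu_{u_i}$ to be unitary, so that only the second alternative of the recurrence \eqref{eq muui} can occur, and one deduces explicitly that
\[
\pi\cong(\nu_{\alpha|\ |^{-d/2}}\times \nu_{\alpha|\ |^{-d/2}}^{-1})^{t/2}\times \prod_{j}(\chi_j\times \chi_j^{-1}),
\]
which visibly satisfies the condition of Theorem \ref{thm linear std}(\ref{b}). Your plan, as written, does not account for the fact that Case (II) survives at the boundary and must be handled by exhibiting this product decomposition rather than by deriving a contradiction. Once this boundary case is isolated and treated separately, the rest of your plan lines up with the paper's proof.
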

\begin{proof}
The first implication of \eqref{u} follows from Theorem \ref{thm class std shal}. The proof of the converse implications of \eqref{u} and \eqref{u} are completely similar to the proofs for trivial $\alpha$ given in Theorem \ref{thm linear std}, and we briefly explain the modifications to be done. 
In equation \eqref{eq::geom-lemma} the character $\mu_i$ must be replaced by $\mu_i\chi_{\alpha}$ with the same definition of $\mu_i$. 
Let us set \[\nu_{\alpha}:=\alpha\circ \nu_{D^\times}.\] 

The proofs of Cases (I) and (III) are the same as when $\alpha=\triv$.

In the proof of Case (II) when $m$ is even, the character $r(\D_1)$ must be replaced by $\nu_{\alpha}^{\pm 1} \mu^{-1/2}=\nu_{\alpha}^{\pm 1}\nu^{-d/2}$ where the sign $\pm 1$ is positive if $m_{1,1}=m_{1,1}^+$ and negative if $m_{1,1}=m_{1,1}^-$. According to the case the argument when 
$\re(\alpha)<d/2$ or $-\re(\alpha)<d/2$ is the same. When $m_{1,1}=m_{1,1}^+$ and $\re(\alpha)=d/2$ or $m_{1,1}=m_{1,1}^-$ and $\re(\alpha)=-d/2$, then the argument is different. In this situation, because the central character is trivial and the standard module is right ordered, all $m_i$'s are equal to $1$, and all $\mu_{u_i}$ must be unitary. The $m_k$'s either come in pairs  $(m_{i,j},m_{j,i})$ for $i<j$, and for the remaining $m_{u_k,u_k}$ from $k=1,\dots,t$, the first case of Equation \eqref{eq muui} can never happen, hence the sign of $\e_k$ changes at each step. We deduce that $\pi$ is of the form 
\[\pi=(\nu_{\alpha|\ |^{-d/2}}\times \nu_{\alpha|\ |^{-d/2}}^{-1})^{t/2}\times \prod_{j=1}^{(m-t)/2}(\chi_j\times \chi_j^{-1})\] for some unitary characters $\chi_j$ of $D^\times$ when $m_{1,1}=m_{1,1}^+$, whereas it is of the form \[\pi=(\nu_{\alpha|\ |^{d/2}}\times \nu_{\alpha|\ |^{d/2}}^{-1})^{t/2}\times \prod_{j=1}^{(m-t)/2}(\chi_j\times \chi_j^{-1})\] when $m_{1,1}=m_{1,1}^-$. 

We now discuss the  proof of Case (II) when $m$ is odd . The character $r(\D_1)$ must be replaced by $\nu_{\alpha}$ when $m_{1,1}=m_{1,1}^+$ and by $\nu_{\alpha^{-1}|\ |^{-d}}$ when $m_{1,1}=m_{1,1}^-$, but this second case cannot happen for right-ordering and central character reasons. When $m_{1,1}=m_{1,1}^+$ and $\re(\alpha)<d/2$ the argument is the same as in the case of trivial $\alpha$. 

\end{proof}

We take the opportunity to extend \cite[Theorems 3.13]{MatCRELLE} in the split case to the case of inner forms. When $m$ is even we obtain the same statement as in the case \eqref{u} of standard modules in Theorem \ref{thm chi linear std}, as a special case of it. When $m$ is odd we obtain the following statement.
 
\begin{thm}\label{thm chi linear gen}
Let $\pi$, $\d_i$ and $\pi_i$ be as in in the statement of Theorem \ref{thm linear std}, and $\alpha$ be a character of $F^\times$ with $-\frac{d}{2}\leq \re(\alpha)< \frac{d}{2}$. Suppose moreover that $\pi$ is generic and that $m$ is odd. Then $\pi$ is $(H_{(m+1)/2,(m-1)/2},\chi_{\alpha})$-distinguished if and only if there exists a generic $H_{(m-1)/2,(m-1)/2}$-distinguished generic representation $\pi'$ of $G_{m-1}$ such thate $\pi=\pi'\times \nu_{\alpha}$.
\end{thm}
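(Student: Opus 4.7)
The plan is twofold: deduce the forward direction from Theorem \ref{thm chi linear std}, and establish the reverse direction by an orbit intertwining period, analogous to the use of \cite[Proposition 3.8]{MatCRELLE} in the proof of Theorem \ref{thm linear std}(\ref{c}).

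Since $\pi$ is generic, the induced product $\delta_1 \times \cdots \times \delta_r$ is irreducible, so $\pi$ coincides with its own standard module. For the forward direction, Theorem \ref{thm chi linear std}(\ref{v}) applies directly via the identification between $\chi_\alpha$ on $L_{(m+1)/2,(m-1)/2}$ and $\xi_\alpha$ on $H_{(m+1)/2,(m-1)/2}$, and yields an index $i_0$ with $\delta_{i_0} = \nu_\alpha$ such that $\pi_{i_0}$ is $H_{(m-1)/2,(m-1)/2}$-distinguished. Set $\pi' := \pi_{i_0}$. Genericity of $\pi'$ follows from the irreducibility of $\pi$: the Zelevinsky-type unlinkedness criterion governing irreducibility of a product of discrete series on $G$ (via \cite{Bjlu}) is inherited by any subproduct, so $\pi'$ is itself an irreducible product of discrete series, hence generic.

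For the reverse direction, suppose $\pi = \pi' \times \nu_\alpha$ with $\pi'$ generic and $H_{(m-1)/2,(m-1)/2}$-distinguished. Let $P = MU$ be the standard maximal parabolic of $G_m(D)$ with Levi $M \simeq G_{m-1}(D) \times D^\times$, so $\pi = \Ind_P^G(\pi' \boxtimes \nu_\alpha)$. Passing to $L := L_{(m+1)/2,(m-1)/2}$ via the natural isomorphism with $H_{(m+1)/2,(m-1)/2}$, one analyzes the orbits of $L$ on $P \backslash G$ as in \cite[Section 3]{MatCRELLE}. On a suitable distinguished orbit, whose $L$-stabilizer in $M$ is (modulo the unipotent contribution) the product $L_{(m-1)/2,(m-1)/2} \times D^\times$, the restriction of $\chi_\alpha$ is trivial on the first factor and equals $\nu_\alpha$ on the second. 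The $H_{(m-1)/2,(m-1)/2}$-invariant functional on $\pi'$ supplied by hypothesis therefore yields a functional on $\pi' \boxtimes \nu_\alpha$ equivariant against this character of the stabilizer, and integration over the orbit produces a nonzero element of $\Hom_L(\pi, \chi_\alpha)$. This is the twisted inner-form analogue of \cite[Proposition 3.8]{MatCRELLE}.

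The main obstacle is making the orbit integral rigorous and nonzero. The range $-d/2 \leq \re(\alpha) < d/2$ is precisely the one in which the modulus character conditions of the geometric lemma as applied in \cite{MOY} are satisfied on this orbit. When the integral does not converge absolutely, one instead realizes the period as a meromorphic family in a holomorphic deformation of the inducing parameters and extracts a nonzero specialization via a leading-term argument, exactly as in the proofs of Proposition \ref{prop st shal prelim} and of Theorem \ref{thm linear std}(\ref{c}); irreducibility of $\pi$, together with the multiplicity bound of Theorem \ref{thm mult 1}, guarantees that the resulting specialization lies in $\Hom_L(\pi,\chi_\alpha)$ and is nonzero.
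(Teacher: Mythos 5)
Your forward direction is correct and matches the paper: apply Theorem~\ref{thm chi linear std}(\ref{v}) directly, with the observation that a subproduct of an irreducible product of discrete series is itself irreducible and hence generic.

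The reverse direction has a genuine gap, and it is exactly the point the paper's proof is careful about. You write that the $H_{(m-1)/2,(m-1)/2}$-invariant functional on $\pi'$ supplied by hypothesis ``therefore yields a functional on $\pi'\boxtimes\nu_\alpha$ equivariant against this character of the stabilizer.'' This is not true as stated: when one descends the orbit integral to the Levi $G_{m-1}\times G_1$, the invariant functional on the inducing data must be equivariant against $\delta_s\cdot\chi_\alpha$ restricted to the stabilizer, where $\delta_s=\delta_{P_s^{\theta_s}}\delta_{P_s}^{-1/2}$ is the modulus character computed in Lemma~\ref{Proposition 3.6 MatCRELLE}. Restricted to the $G_{m-1}$-factor, this character is not trivial; it is $\mu^{1/2}\chi_\alpha$ (with $\mu=\nu^d$), so the hypothesis of trivial distinction of $\pi'$ is not the right equivariance condition for the orbit integral. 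The missing step — and the one that does all the work in the paper's two-line proof — is to invoke Theorem~\ref{thm chi linear std}(\ref{u}) for the standard module of the even group $G_{m-1}$: since $\pi'$ is $H_{(m-1)/2,(m-1)/2}$-distinguished, it is automatically $(H_{(m-1)/2,(m-1)/2},\xi_\beta)$-distinguished for \emph{every} character $\xi_\beta$, in particular for $\mu^{1/2}\chi_\alpha$. Only then does \cite[Proposition 3.8]{MatCRELLE} (the convergent closed orbit intertwining period, valid for inner forms) produce the desired $(H_{(m+1)/2,(m-1)/2},\chi_\alpha)$-invariant functional. Your closing remarks about meromorphic deformation and leading terms are unnecessary here — the orbit that carries the period is the closed one, and the integral converges directly once the correct equivariance of $\pi'$ is in hand — but the central issue is the unaccounted modulus twist.
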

\begin{proof}
By Theorem \ref{thm chi linear std}, if $\pi'$ is $H_{(m-1)/2,(m-1)/2}$-distinguished, it is also $\mu^{1/2}\chi_{\alpha}$-distinguished thanks to Theorem \ref{thm chi linear std}, hence $\pi=\pi'\times \nu_{\alpha}$ is $\chi_{\alpha}$-distinguished by 
\cite[Proposition 3.8]{MatCRELLE}. The converse implication follows from Theorem \ref{thm chi linear std}.
\end{proof}
\begin{rem}
When $G$ is split, one can allow $\re(\alpha)=\frac{1}{2}$ in the statement of Theorem \ref{thm chi linear gen} above because $\pi$ is $\chi$-distinguished if and only if $\pi^\vee$ is $\chi^{-1}$-distinguished by the realization of the contragredient of an irreducible representation using transpose inverse. 
\end{rem}

Together with Proposition \ref{prop symp disc} we obtain:

\begin{cor}\label{cor symp gen}
Suppose that $\pi$ is an irreducible representation of $G$ with either a twisted linear period, or a linear period with respect to 
$H_{l,l,}$ for $|l-l'|\leq 1$. Then it is selfdual. If moreover $\pi$ is generic, and $\pi$ has a twisted linear period or a linear period with respect to $H_{m,m}$, then it is symplectic. 
\end{cor}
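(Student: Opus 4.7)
The plan is to reduce both claims to the essentially square-integrable support of $\pi$ by combining the Langlands classification with Theorem~\ref{thm linear std} and Proposition~\ref{prop symp disc}.

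To prove selfduality, I would write $\pi$ as the Langlands quotient of its unique standard module $\sigma=\delta_1\times\cdots\times\delta_r$. Any nonzero $H$-invariant (or twisted $H$-invariant) linear form on $\pi$ pulls back to $\sigma$ along the projection $\sigma\twoheadrightarrow\pi$, so $\sigma$ is itself distinguished. In cases \eqref{eq suz} and \eqref{b} of Theorem~\ref{thm linear std}, this furnishes an involution $\tau\in S_r$ with $\delta_{\tau(i)}\simeq\delta_i^\vee$ for all $i$; each fixed $\delta_i$ is a distinguished discrete series representation, hence selfdual by Proposition~\ref{prop symp disc}. The case $|l-l'|=1$ (with $m$ odd) falls under \eqref{c} of Theorem~\ref{thm linear std}, which singles out an index $i_0$ with $\delta_{i_0}=\triv_{D^\times}$ and asserts that the ordered standard module $\pi_{i_0}$ of $G_{m-1}$ is $H_{(m-1)/2,(m-1)/2}$-distinguished; applying case \eqref{b} to $\pi_{i_0}$ and adjoining the selfdual character $\triv_{D^\times}$ again yields the stability of the multiset $\{\delta_1,\ldots,\delta_r\}$ under contragredient. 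In every situation $\sigma\simeq\sigma^\vee$ as standard modules, and since the contragredient commutes with the Langlands quotient functor, I conclude that $\pi\simeq\pi^\vee$.

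For the symplectic claim, the genericity hypothesis on $\pi$ implies that $\pi=\delta_1\times\cdots\times\delta_r$ coincides with its own standard module, so under Jacquet-Langlands one has
\[\phi_\pi\simeq\bigoplus_{i=1}^r\phi_{\delta_i}.\]
Applying part \eqref{eq suz} or part \eqref{b} of Theorem~\ref{thm linear std}, I obtain an involution $\tau$ on the indices such that $\delta_{\tau(i)}\simeq\delta_i^\vee$, with every fixed $\delta_i$ distinguished. For each pair $\{i,\tau(i)\}$ with $\tau(i)\neq i$, the contribution $\phi_{\delta_i}\oplus\phi_{\delta_i}^\vee$ carries the canonical hyperbolic alternating form on $V\oplus V^\ast$, hence is symplectic. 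For each fixed index $\tau(i)=i$, Proposition~\ref{prop symp disc} asserts that $\phi_{\delta_i}$ is symplectic. Orthogonal summation preserves the symplectic condition, so $\phi_\pi$ preserves a nondegenerate alternating form.

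Beyond the standard invocation of the Langlands classification, the only mildly subtle point is the reduction of case \eqref{c} to case \eqref{b} inside the selfduality argument; this works cleanly because removing one factor from a right-ordered standard module leaves another right-ordered standard module of smaller rank, to which Theorem~\ref{thm linear std} applies in the opposite parity. Everything else is bookkeeping on top of Theorem~\ref{thm linear std} and Proposition~\ref{prop symp disc}, and I do not anticipate any serious obstruction.
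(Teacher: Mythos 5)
Your argument is correct and follows the same route as the paper: pull the invariant form back to the (unique) standard module, invoke the classification of distinguished standard modules (Theorem~\ref{thm linear std}) to obtain the duality involution $\tau$ on the discrete series factors, use Proposition~\ref{prop symp disc} for the $\tau$-fixed (and, in case \eqref{c}, the trivial character) factors, and descend via the Langlands quotient theorem; in the generic case $\pi$ is itself its standard module, so $\phi_\pi=\bigoplus_i\phi_{\delta_i}$ and the hyperbolic-plus-symplectic decomposition gives the alternating form. The paper's proof compresses all of this into a single sentence, but the content is identical.
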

\begin{proof}
Only the first statement requires a proof, but it is an immediate consequence of Theorem \ref{thm linear std} and Proposition \ref{prop symp disc}, together with the Langlands quotient theorem.
\end{proof}

\section{Sign and parabolic induction}\label{sec sign and induction}

By convention $G_0$ is the trivial group, and the sign of any representation of $G_0$ is equal to $1$. We study the sign of parabolically induced representations, using closed and open intertwining periods. This natural strategy is not new as it was already used in \cite[Lemma 3.5]{LM} in the case of linear periods for split $G$, as was pointed to us by Lapid. 

\subsection{Sign and irreducible parabolic induction}

\begin{prop}\label{prop closed sign}
If $\pi_1$, $\pi_2$ are distinguished representations of $G_{m_1}$ and $G_{m_2}$ for positive integers $m_i$, and $\pi_1\times \pi_2$ is irreducible, then 
\[\sg(\pi_1\times \pi_2)=\sg(\pi_1)\sg(\pi_2).\]
\end{prop}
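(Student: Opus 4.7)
The plan is to construct an explicit closed intertwining period on $\pi_1\times\pi_2$ from the $H_i$-invariant forms on the $\pi_i$, and to track how a well-chosen representative of the nontrivial class in $N_G(H)/H$ acts on it. Property (y) of Section \ref{sec LJL} gives $\epsilon(\pi_1\times\pi_2)=\epsilon(\pi_1)\epsilon(\pi_2)$ at once, so the problem reduces to proving $\chi_{\pi_1\times\pi_2}(\overline{u})=\chi_{\pi_1}(\overline{u_1})\chi_{\pi_2}(\overline{u_2})$ for a compatible choice of representatives.

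First I would fix representatives in their conjugacy classes so that $\delta$ lies in the standard Levi $M=G_{m_1}(D)\times G_{m_2}(D)$ of the parabolic $P=P_{(m_1,m_2)}$ and decomposes as $\delta=\diag(\delta_1,\delta_2)$ with $\delta_i$ giving rise to $H_i\subseteq G_{m_i}(D)$ in the same symmetric pair type as $(G,H)$. Inspecting the description of $N$ in Section \ref{sec normalizer} case by case (linear, twisted linear of odd or even type), one checks that a representative $u$ of the nontrivial class in $N/H$ can then be taken inside $M$ of the form $u=\diag(u_1,u_2)$, with $u_i$ a representative of the nontrivial class in $N_{G_{m_i}(D)}(H_i)/H_i$; for instance in the twisted linear even case this is the identity $\iota\cdot I_m=\diag(\iota\cdot I_{m_1},\iota\cdot I_{m_2})$.

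Next, I would introduce the closed intertwining period
\begin{equation*}
\Lambda(f)=\int_{(P\cap H)\backslash H} (L_1\otimes L_2)\bigl(f(h)\bigr)\,dh,
\end{equation*}
for $f$ a smooth section of the normalized induced space of $\pi_1\times\pi_2$, with suitable modular normalization so that the integrand is well defined on the quotient $(P\cap H)\backslash H$, using that $P\cap H$ has Levi $H_1\times H_2$ and unipotent radical $U\cap H$. Using that $u\in M\cap N$ normalizes both $P$ and $P\cap H$, the decomposition $hu^{-1}=u^{-1}(uhu^{-1})$ with $u^{-1}\in P$ and $uhu^{-1}\in H$, and the change of variable $h\mapsto u^{-1}hu$ on $(P\cap H)\backslash H$, together with the identity $L_i\circ\pi_i(u_i)^{-1}=\chi_{\pi_i}(\overline{u_i})L_i$, yield the equivariance
\begin{equation*}
\Lambda\circ\pi(u)^{-1}=\chi_{\pi_1}(\overline{u_1})\,\chi_{\pi_2}(\overline{u_2})\,\Lambda.
\end{equation*}

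The main obstacle is the absolute convergence and nonvanishing of $\Lambda$, since the $\pi_i$ need not be unitary and the closed orbit is generally not compact. The remedy, well known in this circle of ideas, is to place the period in a holomorphic family by replacing $(\pi_1,\pi_2)$ by $(\pi_1[s],\pi_2[-s])$, establish convergence for $\re(s)$ large, and continue meromorphically as in the proof of Proposition \ref{prop st shal prelim}; irreducibility of $\pi_1\times\pi_2$ together with the multiplicity one statement in Theorem \ref{thm mult 1} forces the resulting form to be holomorphic and nonvanishing at $s=0$, since otherwise the leading term of a nearby standard intertwining operator would produce a second independent invariant form at a point where $\pi_1\times\pi_2$ is irreducible and distinguished. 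Combining the equivariance of $\Lambda$ with $\epsilon(\pi_1\times\pi_2)=\epsilon(\pi_1)\epsilon(\pi_2)$ then gives $\sg(\pi_1\times\pi_2)=\sg(\pi_1)\sg(\pi_2)$.
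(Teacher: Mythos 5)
Your proposal follows the paper's proof almost verbatim in its essential steps: take $\delta=\diag(\delta_1,\delta_2)\in M$ so that $u=\diag(u_1,u_2)\in M$ represents the nontrivial class of $N/H$, write the $H$-invariant form on $\pi_1\times\pi_2$ as the closed intertwining period $\Lambda(f)=\int_{(P\cap H)\backslash H}(L_1\otimes L_2)(f(h))\,dh$, perform the change of variable via $u$ to get $\Lambda\circ\pi(u)^{-1}=\chi_{\pi_1}(\overline{u_1})\chi_{\pi_2}(\overline{u_2})\Lambda$, and combine with $\epsilon(\pi_1\times\pi_2)=\epsilon(\pi_1)\epsilon(\pi_2)$. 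The only genuine departure is the final paragraph, and there your concern is misplaced. Because $\delta$ lies in the Levi $M$, the parabolic $P=P_{(m_1,m_2)}$ is $\theta$-stable; this is precisely the situation of the \emph{closed} orbit, and it implies that $P\cap H$ is a parabolic subgroup of $H$, so that $(P\cap H)\backslash H$ is compact. The integral $\Lambda(f)$ therefore converges absolutely for every smooth section $f$, with no unitarity assumption on the $\pi_i$, and nonvanishing is immediate by choosing $f$ supported in a small neighborhood of the identity coset in $P\backslash G$ with $(L_1\otimes L_2)(f(e))\neq 0$. The deformation $(\pi_1[s],\pi_2[-s])$ and meromorphic continuation you invoke belongs to the theory of \emph{open} intertwining periods (cf.\ the proof of Proposition~\ref{prop open sign}); for the closed orbit it is an unnecessary detour, and the holomorphy/nonvanishing argument you sketch for it (via a nearby intertwining operator and multiplicity one) is not really justified as stated, though it is moot here. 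One small point the paper makes explicit and you use tacitly: $\delta_{P_{(m_1,m_2)}}(u)=1$ for $u=\diag(u_1,u_2)$, which is what allows the normalized section $f$ to pass through the change of variable without producing an extra modulus factor; this does hold in all three cases (linear, twisted odd, twisted even) but deserves the one-line verification.
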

\begin{proof}
We set $m=m_1+m_2$, and let $L_i$ a nonzero $H_i$-invariant linear form on $\pi_i$, where $H_i$ is the relevant subgroup of $G_i$, 
i.e. either $H_{m_i/2,m_i/2}$ if we consider linear periods, or the centralizer of $\d$ in $G_{m_i}$ with square in the non-square elements of $G_{m_i}$ in the case of twisted linear periods. The $H$-invariant linear form on $\pi_1\times \pi_2$ is given by the closed intertwining period 
\[f\mapsto \int_{P_{m_1,m_2}\cap H\backslash H} L_1\otimes L_2(f(h)) dh.\] In both the non twisted and twisted case, we set $u_i$ as in Section \ref{sec normalizer} such that the normalizer $N_i$ of $H_i$ is equal to $H_i\sqcup u_i H_i$. In particular $u:=\diag(u_1,u_2)$ is such that $N=H\sqcup u H$, and we observe that $\d_{P_{m_1,m_2}}(u)=1$. The result now follows from the change of variable 
\[\int_{P_{m_1,m_2}\cap H\backslash H} L_1\otimes L_2(f(hu^{-1})) dh=\int_{P_{m_1,m_2}\cap H\backslash H} L_1\otimes L_2(f(u^{-1}h)) dh\]
\[=\int_{P_{m_1,m_2}\cap H\backslash H} L_1\otimes L_2(\pi_1(u_1^{-1})\otimes \pi_2(u_2^{-1})f(h)) dh\]
\[=\chi_{\pi_1}(\overline{u_1})\chi_{\pi_2}(\overline{u_2})\int_{P_{m_1,m_2}\cap H\backslash H} L_1\otimes L_2(f(h)) dh,\] and the identity 
\[\e(\pi_1\times\pi_2)=\e(\pi_1)\e(\pi_2).\]  
\end{proof}

\subsection{Sign and open intertwining periods}\label{sec sg open periods}

Before stating the next results, we need to extend somehow the definition of the character $\chi_{\pi}$. So far we have only considered irreducible representations for which multiplicity one of $H$-invariant linear form is guaranteed. In particular it made sense to talk about $\chi_{\pi}$ rather than $\chi_{L}$ for $L$ the (possibly twisted) linear period on it. Here, though we are only interested in irreducible representations, we need to consider some which are not, though of finite length. On such a finite length representation $\pi$ of $G$ one might loose multiplicity one, so we prefer to talk about the character of $\frac{N}{H}$ attached to an $H$-invariant linear form on $\pi$ when it exists. Hence if $L\in \Hom_H(\pi,\BC)-\{0\}$ is such that there exists $z$ in $\BC^\times$ satisfying $L\circ \pi(u^{-1})=z.L$, we define $\chi_L$ to be the character of $\frac{N}{H}$ such that 
\[\chi_L(\overline{u}^{-1})=z.\] In particular $\chi_L=\chi_{\pi}$ if $\pi$ is irreducible. 

Now we recall the Blanc-Delorme theory of 
open intertwining periods (see \cite{BD}). For $\pi$ a finite length representation of $G$ and $s\in \BC$, we recall our notation \[\pi[s]:=\mu^s\pi.\] Let $\pi_0,\pi_1,\dots,\pi_r$ be finite length representations of $G_{m_0},G_{m_1},\dots,G_{m_r}$ respectively and suppose that $\pi_0$ possesses an $H$-invariant linear form $\ell_0$. Then the representation 
\[\pi_{\underline{s}}:=\pi_r[s_r]\times \dots \times \pi_1[s_1]\times \pi_0\times  \pi_1^\vee [-s_1]\times \dots \times  \pi_r^\vee [-s_r],\] is distinguished for any $s_i\in \BC$ and there is an explicit enough description of it which we now recall. 

We first make specific choices of the element $\delta$, hence of the element $u$ and the group $H$, well adapted to describe  situation in a simple manner, and which we will also use in the next section:

\begin{enumerate}[1)]
\item In the case of linear periods we take 
\[\delta:=\begin{pmatrix} &  & & & & &        & I_{m_r} \\
                         &  & & & & & \iddots &  \\ 
												&  & & & & I_{m_1} &  &  \\ 
									        &  & & I_{m_0/2} &  &     &   &  \\ 
													  &  & &   & -I_{m_0/2} &     &   &  \\ 
												    & & I_{m_1} & & &   &  & \\ 
												    & \iddots& & & &   &  &  \\ 
												  I_{m_r} & & & & &   &  &  \\ 
																							\end{pmatrix} \] and
																							
		\[u=\diag(I_{m_r},\dots,I_{m_1},I_{m_0/2},-I_{m_0/2},-I_{m_1},\dots,-I_{m_r}). \]
		\item 
\begin{enumerate} 
\item In the case of odd twisted linear periods we take 
\[\delta:=\begin{pmatrix}   & & & & &        & \k I_{m_r} \\
                          & & & & & \iddots &  \\ 
												  & & & & \k I_{m_1} &  &  \\ 
									          & & &\delta_{D^\times}  I_{m_0} &     &   &  \\
												    & & I_{m_1} & & &   &   \\ 
												    & \iddots& & & &   &    \\ 
												  I_{m_r} & & & & &   &    \\ 
																							\end{pmatrix} \] and
																							
		\[u=\diag(I_{m_r},\dots, I_{m_1},\iota I_{m_0},-I_{m_1},\dots,-I_{m_r}). \]
\item In the case of even twisted linear periods we take 
\[\delta:=\begin{pmatrix} &  & & & & &        & \k I_{m_r} \\
                         &  & & & & & \iddots &  \\ 
												&  & & & & \k I_{m_1} &  &  \\ 
									        &  & & & \k I_{m_0/2} &     &   &  \\ 
													  &  & &  I_{m_0/2} & &     &   &  \\ 
												    & & I_{m_1} & & &   &  & \\ 
												    & \iddots& & & &   &  &  \\ 
												  I_{m_r} & & & & &   &  &  \\ 
																							\end{pmatrix} \] and
																							
		\[u=\diag(I_{m_r},\dots,I_{m_1},I_{m_0/2},-I_{m_0/2},-I_{m_1},\dots,-I_{m_r}). \]
\end{enumerate}
\end{enumerate}

Now put \[G:=G_{m_0+2\sum_{i=1}^r m_i},\] 
\[P:=P_{(m_r,\dots, m_1,m_0,m_1,\dots,m_r)}\]  
\[M:=M_{(m_r,\dots, m_1,m_0,m_1,\dots,m_r)},\] and observe that with the choices above, the parabolic subgroup $P$ 
is $\theta$-split, in particular $P^\theta=M^\theta$. 

We denote by $\ell$ the linear form on \[\pi_r \otimes \dots \otimes \pi_1 \otimes \pi_0\otimes  \pi_1^\vee  \otimes \dots \otimes  \pi_r^\vee  \] defined by 
\[\ell(v_r\otimes \dots \otimes v_1 \otimes  v_0 \otimes v_1^\vee \otimes \dots\otimes v_r^\vee )
=\ell_0(v_0)\prod_{i=1}^r v_i^\vee(v_i).\] 
Then if $f_{\underline{s}}$ is a holomorphic section of $\pi_{\underline{s}}$, the integral 
\[I_{{\underline{s}}}(f_{\underline{s}}):=\int_{P^{\theta}\backslash H} \ell(f_{\underline{s}}(h))dh\] converges for $s_{i+1}-s_i$ of real part 
large enough, and extends meromorphically. For a generic choice of nonzero vector $\underline{a}=(a_1,\dots,a_r)\in \BC^r$ and an integer $l$ such that for any $f:=f_0\in \pi:=\pi_0$, the limit 
\[L_{\underline{a}}(f):=\lim_{s\rightarrow 0} s^l I_{s\underline{a}}(f_{s\underline{a}})\] is well defined and nonzero for some $f$. In short we will say that 
$L_{\underline{a}}$ is a leading term of $I_{\underline{s}}$ at $s=0$.

\begin{prop}\label{prop open sign}
Let $\pi_0$ be a finite length representation of $G_{m_0}$, where $m_0$ is a non negative integer, which admits a nonzero $H$-invariant linear form $\ell_0$ which admits a sign $\chi_{\ell_0}$. Let $\pi_i$ be a finite length representation of $G_{m_i}$, for $m_i\geq 1$, with a central character for $i=1,\dots,r$. Then the open intertwining period $L$ on 
\[\pi:=\pi_r \times \dots \times \pi_1 \times \pi_0\times  \pi_1^\vee  \times \dots \times  \pi_r^\vee \] associated to $\ell_0$ as above has a sign character $\chi_L$, moreover  
\[\frac{\chi_L}{\prod_{i=1}^r\omega_{\pi_i}(-1)}=\chi_{\ell_0}.\] In particular when $\pi$ is irreducible:
\[\sg(\pi)=\sg(\pi_0).\] 
\end{prop}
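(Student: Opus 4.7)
The plan is to compute directly the action of $u^{-1}$ on the meromorphic family of integrals $I_{\underline{s}}$ defining the open intertwining period, and pass to the leading term at $\underline{s}=0$. Working in the region where $\Re(s_{i+1}-s_i)$ is large enough for absolute convergence, I would rewrite
\[
I_{\underline{s}}(\pi_{\underline{s}}(u^{-1})f_{\underline{s}}) = \int_{P^\theta \backslash H} \ell(f_{\underline{s}}(hu^{-1})) \, dh
\]
via the substitution $h\mapsto uhu^{-1}$. Since $u\in M\subset P$ belongs to the normalizer $N$ of $H$, it normalizes both $P$ and $H$, hence also $P^\theta = P\cap H$, so the change of variable is well-defined on $P^\theta\backslash H$; its Jacobian is $1$ because $\Ad(u)$ has eigenvalues $\pm 1$, being a signed identity on each block. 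Writing $hu^{-1}=u^{-1}(uhu^{-1})$ and applying the normalized induced representation formula together with $\delta_P(u)=1$ (each diagonal block of $u$ has $|\nrd|=1$), one obtains
\[
I_{\underline{s}}(\pi_{\underline{s}}(u^{-1})f_{\underline{s}}) = \int_{P^\theta \backslash H} \ell(\sigma_{\underline{s}}(u^{-1}) f_{\underline{s}}(h)) \, dh,
\]
where $\sigma_{\underline{s}}$ denotes the inducing representation.

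The second step is to unpack $\sigma_{\underline{s}}(u^{-1})$ and its composition with $\ell$. By the explicit form of $u$, its block action is trivial on the first $r$ factors, equals $u_0^{-1}$ on $\pi_0$, and equals $-I$ on each $\pi_i^\vee$. For a decomposable vector $v=v_r\otimes \cdots\otimes v_1\otimes v_0\otimes v_1^\vee\otimes \cdots \otimes v_r^\vee$, using $\omega_{\pi_i^\vee}(-1)^{-1}=\omega_{\pi_i}(-1)\in\{\pm 1\}$, this yields
\[
\ell(\sigma_{\underline{s}}(u^{-1})v) = \chi_{\ell_0}(\bar u^{-1})\prod_{i=1}^r \omega_{\pi_i}(-1)\cdot \ell(v).
\]
The resulting scalar factor is independent of $\underline{s}$, so the relation $I_{\underline{s}}(\pi_{\underline{s}}(u^{-1})f_{\underline{s}}) = \chi_{\ell_0}(\bar u^{-1})\prod_i \omega_{\pi_i}(-1)\cdot I_{\underline{s}}(f_{\underline{s}})$ carries over to the leading term, establishing both the existence of the sign character $\chi_L$ and the identity $\chi_L=\chi_{\ell_0}\prod_i \omega_{\pi_i}(-1)$.

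For the final assertion, when $\pi$ is irreducible multiplicity one in $\Hom_H(\pi,\BC)$ forces $\chi_\pi=\chi_L$. Iterating formula (y) from the preliminaries gives $\epsilon(\pi)=\epsilon(\pi_0)\prod_i\epsilon(\pi_i)\epsilon(\pi_i^\vee)$, and formula (x) rewrites each $\epsilon(\pi_i)\epsilon(\pi_i^\vee)$ as $\omega_{\pi_i}(-1)$, hence $\epsilon(\pi)=\epsilon(\pi_0)\prod_i\omega_{\pi_i}(-1)$. Combining with $\chi_\pi(\bar u)=\chi_{\ell_0}(\bar u)\prod_i \omega_{\pi_i}(-1)$ and using $\omega_{\pi_i}(-1)^2=1$ yields $\sg(\pi)=\chi_{\ell_0}(\bar u)\epsilon(\pi_0)=\sg(\pi_0)$. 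The only technical points to verify carefully are the triviality of $\delta_P(u)$ and of the Jacobian, both immediate from the explicit form of $u$; the rest is bookkeeping of signs.
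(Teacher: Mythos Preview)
Your proof is correct and follows essentially the same approach as the paper's: a change of variable $h\mapsto uhu^{-1}$ in the convergent integral, the observation that $u\in M$ with $\delta_P(u)=1$, the computation of $\ell\circ\sigma_{\underline{s}}(u^{-1})$ block by block, and passage to the leading term. The only cosmetic difference is that the paper first reduces by induction to the case $r=1$ before performing the same computation, whereas you handle all $r$ factors at once; your justification that $\Ad(u)$ is an involution (hence the Jacobian and $\delta_P(u)$ are trivial) is the key point in both.
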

\begin{proof}

By induction, up to the replacements \[\pi_0:=\pi_{r-1} \times \dots \times \pi_1 \times \pi_0\times \pi_1^\vee \times \dots \times  \pi_{r-1}^\vee \] and 
\[\ell_0(v_0):=\ell_0(v_0)\prod_{i=1}^{r-1} v_i^\vee(v_i),\] we are reduced to the $r=1$ case, which we assume for the rest of this proof.

Let $f_s$ be a holomorphic section of \[\pi_s:=\pi_1[s]\times \pi_0\times  \pi_1^\vee [-s].\]
The $H$-invariant linear form in this case is given by the leading term at $s=0$ of the following open intertwining period: 
\[I_s(f_s)=\int_{P^{\theta}\backslash H} \ell(f_s(h))dh.\] Now we observe that 
\[\ell(\pi_1(I_{m_1})v\otimes\pi_1^\vee(\theta(-I_{m_1}))v^\vee)
=\omega_{\pi_1}(-1)\ell(v \otimes v^\vee).\] 
Hence by  a change of variable in the integral defining $I_s$ for $\re(s)$ large enough we obtain: 
\[\chi_{I_s}=\omega_{\pi_1}(-1)\chi_{\ell_0}.\] 
The first part of statement then follows by a meromorphic continuity argument. For the second part, if 
$\pi$ is irreducible then so is $\pi_1$. Hence \[\e(\pi_1\times \pi_1^{\vee})=\omega_{\pi_1}(-1).\]
\end{proof}

\subsection{Sign of discrete series representations}

In this section it turns out that for linear periods, the Steinberg representation 
$\St_m(\triv_{D^\times})$, where $m$ has to be even, exhibits an exceptional behaviour. First its root number is not given by the same formula as the other generalized Steinberg representations with a linear period, and moreover its 
$H$-invariant linear form does not come from the open intertwining period on the induced representation lying above it. Hence 
we first prove our general result for discrete series representations in the special case of linear periods for the Steinberg representation.

\begin{prop}\label{thm st}
Let $m$ be an even integer. Then the Steinberg representation $\pi:=\St_m(\triv_{D^\times})$ has no twisted linear period. On the other hand it has a linear period, and moreover its sign for linear period is 
\[\sg(\pi)=1.\]
\end{prop}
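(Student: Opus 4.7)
The plan is to prove the three assertions in turn, with part (c) as the main difficulty.

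For part (a), I would invoke the Prasad--Takloo-Bighash conjecture in the twisted case, fully established via \cite{X,SuzJNT,SX,Ch}: a symplectic discrete series $\pi$ of $\GL_m(D)$ is twisted-linear distinguished only when a precise compatibility between $\e(\pi)$, $\omega_\pi$, and the quadratic extension $E/F$ defining the subgroup holds. For $\pi=\St_m(\triv_{D^\times})$, the parameter $\phi_\pi=\Sym^{md-1}$ is symplectic (since $md$ is even) and $\omega_\pi=\triv$, but the exceptional formula \eqref{eq st1} from Section \ref{sec LJL} gives
\[\e(\pi,\psi)=(-1)^{md-1}\e(\triv_{F^\times},\psi)^{md},\]
which is incompatible with the distinction criterion and rules out twisted linear periods.

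For part (b), I would apply Theorem \ref{thm linear std}\eqref{b} to the standard module
\[\pi_m(\triv_{D^\times})=\nu^{(1-m)/2}\triv_{D^\times}\times\cdots\times\nu^{(m-1)/2}\triv_{D^\times},\]
using the fixed-point-free involution $\tau(i)=m+1-i$, valid because $m$ is even. Hence $\pi_m(\triv_{D^\times})$ is $H_{m/2,m/2}$-distinguished, and Theorem \ref{thm class std shal} yields a Shalika model on it. I would then argue, using the uniqueness in Corollary \ref{cor shal mult 1} combined with the Bernstein--Zelevinsky analysis of constituents from Appendix \ref{app BZ der}, that this Shalika functional necessarily factors through the Langlands quotient $\St_m(\triv_{D^\times})$. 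Corollary \ref{cor sh vs twisted linear} then supplies the linear period.

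Part (c) is the main obstacle: as flagged in the preamble to the theorem, the $H$-invariant form on $\St_m(\triv_{D^\times})$ does not descend from an open intertwining period on $\pi_m(\triv_{D^\times})$, so Proposition \ref{prop open sign} is inapplicable. My strategy is to realize the linear period through the Friedberg--Jacquet integral attached to the Shalika functional $L$,
\[L_\triv(v)=\lim_{s\to 1/2}\frac{\Psi_\triv(s,S_{v,L})}{L(s,\pi)},\]
and to compute the action of the normalizer element $u=\left(\begin{smallmatrix} 0 & I_{m/2}\\ -I_{m/2} & 0\end{smallmatrix}\right)$ by means of the Bruhat decomposition
\[\diag(g,I_{m/2})\cdot u=s(-I_{m/2},0)\cdot w\cdot\diag(I_{m/2},-g),\]
where $w$ is the swap Weyl element and $s(-I_{m/2},0)\in S_m$. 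The Shalika equivariance of $L$ absorbs the $s(-I_{m/2},0)$ factor, and the remaining integral transform, which amounts to a local Friedberg--Jacquet-type functional equation at $s=1/2$ for $\St_m(\triv_{D^\times})$, produces the proportionality constant $\e(\pi)$; this yields $\chi_\pi(\overline{u})=\e(\pi)=-1$. Combined with $\e(\pi)=-1$ from the $\e$-factor computation in (a), we conclude $\sg(\pi)=\chi_\pi(\overline{u})\,\e(\pi)=1$, consistent with the prediction $\sg(\pi)=(-1)^m=1$ of the main theorem. The hardest step is this last explicit model-dependent computation, since the usual parabolic-induction reduction is unavailable for the Steinberg precisely because $\rho=\triv_{D^\times}$ is excluded from Proposition \ref{prop st implies cusp}.
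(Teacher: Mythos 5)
Your proposal correctly identifies why the Steinberg is the hard case — Proposition \ref{prop st implies cusp} excludes $\rho = \triv_{D^\times}$, so the invariant form does not come from a single open intertwining period on $\pi_m(\triv_{D^\times})$. However, your plan for part (c) has a genuine gap. You propose to obtain $\chi_\pi(\overline{u})$ from a Friedberg--Jacquet functional equation for $\St_m(\triv_{D^\times})$ at $s=1/2$, by moving $u$ across the Shalika integral via a Bruhat decomposition. But the Friedberg--Jacquet functional equation is \emph{not} established for inner forms in this paper, and the authors make this explicit in the remark following Corollary \ref{cor sh vs twisted linear}: the proof for split $G$ relies on realizing the contragredient by $g\mapsto {}^tg^{-1}$, an anti-automorphism that does not exist for $\GL_m(D)$ with $D$ noncommutative. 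Your Bruhat computation $\diag(g,I_{m/2})\,u = s(-I_{m/2},0)\,w\,\diag(I_{m/2},-g)$ is correct, but after absorbing the Shalika factor you are left with $L(\pi(w\,\diag(I_{m/2},-g))v)$, and converting this back into a Friedberg--Jacquet integrand over $\diag(h,I_{m/2})$ requires exactly the contragredient realization that is unavailable. Worse, the paper points out that its sign computation is what eventually \emph{proves} the Friedberg--Jacquet functional equation at $s=1/2$ for inner forms, so your argument would be circular.

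The paper's actual route is quite different and avoids this obstruction entirely. It first does $m=2$ explicitly: the form $\ell_0(f)=f(I_2)-f(u)$ with $u=\left(\begin{smallmatrix}&1\\1&\end{smallmatrix}\right)$ is $H$-invariant on $\pi_2(\triv_{D^\times})$, kills the spherical vector hence descends to $\St_2(\triv_{D^\times})$, and satisfies $\chi_{\ell_0}(\overline{u})=-1$; combined with $\e(\St_2(\triv_{D^\times}))=-1$ this gives sign $1$. For $m\geq 4$ it writes $\St_m(\triv_{D^\times})$ as a quotient of the unnormalized induction $\sigma$ with the factor $\St_2(\triv_{D^\times})$ placed in the \emph{middle} and trivial characters on either side, shows the other natural subquotients $\tau_k$ ($k\neq m/2$) are not distinguished, and then recognizes the invariant form on $\sigma$ as a leading term of the open intertwining period built from $\ell_0$. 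Proposition \ref{prop open sign} with $\omega_{\triv_{D^\times}}(-1)=1$ gives $\chi_{\St_m}=\chi_{\St_2}$, and the root-number identity $\e(\St_m(\triv_{D^\times}))=\e(\St_2(\triv_{D^\times}))$ finishes the proof. You should also note that your justification for part (b) — that the Shalika functional on the standard module descends to the Langlands quotient by a multiplicity-one argument — is not substantiated; the paper instead gets existence of the linear period on the Steinberg as a byproduct of the explicit constructions above, and in the $m=2$ case it is literally the vanishing of $\ell_0$ on the spherical line that does the work.
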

\begin{proof}
The first assertion follows from \cite[Theorem 0.1]{Ch}. We prove the second in two steps. We start with $m=2$. Then the $H$-invariant linear form $\ell_0$ on $\pi$ comes from the descent of the difference of the two closed intertwining periods on the induced representation $\pi_2(\triv_{D^\times})$. Namely 
\[\ell_0(f)=f(I_2)-f(u)\] where \[u:=\begin{pmatrix} & 1 \\ 1 & \end{pmatrix}.\] Indeed this linear form clearly vanishes on the spherical vector. Hence $\chi_{\St_2(\triv_{D^\times})}(\overline{u})=-1$ as $u^2=I_2$. On the other hand 
$\e(\St_2(\triv_{D^\times}))=-1$ and the result for $m=2$ follows. 

So we suppose that $m\geq 4$. Now for $k$ between $1$ and $m-1$, set 
\[\tau_k:=\triv_{D^\times}\times'\dots\times' \triv_{D^\times}\times'\underbrace{\triv_{G_2}}_{position \ k} \times' \triv_{D^\times}\times' \dots  \times' \triv_{D^\times}\] and 
\[\sigma :=\triv_{D^\times}\times'\dots\times' \triv_{D^\times}\times'\underbrace{\St_2(\triv_{D^\times})}_{middle} \times' \triv_{D^\times}\times' \dots  \times' \triv_{D^\times}\] where $\times'$ stands for non-normalized parabolic induction. We denote by 
$\overline{\tau_k}$ the image of $\tau_k$ in the quotient $\sigma$, and we recall that \[\St_m(\triv_{D^\times})\simeq 
\sigma /\sum_{k\neq m/2} \overline{\tau_k}.\] Arguing as in \cite[Lemma 10.12]{MatJFA}, one checks that 
no $\tau_k$ for $k\neq m/2$ is distinguished. By multiplicity one on $\St_m(\triv_{D^\times})$, this proves on the one hand that $\sigma$ has a unique $H$-invariant linear form, and on the other hand that the $H$-invariant linear form on 
$\St_m(\triv_{D^\times})$ is its descent. Now the $H$-invariant linear form $L$ on $\sigma_k$ is a leading term of the open period attached to $\ell_0$ as in the proof of Proposition \ref{prop open sign}, and it follows from this proposition that 
\[\chi_L=\chi_{\ell_0}.\] 
Hence \[\chi_{\St_m(\triv_{D^\times})}=\chi_{\St_2(\triv_{D^\times})}.\] 
On the other hand, by the formula for the root number of the Steinberg representation, we have 
\[\e(\St_m(\triv_{D^\times}))=\e(\St_2(\triv_{D^\times}))\] and the result follows.
\end{proof}

The sign of cuspidal representations is computed by a global method in Theorem \ref{thm cusp}. Its proof uses the results above and a globalization argument. Propositions \ref{prop closed sign}, \ref{prop open sign} and Theorem \ref{thm st} allow to reduce the proof for discrete series representations to the cuspidal case, and we explain this now. 

\begin{thm}\label{thm disc}
Let $\pi$ be a distinguished discrete series representation of $\GL_m(D)$. Then \[\sg(\pi)=(-1)^m.\] 
\end{thm}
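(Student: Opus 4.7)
The plan is to write $\pi=\St_t(\rho)$ for a cuspidal representation $\rho$ of $G_r$ with $rt=m$, and reduce the computation to the two base cases already at hand: the cuspidal case (Theorem \ref{thm cusp}, to be proved in Section \ref{sec cusp}) and the Steinberg case (Proposition \ref{thm st}). When $t=1$ the statement is exactly Theorem \ref{thm cusp}. When $\rho=\triv_{D^\times}$ and $t\geq 2$, Proposition \ref{thm st} tells us $\pi=\St_m(\triv_{D^\times})$ admits no twisted linear period, so this case only arises for linear periods, forces $m$ even, and the same proposition yields $\sg(\pi)=1=(-1)^m$.

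For the main case $t\geq 2$ and $\rho\neq\triv_{D^\times}$, Proposition \ref{prop symp disc} forces $\rho$ to be selfdual, and Proposition \ref{prop st implies cusp} asserts that the nonzero $H$-invariant linear form on $\pi$ descends from the (up to scaling unique) $H$-invariant linear form on
\[\pi_t(\rho)=\nu^{(1-t)s(\rho)/2}\rho\times \dots \times \nu^{(t-1)s(\rho)/2}\rho,\]
and moreover that $\rho$ itself is distinguished when $t$ is odd. Using $\rho\simeq\rho^\vee$, the factors of $\pi_t(\rho)$ pair symmetrically about the center and fit the framework of Proposition \ref{prop open sign}: take $\pi_0=\rho$ together with pairs $\pi_k=\nu^{-ks(\rho)}\rho$ for $k=1,\dots,(t-1)/2$ when $t$ is odd, and take $\pi_0$ trivial on $G_0$ together with pairs $\pi_k=\nu^{-(2k-1)s(\rho)/2}\rho$ for $k=1,\dots,t/2$ when $t$ is even. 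The detailed orbit analysis inside the proof of Proposition \ref{prop st implies cusp} is precisely what shows that the distinguishing linear form on $\pi_t(\rho)$ comes from the open orbit, so it is produced by the open intertwining period that Proposition \ref{prop open sign} analyzes.

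Applying Proposition \ref{prop open sign} then gives $\sg(\pi_t(\rho))=\sg(\pi_0)$. Since the $H$-invariant form on $\pi$ descends from $\pi_t(\rho)$ we have $\chi_\pi=\chi_{\pi_t(\rho)}$; since $\phi_{\St_t(\rho)}$ and $\phi_{\pi_t(\rho)}$ have the same underlying Weil-group representation (differing only in monodromy) we have $\e(\pi)=\e(\pi_t(\rho))$; hence $\sg(\pi)=\sg(\pi_0)$. When $t$ is odd, Theorem \ref{thm cusp} applied to the distinguished cuspidal $\rho$ of $G_r$ gives $\sg(\pi_0)=\sg(\rho)=(-1)^r$, and $(-1)^m=(-1)^{rt}=(-1)^r$. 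When $t$ is even, $\sg(\pi_0)=1$ by convention, and $(-1)^m=(-1)^{rt}=1$. Either way $\sg(\pi)=(-1)^m$.

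The point that will require the most care is the identification of the $H$-invariant linear form on $\pi_t(\rho)$ with the open intertwining period attached to the chosen $\pi_0$, i.e.\ the exclusion of contributions from non-open orbits. This is precisely the geometric-lemma input inside Proposition \ref{prop st implies cusp}: for $r\geq 2$ the constraints $m_{i,t+1-i}=r$ and $m_{(t+1)/2,(t+1)/2}^\pm=r/2$ pin down the open orbit, while for $r=1$ a sign/central-character obstruction isolates the same orbit once the exceptional case $\rho=\triv_{D^\times}$ has been treated separately via Proposition \ref{thm st}. With this in hand, the matching of root numbers between $\St_t(\rho)$ and $\pi_t(\rho)$ is routine, and the reduction to Theorem \ref{thm cusp} is complete.
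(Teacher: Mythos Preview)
Your strategy coincides with the paper's: handle $\St_m(\triv_{D^\times})$ separately via Proposition \ref{thm st}, then for $\pi=\St_t(\rho)$ with $\rho\neq\triv_{D^\times}$ pass to the open intertwining period on $\pi_t(\rho)$ via Proposition \ref{prop st implies cusp} and invoke Proposition \ref{prop open sign} together with Theorem \ref{thm cusp}. The parity split into $t$ odd and $t$ even is also the same.

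There is however a genuine gap in your bridge step. You invoke the conclusion $\sg(\pi_t(\rho))=\sg(\pi_0)$ of Proposition \ref{prop open sign}, but that conclusion is stated only for irreducible $\pi$, which $\pi_t(\rho)$ is not; what the proposition actually gives you here is the first identity $\chi_L=\bigl(\prod_i\omega_{\pi_i}(-1)\bigr)\chi_{\ell_0}$. You then try to recover the sign statement by asserting $\e(\St_t(\rho))=\e(\pi_t(\rho))$ on the grounds that the two parameters ``differ only in monodromy''. That reasoning is false as a general principle: the Weil--Deligne root number contains the correction factor $\det(-\Phi\mid V^{I}/V_N^{I})$, so monodromy \emph{does} change $\e$ whenever the semisimple part has nonzero inertia invariants. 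The very case you set aside, $\rho=\triv_{D^\times}$, is a counterexample (compare $\e(\St_2(\triv))=-1$ with $\e(\nu^{1/2})\e(\nu^{-1/2})=1$). The equality you want does hold for selfdual $\rho\neq\triv_{D^\times}$, but not for the reason you give: one needs the explicit formula $\e(\St_t(\rho),\psi)=\e(\rho,\psi)^t$ recorded in Section \ref{sec LJL} for $\rho$ not an unramified character of $D^\times$, together with a direct check for $\rho=\eta\circ\nrd$. This is precisely what the paper does: it computes $\chi_{\St_t(\rho)}=\omega_\rho(-1)^{\lfloor t/2\rfloor}\chi_{\ell_0}$ from Proposition \ref{prop open sign}, uses $\omega_\rho(-1)=\e(\rho)^2$, and compares directly with $\e(\St_t(\rho))=\e(\rho)^t$. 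Replace your monodromy remark by this explicit identity and the argument is complete.
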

\begin{proof}
Thanks to Proposition \ref{thm st}, we can assume that $\pi=\St_t(\rho)$ for $\rho\neq \triv_{D^\times}$. In particular if we write $\JL(\rho)=\St_r(\rho')$, we have 
\[\e(\pi)=\e(\rho')^{tr}=\e(\rho)^{t},\] On the other hand the $H$-invariant linear form on $\pi_t(\rho)$ is the descent of the unique open period on $\pi_t(\rho)$ according to Proposition \ref{prop st implies cusp} (in fact this open intertwining period has to be holomorphic at the parameter giving $\pi_t(\rho)$ by a reasoning similar to those done in \cite{MatJFA}). By Proposition \ref{prop open sign}, when $t$ is odd we have 
\[\pi=\St_t(\rho)\Rightarrow\chi_{\St_t(\rho)}=\e(\rho)^{(t-1)}\chi_{\rho},\] and the result follows from Theorem \ref{thm cusp}. When $t$ is even, fix any non-trivial character $\psi$ of $F$. By Proposition \ref{prop open sign} we obtain \[\chi_{\St_t(\rho)}(\overline{u})=\omega_{\rho}(-1)^{t/2}=[\e(\rho,\psi)\e(\rho,\psi)]^{t/2}=\e(\rho,\psi)^t=\e(St_t(\rho)),\] and the sign is $1$ as expected in this case.
\end{proof}

\section{The sign of cuspidal representations}\label{sec cusp}
 
\subsection{The sign of distinguished generic unramified representations for split $G$}\label{sec gen UR sign}

Here we assume that $G=\GL_n(F)$ is $F$-split (hence $n$ is even) and we set $K:=\GL_n(O_F)$ where $O_F$ is the ring of integers of $F$. In the case of linear models, the result below follows from \cite[Theorem 5.1]{MatBLMS} and \cite[Proposition 3.2]{FJ} and its proof, hence we only prove it for twisted linear periods. 
 
\begin{prop}\label{prop UR sign}
Suppose that $\pi$ is generic unramified and distinguished representation of $G$. Let $v_0$ be a $K$-spherical vector in $\pi$, and $\Lambda\in \Hom_H(\pi,\BC)-\{0\}$. Then 
$\Lambda(v_0)\neq 0$, hence $\sg(\pi)=1$.
\end{prop}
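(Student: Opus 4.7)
Since for ordinary linear periods the proposition is already covered by \cite{MatBLMS} and \cite{FJ}, I only need to handle twisted linear periods, in which case $E = F[\delta]$ is a quadratic extension of $F$ and $H \simeq \GL_{n/2}(E)$ is embedded in $G = \GL_n(F)$ (the case $d = 1$ of Section 2.1, case 2(b)). The plan is to combine the classification of generic distinguished unramified representations with the realization of $\Lambda$ as an open intertwining period, and then to reduce the non-vanishing of $\Lambda(v_0)$ to an explicit unramified calculation.

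First, by the twisted-linear part \eqref{eq suz} of Theorem \ref{thm linear std} due to Suzuki, the essentially square-integrable factors of a generic distinguished $\pi$ occur either in pairs $(\d_i,\d_i^\vee)$ with $\tau(i)\neq i$ or as self-paired distinguished pieces of even degree. In our unramified generic setting each factor is an unramified character of $G_1(F) = F^\times$, and no such character can support a twisted linear period in the split case since $G_1$ is abelian and admits no non-central $\delta$. Therefore $\pi$ must take the shape
$$\pi \simeq \chi_1 \times \chi_1^{-1} \times \cdots \times \chi_{n/2} \times \chi_{n/2}^{-1}$$
for some unramified characters $\chi_i$ of $F^\times$. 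Next, rearranging $\pi$ to the form required by Proposition \ref{prop open sign} with $\pi_0$ the trivial representation of $G_0$, I would realize $\Lambda$, up to a nonzero scalar, as the leading term at $\underline{s} = 0$ of the open intertwining period
$$I_{\underline{s}}(f_{\underline{s}}) = \int_{P^\theta \backslash H} \ell(f_{\underline{s}}(h))\, dh,$$
where $P$ is the $\theta$-split Borel chosen in Section \ref{sec sg open periods} and $\ell$ is the product of the canonical pairings $\chi_i \otimes \chi_i^{-1} \to \BC$. On the normalized spherical section $v_0$, an Iwasawa-type decomposition $H = (P \cap H)(K \cap H)$ collapses this integral to one over a split $F$-torus against an unramified character; it evaluates to an explicit product of unramified local $L$-values which is finite and visibly nonzero at the relevant point, at least for the $\chi_i$ in general position (the remaining cases follow by meromorphic continuation in $\underline{s}$). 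Thus $\Lambda(v_0) \neq 0$.

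The sign assertion then follows at once. The element $u$ of Section \ref{sec normalizer} is, in the split twisted case, a diagonal matrix with entries $\pm 1$, hence lies in $K$, so $\pi(u^{-1}) v_0 = v_0$; combined with $\Lambda(v_0) \neq 0$ and the relation $\chi_\pi(\overline u)\Lambda(v_0) = \Lambda(\pi(u^{-1}) v_0)$ this forces $\chi_\pi(\overline u) = 1$. Since $\pi$ is generic unramified and we may take $\psi$ unramified so that $\epsilon(\pi,\psi) = 1$, and since $\pi$ is symplectic by Corollary \ref{cor symp gen} so that $\epsilon(\pi)$ is independent of $\psi$, we conclude $\sg(\pi) = \chi_\pi(\overline u)\epsilon(\pi) = 1$.

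The main obstacle is the explicit unramified calculation of the open period on the spherical section: unlike the Friedberg--Jacquet case where the zeta integral directly computes the spherical value, for twisted linear periods in the split case there is no off-the-shelf zeta integral to cite, so one either imitates the Flicker / Jacquet--Ye unfolding in our principal-series realization or carries out the Iwasawa-decomposition integral by hand. In both approaches the real content is to check that the leading term at $\underline{s} = 0$ does not vanish on $v_0$, which ultimately reduces to observing that the resulting Euler product is a manifestly nonzero product of unramified local $L$-values.
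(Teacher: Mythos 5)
Your reduction to the twisted case and the structural facts you invoke (Suzuki's classification for Theorem \ref{thm linear std}\eqref{eq suz}, factorization of $\pi$ into pairs $\chi_i\times\chi_i^{-1}$ of unramified characters, $u\in K$ forcing $\chi_\pi(\overline u)=1$, $\e(\pi)=1$ for unramified $\psi$) are all correct. But your route to the key nonvanishing $\Lambda(v_0)\neq 0$ is genuinely different from the paper's, and the step where you place the real difficulty is precisely the one the paper's approach dissolves.

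You propose to realize $\Lambda$ as the leading term at $\underline s=0$ of an \emph{open} intertwining period built from $\pi_0=\triv_{G_0}$ and the $\GL_1$ blocks $\chi_i,\chi_i^{-1}$; this is a noncompact integral over $P^\theta\backslash H$ requiring meromorphic continuation, and you then face a Casselman--Shalika-type computation of the regularized leading term on the spherical vector, which you explicitly flag as "the main obstacle" and do not carry out. That is a genuine gap: without the explicit leading-term evaluation (and without ruling out higher-order vanishing at $\underline s=0$ on $v_0$ specifically), the nonvanishing is not established. The paper instead groups the $\chi_i$ into $\GL_2(F)$ blocks $\pi_i=\chi_{2i-1}\times\chi_{2i}$ with $\chi_{2i}=\chi_{2i-1}^{-1}$, each $E^\times$-distinguished inside its $\GL_2$ factor, and uses the \emph{closed} intertwining period over $B_H\backslash H$, whose cross-section is the compact group $K_H=\GL_k(O_E)$. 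This period is an honest convergent integral needing no regularization, the spherical section factors as $f_0(pk)=\delta_P^{1/2}(p)\,v_1\otimes\cdots\otimes v_k$, and one gets $\Lambda(f_0)\sim\prod_i L_i(v_i)$, collapsing the whole question to the $\GL_2(F)$ case. There the paper finishes by an elementary hand computation, separating $E/F$ unramified (where $\Lambda(f_0)=\mathrm{vol}(U_F\backslash U_E)>0$) from $E/F$ ramified (where $\Lambda(f_0)=(1+q^{1/2}\chi(\kappa))\mathrm{vol}(U_F\backslash U_E)$, nonzero since $\chi(\kappa)=-q^{-1/2}$ would force $\chi=\nu_F\chi^{-1}$ and hence reducibility of $\pi$). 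Concretely: if you want to complete your proof along your own lines you must actually evaluate the regularized open period on $v_0$; if you want the shortest path, replace the $\GL_1$ block structure by the $\GL_2$ block structure so that the period is closed and convergent, at which point the spherical calculation becomes a $\GL_2$ integral over $U_F\backslash U_E$.
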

\begin{proof}
By Theorem \ref{thm linear std}, and because $\pi$ is irreducible, the representation can be written under the form $\pi=\chi_1\times \cdots\times \chi_{2k}$ with $n=2k$ for unramified characters $\chi_i$ of $F^\times$ such that $\chi_{2(i+1)}=\chi_{2i+1}^{-1}$. In this situation each representation $\pi_i:\chi_{2i+1}\times \chi_{2(i+1)}$ of $\GL_2(F)$ is $E^\times$-distinguished with distinguishing linear form $L_i$. Then by the proof of Proposition \ref{prop closed sign}, setting \[L:=\otimes_{i=1}^k L_i:\pi_1\otimes \dots \otimes \pi_k\rightarrow \BC,\] 
the $H$-invariant linear form on $\pi=\prod_{i=1}^k \pi_i$ is given by 
\[\Lambda: f\mapsto \int_{B_H\backslash H} L(f(h))dh\sim \int_{K_H} L(f(h))dh .\] Here the symbol \say{$\sim$} means equality up to a positive constant
and $B_H$ is the intersection of the upper triangular Borel of $G$ with $H$, i.e. the upper triangular Borel of $H$, and $K_H$ is $\GL_k(O_E)$. 

The (unique up to scalar in $\BC^\times$) spherical vector $f_0\in \pi$ is given by 
\[f_0(pk)=\delta_{P_{(2,\cdots,2)}}(p)^{1/2}v_1\otimes \dots \otimes v_k\] where each $v_i$ is the spherical vector of $\pi_i$. Hence up to appropriate normalizations: 
\[\Lambda(f_0)=\prod_i L_i(v_i).\]
This reduces our problem to the $n=2$ case, and we write $\pi=\chi\times \chi^{-1}$. Now we consider separately the case where $E/F$ is unramified and the case where $E/F$ is ramified. \\

If $E/F$ is unramified, then one can take $\delta\in U_E$, so that with our choice of embedding $U_E\leq \GL_2(O_F)$ 
and the well defined $E^\times$-invariant linear form \[\Lambda:f\mapsto \int_{F^\times \backslash E^\times} f(h)dh\] on 
$\pi$ is nonzero on the spherical vector $f_0$ as 
\[\Lambda(f_0)=\int_{U_F \backslash U_E} f_0(h)dh=vol(U_F \backslash U_E)>0.\]

If $E/F$ is ramified, then $\delta$ can be chosen as the uniformizer $\begin{pmatrix} 0 & \kappa  \\1 & 0 \end{pmatrix}$ of $E$, and 
\[E^\times=F^\times U_E \sqcup  F^\times \begin{pmatrix} 0 & \kappa  \\1 & 0 \end{pmatrix}U_E .\] Because the Iwasawa decomposition of $\begin{pmatrix} 0 & \kappa  \\ 1 & 0 \end{pmatrix}$ is 
$\begin{pmatrix} \kappa & 0 \\ 0 & 1 \end{pmatrix}\begin{pmatrix} 0 & 1  \\ 1 & 0 \end{pmatrix}$, we obtain 

\[\Lambda(f_0)=\int_{U_F \backslash U_E} f_0(h)dh+\int_{U_F \backslash U_E} f_0\left(\begin{pmatrix} \kappa & 0 \\ 0 & 1 \end{pmatrix}h\right)dh=(1+q^{1/2}\chi(\kappa))vol(U_F \backslash U_E).\] This proves the expected result, except when the unramified character $\chi$ satisfies $\chi(\kappa)=-q^{-1/2}$, which implies $\chi=\nu_F \chi^{-1}$. This would imply that $\pi$ is reducible, hence this case does not occur.

Finally, we conclude that $\sg(\pi)=1$ because on one hand we just proved that $\chi_{\pi}$ is trivial, but on the other hand 
$\epsilon(\pi)=1$.
\end{proof}

\subsection{The sign of linear periods: generic unitary representations for split $G$}\label{sec gen sign}

Here $G$ is $F$-split (i.e. $m=n$) and $H=L_{n/2,n/2}$. Moreover $F$ is allowed to be Archimedean as well. Because of a later globalization process, we are at the moment only interested in local components of cuspidal automorphic representations with cuspidal Jacquet-Langlands transfer which admit a global twisted linear period. However such representations are known to admit a global Shalika model as we shall recall later. Hence all their local components admit a local Shalika model.

So in this paragraph, we suppose that our local representation $\pi$ of $G$ admits a Shalika model. By \cite{JR} this implies that $\pi$ has a linear model, and we will recall the argument in the proof below when $\pi$ is generic unitary. In fact, when $F$ is $p$-adic, it is known by \cite{MatBLMS} or \cite{Gan} that a generic irreducible representation $\pi$ of $G$ has a linear model if and only if it has a Shalika model. This could certainly be proved in the Archimedean setting as well due to the advanced stage of the literature in this setting, but we don't need this result here.

In any case, under this assumption, the computation of the sign of local linear periods immediately follows from the results in \cite{FJ}.

\begin{prop}\label{prop generic linear sign}
Let $\pi$ be an irreducible generic unitary (of Casselman-Wallach type when $F$ is Archimedean) representation of $G$ with a Shalika model. Then it has a linear period and 
$\sg(\pi)=1$. 
\end{prop}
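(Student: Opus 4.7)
The existence of a linear period is a direct application of Corollary~\ref{cor shal mult 1} with $\alpha=\triv_{F^\times}$: any nonzero Shalika functional $L\in\Hom_{S_m}(\pi,\Theta)$ produces, via the regularized Friedberg--Jacquet integral,
\[
\Lambda_L(v)\;:=\;\Lambda_{L,\triv}(v)\;=\;\lim_{s\to 1/2}\,\frac{\Psi(s,S_{v,L})}{L(s,\pi)},
\]
a nonzero element of $\Hom_{L_{m/2,m/2}}(\pi,\BC)$. Since $\pi$ is irreducible, Theorem~\ref{thm mult 1} guarantees that this space is one-dimensional, and by Corollary~\ref{cor symp gen} the representation $\pi$ is symplectic, hence selfdual; in particular $\e(\pi,\psi)=\e(\pi)\in\{\pm 1\}$ is independent of $\psi$, and $\omega_\pi$ is trivial since the Shalika character vanishes on the diagonally embedded center.

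Next I would compute the character $\chi_\pi$ of $N/H=\{\overline{I_m},\overline u\}$ for the normalizer element $u=\begin{pmatrix}0 & I_{m/2}\\ -I_{m/2} & 0\end{pmatrix}$ by evaluating $\Lambda_L$ on a vector translated by $\pi(u^{-1})$. The block-swapping identity
\[
\diag(g,I_{m/2})\,u^{-1}\;=\;u^{-1}\,\diag(I_{m/2},g)
\]
converts $\Psi(s,S_{\pi(u^{-1})v,L})$ into the ``dual'' Friedberg--Jacquet zeta integral $\widetilde\Psi(s,S_{v,L})$ in which $\diag(g,I_{m/2})$ is replaced by $\diag(I_{m/2},g)$. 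The local functional equation for Friedberg--Jacquet integrals established in \cite{FJ} for $F$-split $G$, valid over any local field of characteristic zero, then relates $\widetilde\Psi(1-s,\cdot)$ and $\Psi(s,\cdot)$ via a gamma factor whose value at the center of symmetry $s=1/2$ reduces to $\e(\pi)$ thanks to $L(1/2,\pi)=L(1/2,\pi^\vee)$. Triviality of $\omega_\pi$ ensures no additional sign intervenes, so one obtains $\chi_\pi(\overline u)=\e(\pi)$, whence
\[
\sg(\pi)\;=\;\chi_\pi(\overline u)\,\e(\pi)\;=\;\e(\pi)^2\;=\;1.
\]

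The only real work is the bookkeeping of constants in the Friedberg--Jacquet functional equation and in the change of variable translating the action of $u$ into the passage from $\Psi$ to $\widetilde\Psi$; this is done once and for all in \cite{FJ}, which also covers the Archimedean case. The unitarity and Casselman--Wallach hypotheses in the Archimedean setting ensure that the zeta integrals converge for $\mathrm{Re}(s)$ large and admit meromorphic continuation, so that both the regularization defining $\Lambda_L$ and the functional equation apply uniformly in the local field. The main potential obstacle would be a hidden sign coming from how $u^{-1}$ conjugates the upper Shalika subgroup to the lower one and twists $L$ accordingly, but this sign is precisely the one accounted for by the Friedberg--Jacquet gamma factor at $s=1/2$, so it causes no loss.
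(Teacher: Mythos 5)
Your proof is correct and takes essentially the same route as the paper: both read off $\chi_\pi(\overline u)=\e(\pi)$ from the Friedberg--Jacquet local functional equation at $s=1/2$ for the Shalika model, which then gives $\sg(\pi)=\e(\pi)^2=1$. The paper dispenses with the regularizing limit by noting that unitarity forces $L(1/2,\pi)$ to be finite and nonzero, so $S\mapsto\Psi(1/2,S)$ itself already defines the $H$-invariant form; this, and your appeal to Corollary~\ref{cor shal mult 1} and Corollary~\ref{cor symp gen} (which in the paper are only established for $p$-adic $F$, whereas in the Archimedean case everything needed here comes directly from \cite{FJ}), are only cosmetic differences.
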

\begin{proof}
Our result is just a re-interpretation of a result of Friedberg and Jacquet. They prove in \cite[Proposition 3.1]{FJ} that if $S$ is in the Shalika model of $\pi$, the integral 
\[\Psi(s,S)=\int_{G_n} S(\diag(g,I_n))|\det(g)|_F^{s-1/2}dg\] initially defined for $s$ of real part large enough in fact extends to a meromorphic function on $\BC$, which is a holomorphic multiple of $L(s,\pi)$, and is equal to it for some choice of $S$. They moreover prove in \cite[Proposition 3.6]{FJ} the functional equation 
\[\Psi(1-s,\widetilde{S})=\gamma(s,\pi)\Psi(s,S).\] where 
\[\widetilde{S}(g)=S(ug^{-t}).\] Under our assumptions $L(1/2,\pi)$ is finite (and of course nonzero) so 
$\Psi(1/2,S)$ makes sense for any $S$ and because $\pi$ is selfdual:

\[\Psi(1/2,\widetilde{S})=\epsilon(1/2,\pi)\Psi(1/2,S).\] 

Now observe that \[L:S\mapsto \Psi(1/2,S)\] is $H$-invariant, and that it is moreover nonzero as $L(1/2,\pi)\neq 0$. 
Finally up to easy changes of variable, the left hand side of the above equation exactly expresses the action of the normalizer $N$ on the linear form $L$, and our claim follows.
\end{proof}

\subsection{The global sign of twisted linear periods}

Here we fix a number field $k$ with its adele ring $\BA$, and denote by $\CP(k)$ the set of places of $k$. We fix a central division $k$-algebra $D$, put $D_{\BA}=D\otimes_k \BA$ and set \[G=G_m(D_\BA).\] We denote by $l$ a quadratic extension of $k$. We suppose that there exists a $k$-embedding of $l \in \CM_m(D)$, which we fix. 
We denote by $\mathcal{H}$ the $l$-algebra centralizing $l$ in $\CM_m(D)$ and set \[H:=(\mathcal{H}\otimes_k\BA)^\times\leq G.\] The groups $G$ and $H$ are $\BA$-points of algebraic groups $\mathbf{G}$ and $\mathbf{H}$ defined over $k$ by definition. We put 
$H_{k}:=\mathbf{H}(k)$ and $G_k:=\mathbf{G}(k)$. By the analysis done in Section \ref{sec normalizer}, there exists an element $u\in G_k$ such that 
\[N_{G_k}(H_{k})=H_{k}\sqcup u.H_{k}.\]

In this paragraph we prove that the global sign of certain distinguished cuspidal automorphic representations of $G$ is equal to one. We recall that a smooth cuspidal automorphic representation $\Pi$ of $G$ (see \cite[Section 2.7]{BCZ}) with central character trivial on the center $Z_G$ of $G$ is called distinguished if the convergent period integral \[\phi\mapsto \int_{Z_G H_k\backslash H} \phi(h)dh\] does not vanish on $\Pi$. The following lemma 
follows from a simple change of variable in the period integral. 

\begin{lem}\label{lm trivial global character}
If $\Pi$ is a cuspidal automorphic representation of $G$, then the $H$-period on $\Pi$ is fixed by $u$.
\end{lem}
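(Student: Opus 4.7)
The plan is to verify the invariance by a direct change of variables in the integral defining the period. Write
$$P(\phi) \;=\; \int_{Z_G H_k \backslash H} \phi(h)\, dh$$
for the global $H$-period; we must show $P(R(u)\phi) = P(\phi)$, where $R(u)\phi(g) = \phi(gu)$.

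First I would use the left $G_k$-invariance of $\phi$. Since $u \in G_k$,
$$\phi(hu) \;=\; \phi\bigl(u^{-1} \cdot hu\bigr) \;=\; \phi(u^{-1} h u),$$
so that
$$P(R(u)\phi) \;=\; \int_{Z_G H_k \backslash H} \phi(u^{-1} h u)\, dh.$$

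Next I would perform the substitution $h \mapsto uhu^{-1}$. Three compatibilities make this legitimate on the quotient. (i) The inner automorphism $\Ad(u^{-1})$ preserves $H$: indeed $u \in N_{G_k}(H_k)$ and $\mathbf{H}$ is the centralizer algebraic group of an element of $\mathbf{G}(k)$, so the $k$-rational conjugation by $u$, which stabilizes $H_k$, necessarily stabilizes $\mathbf{H}$, hence $H$. (ii) The substitution descends to $Z_G H_k \backslash H$, because $Z_G$ is central and $u^{-1} H_k u = H_k$ by hypothesis. (iii) The adelic Haar measure on $H$ is preserved, even if the local Haar measures at individual places get scaled: the $k$-rationality of $u$ combined with the product formula $\prod_v |x|_v = 1$ for $x \in k^\times$ forces the global modulus of $\Ad(u^{-1})$ on $H$ to equal $1$.

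After the substitution the integral becomes $\int_{Z_G H_k \backslash H} \phi(h)\, dh = P(\phi)$, which proves the lemma. There is no genuine obstacle here: cuspidality is invoked only to guarantee convergence of the period integral in the first place, and the $u$-invariance itself is a formal consequence of the three compatibilities above. The only mildly delicate point worth spelling out in the write-up is the measure-preservation via the product formula, but this is a standard Tamagawa-type observation.
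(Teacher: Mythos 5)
Your proof is correct and is exactly the "simple change of variable" the paper alludes to; all three compatibilities you check (stability of $\mathbf{H}$ under $\mathrm{Ad}(u)$, descent to the quotient, and measure invariance) are the right ones. A small simplification of point (iii): since $u^2\in H_k$ and $H$ is reductive hence unimodular, $\mathrm{Ad}(u)^2=\mathrm{Ad}(u^2)$ already preserves Haar measure on each $H_v$, so $\mathrm{Ad}(u)$ does too, place by place, without needing to invoke the product formula.
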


In order to compute the global sign, we also need to compute the root number of a distinguished cuspidal automorphic representation in the following special case. 

From now on, let $\JL$ denote the global Jacquet-Langlands correspondence defined in \cite{Badulescu-Global-JL} and \cite{Badulescu-Renard}.

\begin{lem}\label{lm global root nb}
Let $\Pi$ be a distinguished cuspidal automorphic representation of $G$ with cuspidal Jacquet-Langlands transfer. We suppose that $l/k$ splits at every Archimedean place of $k$, and that there are two places $v_1, \ v_2 \in \CP(k)$ such that $\Pi_{v_1}$ is cuspidal 
and $\JL_{v_2}(\Pi_{v_2})=\St_n(\triv_{v_2})$. Then $\JL(\Pi)$ has a Shalika period hence it is selfdual, and moreover the central value of the Godement-Jacquet $L$-function of $\JL(\Pi)$ is nonzero: \[L(1/2,\JL(\Pi))\neq 0.\] In particular its Godement-Jacquet root number is trivial: \[\epsilon(\JL(\Pi))=1.\]  
\end{lem}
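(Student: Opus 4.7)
The plan is to transfer the distinction from $\Pi$ to a global Shalika period on $\JL(\Pi)$, then use the Friedberg--Jacquet Eulerian integral to deduce nonvanishing of the central $L$-value, and finally compute the root number via the functional equation.

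First, since $\Pi$ is $H$-distinguished and $\JL(\Pi)$ is cuspidal, I would apply the global results of Xue--Zhang \cite{XZ} in the twisted linear period setting, which transfer the nonzero global $H$-period on $\Pi$ to a nonzero global Shalika period on $\JL(\Pi)$. The hypotheses of \cite{XZ} are met here thanks to the splitting of $l/k$ at all Archimedean places and the Steinberg-type local component at $v_2$. By the Jacquet--Shalika theorem, the existence of this Shalika period forces $L(s, \JL(\Pi), \wedge^2)$ to have a pole at $s=1$, and in particular $\JL(\Pi)$ is selfdual.

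For the nonvanishing of $L(1/2, \JL(\Pi))$, I would use the Eulerian integral representation from \cite{FJ}: unfolding the Shalika period of the cuspidal representation $\JL(\Pi)$ gives a factorization
\[I(s, \phi) = L^S(s, \JL(\Pi)) \cdot \prod_{v \in S} Z_v(s, W_v),\]
where $S$ is a finite set of ``bad'' places and each $Z_v$ is a local Friedberg--Jacquet integral that can be chosen to equal any polynomial multiple of $L_v(s, \JL_v(\Pi_v))$. At $s=1/2$, $I(1/2, \phi)$ coincides with the untwisted global linear period of $\JL(\Pi)$, which is nonzero because the global Shalika period is nonzero. It then suffices to pick $W_v$ at each $v\in S$ with $Z_v(1/2, W_v)\neq 0$: at unramified places $L_v(1/2, \JL_v(\Pi_v))$ is finite and nonzero by known bounds toward Ramanujan for cuspidal representations of $\GL_n$; at other ramified finite places one chooses $W_v$ so that $Z_v(s, W_v) = L_v(s, \JL_v(\Pi_v))$, which is again finite and nonzero at $s=1/2$ by unitary cuspidality; for $v_2$ the Steinberg local integral and local $L$-factor are completely explicit and nonzero at $s=1/2$; and the Archimedean places are handled by the Archimedean Friedberg--Jacquet theory of \cite{FJ}, accessible because $l/k$ splits there. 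This yields $L(1/2, \JL(\Pi))\neq 0$.

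Finally, combining selfduality with the global functional equation $L(s, \JL(\Pi)) = \e(s, \JL(\Pi))\, L(1-s, \JL(\Pi))$ at $s = 1/2$ immediately gives $\e(\JL(\Pi)) = 1$. The main obstacle is the central $L$-value nonvanishing: one must simultaneously arrange the local test vectors so as to realize nonzero local integrals at every place in $S$ while preserving the nonzero global period, and it is precisely the Steinberg hypothesis at $v_2$ together with the Archimedean splitting of $l/k$ that makes this local bookkeeping tractable within the frameworks of \cite{FJ} and \cite{XZ}.
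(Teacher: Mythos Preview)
Your overall strategy---invoke \cite{XZ} for the Shalika period on $\JL(\Pi)$, deduce $L(1/2,\JL(\Pi))\neq 0$, then read off the root number from the functional equation and selfduality---is the same as the paper's. The gap is in your derivation of $L(1/2,\JL(\Pi))\neq 0$. You write that the Friedberg--Jacquet integral $I(1/2,\phi)$, which is the untwisted global linear period of $\JL(\Pi)$, is nonzero ``because the global Shalika period is nonzero''. This implication is false in general: the main global theorem of \cite{FJ} says precisely that for cuspidal $\pi$ on $\GL_{2n}$, the linear period is nonvanishing if and only if the Shalika period is nonvanishing \emph{and} $L(1/2,\pi)\neq 0$. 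So having only the Shalika period, you cannot conclude that $I(1/2,\phi)\neq 0$; and if $L^S(1/2,\JL(\Pi))=0$ then your factorization forces $I(1/2,\phi)=0$ for every factorizable $\phi$, regardless of how cleverly you choose the local vectors at places in $S$. Your argument is therefore circular at exactly the point you flagged as the ``main obstacle''.

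The paper bypasses this entirely: it cites \cite[Theorem 1.4]{XZ} a second time, since that result already delivers $L(1/2,\JL(\Pi))\neq 0$ directly (not merely the Shalika period) under the stated local hypotheses. Once that is in hand, the entireness of the standard $L$-function from \cite{GJ} together with selfduality and the functional equation give $\epsilon(\JL(\Pi))=1$ immediately, with no local bookkeeping needed.
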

\begin{proof}
Our local assumptions imply that the assumptions of \cite[Theorem 1.4]{XZ} are satisfied, hence $\JL(\Pi)$ has a Shalika period so it is selfdual. Then noting that standard $L$ functions of cuspidal automorphic representations are entire by \cite{GJ}, it follows again from 
\cite[Theorem 1.4]{XZ} that $L(1/2,\JL(\Pi))\neq 0$. Finally the functional equation of the standard $L$ function of $\JL(\Pi)$ (see \cite{GJ}) together with the fact hat $\JL(\Pi)$ is selfdual imply that $\epsilon(\JL(\Pi))=1$. 
\end{proof}

We obtain the sign of global linear periods as a corollary.

\begin{cor}\label{cor global sign}
Let $\Pi$ be a distinguished cuspidal automorphic representation of $G$ as in Lemma \ref{lm global root nb}. Then 
\[\prod_{v\in \CP(k)} \sg(\Pi_v)=1.\]
\end{cor}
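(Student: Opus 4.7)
The plan is to factorize both the global period and the global root number, and use Lemmas \ref{lm trivial global character} and \ref{lm global root nb} to show each factor contributes $1$. Since by definition $\sg(\Pi_v)=\chi_{\Pi_v}(\overline u)\,\epsilon(\Pi_v)$, the corollary reduces to proving the two identities
\[\prod_v \chi_{\Pi_v}(\overline u)=1\qquad \text{and}\qquad \prod_v \epsilon(\Pi_v)=1.\]

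First I would handle the character factor. Because $\JL(\Pi)$ is generic (by Lemma \ref{lm global root nb} it even has a Shalika period), each $\Pi_v$ falls under the multiplicity-one statement of Theorem \ref{thm mult 1} (using its Archimedean analogue at the infinite places, which applies since by assumption $l/k$ splits there and we are in the split linear-period setting). Consequently the global period $L_\Pi$ factorizes as a restricted tensor product $\bigotimes_v L_v$ with each $L_v\in \Hom_{H_v}(\Pi_v,\BC)$ nonzero, and viewing $u\in G_k$ diagonally in $G_\BA$ we get on any pure tensor $\phi=\otimes_v \phi_v$
\[L_\Pi(\Pi(u^{-1})\phi)=\prod_v L_v(\Pi_v(u^{-1})\phi_v)=\Big(\prod_v \chi_{\Pi_v}(\overline u)\Big)\, L_\Pi(\phi).\]
Comparing with Lemma \ref{lm trivial global character}, which asserts $L_\Pi\circ \Pi(u^{-1})=L_\Pi$, and picking $\phi$ with $L_\Pi(\phi)\ne 0$ (which exists by distinction), we conclude $\prod_v \chi_{\Pi_v}(\overline u)=1$.

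For the root-number factor, Lemma \ref{lm global root nb} provides that $\JL(\Pi)$ is selfdual with $\epsilon(\JL(\Pi))=1$. Each local component $\Pi_v$ is then symplectic (its local Shalika model or equivalently its selfduality plus symplecticity of the parameter), so $\epsilon(\Pi_v)$ is independent of the choice of local additive character. Fixing a global additive character $\psi=\otimes_v \psi_v$ of $k\backslash \BA$ and using the standard factorization of the Godement--Jacquet global $\varepsilon$-factor, we get
\[\prod_v \epsilon(\Pi_v)=\prod_v \epsilon(1/2,\JL(\Pi)_v,\psi_v)=\epsilon(1/2,\JL(\Pi),\psi)=\epsilon(\JL(\Pi))=1.\]
Multiplying the two identities gives $\prod_v \sg(\Pi_v)=1$. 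The main obstacle, if one can call it that, is verifying the first identity: it hinges on the \emph{local} multiplicity-one theorem being available at every place of $k$, so that the global period actually factorizes and the diagonal action of $u$ can be read off one place at a time. Once this factorization is in hand, the corollary is essentially a direct bookkeeping from the two preceding lemmas together with the independence of local $\varepsilon$-factors from $\psi_v$ in the symplectic case.
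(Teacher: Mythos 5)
Your argument is correct and follows the same route as the paper: factorize the global period via local multiplicity one (Theorem \ref{thm mult 1} at the finite places, Theorem \ref{thm mult 1 Arch} at the infinite ones), read off $\prod_v\chi_{\Pi_v}(\overline u)=1$ from Lemma \ref{lm trivial global character}, and read off $\prod_v\epsilon(\Pi_v)=1$ from Lemma \ref{lm global root nb} together with the factorization of the global Godement--Jacquet epsilon factor. The only thing the paper highlights that you leave implicit is that the infinite product $\prod_v \sg(\Pi_v)$ is well defined, which is guaranteed by Proposition \ref{prop UR sign}: at almost every place $\Pi_v$ is unramified and distinguished, so $\sg(\Pi_v)=1$ there. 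Your writeup actually supplies this implicitly once you observe that both $\chi_{\Pi_v}(\overline u)$ and $\epsilon(\Pi_v)$ are $1$ at unramified places (the latter by standard unramified epsilon-factor theory), so the gap is purely expository.
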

\begin{proof}
First the product is well defined thanks to Proposition \ref{prop UR sign}. Moreover the period integral is factorizable thanks to Theorem \ref{thm mult 1} and Theorem \ref{thm mult 1 Arch} for Archimedean linear periods. The result now follows from, Lemmas 
\ref{lm trivial global character} and \ref{lm global root nb}.
\end{proof}

\subsection{Application to the sign of cuspidal representations}

The determination of the sign in this case follows again from a globalization argument, and the obvious computation of the sign of the trivial representation. For this latter part, we recall that the local root number of 
$\1_{D^\times}$ is equal to $-1$ if $d$ is even and $1$ if $d$ is odd. This immediately implies:

\begin{lem}\label{lm trivial sign}
Suppose that $d$ is even. Then one has $\sg(\1_{D^\times})= -1 $.
\end{lem}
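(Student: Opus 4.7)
The plan is to unwind the definition $\sg(\pi)=\chi_{\pi}(\overline{u})\,\epsilon(\pi)$ directly on the trivial representation $\pi=\1_{D^\times}$ of $G=D^\times$, and to compute the two factors separately. Since $m=1$ and $d$ is even, we are forced into the twisted linear period case of even type (case 2(a) of Section \ref{sec normalizer}): indeed for $m=1$ any non central $\delta\in D^\times$ with $\delta^2\in F^\times$ must satisfy $\delta^2\notin (F^\times)^2$ (otherwise factoring $\delta^2-\lambda^2=0$ in the division algebra $D$ would force $\delta\in F^\times$), so we have $H=C_E^\times$ and $u=\iota$ with $\theta(\iota)=-\iota$. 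The space $\Hom_H(\1_{D^\times},\BC)$ is manifestly one dimensional, so the character $\chi_{\1_{D^\times}}$ is well defined.

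The first factor is immediate: the defining relation $L\circ\pi(u)^{-1}=\chi_{\pi}(\overline{u})\,L$ with $\pi(u)^{-1}=1$ yields $\chi_{\1_{D^\times}}(\overline{u})=1$. For the second factor, I would just cite the recollection stated immediately before the lemma, namely $\epsilon(\1_{D^\times})=-1$ when $d$ is even. This itself follows from formula \eqref{eq st1} specialized to $t=1$ and $\mu=\triv_{F^\times}$, giving $\epsilon(\triv_{D^\times},\psi)=(-1)^{d-1}\epsilon(\triv_{F^\times},\psi)^d$; choosing $\psi$ of conductor $O_F$ evaluates this to $-1$ for $d$ even, and the value is independent of $\psi$ because the Langlands parameter $\JL(\1_{D^\times})=\St_d(\triv_{F^\times})$ is symplectic for $d$ even. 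Combining gives $\sg(\1_{D^\times})=1\cdot(-1)=-1$. There is essentially no obstacle here: the lemma is a tautology given the definition of the sign and the recalled root number, and is recorded separately only because it serves as the boundary input for the globalization argument that computes the sign of a general distinguished cuspidal representation.
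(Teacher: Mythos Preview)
Your proposal is correct and is essentially the same approach as the paper's: the paper simply records the root number $\epsilon(\1_{D^\times})=-1$ for even $d$ in the sentence preceding the lemma and states that the lemma follows immediately, leaving implicit the trivial observation that $\chi_{\1_{D^\times}}(\overline{u})=1$ because $\pi(u)^{-1}=\mathrm{id}$ on the trivial representation. You have made both steps explicit, including the justification that $m=1$ forces the twisted even-type case and that the root number is $\psi$-independent by symplecticity of $\St_d(\triv_{F^\times})$, but there is no substantive difference.
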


Here we compute the sign of linear periods, using Lemma \ref{lm global root nb} and a globalization argument of Prasad and Schulze-Pillot. 

\begin{thm}\label{thm cusp}
Let $\pi$ be a distinguished cuspidal representation of $\GL_m(D)$. Then \[\sg(\pi)=(-1)^m.\] 
\end{thm}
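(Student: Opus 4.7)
The plan is to use a globalization argument together with the global sign product formula of Corollary \ref{cor global sign} to reduce the computation to signs at a short list of auxiliary local places, each of which can be handled by Proposition \ref{prop UR sign}, Proposition \ref{prop generic linear sign}, Theorem \ref{thm st}, or Lemma \ref{lm trivial sign}. I would first fix a number field $k$ with a place $v_0$ such that $k_{v_0}=F$, a central division $k$-algebra $\mathbf{D}$ with $\mathbf{D}_{v_0}\cong D$ and split at all Archimedean places, and (in the twisted case) a global quadratic extension $l/k$ recovering the local situation at $v_0$ and split at every Archimedean place of $k$. Using a Prasad--Schulze-Pillot type globalization via a simple form of the relative trace formula, I would then produce a distinguished cuspidal automorphic representation $\Pi$ of $\mathbf{G}(\BA)$ with $\Pi_{v_0}=\pi$ whose components at a prescribed finite set of auxiliary places meet the hypotheses of Lemma \ref{lm global root nb} and which is unramified everywhere else.

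The second step is to apply Corollary \ref{cor global sign}, which yields
\[\prod_{v\in \CP(k)} \sg(\Pi_v)=1,\]
since Lemma \ref{lm global root nb} ensures that $\JL(\Pi)$ carries a global Shalika period and has trivial global root number. The third step is to compute $\sg(\Pi_v)$ at each $v\neq v_0$. At every unramified finite place, Proposition \ref{prop UR sign} gives $\sg(\Pi_v)=1$. At Archimedean places the splitting hypothesis on $l/k$ places us in the linear period setting for the split group $\GL_n(k_v)$; the global Shalika period on $\JL(\Pi)$ factorizes through local Shalika functionals on each $\Pi_v$ by multiplicity one (Corollary \ref{cor shal mult 1} and its Archimedean counterpart), and Proposition \ref{prop generic linear sign} then yields $\sg(\Pi_v)=1$. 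At the auxiliary finite places, $\Pi_v$ is taken to be a generalized Steinberg representation in the linear period case (with sign $1$ by Theorem \ref{thm st}) or a trivial representation in the twisted period case (whose sign is computable via the Steinberg root number formula recalled in Section \ref{sec LJL}), so that rearranging the product formula yields $\sg(\pi)=(-1)^m$.

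The principal obstacle is the globalization step itself, where one must simultaneously arrange: $\Pi_{v_0}=\pi$ with the local $H$-period coming from the chosen nonzero local linear form on $\pi$; a cuspidal local component to force cuspidality of the global Jacquet--Langlands transfer; a local component with Steinberg Jacquet--Langlands transfer at some auxiliary place to fit the hypotheses of Lemma \ref{lm global root nb}; and global distinguishedness, which forces each prescribed local component to be locally distinguished. In the twisted linear period case the naive choice $\St_n(\triv)$ is excluded by Theorem \ref{thm st}, and one has to select a place where $\mathbf{D}$ remains a division algebra and take $\Pi_v$ to be the trivial representation of $G_{m_v}(\mathbf{D}_v)$, whose Jacquet--Langlands transfer is Steinberg and whose sign can be computed explicitly. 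A case analysis depending on the parity of $d$ and on whether we are in the linear, twisted odd type, or twisted even type period setting is therefore unavoidable; the subcase $m=1$ reduces directly to Lemma \ref{lm trivial sign}, while the general case rests on the existence of the globalization with the prescribed local behavior.
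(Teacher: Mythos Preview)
Your overall strategy matches the paper's: globalize via \cite{PSP}, apply Corollary \ref{cor global sign}, and evaluate the local signs away from $v_0$. However, two genuine gaps remain.

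First, your choice of global model creates difficulties. You take $\mathbf{G}=\GL_m(\mathbf{D})$ with $\mathbf{D}_{v_0}\cong D$ of degree $d^2$. At an auxiliary place $w$ where $\mathbf{D}_w$ is a division algebra you would like a distinguished $\Pi_w$ with Jacquet--Langlands transfer $\St_n(\triv)$, but on $\GL_m(\mathbf{D}_w)$ the only such representation is $\St_m(\triv_{\mathbf{D}_w^\times})$, which has no twisted linear period by Proposition \ref{thm st}; and the trivial representation of $\GL_m(\mathbf{D}_w)$ does not transfer to $\St_n$ when $m>1$. The paper avoids this by globalizing instead to $\CD^\times$ for a global division algebra $\CD$ of degree $n^2$ with $\CD_{v_0}=\CM_m(D)$: at each ramified auxiliary place $w$ one then has a genuine division algebra $\mathfrak{D}_w$ of degree $n^2$, the trivial character $\triv_{\mathfrak{D}_w^\times}$ is distinguished, and $\JL_w(\triv)=\St_n(\triv_w)$, so Lemma \ref{lm global root nb} applies. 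In particular no Steinberg representation is prescribed at any auxiliary place, and Theorem \ref{thm st} plays no role in this proof.

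Second, and more importantly, you never identify the arithmetic mechanism that produces the sign $(-1)^m$. In the paper, writing the Hasse invariant of $\CM_m(D)$ as $\tfrac{x}{d}=\tfrac{a}{n}$ with $(x,d)=1$ and $a=mx$, Brauer--Hasse--Noether reciprocity forces $\CD$ to be ramified at exactly $a$ further finite places $S_0$ (each with invariant $-\tfrac{1}{n}$), and Lemma \ref{lm trivial sign} gives $\sg(\Pi_w)=-1$ at each such $w$ since $n$ is even. The extra split place $v_1$ carrying a cuspidal component contributes sign $1$ by Proposition \ref{prop generic linear sign}, and all remaining places contribute $1$ by Proposition \ref{prop UR sign}. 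Hence $\sg(\pi)=(-1)^{|S_0|}=(-1)^a$, and one concludes via the parity check $a\equiv m\pmod 2$: if $m$ is even then so is $a=mx$, while if $m$ is odd then $d$ is even, hence $x$ is odd and $a=mx$ is odd. Your ``case analysis depending on the parity of $d$'' gestures at this but supplies neither the count of ramified places dictated by Brauer reciprocity nor the parity argument, without which the conclusion cannot be reached.
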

\begin{proof}
Let $\frac{x}{d}+\BZ$ be the Hasse invariant of $\CM_m(D)$ where we take $x\in [1,d]$ and coprime to $d$, and define $a$ by the equality 
$\frac{a}{n}=\frac{x}{d}$. 

We start with the case of twisted linear periods. By Krasner's lemma and the weak approximation lemma, one can find a number field $k$ and a quadratic extension $l/k$ such that $l_v/k_v=E/F$ for some finite place $v$ of $k$, $l/k$ remains unsplit at every place of a set $S_0$ of finite places of $k$ of size $|S_0|=a$ which does not contain $v$, but $l/k$ splits at every Archimedean place of $k$ (see for instance \cite[Section 9.6]{AKMSS} for the details). Then by the Brauer-Hasse-Noether theorem, one can also find a division algebra $\CD$ with center $k$ such that $\CD_v=\CM_m(D)$, $\CD_{w}$ is a central division algebra $\mathfrak{D}_w$ over $k_w$ of Hasse invariant $\frac{-1}{n}+\BZ$ for each $w$ in $S_0$, and which is split at every other place of $k$. In this situation we observe that $l$ automatically embeds as a $k$-subalgebra of $\CD$ thanks to \cite[Theorem 1.1]{SYY}. Finally fix some finite $v_1$ outside $\{v\} \cup S_0\cup S_\infty$ such that $l_{v_1}/k_{v_1}$ is split. Putting  
\[S:=\{v\} \cup 
\{v_1\}\cup S_0\cup S_\infty\] with $S_\infty$ the set of Archimedean places of $k$, by \cite[Theorem 4.1]{PSP}, there exists a cuspidal automorphic representation $\Pi$ of $(\CD\otimes_k \BA)^{\times}$ which is unramified outside $S$, such that $\Pi_v=\pi$, and $\Pi_w=\triv_{\mathfrak{D}_w^\times}$ for $w$ in $S_0$, and $\Pi_{v_1}$ is cuspidal (and distinguished). Note that $\JL_w(\pi_w)=\St_n(\triv_w)$ for all $w\in S$ hence Lemma \ref{lm global root nb} applies. Note also that $\JL_{v_1}(\Pi_{v_1})=\Pi_{v_1}$ because $\CD_{v_1}$ is split. Now the combination of Proposition \ref{prop UR sign}, Proposition \ref{prop generic linear sign}, Lemma \ref{lm trivial sign} and Corollary \ref{cor global sign}, we obtain the formula:
\[\sg(\Pi_v)=(-1)^{a}.\] The result now follows from the fact that if $m$ is even, then $a=mx$ as well, whereas if $m$ is odd then $d$ must be even, $a=mx$ is also odd because $x$ and $d$ are coprime.

For linear periods, we use again Krasner's lemma and the weak approximation lemma, but this time to find a quadratic extension $l/k$ such that 
$l_v/k_v\simeq (F\times F)/F$ for some finite place $v$ of $k$, $l/k$ remains unsplit at every place of a set $S_0$ of finite places of $k$ of size $|S_0|=a$ which does not contain $v$, and again $l/k$ is split at every Archimedean place. The rest of the proof is the same (we can however observe that in this situation $m$ is automatically even, so \cite[Theorem 1.1]{SYY} applies again).
\end{proof}

\begin{rem}\label{rem poisson}
When $G$ is split (and also when $G$ is split over a quadratic extension), one can give a local proof of the above result, extending that of Prasad for inner forms of $\GL_2$, and which applies directly to the class of $H$-cuspidal representations of $G$ (see \cite{KT}). By \cite{KT} a cuspidal representation of $G$ is automatically $H$-cuspidal. We very briefly sketch the idea, and mention that the proof uses the realization of the contragredient of an irreducible representation using transpose inverse, hence the restrictive hypothesis on $G$. One just needs to modify an argument from the thesis \cite{Ok} of Youngbin Ok based on the Poisson summation formula, which itself extends to $\GL_n$ the quick proof by Deligne \cite{Del} of a result of Fr\"ohlich and Queyrut \cite{FQ} for $\GL_1$. Ok uses the the Poisson summation formula together with an extension of the Godement-Jacquet functional equation for relative matrix coefficients to prove the triviality of local root numbers of cuspidal representations distinguished by a Galois involution. The method of Ok in fact proves the result for relatively cuspidal representations as has been observed by Offen in \cite{Offen}. In fact \cite[Sections 2.2 and 2.3]{Offen} apply with minor modifications to our setting, using the same results from \cite{GJ}. The only difference is that in the Poisson summation formula \cite[Lemma 2.2]{Offen}, the orthogonal of the Lie algebra $\mathcal{H}$ of $H$ is $u.\mathcal{H}$ instead of $\mathcal{H}$ itself. 
Following the proof of \cite[Proposition 2.1]{Offen}, this will claim that the gamma factor of an $H$-cuspidal $\pi$ at $s=1/2$ is equal to $(-1)^{m(d-1)}\chi_{\pi}(u)=(-1)^{m}\chi_{\pi}(u)$, but this gamma factor is equal to the root number as $\pi$ is self dual and because $L(1/2,\pi)\neq 0$ because $\pi$ is unitary. We refer to the proof of Theorem \ref{thm Arch sign} for more details when $\pi$ is a discrete series representation and $G$ is a real Lie group. 
\end{rem}

\begin{rem}
Using Remark \ref{rem poisson}, one can remove the assumption that $l_{v_1}/k_{v_1}$ is split in the proof of Theorem \ref{thm cusp}.
\end{rem}

\begin{rem}\label{rem poiscaille}
When $F$ is a finite field, the sign problem makes sense as well. Multiplicity at most one is known for linear periods, at least on cuspidal representations (\cite{Sgalois}), and can certainly be checked for twisted linear periods following \cite{G}. In this setting the computation alluded to in \ref{rem poisson} works equally well and relates the sign to the Godement-Jacquet gamma factors defined in \cite{Rgamma}, with the extra complication that $G$ has not total measure in its Lie algebra anymore. Hence one has to use cuspidality to prove vanishing of some terms, in order to use the Poisson summation formula within the Godement-Jacquet functional equation. The details will appear elsewhere, and will be consistent with our result for level zero cuspidal representations.
\end{rem}

\section{The sign of generic representations}\label{sec main}

As an immediate corollary of Theorem \ref{thm linear std}, Proposition \ref{prop closed sign}, Proposition \ref{prop open sign}, and Theorem \ref{thm disc}, we achieve the goal of this paper when $F$ is $p$-adic. For Archimedean $F$ we refer to the next section. 

\begin{thm}\label{thm gen}
Let $F$ be any local field of characteristic zero, and $\pi$ be a distinguished generic representation of $\GL_m(D)$ Then \[\sg(\pi)=(-1)^m.\] 
\end{thm}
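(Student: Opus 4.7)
The plan is to reduce the sign computation for a distinguished generic $\pi$ to the discrete series case, already handled by Theorem~\ref{thm disc}, by combining the classification of distinguished standard modules (Theorem~\ref{thm linear std}) with the multiplicativity of the sign under closed intertwining periods (Proposition~\ref{prop closed sign}) and its invariance under pairing off contragredient factors via open intertwining periods (Proposition~\ref{prop open sign}).

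First I would apply Theorem~\ref{thm linear std} to $\pi$, written as a product of discrete series $\pi \simeq \delta_1 \times \dots \times \delta_r$. Distinction forces an involution $\tau \in S_r$ with $\delta_{\tau(i)} \simeq \delta_i^\vee$, and with $\delta_i$ itself distinguished whenever $\tau(i) = i$. Because $\pi$ is generic, its factors may be permuted freely inside the product, so after reordering
\[\pi \simeq \delta_1 \times \dots \times \delta_a \times \tau_1 \times \dots \times \tau_b \times \delta_a^\vee \times \dots \times \delta_1^\vee,\]
where each $\tau_j$ is a distinguished discrete series on some $G_{m_j'}$.

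Next, set $\pi_0 := \tau_1 \times \dots \times \tau_b$. As a generic product of distinguished discrete series, $\pi_0$ is itself irreducible and distinguished, and iterating Proposition~\ref{prop closed sign} gives $\sg(\pi_0) = \prod_j \sg(\tau_j)$. Then Theorem~\ref{thm disc} yields $\sg(\tau_j) = (-1)^{m_j'}$, so $\sg(\pi_0) = (-1)^{\sum_j m_j'}$. Applying Proposition~\ref{prop open sign} to $\pi$ with inner piece $\pi_0$ and outer contragredient pairs $(\delta_i, \delta_i^\vee)$ then gives $\sg(\pi) = \sg(\pi_0)$, and the elementary parity identity $m = 2\sum_i m_{\delta_i} + \sum_j m_j'$ finishes the proof: $\sg(\pi) = (-1)^{\sum_j m_j'} = (-1)^m$.

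The main technical point to verify is that the involution element $u$ chosen in Section~\ref{sec normalizer} is compatible with the block decomposition in each of the three relevant cases (linear periods with $p = q$, twisted linear periods of odd type, twisted linear periods of even type), so that Propositions~\ref{prop closed sign} and~\ref{prop open sign} apply verbatim to the particular arrangement above; multiplicity one (Theorem~\ref{thm mult 1}) is also needed to ensure that the sign character on the irreducible $\pi_0$ is well defined. For Archimedean $F$ the same strategy should work, relying on the Archimedean counterpart of Theorem~\ref{thm linear std} provided in Appendix~\ref{app ST}, and is deferred to Section~\ref{sec Arch}.
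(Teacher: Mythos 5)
Your proof is correct and reconstructs exactly the argument the paper intends, which it states only as ``an immediate corollary'' of Theorem~\ref{thm linear std}, Proposition~\ref{prop closed sign}, Proposition~\ref{prop open sign}, and Theorem~\ref{thm disc}: classify the distinguished standard module, isolate the distinguished middle block, reduce via the open-period sign comparison, and close with the parity identity. The two sanity checks you flag (compatibility of the chosen $u$ with the block arrangement, and multiplicity one to make $\chi_{\pi_0}$ well defined) are precisely the points the paper treats in Sections~\ref{sec normalizer} and \ref{sec sg open periods}.
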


\section{The case of Archimedean fields}\label{sec Arch}

Here $D$ is either $\BC$, $\BR$ or the Hamilton quaternions $\BH$. This case is actually simpler up to the classification of distinguished generic representations, which is more difficult. First we claim that the strategy for reducing to discrete series representations is exactly the same. Indeed, the classification theorem \ref{thm linear std} is proved by Suzuki and Tamori in \cite{ST} for twisted linear periods, and in Appendix \ref{app ST} for linear periods, together with \cite[Theorem 5.4]{MOY} for the sufficiency of the condition obtained in these references. The reduction then relies on open and closed intertwining periods and the properties \eqref{x} and \eqref{y} of epsilon factors before Section \ref{sec lin per}, which as we already observed are also valid in the Archimedean setting. We refer to \cite{MOY} for the state of the art concerning Archimedean intertwining periods, however we mention that in the case of interest to us, the theory of Archimedean open intertwining periods is fully established by the works of Delorme-Brylinski \cite{Brylinski-Delorme-H-inv-form-MeroExtension-Invention} and Carmona-Delorme \cite{Carmona-Delorme-H-inv-form-FE-JFA}. Now there is not too much to say in the case of discrete series representations when $D=\BC^\times$ as they are characters of $\BC^\times$. The sign of discrete series representations of $\GL_2(\BR)$ admitting a linear model is covered by the results of Friedberg and Jacquet in Section \ref{sec gen sign}, and is actually nothing else than the $\GL_2(\BR)\times \GL_1(\BR)$ Rankin-Selberg equation at $s=1/2$.

When $D=\BH^\times$ then discrete series representations of $\BH^\times$ are finite dimensional and the only possible distinction is with respect to the twisted linear model. We treat this case together with that of twisted linear models for 
discrete series representations of $\GL_2(\BR)$. In such cases Prasad's computation of the sign of cuspidal representations using the Godement-Jacquet functional equation in \cite[Theorem 4]{P} when $F$ is $p$-adic applies up to some extra observations. 

\begin{thm}\label{thm Arch sign}
Let $\pi$ be a distinguished discrete series representations of $\GL_m(D)$, then $\sg(\pi)=(-1)^m$. 
\end{thm}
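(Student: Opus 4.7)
The plan is to adapt the local argument of Prasad from \cite[Theorem 4]{P}, in the form sketched in Remark \ref{rem poisson} (which extends the thesis of Ok \cite{Ok} and the computation of Offen \cite{Offen}), from the $p$-adic cuspidal setting to the Archimedean discrete series setting. The underlying algebraic input, namely that the orthogonal complement of the Lie algebra $\mathcal{H}$ of $H$ in $\mathcal{M}_m(D)$ with respect to the trace form $(x,y)\mapsto \trd(xy)$ is $u\cdot\mathcal{H}$ and not $\mathcal{H}$ itself, is identical in both settings; the extra observations needed here are of an analytic nature.

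As a preliminary step I would note that $\pi$ is symplectic by Corollary \ref{cor symp gen} combined with Proposition \ref{prop symp disc}, hence self-dual, and that since $\pi$ is unitary, $L(s,\pi)$ is holomorphic and nonvanishing at $s=1/2$, so $\gamma(1/2,\pi,\psi)=\epsilon(\pi)$ independently of $\psi$. Fix a nonzero $L\in\Hom_H(\pi,\BC)$, so that $L\circ\pi(u)^{-1}=\chi_\pi(u)L$. For a matrix coefficient $c$ of $\pi$ and a Schwartz function $\Phi$ on $\mathcal{M}_m(D)$, consider the Godement--Jacquet integral $Z(s,\Phi,c)$. Unfolding this integral using the $H$-invariance of $L$, the functional equation at $s=1/2$, and the key fact that the Fourier transform with respect to the pairing $(x,y)\mapsto \psi(\trd(xy))$ exchanges the roles of $\mathcal{H}$ and $u\mathcal{H}$ (producing a factor $\chi_\pi(u)$ together with a Jacobian sign $(-1)^{m(d-1)}$), yields an identity of the form
\[
\epsilon(\pi)\cdot I\;=\;(-1)^{m(d-1)}\,\chi_\pi(u)\cdot I,
\]
where $I$ is a nonzero integral for a judicious choice of test data. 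Since $(-1)^{m(d-1)}=(-1)^m$ in each case of interest (namely $D=\BH$ with $d=2$, or $D=\BR$ with $d=1$ forcing $m$ to be even), and since $\chi_\pi(u)^2=1$, this gives $\sg(\pi)=\chi_\pi(u)\epsilon(\pi)=(-1)^m$.

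The main obstacle will be the analytic justification in the Archimedean setting. In the $p$-adic cuspidal case treated by Ok and Offen, matrix coefficients are compactly supported modulo the center, so the unfolding of $Z$ and the subsequent Poisson-style manipulation are relatively straightforward. For Archimedean discrete series, matrix coefficients only decay in the sense of Harish-Chandra, and one must verify by hand that the unfolding of $Z(s,\Phi,c)$ as an integral over $H\backslash G$ paired against an integral of $\Phi$ over $\mathcal{H}$, together with the subsequent interchanges of integration and application of the functional equation, all remain valid for appropriate Schwartz $\Phi$. A subsidiary issue is the nonvanishing of $I$, which should follow from $L(1/2,\pi)\neq 0$ combined with the standard fact that $\Phi\mapsto Z(s,\Phi,c)/L(s,\pi)$ can be made nonzero at any prescribed point by varying $\Phi$.
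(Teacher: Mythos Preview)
Your proposal is correct and follows essentially the same route as the paper: the Godement--Jacquet functional equation at $s=1/2$, combined with the Poisson formula and the key fact $\mathfrak{h}^\perp=u\cdot\mathfrak{h}$, yielding $\chi_\pi(u)=(-1)^{m(d-1)}\epsilon(\phi_\pi)$. The paper makes two points slightly more concrete than your sketch: first, it singles out the cases actually requiring this argument (twisted linear periods for $\BH^\times$ and $\GL_2(\BR)$), the remaining cases being handled directly or via Friedberg--Jacquet; second, rather than an abstract ``unfolding'' of $Z(s,\Phi,c)$ via an $H$-invariant linear form, it works with a specific bi-$H$-invariant matrix coefficient $c_0(g)=\langle\pi(g)v_0,v_0\rangle$ built from a $\BC^\times$-fixed vector $v_0$, and cites \cite[(9.18)]{GJ} for the absolute convergence at $s=1/2$ that resolves the analytic concern you correctly flagged.
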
 
\begin{proof}
Let us focus on the case of twisted linear models for $G=\BH^\times$ as well as $G=\GL(2,\BR)$, as we already discussed all other cases before the theorem. We treat both cases together though the arguments can be simplified a lot when  $G=\BH^\times$. First, $\pi$ is trivial on the central subgroup $\BR_{>0}$. Moreover, by density of the space $\pi_0$ of $\mathrm{SO}_2(\BR)$-finite vectors, the continuous and nonzero $\BC^\times$-invariant linear $\ell$ form on $\pi$ restricts as a non trivial linear form $\ell_0$ to that space. Hence it must restricts non trivially to the subspace of $\mathrm{SO}_2(\BR)$-fixed vectors, and in particular there is $v_0\in \pi$ which is fixed under $\BC^\times$ such that $\ell(v_0)\neq 0$. Of course the space of $\BC^\times$-fixed vectors is a line in $\pi$, otherwise the space $\BC^\times$-invariant linear forms on $\pi_0$ would have dimension $2$, and thus that of continuous $\BC^\times$-invariant linear forms on $\pi$ as well according to the automatic continuity \cite[Theorem 1]{Brylinski-Delorme-H-inv-form-MeroExtension-Invention}, so the sign of $\pi$ is actually detected by the action of $N$ on $\BC\cdot v_0$. Let $\langle \ , \ \rangle$ be a $G$-invariant scalar product on $\pi$ and call $\pi_1$ the completion of $\pi$ with respect to it (so that $\pi$ is the space of smooth vectors in $\pi_1$). We recall that this scalar product identifies $\overline{\pi_1}$ and $\pi_1^\vee=\pi_1$, and that the same holds without the index $1$ as well as with the index $0$. In particular through this identifications, $\ell_0$ identifies with $v\to \langle v , v_0 \rangle$, and $c_0(g):=\langle \pi(g)v_0, v_0  \rangle$ is a bi-$\BC^\times$-invariant matrix coefficient of $\pi_0$. The Godement-Jacquet functional equation \cite[Theorem 8.7]{GJ} for $s=1/2$ now looks like 
\[\int_G  \widehat{\Phi}(g)c_0(g^{-1}) \nu(g)dg=(-1)^{m(d-1)}\e(\phi_\pi)\int_G  \Phi(g)c_0(g)\nu(g) dg\] where $\Phi$ can be any Schwartz function on the Lie algebra $\mathfrak{g}$ of $G$, the Fourier transform $\widehat{\Phi}$ is defined with the character $\Psi$ of $\mathfrak{g}$ as in \cite[p. 113]{GJ}, and where we canceled out the nonzero L value $L(1/2,\pi_0)=L(1/2,\pi_0^\vee)$ on both sides of the functional equation. Moreover, both sides of the functional equation are actually absolutely convergent by \cite[(9.18) on p. 132]{GJ}. Now let $u$ be our representative in the non trivial class of $N_G(H)$. We observe that the orthogonal $\mathfrak{h}^\perp$ of the Lie algebra $\mathfrak{h}$ of $H$ with respect to $\Psi$ is equal to $u\cdot\mathfrak{h}=\mathfrak{h}\cdot u$. Rewriting the absolutely convergent LHS of the functional equation, we obtain:
\[\int_G  \widehat{\Phi}(g)c_0(g^{-1}) \nu(g)dg=\int_{G/H} (\int_H \widehat{\Phi}(hg) \nu(hg) dh) c_0(g^{-1})dg\]
\[=\int_{G/H} (\int_{\mathfrak{h}} \widehat{\Phi}(xg) \nu(g) dx) c_0(g^{-1})dg\] where $dx$ is the normalized Haar measure on $\mathfrak{h}$. Because \[\widehat{\Phi}(xg)=\nu(g)^{-2}\widehat{\rho(g^{-1})\Phi}(x)\] where $\rho$ denotes right translate, by the Poisson formula we obtain  
\[\int_{\mathfrak{h}} \widehat{\Phi}(xg)  \nu(g)dx=\int_{\mathfrak{h}}\Phi(xg^{-1}) \nu(g)^{-1} dx,\] hence 
\[\int_G  \widehat{\Phi}(g)c_0(g^{-1}) \nu(g)dg= 
\int_{G/H} (\int_{u\cdot\mathfrak{h}}\Phi(xg^{-1}) \nu(g)^{-1} dx) c_0(g^{-1})dg\]
\[=\int_{G/H} (\int_{H}\Phi(u^{-1}hg^{-1}) \nu(u^{-1}hg^{-1}) dh) c_0(g^{-1})dg\]
\[=\nu(u^{-1})\int_G \Phi(u^{-1}g^{-1})  c_0(g^{-1})\nu(g^{-1}) dg\]
\[=\int_G \Phi(g)  c_0(ug)\nu(g)dg=\chi_{\pi}(u)\int_G \Phi(g) \nu(g)c_0(g)dg\]
Certainly the Schwartz function $\Phi$ can be chosen such that the Zeta integral $\int_G \Phi(g) \nu(g)c_0(g)dg$ is nonzero, so we deduce from the functional equation that 
\[\chi_{\pi}(u)=(-1)^{m(d-1)}\e(\phi_{\pi})=(-1)^m\e(\phi_{\pi}).\]
\end{proof}

\appendix
\section{multiplicity one for linear periods of inner forms}\label{app mult 1}

In this appendix $F$ is allowed to be any local field of characteristic $0$. We recall that $D$ is a central $F$-division. Let $l,l'$ and $m$ be positive integers such that $l + l' =m$. We prove here that the symmetric pair \[(G:=\GL_m(D),H:=\GL_l(D) \times \GL_{l'}(D))\] is a \textit{Gelfand pair} when $|l-l'|\leq 1$, i.e., that if $\pi$ is an irreducible representation of $\GL_m(D)$ (of Casselman-Wallach type when $F$ is Archimedean), then 
\[\dim(\Hom_{H}(\pi,\BC))\leq 1.\] When $D=F$ is non Archimedean, this result is due to Jacquet and Rallis (\cite{JR}). Later Aizenbud and Gourevitch gave another proof, inspired by that of Jacquet and Rallis, which applies to all local fields of characteristic zero in 
\cite[Theorem 8.2.4]{Aizenbud-Gourevitch}. This proof was extended when $l=l'$ to unitary representations of inner forms by Chong Zhang, who obtained the following statement as a key step in the process (see \cite[Proposition 4.3 and Remark 4.7]{Zlinear}), but we claim that this step is valid for any $l$ and $l'$:

\begin{prop}
Let $\pi$ be an irreducible representation of $G$. Then \[\dim(\Hom_H(\pi,\BC))\dim(\Hom_H(\pi^\vee,\BC))\leq 1.\]
\end{prop}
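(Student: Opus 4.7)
The approach I would take is the classical Gelfand-Kazhdan method, as implemented for this symmetric pair by Jacquet-Rallis in the split non-Archimedean case, by Aizenbud-Gourevitch in the Archimedean setting, and extended to inner forms by Chong Zhang when $l = l'$. First, I would fix an $F$-anti-involution $\tau$ of the central simple algebra $A = \CM_m(D)$ that preserves the block decomposition underlying $H$. Concretely, $\tau$ can be built by extending an $F$-anti-involution of $D$ to $\CM_m(D)$ via the matrix transpose in a basis adapted to the embedding $\CM_l(D) \oplus \CM_{l'}(D) \hookrightarrow \CM_m(D)$. Setting $\sigma(g) := \tau(g)^{-1}$ yields an anti-involution of $G$ that preserves $H$, and one then verifies, using the Jacquet-Langlands transfer together with the classical MVW-type result for $\GL_n$, that the twisted representation $g \mapsto \pi(\sigma(g^{-1}))$ is isomorphic to $\pi^\vee$ for every irreducible $\pi$ of $G$.

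Next, the Gelfand-Kazhdan criterion reduces the desired bound to the following distributional statement: every distribution on $G$ invariant under the left-right action of $H \times H$ is also $\sigma$-invariant. To establish this, I would stratify $G$ by $(H \times H)$-orbits using a Bruhat-type decomposition adapted to the pair $(G,H)$, compute the reductive parts of the stabilizers, and apply a Bernstein localization argument (respectively, the sheaf-theoretic machinery of Aizenbud-Gourevitch in the Archimedean case) to reduce to a local statement at each orbit. At each orbit, the $\sigma$-invariance follows from the structure of the stabilizer, which admits no characters obstructing the desired invariance.

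The main obstacle will be handling the orbits that appear in the asymmetric case $l \neq l'$. In Zhang's symmetric setting, certain orbits come in naturally $\sigma$-paired flavors which automatically yield the required invariance; when $l \neq l'$, one has to verify orbit by orbit that the $\sigma$-invariance still persists. Fortunately, the stabilizers in the asymmetric case remain products of general linear groups over $D$ acting compatibly with $\sigma$, so the core computation from \cite{Zlinear} goes through with only combinatorial bookkeeping changes in the orbit parametrization; the geometric and distribution-theoretic inputs are unchanged. This is the reason the authors assert that Zhang's argument is valid for any $l$ and $l'$ without modification.
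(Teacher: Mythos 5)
The central ingredient in your plan does not exist in general, and this is a genuine gap. You propose to fix an $F$-anti-involution $\tau$ of the central simple algebra $A=\CM_m(D)$ built by extending an anti-involution of $D$ via transpose. But $A$ admits an $F$-algebra anti-involution if and only if $A\cong A^{\mathrm{op}}$, equivalently $D\cong D^{\mathrm{op}}$, equivalently $2[D]=0$ in $\mathrm{Br}(F)$. For a local field this means $d\le 2$: as soon as $D$ is a division algebra of degree $\ge 3$, neither $D$ nor $\CM_m(D)$ carries any $F$-anti-involution, so your $\tau$ cannot be constructed and the whole scaffolding collapses. (There is also a smaller slip: if $\tau$ is an anti-automorphism then $g\mapsto\tau(g)^{-1}$ is an \emph{automorphism}, not an anti-involution, so the map you call $\sigma$ would be of the wrong type, and the "twisted representation'' $g\mapsto\pi(\sigma(g^{-1}))$ you write down is not a representation.)

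The anti-involution that actually makes the Gelfand--Kazhdan mechanism run for this pair is the purely group-theoretic one attached to the symmetric pair, namely $\sigma(g):=\theta(g^{-1})=\delta g^{-1}\delta^{-1}$. This exists for every $(G,H,\theta)$ with $\theta=\Ad(\delta)$, no condition on $D$, and satisfies $\sigma(H)=H$ since $\sigma|_H$ is just inversion. With this $\sigma$ no MVW-type statement is required either: in the Aizenbud--Gourevitch formulation, the condition that every $H\times H$-invariant distribution on $G$ is $\sigma$-invariant (a ``GK-pair'') already yields, for every irreducible admissible $\pi$, the inequality $\dim\Hom_H(\pi,\BC)\cdot\dim\Hom_H(\pi^\vee,\BC)\le 1$ directly, which is exactly the proposition. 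The self-duality of distinguished $\pi$ (the role your proposed MVW step was meant to play) is proved elsewhere in the paper (Corollary \ref{cor symp gen}) and is only used afterwards to upgrade this to $\dim\Hom_H(\pi,\BC)\le 1$.

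Once $\sigma$ is chosen correctly, the rest of your outline is in the right spirit: the distributional statement is established via the generalized Harish--Chandra descent of Aizenbud--Gourevitch, and the work reduces to identifying the descendants of the pair. This is precisely what Zhang does in \cite{Zlinear} and what the paper appeals to: following the proof of \cite[Proposition 4.1]{Zlinear}, the descendants of $(\GL_m(D),\GL_l(D)\times\GL_{l'}(D))$ are of diagonal type, of Galois type, or again of the form $\bigl(R_{L/F}\GL_{a+b}(D'),\,R_{L/F}\GL_a(D')\times R_{L/F}\GL_b(D')\bigr)$ for a finite extension $L/F$ and an $L$-division algebra $D'$, and these are all handled by the criteria recalled before \cite[Proposition 4.3]{Zlinear}. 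Your remark that passing from $l=l'$ to arbitrary $l,l'$ only changes the combinatorics of the orbit parametrization and not the nature of the stabilizers is essentially the observation the paper is making; so once the anti-involution is corrected, your outline and the paper's proof coincide.
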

\begin{proof}
Following the proof of \cite[Proposition 4.1]{Zlinear} and using the terminology introduced in \cite{BM}, one sees that the descendants of the pair $(G,H)$ can only be of diagonal type, of Galois type or of the form \[(R_{L /F}\GL_{a+b}(D' ), R_{L /F}\GL_a(D' )\times R_{L /F}\GL_b(D' ))\] for $D'$ an $L$-division algebra, where $L$ is a finite extension of $F$ and $R_{L /F}$ is Weil restriction of scalars. This is enough to conclude according to the discussion before \cite[Proposition 4.3]{Zlinear} and the references to \cite{Aizenbud-Gourevitch} given there.
\end{proof}

In order to prove that $(G,H)$ is a Gelfand pair when $|l-l'|\leq 1$, it is enough to prove that $H$-distinguished irreducible representations of $G$ are selfdual. This has been proved in Corollary \ref{cor symp gen} in the $p$-adic case, and follows from a verbatim adaptation of the proof of \cite[Theorem 6.7]{BM} in the Archimedean case. Hence we obtain:

\begin{thm}\label{thm mult 1 Arch}
Let $\pi$ be an irreducible representation of $G$, and suppose that $|l-l'|\leq 1$. Then \[\dim(\Hom_{H}(\pi,\BC))\leq 1.\]
\end{thm}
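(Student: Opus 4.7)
The plan is to combine two ingredients to obtain the multiplicity one bound. The first is the preceding proposition, which already yields the product bound
\[
\dim \Hom_{H}(\pi,\BC)\cdot\dim \Hom_{H}(\pi^{\vee},\BC)\leq 1.
\]
So it suffices to upgrade this to the individual bound, and for that it is enough to show that every $H$-distinguished irreducible representation $\pi$ of $G$ is self-dual, since then $\dim \Hom_{H}(\pi,\BC)=\dim \Hom_{H}(\pi^{\vee},\BC)$ and an integer whose square is at most $1$ is itself at most $1$.

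For the self-duality, I would split according to whether $F$ is non-Archimedean or Archimedean. When $F$ is $p$-adic, the self-duality assertion is exactly the content of Corollary \ref{cor symp gen}, which covers the case $|l-l'|\leq 1$ via the classification of distinguished standard modules in Theorem \ref{thm linear std} together with the fact that distinguished discrete series are symplectic (Proposition \ref{prop symp disc}) and the Langlands quotient theorem. So in this case there is nothing more to do beyond invoking that corollary together with the proposition above.

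When $F$ is Archimedean, I would transport the argument of \cite[Theorem 6.7]{BM} to our symmetric pair $(G,H)$. The input that makes the adaptation essentially mechanical is that the relevant descendants of $(G,H)$ were already classified in the proof of the preceding proposition, and they are all either of diagonal type, of Galois type, or of the form $(R_{L/F}\GL_{a+b}(D'),R_{L/F}\GL_a(D')\times R_{L/F}\GL_b(D'))$. For each such descendant one verifies the analogue of the Jacquet--Rallis involution (for instance transpose with respect to an appropriate anti-involution of $\CM_m(D)$ preserving $H$), and then the usual Gelfand--Kazhdan type argument, in its Archimedean incarnation through the Aizenbud--Gourevitch distribution theoretic framework \cite[Theorem 8.2.4]{Aizenbud-Gourevitch}, yields $\pi\simeq \pi^{\vee}$ whenever $\Hom_{H}(\pi,\BC)\neq 0$.

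The main obstacle is the construction of this anti-involution on $\CM_m(D)$ preserving $H=\GL_l(D)\times\GL_{l'}(D)$ in the case $|l-l'|=1$ (the case $l=l'$ being handled in \cite{Zlinear}), but this is a purely linear algebra verification once one has the list of descendants: one chooses an anti-involution of $D$, extends it coordinatewise, and composes with an inner automorphism by an element of the normalizer of $H$ chosen so as to swap the roles of the two blocks appropriately. With this in hand, the conclusion is immediate by the product bound and the reduction above.
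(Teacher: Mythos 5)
Your reduction is the same as the paper's: the preceding proposition gives the product bound, and it then suffices to prove that every $H$-distinguished irreducible representation is self-dual (or, weaker, that $\Hom_H(\pi,\BC)$ and $\Hom_H(\pi^\vee,\BC)$ have the same dimension). The $p$-adic case via Corollary \ref{cor symp gen} also matches the paper verbatim. The divergence is in the Archimedean step.

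The paper obtains Archimedean self-duality by a \say{verbatim adaptation of the proof of \cite[Theorem 6.7]{BM}}, and given the surrounding machinery it provides for precisely this purpose (the Archimedean classification of distinguished standard modules in Appendix \ref{app ST}, together with \cite[Theorem 5.4]{MOY}), this is a classification-theoretic argument: a distinguished standard module must be built from dual pairs of discrete series plus self-dual distinguished ones, so its Langlands quotient is self-dual. Your proposal instead tries to run a Gelfand--Kazhdan-type anti-involution argument, and as written it has two real problems. First, even when it applies, the Gelfand--Kazhdan/Aizenbud--Gourevitch machinery does not yield $\pi\simeq\pi^\vee$; it yields $\dim\Hom_H(\pi,\BC)=\dim\Hom_H(\pi^\vee,\BC)$ (which would be enough, but is not the statement you assert and is not what \cite[Theorem 8.2.4]{Aizenbud-Gourevitch} says either---that reference is the split-$\GL$ multiplicity one theorem, not a self-duality statement). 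Second, you identify the construction of the anti-involution of $\CM_m(D)$ preserving $H$ as \say{the main obstacle} and call it \say{purely linear algebra}, but the anti-involution is in fact easy (the main involution of $D$ applied entrywise, composed with transpose, fixes any $L_{l,l'}$ regardless of whether $l=l'$). The genuinely hard inputs, which you do not address, are (i) that $\pi\circ\sigma\simeq\pi^\vee$ for irreducible $\pi$ of the Archimedean inner form, which requires its own citation, and (ii) the invariance of $H$-biinvariant distributions under $\sigma$---and the latter is exactly what Zhang's descendant analysis was designed to circumvent. Your parenthetical that \say{the case $l=l'$ is handled in \cite{Zlinear}} by such an anti-involution is also a misreading: Zhang's argument for $l=l'$ is the same descent-to-product-bound argument you are already using, supplemented by $\pi^\vee\simeq\overline\pi$ for \emph{unitary} $\pi$, not an anti-involution construction. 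In short, the structure is right and the $p$-adic half is correct, but the Archimedean self-duality step is both logically imprecise (confusing dimension equality with isomorphism) and not the mechanism the paper uses; you should instead invoke the Archimedean classification of distinguished standard modules supplied in Appendix \ref{app ST}.
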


When $\pi$ is unitary, we can remove the assumption on $p$ and $q$ by the observation in the proof of \cite[Corollary 4.4]{Zlinear}.

\section{Bernstein-Zelevinsky theory for the mirabolic subgroup of inner forms and linear periods}\label{app BZ der}

Here we only make observations without proofs, about some results which remain valid from the mirabolic restriction theory of Bernstein-Zelevinsky (\cite{BZ1} and \cite{BZ2}) for inner forms of the generalized linear group. We define the mirabolic subgroup $P_m$ of $G_m:=\GL_m(D)$ as the group of matrices in $G_m$ with bottom row equal to $(0,\dots,0,1)$. We denote by $U_m$ its unipotent radical, so that the mirabolic subgroup is the semi-direct product $P_m=G_{m-1}.U_m$, where we identify $G_{m-1}$ with a subgroup of $G_m$ by the embedding 
\[g\mapsto \diag(g,1).\] The group $G_{m-1}$ thus acts naturally on the group $\widehat{U_m}$ of characters of $U_m$ by 
\[(g,\psi)\mapsto \psi(g^{-1}\ \cdot \ g),\] with exactly two orbits: the trivial character and the others. Moreover all non degenerate characters (see \cite{BH}) of the maximal unipotent upper triangular subgroup of $G_m$ are conjugate under the action of the diagonal matrices in $G_m$. This makes some part of the machinery of Bernstein-Zelevinsky work the same and we simply observe that the following results hold with exactly the same proofs. First the functors $\Phi^\pm$ and $\Psi^\pm$ (and $\hat{\Phi}^+$ defined as in \cite[Section 3]{BZ2}, satisfy the statements of \cite[Propositions 3.2 and 3.4]{BZ2} with the same proofs (relying on \cite[Section 5.11]{BZ1}). Following the authors of 
\cite{BZ2}, for 
$\tau$ a smooth representation of $P_m$ and $k$ between $1$ and $n$ we put \[\tau^{(k)}=\Psi^-(\Phi^-)^{k-1}(\tau),\] and 
 \[\tau^{(0)}=\tau.\]
Then the Bernstein-Zelevinsky filtration still exists for smooth representations of $P_m$:

\begin{thm}{\cite[Corollary 3.4]{BZ2}}
Let $\tau$ be a smooth representation of $P_m$ with $m\geq 2$. Then $\tau$ admits a filtration \[\{0\}\subseteq \tau_m \subseteq \dots \subseteq \tau_1=\tau \] 
with \[\tau_k/\tau_{k+1}\simeq (\Phi^+)^{k-1}\Psi^+(\tau^{(k)}).\]

\end{thm}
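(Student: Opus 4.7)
The plan is to mirror Bernstein and Zelevinsky's derivation of \cite[Corollary 3.4]{BZ2} word for word, since every ingredient used there has just been asserted to carry over to the inner form setting. The central input is the short exact sequence
\begin{equation*}
0 \to \Phi^+\Phi^-(\sigma) \to \sigma \to \Psi^+\Psi^-(\sigma) \to 0,
\end{equation*}
attached to any smooth representation $\sigma$ of $P_k$ with $k \geq 2$, which is a reformulation of \cite[Propositions 3.2 and 3.4]{BZ2}; these propositions also supply the exactness of $\Phi^+, \Phi^-, \Psi^+, \Psi^-$ together with the injectivity of the canonical natural transformation $\Phi^+\Phi^- \to \mathrm{id}$, which is all that is needed below.

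Granting this, I would define the filtration by setting
\begin{equation*}
\tau_k := (\Phi^+)^{k-1}(\Phi^-)^{k-1}(\tau) \quad \text{for } 1 \leq k \leq m,
\end{equation*}
with $\tau_1 = \tau$ and the convention $\tau_{m+1} = 0$. The embedding $\tau_{k+1} \hookrightarrow \tau_k$ is obtained by applying the exact functor $(\Phi^+)^{k-1}$ to the canonical injection $\Phi^+\Phi^-(\sigma) \hookrightarrow \sigma$ for $\sigma := (\Phi^-)^{k-1}(\tau) \in \mathrm{Rep}(P_{m-k+1})$. That $\tau_{m+1}=0$ reflects the fact that $\Phi^-$ vanishes on representations of the trivial group $P_1$, so $(\Phi^-)^m(\tau)=0$.

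The identification of successive quotients is then obtained by applying $(\Phi^+)^{k-1}$ to the fundamental exact sequence for $\sigma = (\Phi^-)^{k-1}(\tau)$, which yields
\begin{equation*}
0 \to \tau_{k+1} \to \tau_k \to (\Phi^+)^{k-1}\Psi^+\Psi^-(\Phi^-)^{k-1}(\tau) \to 0.
\end{equation*}
Since $\tau^{(k)} = \Psi^-(\Phi^-)^{k-1}(\tau)$ by definition, this identifies $\tau_k/\tau_{k+1}$ with $(\Phi^+)^{k-1}\Psi^+(\tau^{(k)})$, as claimed.

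The proof is thus formal once the inner form analogues of \cite[Propositions 3.2 and 3.4]{BZ2} are in hand, and no real obstacle remains beyond those propositions themselves. The conceptual reason the same argument goes through is the observation recorded just above the statement: non-degenerate characters of the maximal upper triangular unipotent of $G_m$ form a single diagonal conjugacy class, so that $G_{m-1}$ has exactly two orbits on $\widehat{U_m}$, and the orbit geometry underlying the Bernstein-Zelevinsky formalism is identical to that of the split case. Consequently the main task is purely bookkeeping.
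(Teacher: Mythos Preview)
Your proposal is correct and is exactly the standard Bernstein--Zelevinsky argument; the paper does not give an independent proof but simply records that \cite[Corollary~3.4]{BZ2} holds with the same proof once \cite[Propositions~3.2 and~3.4]{BZ2} are known for inner forms, which is precisely what you have written out. The only minor looseness is the handling of $\tau_{m+1}=0$: rather than invoking $(\Phi^-)^m$, it is cleaner to note that for $\sigma=(\Phi^-)^{m-1}(\tau)\in\mathrm{Rep}(P_1)$ one has $\sigma=\Psi^+\Psi^-(\sigma)$ tautologically (both $P_1$ and $G_0$ being trivial), so $\tau_m=(\Phi^+)^{m-1}\Psi^+(\tau^{(m)})$ directly.
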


Following again Bernstein and Zelevinsky, we define the derivatives of a representation of $G_m$ to be that of its restriction to $G_m$. Now let $\pi$ be a cuspidal representation of $G_m$. Because of the filtration above, and because $\pi^{(k)}=\{0\}$ for all $k=1,\dots,m-1$ (this is by the definition of the derivatives, which factor through some Jacquet module), we obtain the following corollary of the Bernstein-Zelevinsky filtration above:

\begin{cor}
Let $\pi$ be a cuspidal representation of $G_m$. Then $\pi=(\Phi^{+})^{m-1}(\Psi^+)(\pi^{(n)})$ as a representation of $P_m$, where $\pi^{(n)}$ is a finite dimensional $D$-vector space, the dimension of which is that of the generalized Whittaker model of $\pi$ in the sense of \cite{BH}.
\end{cor}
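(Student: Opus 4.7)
The plan is to combine the Bernstein--Zelevinsky filtration stated immediately before with cuspidality of $\pi$, which forces all intermediate derivatives to vanish and collapses the filtration to its bottom piece. Concretely, I would first show $\pi^{(k)}=0$ for each $1 \leq k \leq m-1$, then apply the filtration, then identify the surviving top derivative with the generalized Whittaker model.

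For the vanishing step, unwinding the definition $\pi^{(k)}=\Psi^-(\Phi^-)^{k-1}(\pi)$ as in \cite{BZ2}, which transfers verbatim to the inner form setting thanks to the observations of the preceding subsection, realises $\pi^{(k)}$ as a twisted coinvariant quotient of the ordinary Jacquet module $\pi_{V_k}$, where $V_k$ is the unipotent radical of the standard parabolic of $G_m$ with Levi $G_{m-k}\times G_k$. Cuspidality of $\pi$ forces $\pi_{V_k}=0$, hence $\pi^{(k)}=0$. Plugging this into the filtration $\{0\}\subseteq\tau_m\subseteq\cdots\subseteq\tau_1=\pi$, every successive quotient $\tau_k/\tau_{k+1}\simeq(\Phi^+)^{k-1}\Psi^+(\pi^{(k)})$ is zero for $1\leq k\leq m-1$, so $\pi=\tau_1=\cdots=\tau_m=(\Phi^+)^{m-1}\Psi^+(\pi^{(m)})$, which is the claimed $P_m$-equivariant identification.

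It remains to analyse $\pi^{(m)}$. Being a module over the trivial group $G_0$, it is just a complex vector space, and iterating the derivative functors $m$ times presents it as the twisted coinvariant space of $\pi$ along the maximal upper triangular unipotent subgroup $N$ of $G_m$ against a non-degenerate character in the sense of \cite{BH}. Its linear dual is therefore the space of generalized Whittaker functionals on $\pi$, whose finite-dimensionality follows from admissibility of $\pi$ together with the standard argument that twisted Jacquet modules at non-degenerate characters are finite-dimensional. The main technical point I expect to be the bookkeeping of the natural $D$-module structure on $\pi^{(m)}$ inherited from the last diagonal $D^\times$-action, and matching it with the $D$-module structure on the generalized Whittaker model of Bushnell--Henniart; this requires careful unfolding of the functors in the non-split setting but introduces no new idea beyond the split case.
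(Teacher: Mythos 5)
Your proof follows the paper's route exactly: the intermediate derivatives $\pi^{(k)}$ for $1\leq k\leq m-1$ vanish because cuspidality kills the Jacquet modules through which they factor, the Bernstein--Zelevinsky filtration then collapses to its bottom piece $(\Phi^+)^{m-1}\Psi^+(\pi^{(m)})$, and the dual of the top derivative is identified with the space of generalized Whittaker functionals. The one step you under-justify is the finite-dimensionality of that space: for the non-split group $\GL_m(D)$ this is not a formal consequence of admissibility (the quasi-split uniqueness/finiteness theorems for Whittaker functionals do not literally apply) but is precisely the Moeglin--Waldspurger theorem on degenerate Whittaker models, which is what the paper cites as \cite{MW}; note also that ``$D$-vector space'' in the statement is almost certainly a misprint for $\BC$-vector space (and $\pi^{(n)}$ for $\pi^{(m)}$), so the $D$-module bookkeeping you anticipate in your last paragraph is not needed.
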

\begin{proof}
The only assertion remaining is that on the dimension of $\pi^{(n)}$. By definition its dual is the space of generalized Whittaker functionals on $\pi$, and it is finite dimensional by \cite{MW}.
\end{proof}

In turn, this is sufficient for the following result concerning cuspidal representations of $G_m$ to hold with the same proof as in \cite{MatCRAS} (see also \cite{MatCMRL} for the version with the twisting character).

\begin{prop}\label{prop cusp p=q}
Let $p$ and $q$ be two non negative integers such that $p+q=m\geq 2$. Let $\mu$ be a character of $H_{l,l'}$ and $\pi$ be a cuspidal representation of $G_m$. If 
 $\pi$ is $(H_{l,l'},\mu)$-distinguished, then $l=l'$.
\end{prop}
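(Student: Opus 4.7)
The plan is to follow the strategy of \cite{MatCRAS} (and its twisted character extension \cite{MatCMRL}), which handled the split case $D=F$, substituting the usual Kirillov model by its inner-form analogue supplied by the corollary immediately preceding the statement: for cuspidal $\pi$, the identification $\pi|_{P_m}\simeq (\Phi^+)^{m-1}\Psi^+(\pi^{(m)})$ realizes $\pi|_{P_m}$ as $\cind_{N_m}^{P_m}(\psi)\otimes \pi^{(m)}$, where $N_m$ is the maximal upper-triangular unipotent subgroup of $G_m$, $\psi$ is a non-degenerate character of $N_m$ in the sense of \cite{BH}, and $\pi^{(m)}$ is the finite-dimensional generalized Whittaker space.

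Suppose then that $L\in \Hom_{H_{p,q}}(\pi,\mu)$ is nonzero. I would first restrict $L$ to the subgroup $H_{p,q}\cap P_m$. Since cuspidality implies that matrix coefficients of $\pi$ are compactly supported modulo the center $Z_m$, and $P_m$ is cocompact in $G_m$ modulo $Z_m$, this restriction remains nonzero and yields a nontrivial element of $\Hom_{H_{p,q}\cap P_m}(\cind_{N_m}^{P_m}(\psi)\otimes \pi^{(m)},\mu)$. Frobenius reciprocity for compact induction then realizes this space as a direct sum, indexed by the double cosets $(H_{p,q}\cap P_m)\backslash P_m/N_m$, of $\bigl((H_{p,q}\cap P_m)\cap g N_m g^{-1}\bigr)$-invariants in the character $\mu\cdot(g.\psi)^{-1}$.

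The main step is then a purely geometric analysis of these double cosets and stabilizers. Following \cite{MatCRAS} and \cite{MatCMRL}, the key lemma is that when $p\neq q$, for every representative $g$ there exists a unipotent element $u\in (H_{p,q}\cap P_m)\cap g N_m g^{-1}$ on which $\mu$ is trivial while $g.\psi$ is nontrivial, forcing every summand in the Frobenius decomposition to vanish and hence $L=0$. The $p=q$ case escapes this obstruction thanks to the symmetric block structure of $H_{p,p}$, which is compatible with the standard upper-triangular form of $\psi$.

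The main obstacle will be checking that the orbit-and-stabilizer computations of \cite{MatCRAS,MatCMRL}, which were carried out for the split group $\GL_m(F)$, transfer without modification to a general $D$. This is expected to hold because those computations rely only on the matrix combinatorics of $\GL_m$ over a unital ring and on the non-degeneracy of $\psi$; the one analytic input specific to inner forms, namely the Bernstein-Zelevinsky-type description of $\pi|_{P_m}$, is precisely what Appendix \ref{app BZ der} supplies. Once this reduction is made, the argument of loc.\,cit.\ applies essentially verbatim.
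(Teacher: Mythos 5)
Your proposal matches the paper's approach: the paper's own ``proof'' consists of the remark, placed just before the statement, that once Appendix~\ref{app BZ der} supplies the identification $\pi|_{P_m}\simeq(\Phi^+)^{m-1}\Psi^+(\pi^{(m)})$ for cuspidal $\pi$, the argument of \cite{MatCRAS} (and \cite{MatCMRL} for the twisted-character version) carries over verbatim, and that is exactly the strategy you lay out.

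Two small quibbles on your write-up. First, the appeal to compactly supported matrix coefficients and cocompactness of $Z_m P_m$ in $G_m$ is not what makes the restriction nonzero: a nonzero $H_{p,q}$-equivariant linear form $L$ on $\pi$ gives a nonzero element of $\Hom_{H_{p,q}\cap P_m}(\pi|_{P_m},\mu)$ for free, simply because it is the same functional on the same underlying space, equivariant under the smaller group $H_{p,q}\cap P_m\subset H_{p,q}$. Cuspidality is only used, as you indeed also note, to collapse the Bernstein--Zelevinsky filtration of $\pi|_{P_m}$ to the single top layer. Second, the Mackey analysis of the Kirillov model restricted to $H_{p,q}\cap P_m$ produces a filtration (and a corresponding product decomposition after taking $\Hom$), not a direct sum; the vanishing-by-unipotent-witness argument has to be run on each graded piece. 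Neither issue affects the validity of the approach.
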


We extend this result to discrete series representations in Section \ref{sec steinberg}. Here we treat separately the case of 
Steinberg representations, as its proof is different and uses the theory of derivatives. 

More precisely, a careful analysis of \cite{BZ2} and \cite{Z} shows that the full theory developed in these papers hold for representations with cuspidal support consisting of characters of $G_1=D^\times$. 
Let us explain the crucial point. The result of \cite[Theorem 4.2]{BZ2} for a representation $\pi:=\chi_1\times \dots \times \chi_r$ induced by characters of $D^\times$ becomes in our setting that $\pi$ is irreducible as soon as $\chi_i\chi_j^{-1}\neq \mu$ for any $i\neq j$. But following \cite{BZ2}, with notations and terminology identical to that of \cite[Section 4.8]{BZ2}, the natural 1-pairing between $\pi$ and $\tilde{\pi}$ induces a $\mu$-pairing between $\pi$ and $\overline{\pi}:=\mu \pi$, which induces by restriction to the mirabolic subgroup $P_r$ a $\Delta$-pairing between $\pi_{|P_r}$ and 
$\overline{\pi}_{|P_r}$. This latter assertion becomes false if one replaces $\mu$ by $\mu^s$ for any other real number $s$, but for $s=1$ it implies that the proof of \cite[Lemma 4.7]{BZ2} remains valid when the cuspidal representations $\rho_i$ there are replaced here by characters $\chi_i$ of $D^\times$. The claim one has to check in turn is that this lemma is the key to have a complete theory of derivatives. In particular the following proposition holds, as one can check from its proof in \cite[Proposition 9.5]{Z} and the formula for Jacquet modules of generalized Steinberg representations in \cite[Proposition 3.1]{Tglna}. 

\begin{prop}
Let $m\geq 1$ and $k$ be between $0$ and $n$, and let $\chi$ be a character of $G_1$. Then 
\[\St_n(\chi)^{(k)}=\nu^{k/2}\St_{n-k}(\chi).\]
\end{prop}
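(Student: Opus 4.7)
The plan is to transport the classical argument of \cite[Proposition 9.5]{Z} to the present setting, relying on the fact that the full Bernstein--Zelevinsky formalism of derivatives has just been shown to remain valid for representations of $G_m(D)$ whose cuspidal support consists of characters of $G_1=D^\times$, and that the Jacquet module of $\St_n(\chi)$ along any standard parabolic is computed by \cite[Proposition 3.1]{Tglna}.

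I would argue by induction on $n$, the case $n=1$ reducing at once to $\chi^{(0)}=\chi$ and $\chi^{(1)}=\BC$, which follow from the definitions of $\Psi^-$ and $\Phi^-$ on $G_1$. For the inductive step, the key input is the Leibniz-type filtration of the $k$-th derivative functor applied to an induced representation, which follows from the extended Bernstein--Zelevinsky filtration recalled just above together with the behaviour of $\Phi^{\pm}$ and $\Psi^{\pm}$ under parabolic induction in the analogue of \cite[Section 4]{BZ2}. Applied to the realization of $\St_n(\chi)$ as the unique irreducible submodule of the reversed product
\[\pi':=\nu^{(n-1)s(\chi)/2}\chi\times \nu^{(n-3)s(\chi)/2}\chi\times \dots \times \nu^{-(n-1)s(\chi)/2}\chi,\]
this filtration exhibits $(\pi')^{(k)}$ as glued from subquotients obtained by choosing $k$ of the $n$ character factors, replacing each by its first derivative (the trivial representation of $G_0$, up to a $\nu^{1/2}$-twist), and inducing the remaining $n-k$ factors.

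To conclude, I would identify which subquotient actually supports the image of $\St_n(\chi)^{(k)}$. The Jacquet module computation of \cite[Proposition 3.1]{Tglna} shows that the embedding $\St_n(\chi)\hookrightarrow \pi'$ survives, after taking the $k$-th derivative, only on the subquotient in which the $k$ leftmost factors are derived; this subquotient is precisely $\nu^{k/2}$ times the reversed product of $n-k$ copies of appropriately shifted $\chi$'s. The inductive hypothesis at rank $n-k$, combined with the irreducibility criterion for products $\chi_1\times\dots\times\chi_r$ of characters of $D^\times$ recalled above, identifies the unique irreducible submodule of the latter as $\St_{n-k}(\chi)$ and rules out further cancellation.

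The main technical obstacle is bookkeeping with modular characters: one must verify that the $\nu^{k/2}$-twist in the target matches the cumulative normalisation coming from the half-integer shifts in $\Psi^-$ and $\Phi^-$ applied $k$ times on the inner form $G_m(D)$ rather than on the split $\GL_n(F)$. Since $\nu=|\nrd|_F$ plays precisely the role that $|\det|_F$ plays in Zelevinsky's original setup, this is a direct computation, made rigorous by the $\mu$-pairing with $\mu=\nu^d$ that underlies the extension of \cite[Lemma 4.7]{BZ2} to the present context; no substantially new input is required beyond what has already been set up.
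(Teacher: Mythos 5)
Your outline matches the paper's (one-line) proof: both transport Zelevinsky's argument for \cite[Proposition 9.5]{Z} to $G_m(D)$ by combining the extended Bernstein--Zelevinsky derivative theory of Appendix~\ref{app BZ der} with the Jacquet module formula for generalized Steinberg representations of \cite[Proposition 3.1]{Tglna}. The bookkeeping you flag as the main obstacle is indeed the only real content, and the paper leaves it equally implicit; just note that the $\nu^{1/2}$-shift in $\Psi^{-}$ on $G_m(D)$ is governed by $\mu^{1/2}=\nu^{d/2}$ (the modulus of the $G_{m-1}$-action on $U_m\cong D^{m-1}$, not the $\mu$-pairing of \cite[Lemma 4.7]{BZ2}, which serves a different purpose in the appendix), and that it is the factors of $\pi'$ sitting next to the bottom-right mirabolic corner, i.e.\ the rightmost ones in your decreasing order, that are consumed by the derivative.
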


The the following follows immediately as in \cite[Theorem 3.1]{MatJNT}

\begin{prop}\label{prop st p=q}
Suppose that the representation $\St_n(\chi)$ above is $(H_{l,l'},\mu)$-distinguished for some character $\mu$ of $H_{l,l'}$. 
Then $p=q$.
\end{prop}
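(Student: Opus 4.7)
The plan is to transport the proof of \cite[Theorem 3.1]{MatJNT} from the split case $D=F$ to the inner form setting, using as the sole new inputs the Bernstein--Zelevinsky filtration for smooth representations of $P_m$ and the derivative formula $\St_n(\chi)^{(k)} = \nu^{k/2}\St_{n-k}(\chi)$, both of which were just established in this appendix. Note that these are precisely the two ingredients used in \cite{MatJNT}, and all other steps of the argument there rest only on the formal properties of the functors $\Psi^\pm,\Phi^\pm$ and on Frobenius reciprocity, which are insensitive to the replacement of $F$ by $D$.

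Concretely, I would argue by induction on $n$. Assuming $\Hom_{H_{l,l'}}(\St_n(\chi),\mu)\neq 0$, I would pick a nonzero invariant linear form $L$, restrict $\St_n(\chi)$ to the mirabolic $P_n$, and filter this restriction by the Bernstein--Zelevinsky filtration
\[
0 \subseteq \tau_n \subseteq \cdots \subseteq \tau_1 = \St_n(\chi)\vert_{P_n},
\qquad \tau_k/\tau_{k+1} \simeq (\Phi^+)^{k-1}\Psi^+\bigl(\nu^{k/2}\St_{n-k}(\chi)\bigr).
\]
Analyzing the action of $H_{l,l'}\cap P_n$ on each layer as in \cite[Theorem 3.1]{MatJNT}, one shows that the existence of $L$ forces some graded piece to admit a nonzero equivariant linear form for an appropriate character. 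By Frobenius reciprocity for $\Phi^+$ and $\Psi^+$, this is the same data as a distinction statement $\Hom_{H_{l_1,l'_1}}(\St_{n-k}(\chi),\mu_1)\neq 0$ for parameters satisfying $l_1+l'_1 = n-k$ and $l-l' = l_1-l'_1$ (up to a controlled boundary adjustment coming from the passage through $\Psi^+$). The induction hypothesis, together with the base case $k=n-1$ handled by Proposition \ref{prop cusp p=q} (the $(n-k)=1$ situation reducing to a character of $D^\times$, whose distinction forces the difference of block sizes to vanish), then yields $l_1=l'_1$, and unwinding the relationship between $(l,l')$ and $(l_1,l'_1)$ gives $l=l'$.

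The main obstacle is the careful orbit-theoretic bookkeeping in the filtration step: one must match the modulus character of $H_{l,l'}\cap P_n$ against the twist $\nu^{k/2}$ appearing in the derivative formula, and rule out that a nontrivial character $\mu$ of $H_{l,l'}$ could absorb an imbalance between $l$ and $l'$. This is exactly the combinatorial computation done in \cite[Section 3]{MatJNT}, and it goes through unchanged here because $\nu$ in the inner form is defined using the reduced norm and so the modulus character computation is identical after replacing $|\det|_F$ by $\nu$ throughout. For this reason the statement follows, as the paragraph preceding the proposition already asserts, immediately from the split-case argument.
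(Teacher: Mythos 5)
Your proposal takes exactly the approach the paper intends: the paper's entire proof is the sentence preceding the proposition, which asserts the result ``follows immediately as in \cite[Theorem 3.1]{MatJNT},'' and your core observation --- that the split-case argument transfers verbatim once one has the inner-form Bernstein--Zelevinsky filtration and the derivative formula $\St_n(\chi)^{(k)} = \nu^{k/2}\St_{n-k}(\chi)$, both of which this appendix has just established --- is precisely the content of that reference. One small inaccuracy in your sketch: invoking Proposition \ref{prop cusp p=q} as the base case at $n-k=1$ cannot be literally right, since that proposition requires $m\ge 2$, and a character of $D^\times$ is trivially $H_{1,0}$-distinguished with $l_1-l'_1=1$; in the actual MatJNT argument the contradiction is forced by the $\nu^{k/2}$ twist against the modulus characters of $H_{l,l'}\cap P_n$ in the orbit analysis (together with the cuspidal case for $m\ge 2$), not by a reduction to a character, but this is a detail internal to the very argument you correctly identify as transferring unchanged.
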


\section{A multiplicity zero result for generic representations}\label{app mult zero}

Here $F$ is $p$-adic. He prove a multiplicity zero result which applies to a class larger than that of generic representations. 

\begin{lem}\label{lm prod}
Let $p\geq q$ be two non negative integers, $m_0$ and $m_1$ be two positive integers such that $m_0+m_1=l+l':=m$. Set $G=G_m$, and set $H=H_{l,l'}$. Let $\pi_0$ be an irreducible representation of $G_{m_0}$ with cuspidal support containing no character of $G_1$, let $\pi_1$ an irreducible representation of $G_{m_1}$ with cuspidal support consisting of characters of $G_1$ only, and let $\chi$ be a character of $H$. In such a situation, if 
$\pi_0\times \pi_1$ is $\chi$-distinguished, then $m_0$ is even, $q\geq m_0/2$, and there exist characters $\chi_0$ of $H_{m_0/2 ,m_0/2 }$ and $\chi_1$ of $H_{l-m_0/2 ,l'-m_0/2}$ such that $\pi_0$ is $\chi_0$-distinguished and $\pi_1$ is $\chi_1$-distinguished.
\end{lem}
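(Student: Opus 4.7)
The strategy is a two-tiered application of the geometric lemma \cite[Corollary 5.2]{OJNT}: first to the parabolic $P_{(m_0, m_1)}$ in order to separate the distinction data between $\pi_0$ and $\pi_1$, then to a standard module above $\pi_0$ in order to force the symmetry $m_{11}^+ = m_{11}^- = m_0/2$.

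First I would apply the geometric lemma to $P = P_{(m_0, m_1)}$ in $G_m$, with Levi $M = G_{m_0} \times G_{m_1}$ carrying $\pi_0 \otimes \pi_1$, and to the right action of $H = H_{l, l'}$. Following Section \ref{sec geom lem prelim}, the relevant orbits are parametrized by symmetric $2 \times 2$ matrices $(m_{ij})$ with diagonal splittings $m_{ii} = m_{ii}^+ + m_{ii}^-$, and $\chi$-distinction produces an orbit $s$ for which the Jacquet module $r_{M_s, M}(\pi_0 \otimes \pi_1)$, which factors as $r_{(m_{11}, m_{12})}(\pi_0) \otimes r_{(m_{21}, m_{22})}(\pi_1)$, admits a nonzero equivariant functional with equivariance prescribed by \eqref{eq::geom-lemma}. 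The off-diagonal part of that condition requires an irreducible constituent $\tau_{12}$ of $r_{(m_{11}, m_{12})}(\pi_0)$ on $G_{m_{12}}$ to be isomorphic to the contragredient $\tau_{21}^\vee$ of a constituent $\tau_{21}$ of $r_{(m_{21}, m_{22})}(\pi_1)$ on $G_{m_{21}} = G_{m_{12}}$. But the cuspidal support of $\tau_{12}$ is contained in that of $\pi_0$ and so contains no character of $G_1$, whereas the cuspidal support of $\tau_{21}^\vee$ lies in that of $\pi_1^\vee$ and consists only of characters of $G_1$; this disjointness forces $m_{12} = m_{21} = 0$. Consequently $m_{11} = m_0$, $m_{22} = m_1$, and the geometric lemma provides characters $\mu_1, \mu_2$ making $\pi_0$ $(H_{m_{11}^+, m_{11}^-}, \mu_1)$-distinguished and $\pi_1$ $(H_{m_{22}^+, m_{22}^-}, \mu_2)$-distinguished, with $\{m_{11}^+ + m_{22}^+, m_{11}^- + m_{22}^-\} = \{l, l'\}$.

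Next I would show that $m_0$ is even and $m_{11}^+ = m_{11}^- = m_0/2$ by iterating the geometric lemma on a standard module above $\pi_0$. Write $\pi_0$ as the Langlands quotient of $\sigma = \delta_1 \times \cdots \times \delta_s$ with $\delta_i = \St_{t_i}(\rho_i)$ for $\rho_i$ cuspidal on $G_{n_i}$; the hypothesis on the cuspidal support of $\pi_0$ forces $n_i \geq 2$ for every $i$. Distinction of $\pi_0$ descends to $\sigma$, and applying the geometric lemma to $\sigma$ relative to $H_{m_{11}^+, m_{11}^-}$ yields an orbit whose parameters $(m'_{ij})$ satisfy: every diagonal block $\delta'_{ii}$ is a nonzero irreducible constituent of a Jacquet module of $\St_{t_i}(\rho_i)$, hence of the form $\nu^c \St_k(\rho_i)$ for some $k \geq 1$, and is thus a discrete series of dimension $m'_{ii} = k n_i \geq 2$. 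Theorem \ref{thm p=q} then forces $m_{ii}^{\prime +} = m_{ii}^{\prime -}$ for every nonzero diagonal entry, and summing with the sum condition $\sum m_{ii}^{\prime +} - \sum m_{ii}^{\prime -} = m_{11}^- - m_{11}^+$ gives $m_{11}^+ = m_{11}^-$. Combined with $m_0 = m_{11}^+ + m_{11}^-$ this yields the desired parity and symmetry. The constraint from the first step then forces $\{m_{22}^+, m_{22}^-\} = \{l - m_0/2, l' - m_0/2\}$, whose non-negativity is exactly $q \geq m_0/2$, and the required characters are $\chi_0 := \mu_1$ and $\chi_1 := \mu_2$.

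The main obstacle is the bookkeeping in the first application of the geometric lemma: one must carefully extract the off-diagonal isomorphism $\tau_{12} \cong \tau_{21}^\vee$ from the decomposition of the Jacquet module of $\pi_0 \otimes \pi_1$, and then apply the cuspidal-support disjointness to conclude $m_{12} = m_{21} = 0$. Once that step is in hand, the second application is a largely mechanical iteration reducing the symmetry question to Theorem \ref{thm p=q}.
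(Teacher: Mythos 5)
Your proof is correct and follows essentially the same route as the paper: both rely on the geometric lemma to separate the contributions of $\pi_0$ and $\pi_1$ via cuspidal-support disjointness, and then invoke Theorem~\ref{thm p=q} on the discrete-series blocks supporting $\pi_0$ to force the symmetry $m_{11}^+ = m_{11}^-$. The only organizational difference is that you apply the geometric lemma in two explicit passes (first at $P_{(m_0,m_1)}$, then on a standard module above $\pi_0$), whereas the paper passes to the standard module $S_0 \times S_1$ once at the start and applies the lemma there; the substance is the same.
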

\begin{proof}
Once again this follows from the geometric lemma as in \cite{OJNT}. Realize $\pi_0$ and $\pi_1$ as Langlands quotients of the standard modules $S_0$ and $S_1$ respectively. Any $\chi$-invariant on $\pi_0\times \pi_1$ is the descent of a $\chi$-invariant linear form on $S_0\times S_1$. Set $P=MU$ be the standard parabolic subgroup of $G$ of type $(m_0,m_1)$. Reasonning as in the beginning of the proof of Theorem \ref{thm linear std}, and by our assumption, no duality relation can occur between the subsegments of the product $S_0$ and those of the product $S_1$. This already implies that the double cosets $PxH$ possibly contributing to distinction of $\pi_0\times \pi_1$ are such that the representative $x$ belongs to the standard Levi subgroup $M$. Moreover if one writes 
$x=\diag(x_0,x_1)$, the element $x_0$ can only have even $m_{i,i}=2m_{i,i}^+$ in its associated subpartiation by Theorem \ref{thm p=q}. The result now follows from the second equality in the proof of \cite[Proposition 3.1]{OJNT}.
\end{proof}

As a consequence of the discussion in Appendix \ref{app BZ der} about the theory of derivatives for representations the cuspidal support of which consist of characters of $G_1$, we also obtain the following multiplicity zero result. Its proof is the same as that of the erratum to \cite[Theorem 3.2]{MatJNT}, which is available on the author's webpage but not published. We thus reproduce the argument here, and recall that if $\Delta$ is a cuspidal segment, then $r(\Delta)$ denotes the cuspidal representation forming its right end. 

\begin{lem}\label{lm anti}
Let $\D_1,\dots,\D_r$ be cuspidal segments with supports consisting of characters of $G_1$, and suppose that they are right anti-ordered, i.e. $e(r(\D_{i+1}))\geq e(r(\D_i))$ for $i=1,\dots, r-1$. Then the representation $\pi=\d(\D_1)\times \dots \times \d(\D_r)$ of $G$ cannot by $H_{l,l'}$ distinguished if $|l-l'|\geq 2$. 
\end{lem}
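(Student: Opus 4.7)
The plan is to prove the lemma by induction on $m=l+l'$, adapting the strategy of the erratum to \cite[Theorem 3.2]{MatJNT} to inner forms. This adaptation is now possible because Appendix \ref{app BZ der} establishes the full Bernstein--Zelevinsky derivative machinery for representations of $G_m$ whose cuspidal support consists of characters of $D^\times$. The base case $r=1$ is Proposition \ref{prop st p=q}. For the inductive step, assume the result for all smaller ranks, and suppose for contradiction that $\pi=\d(\D_1)\times\dots\times \d(\D_r)$ is $H_{l,l'}$-distinguished with $l\geq l'+2$ (the other case is symmetric).

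First I would analyze the restriction of $\pi$ to the mirabolic $P_m$ via the Bernstein--Zelevinsky filtration, whose successive quotients are of the form $(\Phi^+)^{k-1}\Psi^+(\pi^{(k)})$ for $k=1,\dots,m$. The intersection $H_{l,l'}\cap P_m$ is (up to a conjugation inside $P_m$) generated by a copy of $H_{l-1,l'}$, sitting in the top $(m{-}1)\times(m{-}1)$ block, together with a unipotent subgroup of $U_m$. Since a nonzero $H_{l,l'}$-invariant form on $\pi$ restricts non-trivially to some quotient in the filtration, applying Frobenius reciprocity to the compactly induced functors $\Phi^+$ and $\Psi^+$ produces an $H_{l-1,l'}$-invariant linear form (twisted by an explicit character) on some twist of $\pi^{(k)}$, for some $k\geq 1$.

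Second, I would compute $\pi^{(k)}$ using the Leibniz rule for derivatives of parabolic inductions, which is valid in our setting thanks to Appendix \ref{app BZ der}. This expresses $\pi^{(k)}$ as a successive extension of terms of the form $\d(\D_1)^{(k_1)}\times\dots\times\d(\D_r)^{(k_r)}$ with $k_1+\dots+k_r=k$, each of which, by the formula $\d(\D_i)^{(j)}=\nu^{j/2}\d(\D_i^{(j)-})$ for the truncated segment, is again a product of generalized Steinberg representations with cuspidal support in characters of $D^\times$. The right anti-ordering of the segments is preserved (possibly vacuously if some factor becomes $0$) because truncation of each segment shifts $e(r(\D_i))$ by the same amount whenever the factor remains nonzero. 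Thus each summand falls under the inductive hypothesis at a strictly smaller rank, with disparity $|(l-1)-l'|=l-l'-1\geq 1$, yielding a nonzero invariant form on a representation of exactly the type the lemma forbids—a contradiction.

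The main obstacle I anticipate is the bookkeeping in the Frobenius reciprocity step: one must verify that the unipotent part of $H_{l,l'}\cap P_m$ is absorbed correctly by the functors $\Phi^+/\Psi^+$, that the resulting subgroup acting on the derivative is genuinely of the form $H_{l-1,l'}$ (and not some other symmetric subgroup arising from a non-trivial double coset of $H_{l,l'}\backslash G_m/P_m$), and that the auxiliary character picked up is of the form $\chi_\a$ so that Theorem \ref{thm chi linear std}-type statements control the possible smaller distinction. Once these geometric verifications are in place, the induction closes because at each step the disparity $|l-l'|$ drops by at most one, so the hypothesis $|l-l'|\geq 2$ always forces the induced smaller representation to violate the previous inductive layer.
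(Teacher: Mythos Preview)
Your overall strategy—induction on $m$ via the mirabolic restriction and the Bernstein--Zelevinsky filtration, now available for inner forms by Appendix~\ref{app BZ der}—is the same as the paper's. But as written the induction does not close. After one reduction the pair $(l,l')$ becomes $(l-1,l')$, so the disparity drops from $l-l'\geq 2$ to $l-l'-1\geq 1$; when $l-l'=2$ this is exactly $1$, and the inductive hypothesis (the lemma for smaller $m$, which still requires disparity $\geq 2$) says nothing. Your final claim that ``the hypothesis $|l-l'|\geq 2$ always forces the induced smaller representation to violate the previous inductive layer'' is therefore unsupported. Concretely, for $m=2$, $r=2$ and $(l,l')=(2,0)$, your reduction lands on $G_1$ or $G_0$, where the statement is vacuous and no contradiction arises.

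The paper repairs this not by changing the inductive step of \cite[Theorem~3.2]{MatJNT} but by taking different base cases: $m=2$ and $m=3$ rather than $r=1$. For these $m$ the only pair with $|l-l'|\geq 2$ is $(m,0)$, so $H_{m,0}=G_m$ and distinction simply means that $\pi$ admits a character as a quotient. One then checks directly that the only products of discrete series of $G_2$ (resp.\ $G_3$) with a character quotient are $\xi\times\xi\nu^{-1}$ (resp.\ $\xi\times\xi\nu^{-1}\times\xi\nu^{-2}$), and these are visibly \emph{not} right anti-ordered. This is precisely where the right-anti-ordering hypothesis does real work, and it is exactly the case your $r=1$ base case via Proposition~\ref{prop st p=q} misses.

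A smaller point: your justification for why right anti-ordering survives derivatives (``truncation of each segment shifts $e(r(\D_i))$ by the same amount'') cannot be correct, since in the Leibniz expansion the factors are truncated by different amounts $k_i$.
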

\begin{proof}
We observe that for any non negative integer $n_i$, the segments \[\D_1^{(n_1)},\dots,\D_r^{(n_1)}\] are still right anti-ordered. Then one replaces the class of representations to which the induction is applied in the proof of \cite[Theorem 3.2]{MatJNT} (defined by a non preceding condition which is not preserved by taking derivatives of each segments as claimed in the proof of \cite[Theorem 3.2]{MatJNT}) by that of products as in the statement of the lemma. Then the proof of \cite[Theorem 3.2]{MatJNT} applies without any modification, except the initial steps of the induction which are for $n=2$ and $3$. For these cases we see that for $G_2$ and $G_3$, the only product of discrete series representations having a character as a quotient are respectively of the form $\xi\times \xi \nu^{-1}$ and $\xi\times \xi \nu^{-1}\times \xi \nu^{-2}$, and the corresponding segments are visibly not right anti-ordered.
\end{proof}

As an immediate consequence of Lemmas \ref{lm prod} and \ref{lm anti}, we obtain:

\begin{thm}\label{thm mult 0}
Let $\pi$ be an irreducible representation of $G$ which can be written as a commutative product of discrete series representations, for example a generic representation of $G$. Let $H=H_{l,l'}$ with $|l-l'|\geq 2$, and $\chi$ be a character of $H$. Then $\pi$ cannot be $\chi$-distinguished. 
\end{thm}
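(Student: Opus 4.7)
The plan is to deduce the theorem directly by combining Lemmas \ref{lm prod} and \ref{lm anti}. I would begin by writing $\pi$ as a commutative product of discrete series representations, and grouping its factors into two subproducts: $\pi_0$, formed from those factors whose cuspidal support contains no character of $G_1 = D^\times$, and $\pi_1$, formed from those factors whose cuspidal support consists entirely of characters of $G_1$. Setting $m_0$ and $m_1$ for the respective sizes, commutativity yields an irreducible factorization $\pi = \pi_0 \times \pi_1$ on $G_{m_0} \times G_{m_1}$ with $m_0 + m_1 = m$, in which both $\pi_0$ and $\pi_1$ are irreducible. For generic $\pi$, this commutativity is exactly the fact recalled in Section \ref{sec LJL}.

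Suppose for contradiction that $\pi$ is $\chi$-distinguished for some character $\chi$ of $H = H_{l,l'}$ with $|l-l'| \geq 2$, and assume first that $m_0, m_1 \geq 1$. Lemma \ref{lm prod} then applies and gives that $m_0$ is even, that $\min(l,l') \geq m_0/2$, and that $\pi_1$ is $\chi_1$-distinguished by some character $\chi_1$ of $H_{l - m_0/2,\, l' - m_0/2}$. The crucial observation is that this new pair has the same defect as before, since $|(l-m_0/2) - (l'-m_0/2)| = |l-l'| \geq 2$. Using commutativity of the product giving $\pi_1$, I would then permute its discrete series factors so that the associated cuspidal segments are right anti-ordered in the sense of Lemma \ref{lm anti}; this permutation produces the same irreducible representation $\pi_1$, and hence preserves $\chi_1$-distinction. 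Lemma \ref{lm anti} then forbids such a distinction, giving the desired contradiction.

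It remains to dispose of the two boundary cases. If $m_1 = 0$, so $\pi = \pi_0$, the same geometric lemma argument used to prove Lemma \ref{lm prod} goes through, with the constraint on duality between $S_0$ and $S_1$ becoming vacuous, and yields $\min(l,l') \geq m/2$; combined with $l + l' = m$ this forces $l = l'$, contradicting $|l-l'| \geq 2$. If $m_0 = 0$, so $\pi = \pi_1$, then Lemma \ref{lm anti} applies directly after a commutativity-based right anti-ordering. The main difficulty is already absorbed into Lemmas \ref{lm prod} and \ref{lm anti}; the only new step here is the observation that the commutativity hypothesis on $\pi$ is inherited by the subproduct $\pi_1$, which is clear by restricting to permutations of the factors of $\pi$ that preserve the partition between $\pi_0$-factors and $\pi_1$-factors.
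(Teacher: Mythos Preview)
Your proposal is correct and follows essentially the same approach as the paper, which simply records the theorem as an immediate consequence of Lemmas \ref{lm prod} and \ref{lm anti}; you have spelled out the reduction and the boundary cases $m_0=0$ and $m_1=0$ explicitly and accurately. One cosmetic simplification: rather than first forming $\pi_1$ and then arguing that commutativity is inherited by the subproduct, you can use the commutativity of $\pi$ at the outset to arrange the $G_1$-supported discrete series factors in right anti-ordered position before grouping, so that Lemma \ref{lm anti} applies to $\pi_1$ directly without the inheritance discussion.
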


\section{Archimedean standard modules with a linear period}\label{app ST}

\begin{center}\textit{By Miyu Suzuki and Hiroyoshi Tamori}\end{center}

We prove a necessary condition for a standard module of $\GL_m(D)$ has non-zero linear period.
Here,  $D=\BR$,  $\BC$ or $\BH$.
We follow the same line as \cite{ST},  in which the authors treated the case of twisted linear period.

\subsection{Notation}\label{sec not ST}
\label{sec:notation}
We recall the notations that will be used here. Let $D\in\{\BR,  \BC,  \BH\}$,  where $\BH$ is the quaternion division algebra and $m$ be a positive integer.
Set
    \[
    G=G_m=\GL_m(D),  \qquad l=
        \begin{cases}
        \frac12 m & \text{if $m$ is even,} \\
        \frac12(m-1) & \text{if $m$ is odd.}
        \end{cases}
    \]

Put $\delta=\diag(1,  -1,  \ldots,  (-1)^{m-1})\in G$ and write $\theta=\Ad(\delta)$ for the inner automorphism $x\mapsto \theta(x)=\delta x\delta^{-1}$ of $G$.    
The group of fixed points $H_m=H:=G^\theta$ is a symmetric subgroup and $H_m=H_{l,l}$ when $m$ is even, whereas $H_m=H_{l+1,l}$ when $m$ is odd. 

A parabolic subgroup and a Levi subgroup of $G$ are assumed to be blockwise upper triangular and blockwise diagonal.  
Let $(m_1,  \ldots,  m_r)$ be a partition of $m$ and $P$ a parbolic subgroup with a Levi subgroup 
consisting of blockwise diagonal matrices of the form $\diag(x_1,\ldots,x_r)$ with $x_i\in \GL_{m_i}(D)$.
When $D\neq \BR$,  we assume that $r=m$ and $m_1=\cdots=m_r=1$,  \emph{i.e.} $P$ is the minimal parabolic subgroup.
When $D=\BR$,  we assume that $m_i\in\{1,  2\}$ for all $i$.
These conditions are equivalent to that Levi subgroups of $P$ have essentially square integrable representations,  and such parabolic subgroups are called \emph{cuspidal}.

\subsection{$H_m$-orbits in $P\backslash G$}\label{sec orb ST}
Let $P$ be a cuspidal parabolic subgroup of $G$ corresponding to the partition $(m_1,  \ldots,  m_r)$.
From \cite[Section 3.1]{MatCRELLE},  we recall the study on a set of representatives of the $H_m$-orbits in the flag variety $P\backslash G$.
Note that \cite{MatCRELLE} treats the $p$-adic case but the same holds in our setting.

Let $V$ be a right $D$-vector space of dimension $m$ and $e_1,  \ldots,  e_m$ a basis of $V$.
We regard $G$ as the group of $D$-linear automorphisms of $V$ which acts on $V$ from the left.
In particular,  note that $\delta\in G$ defines an involution on $V$.
Set $V_i^\circ=\mathrm{Span}_D\{e_i \mid 1\leq i \leq m_1+\cdots+m_i\}$ for each $i=1,  \ldots,  r$.
We obtain a flag $V_0^\circ=0\subset V_1^\circ \subset \cdots \subset V_r^\circ=V$ such that $\dim_D(V_i^\circ)=m_1+\cdots+m_i$.

Let $\CF_P(V)$ be the set of flags $V_0=0\subset V_1 \subset \cdots \subset V_r=V$ of length $r$ such that $\dim_D(V_i)=m_1+\cdots+m_i$ for each $i$.
Then we obtain a bijection $P\backslash G\to \CF_P(V)$ by sending $g\in P\backslash G$ to the flag $0\subset g^{-1}V_1^\circ \subset \cdots \subset g^{-1}V_r^\circ=V$.
Obviously this is a $G$-equivariant map with respect to the right $G$-actions on $P\backslash G$ and $\CF_P(V)$.

For a flag $0\subset V_1 \subset \cdots \subset V_r=V$ in $\CF_P(V)$,  take a complement $W_{ij}\subset V_i\cap \delta(V_j)$ to $V_i\cap\delta (V_{j-1})+V_{i-1}\cap\delta(V_j)$ satisfying $W_{ji}=\delta(W_{ij})$ for each $i,  j=1,  \ldots,  r$.
Note that in particular $W_{ii}$ is $\delta$-stable.
Such family of subspaces $(W_{ij})$ exists since $\delta$ maps $V_i\cap\delta(V_{j-1})+V_{i-1}\cap\delta(V_j)$ to $V_j\cap\delta(V_{i-1})+V_{j-1}\cap\delta(V_i)$.

For each $i,  j$,  set $m_{ij}=\dim_D(W_{ij})$.
Let $W_{ii}=W_{ii}^+\oplus W_{ii}^-$ be the decomposition of $W_{ii}$ into the $(\pm1)$-eigensubspaces of $\delta$ and set $m_{ii}^\pm=\dim_D(W_{ii}^\pm)$.
Then we obtain an $r\times r$-symmetric matrix $(m_{ij})$ of non-negative integer entries attached to a flag $0\subset V_1 \subset \cdots \subset V_r=V$ in $\CF_P(V)$.
It satisfies for each $i,  j$ that  \setlength{\leftmargini}{25pt}
\begin{itemize}
\item[(P1):]   $m_i=\sum_{j=1}^r m_{ij}$,  \emph{i.e.} $m=\sum_{i, j=1}^r m_{ij}$ is a subpartition of $m=\sum_{i=1}^r m_i$.
\item[(P2):] $m_{ji}=m_{ij}$ and $m_{ii}=m_{ii}^+ +m_{ii}^-$.
\item[(P3):] $\sum_{i=1}^rd_i=[m]_2$, where $d_i:=m_{ii}^+-m_{ii}^-\in\BZ$ and $[m]_2\in\{0,1\}$ denotes the remainder when $m$ is divided by $2$.
\end{itemize}
Let $I_P(m)$ be the set of 
pairs $((m_{ij})_{ij}, (m_{ii}^{\pm})_i)$ satisfying $(\mathrm{P}1)$-$(\mathrm{P}3)$ where $(m_{ij})$ is a $r\times r$-matrix with $m_{ii}=m_{ii}^+ + m_{ii}^-$.

It is easy to see that two flags are in a same $H_m$-orbit in $\CF_P(V)$ if and only if they correspond to the same elements in $I_P(n)$ (see \cite[Proposition 3.1]{MatCRELLE}, the proof of which holds in the Archimedean setting).
Let $\CF_P(V)/H_m$ denote the set of $H_m$-orbits in $\CF_P(V)$.

\begin{lem}
The map $\CF_P(V)/H_m \to I_P(m)$ obtained above is injective,  \emph{i.e.} the set $I_P(m)$ is a complete set of invariants of $H_m$-orbits in $\CF_P(V)$.
\end{lem}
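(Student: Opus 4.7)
The plan is to show that two flags $V_\bullet$ and $V'_\bullet$ in $\CF_P(V)$ which produce the same element of $I_P(m)$ are carried one to the other by some element of $H_m$. I would construct this element by building bases of $V$ adapted simultaneously to the flag and to the $(W_{ij})$-decomposition, then transporting one basis to the other in a $\delta$-equivariant way.

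First I would establish the structural fact that $V_i=\bigoplus_{k\le i}\bigoplus_{l=1}^r W_{kl}$, more precisely that \[V_i\cap\delta(V_j)=\bigoplus_{k\le i,\,l\le j}W_{kl}\] for every $i,j$. This is the key compatibility between the $W$-decomposition and the flag, and it follows from the definition of $W_{ij}$ as a complement of $V_i\cap\delta(V_{j-1})+V_{i-1}\cap\delta(V_j)$ in $V_i\cap\delta(V_j)$ together with a short double induction on $(i,j)$, using at each step that the new $W_{ij}$ is added to the previously constructed direct sum. In particular $V_i$ is the span of the $W_{kl}$ with $k\le i$, so any linear automorphism of $V$ preserving the $W$-decomposition (in the weak sense of mapping $W_{kl}$ into the corresponding piece of a second decomposition) will carry $V_\bullet$ to the second flag.

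Second, given $V_\bullet$ with a $W$-decomposition, I would pick a basis adapted to this decomposition as follows: for each ordered pair $i<j$ choose a $D$-basis of $W_{ij}$, take the $\delta$-translated basis of $W_{ji}=\delta(W_{ij})$, and for each $i$ choose $D$-bases of the two eigensubspaces $W_{ii}^{+}$ and $W_{ii}^{-}$. Do the same for $V'_\bullet$ with the analogous $W'$-decomposition. Since the two flags have the same invariants in $I_P(m)$, the multiplicities match: $\dim_D W_{ij}=\dim_D W'_{ij}$ and $\dim_D W_{ii}^{\pm}=\dim_D (W'_{ii})^{\pm}$, so the two bases have the same cardinalities.

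Third, I would define $g\in\mathrm{End}_D(V)$ by sending the chosen basis of $W_{ij}$ (for $i<j$) to the chosen basis of $W'_{ij}$ in order, and the chosen basis of $W_{ii}^{\pm}$ to that of $(W'_{ii})^{\pm}$, and extend $D$-linearly. The behaviour on $W_{ji}$ for $i<j$ is then forced by $\delta$-equivariance and matches the $\delta$-translated basis of $W'_{ji}$. By construction $g$ is an invertible $D$-linear map, it commutes with $\delta$ on each $W_{ij}$ and on $W_{ii}^{\pm}$, hence $g\in G^\theta=H_m$. Moreover $g$ carries $\bigoplus_{k\le i,\,l}W_{kl}$ onto $\bigoplus_{k\le i,\,l}W'_{kl}$, so by the first step $g\cdot V_\bullet=V'_\bullet$, and the two flags lie in the same $H_m$-orbit. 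The only nontrivial point is the first step (decomposition of $V_i\cap\delta(V_j)$ as a direct sum of the $W_{kl}$), but this is really just an unwinding of the definition of the $W_{ij}$ as complements and essentially identical to the verification done in \cite[Section 3.1]{MatCRELLE} in the $p$-adic case.
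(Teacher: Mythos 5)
Your argument is correct and is essentially the standard one the paper invokes by citing \cite[Proposition 3.1]{MatCRELLE}: establish the refinement $V_i\cap\delta(V_j)=\bigoplus_{k\le i,\,l\le j}W_{kl}$ by induction, choose $\delta$-compatible bases of the $W_{ij}$ and $W_{ii}^{\pm}$, and transport one to the other to produce the required element of $H_m$. The paper does not write out this proof but defers to the $p$-adic reference with the remark that the same argument works in the Archimedean setting, which is exactly what you have reproduced.
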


On the other hand,  for each element $s\in I_P(m)$,  we can explicitly construct flags in $\CF_P(V)$ which corresponds to that invariant.
In particular,  the map $P\backslash G/H_m \to  \CF_P(V)/H_m \to I_P(m)$ is a bijection.
For $s=[(m_{ij}),  (m_{ii}=m_{ii}^+ + m_{ii}^-)]\in I_P(m)$,  we define a block monomial matrix $\delta_s$ as follows:
\begin{itemize}
\item[(i)] the block partition of $\delta_s$ is the subpartition of $m=m_1+\cdots+m_r$ attached to $(m_{ij})$,  \emph{i.e.} 
    \begin{align}\label{eq:partition}
    m & = m_{11} + m_{12} + \cdots+m_{1r} \\ \nonumber 
     &  + m_{21} + m_{22} + \cdots + m_{2r}  \\ \nonumber 
     & \phantom{+ m_{21} + m_{22}}  \vdots  \\ \nonumber 
     & + m_{r1} + m_{r2} + \cdots + m_{rr}. 
    \end{align}
\item[(ii)] each diagonal $m_{ii}\times m_{ii}$-block is $\diag(I_{m_{ii}^+},  -I_{m_{ii}^-})$ and the other diagonal blocks are all zero.
\item[(iii)] for each $1\leq i< j \leq r$,  both the $m_{ij}\times m_{ji}$-block and the $m_{ji}\times m_{ij}$-block are $I_{m_
{ij}}$ (note that $m_{ij}=m_{ji}$).
\item[(iv)] all the other blocks are zero.
\end{itemize}
Then we can take a representative $u_s\in G$ of the double coset in $P\backslash G/H_m$ corresponding to $s\in I_P(m)$ so that $\delta_s=u_s\delta u_s^{-1}$ (\cite[Proposition 3.2]{MatCRELLE}).

Since $\delta_s^2=I_m$,  the inner automorphism $\theta_s=\Ad(\delta_s)$ on $G$ is an involution.
The group of fixed points $G^{\theta_s}$ equals $u_sH_m u_s^{-1}$.

Let $P_s$ be the standard parabolic subgroup of $G$ corresponding to the partition \eqref{eq:partition}.
Note that we have $P_s\subset P$ and that $P_s^{\theta_s}=(P\cap \theta_sP)^{\theta_s}$.
Let $P=MN$ and $P_s=M_sN_s$ be the Levi decomposition of $P$ and $P_s$,  respectively.
Then we have
\begin{itemize}
\item $M\cong \GL_{m_1}(D)\times \cdots \times \GL_{m_r}(D)$ and $M_s\cong \prod_{j=1}^r\GL_{m_{1j}}(D) \times \cdots \times \prod_{j=1}^r \GL_{m_{rj}}(D)$.
\item $P'_{M,  s}=M\cap P_s=M\cap\theta_sP$ is a parabolic subgroup of $M$ with Levi decomposition $P'_{M,  s}=M_s(M\cap N_s)=(M\cap\theta_sM)(M\cap\theta_sN)$. 
\item the involution $\theta_s$ acts on $M_s$ by a permutation of blocks and and an element of $M_s^{\theta_s}$ is of the form
    \begin{equation}\label{eq:Ms_fixed}
    x=\diag(x_{11}^+,  x_{11}^-,  x_{12},  \ldots,  x_{1r},  x_{21},  \ldots,  x_{r,r-1},  x_{rr}^+,  x_{rr}^-),
    \end{equation}
where $x_{ij}\in\GL_{n_{ij}}(D)$,  $x_{ii}^\pm\in\GL_{x_{ii}^\pm}(D)$ and $x_{ij}=x_{ji}$ for all $i\neq j$.
\end{itemize}
The following proposition is \cite[Proposition 3.6]{MatCRELLE}.
 
\begin{prop}\label{prop}
For parabolic subgroups $P$,  $P_s$ of $G$ and $P'_{M,  s}$ of $M$,  we write their modulus characters by $\delta_P$,  $\delta_{P_s}$ and $\delta_{P'_{M,  s}}$.
\begin{itemize}
\item[(1)] For $x\in M_s^{\theta_s}$,  we have $\delta_P\delta_{P'_{M,  s}}(x)=\delta_{P_s}(x)$.
\item[(2)] We write $x\in M_s^{\theta_s}$ as in the form \eqref{eq:Ms_fixed}.
Then we have
    \[
    \delta^2_{P_s^{\theta_s}}\delta_{P_s}^{-1}(x) = \prod_{1\leq i < j \leq r}
    |\det(x_{ii}^+)|^{d_j}|\det(x_{ii}^-)|^{-d_j}
    |\det(x_{jj}^+)|^{-d_i}|\det(x_{jj}^{-})|^{d_i}.
    \]
\end{itemize}
\end{prop}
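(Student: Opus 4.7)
The proof plan is to follow the same approach as in \cite[Proposition 3.6]{MatCRELLE}, verifying that the computation—which is purely algebraic—extends to Archimedean $F$ without modification. Both modulus characters $\delta_{P_s}$ and $\delta_{P_s^{\theta_s}}$ are defined as absolute values of $F$-determinants of the adjoint action on the appropriate unipotent Lie algebras, and these formulas are available in the Archimedean setting in exactly the same form as in the $p$-adic one.

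For part~(1), I would invoke transitivity of modulus characters along nested parabolic subgroups. Since $P_s \subset P$ with $M_s \subset M$, the unipotent radical of $P_s$ factors as $N_s = (M \cap N_s) \cdot N$, with $(M \cap N_s)$ normalized by $M_s$. At the Lie algebra level this gives the $\Ad(M_s)$-stable direct sum decomposition $\mathrm{Lie}(N_s) = \mathrm{Lie}(M \cap N_s) \oplus \mathrm{Lie}(N)$. Taking absolute values of $F$-determinants on each summand yields $\delta_{P_s}(m) = \delta_{P'_{M,s}}(m)\,\delta_P(m)$ for every $m \in M_s$, and part~(1) follows by restriction to $x \in M_s^{\theta_s}$.

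For part~(2), I would decompose the relevant unipotent Lie algebras, namely that of $N_s^{\theta_s}$ and that of $N_s$ itself, into block components indexed by pairs of positions from the refined partition \eqref{eq:partition}. Using the explicit block-monomial description of $\delta_s$, one then tracks, for each block component, how it is permuted by $\theta_s$ and how it decomposes into $(\pm 1)$-eigenspaces. The contributions of block-pair positions that involve only off-diagonal positions $(i,j),(k,l)$ with $i \neq j$ or $k \neq l$ turn out to be symmetric between the $(+1)$- and $(-1)$-eigenspaces and hence cancel in the ratio $\delta_{P_s^{\theta_s}}^2 \delta_{P_s}^{-1}$; the non-trivial factors in this ratio are produced by blocks coupling a diagonal position of type $(i,i)$ with a diagonal position of type $(j,j)$ for some pair $i \neq j$, where the $\pm I$ components of $\delta_s$ act non-trivially and introduce the exponents $\pm d_i$ and $\pm d_j$ appearing in the stated formula. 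Summing the resulting contributions over all pairs $1 \leq i < j \leq r$ yields the claimed product.

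The main obstacle is the combinatorial bookkeeping. One must carefully enumerate the block components contributing to $\mathrm{Lie}(N_s)$ according to the lexicographic strict upper-triangular condition induced by \eqref{eq:partition}, track the conjugation action of a general $x$ of the form \eqref{eq:Ms_fixed} on each such block, and correctly collect the exponents of reduced norms that appear. This step is tedious but entirely mechanical, and is identical to the analogous computation performed in \cite[Proposition 3.6]{MatCRELLE}; the only difference in the Archimedean case is the interpretation of $|\cdot|$ as an Archimedean absolute value of a reduced norm, which changes nothing about the formal multiplicative identities involved.
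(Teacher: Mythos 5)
The paper gives no proof of this proposition at all: it is simply quoted from MatCRELLE (Proposition 3.6), with the earlier remark that the $p$-adic arguments of that reference carry over to the Archimedean setting. Your proposal therefore supplies an argument where the paper relies on citation, and in its broad outline it reconstructs what the cited proof does.

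Part~(1) is correct and is exactly the standard transitivity of modulus characters: $\mathrm{Lie}(N_s)=\mathrm{Lie}(M\cap N_s)\oplus\mathrm{Lie}(N)$ is an $\Ad(M_s)$-stable decomposition, and taking $|\det_{\BR}|$ factor by factor gives $\delta_{P_s}=\delta_{P'_{M,s}}\delta_P$ on all of $M_s$, hence on $M_s^{\theta_s}$.

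For part~(2), the method you describe (decompose $\mathrm{Lie}(N_s)$ and $\mathrm{Lie}(N_s^{\theta_s})$ into blocks labelled by pairs from the refined partition \eqref{eq:partition}, track the permutation and sign action of $\theta_s$ on each block, and compare exponents) is the right one. However, your description of exactly where the cancellations occur is not accurate as stated. You claim that the nontrivial contributions to $\delta_{P_s^{\theta_s}}^2\delta_{P_s}^{-1}$ come only from matrix blocks coupling a diagonal refined position $(i,i)$ with a diagonal position $(j,j)$, and that all blocks involving an off-diagonal label $(k,l)$ with $k\neq l$ cancel outright. In fact, blocks of the form $\big((ii^{\pm}),(k,l)\big)$ with $k\neq l$, and the block pairs $\big((i,j),(j,i)\big)$, each contribute to both $\delta_{P_s}$ and $\delta_{P_s^{\theta_s}}$, and the cancellation happens only after collecting, for each fixed $x_{ii}^{\pm}$, the total exponent from \emph{all} blocks containing $(ii^{\pm})$ as a row or column label; it is not a block-by-block cancellation, and it is not confined to off-diagonal positions. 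The sanity check on a rank-$2$ example (e.g.\ $r=2$, $m_1=m_2=1$, $m_{11}^+=m_{22}^-=1$, all other $m_{ij}^{(\pm)}=0$, so $N_s^{\theta_s}$ is trivial) shows that the formula is produced by the \emph{imbalance} between the unipotent group of $P_s$ and the fixed-point unipotent group, not by a symmetric cancellation restricted to the diagonal coupling blocks. Your sketch would need to redo this bookkeeping carefully (as in MatCRELLE) to justify the precise exponents $\pm d_i$, $\pm d_j$ rather than asserting the cancellation pattern.

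Finally, your remark that the Archimedean case only changes the interpretation of $|\cdot|$ is essentially right, but one should also keep in mind that for $D\in\{\BR,\BC,\BH\}$ the modulus characters are absolute values of $\BR$-determinants of the adjoint action, so the $\dim_{\BR}D$-factor is absorbed in that convention rather than appearing through a $\nu^d$-normalization as in the $p$-adic part of the paper.
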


\subsection{The main result}
We assume that all representations of (almost) linear Nash groups in this Appendix are 
smooth Fr{\' e}chet representations of moderate growth. 
Lie algebras will be denoted by Latin capital letters, and their Lie algebras will be denoted by the corresponding lower case German letters. 
We put subscript $\BC$ to denote their complexifications. 
Given a linear $f$ map between Lie algebras, we use the same letter $f$ to denote the complexification of $f$. 
For a complex linear space $\fr_{\BC}$, we write $\fr_{\BC}^{\vee}=\Hom_{\BC}(\fr_{\BC},\BC)$.  
Given an involution $\sigma$ on a Lie group $G$, we use the same letter $\sigma$ for its differential on the Lie algebra $\fg$ of $G$. 
The trivial representation of a group $G$ is denoted by $\triv_{G}$.

Take $P, s, \theta_s$ as in Section \ref{sec orb ST}.
Fix a nondegenerate invariant bilinear form $B$ on $\fg$. 
Let $\fs$ the Cartan subalgebra of the real Lie algebra $\fg$ consisting of diagonal real matrices. 
Write $\Sigma=\{e_i-e_j\mid 1\le i,j\le m, i\neq j\}$ for the root system for $(\fg,\fs)$ and fix a positive system $\Sigma^+=\{e_i-e_j\mid 1\le i<j\le m\}$ of $\Sigma$. 
For $\a \in \Sigma$, we define $X_{\a}$ to be the unique element in $\fs$ satisfying $B(X,X_{\a})=\a(X)$ for any $X\in\fs$. 
Set $\fa:=\fz(\fm)\cap\fg^{-\theta_s}$, where $\fz(\fm)$ denotes the center of $\fm$. 
We regard $\fa_{\BC}^{\vee}$ as a subspace of $\fs_{\BC}^{\vee}$ via the orthogonal decomposition $\fs=\fa\oplus\{X\in\fs | B(X,Y)=0\text{ for any $Y\in\fa$}\}$. 

For a closed subgroup $R$ of $G$, we define 
    \[
    \Sigma_{R}=\{\a\in\Sigma | \fg_{\a}\cap\fr\neq \{0\}\}.
    \]

\begin{thm}\label{app}
Let $\delta_i$ be an irreducible essentially square integrable representation of $\GL_{m_i}(D)$ for any $1\le i\le r$. 
Define $\l\in\fa_{\BC}^{\vee}$ so that $\delta_1\hat{\otimes}\cdots\hat{\otimes}\delta_r(\exp(X))$ is a scalar multiple by $e^{\l(X)}$ for $X\in\fa$. 
Here $\hat{\otimes}$ denotes the completed tensor product of nuclear Fr\'echet spaces.
Assume that $\re\l(X_{\a})\ge 0$ for $\a\in\Sigma^+$ and 
that $\pi:=\delta_1\times\delta_2\times\cdots\times\delta_r$ is $H_m$-distinguished.
Then there exists an involutive permutation $\z\in S_r$ satisfying the following conditions:
    \begin{align}\label{per}
        \begin{cases}
        \text{$m$ is odd and $\delta_i\cong\triv_{\GL_1(D)}$}&\text{if $\z(i)=i$ and $m_i=1$,}\\
        \text{$\delta_i$ is $H_{2}$-distinguished}&\text{if $\z(i)=i$, $D=\BR$ and $m_i=2$,}\\
        \text{$m_{\z(i)}=m_i$ and $\delta_{\z(i)}\cong \delta_i^{\vee}$}&\text{if $\z(i)\neq i$.}
        \end{cases}
    \end{align}
Here $\delta_i^{\vee}$ denotes the contragredient representation of $\delta_i$. 
In particular,  if $m=\sum_{i=1}^rm_i$ is odd, then there exists $1\le i\le r$ such that $m_i=1$ and $\delta_i\cong \triv_{\GL_1(D)}$.
\end{thm}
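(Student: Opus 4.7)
The plan is to adapt the argument of \cite{ST} (where the twisted linear period case is handled) to the linear period setting, using the orbit parametrization $I_P(m)$ recalled in Section \ref{sec orb ST} together with an Archimedean Bruhat filtration along the symmetric subgroup $H_m$. First, I would decompose $P\backslash G$ into its finitely many $H_m$-orbits (indexed by $s\in I_P(m)$) and filter the restriction $\pi|_{H_m}$ by closed $H_m$-stable subspaces, using the Archimedean tools of Brylinski--Delorme \cite{Brylinski-Delorme-H-inv-form-MeroExtension-Invention} and Carmona--Delorme \cite{Carmona-Delorme-H-inv-form-FE-JFA} cited in Section \ref{sec Arch} to ensure that the associated graded pieces behave as smooth Fr\'echet Mackey-inductions from the stabilizers. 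If $\pi$ is $H_m$-distinguished, some orbit $s\in I_P(m)$ must contribute a non-zero element in
\[
    \Hom_{M_s^{\theta_s}}\!\left(r_{M_s,M}(\delta_1\hat\otimes\cdots\hat\otimes\delta_r),\ \delta_{P_s^{\theta_s}}^2\,\delta_{P_s}^{-1}\right),
\]
where the character on the right is given explicitly by Proposition \ref{prop}(2).

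Next, I would exploit the positivity hypothesis $\re\l(X_\a)\ge 0$ on $\Sigma^+$, which says precisely that $\pi$ is a standard module in the right-ordered sense $e(\delta_1)\ge\cdots\ge e(\delta_r)$. Because each $\delta_i$ is essentially square-integrable, the leading exponents of its proper Jacquet modules lie strictly deeper in the positive Weyl chamber (Casselman's square-integrability criterion). Comparing these exponents to the explicit formula of Proposition \ref{prop}(2) and imposing the right-ordered condition, the only orbits $s$ for which the above $\Hom$ space can be non-zero are those whose matrix $(m_{ij})$ is a generalized permutation matrix, i.e.\ has exactly one non-zero entry per row. I would then define $\z\in S_r$ by letting $\z(i)$ be the unique index with $m_{i\z(i)}\neq 0$; the symmetry $m_{ij}=m_{ji}$ makes $\z$ an involution, and the constraint $m_i=\sum_j m_{ij}$ forces $m_{i\z(i)}=m_i$, hence $m_{\z(i)}=m_i$.

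Finally, I would translate the surviving equivariance block-by-block. For $i$ with $\z(i)\neq i$, the off-diagonal block in $M_s^{\theta_s}$ pairs $\delta_i$ with $\delta_{\z(i)}$, and evaluating the character in Proposition \ref{prop}(2) cancels the determinantal twists and forces $\delta_{\z(i)}\cong \delta_i^{\vee}$. For $i$ with $\z(i)=i$, the diagonal block forces $\delta_i$ itself to be $(H_{m_{ii}^+,m_{ii}^-},\mu_i)$-distinguished for the explicit character $\mu_i$ dictated by Proposition \ref{prop}(2). When $m_i=1$ we have $\{m_{ii}^+,m_{ii}^-\}=\{1,0\}$, so $\delta_i$ must be a character matching $\mu_i$; combined with the parity identity $\sum_i d_i=[m]_2$ from condition (P3) in Section \ref{sec orb ST}, this forces $m$ odd and $\delta_i\cong\triv_{\GL_1(D)}$. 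When $m_i=2$ with $D=\BR$, the Archimedean analogue of Theorem \ref{thm p=q} rules out $\{m_{ii}^+,m_{ii}^-\}=\{2,0\}$ (as $\delta_i$ would then have to be a character, contradicting essential square-integrability), leaving $m_{ii}^+=m_{ii}^-=1$ and $\delta_i$ necessarily $H_2$-distinguished. The main obstacle I anticipate is twofold: establishing the Archimedean Bruhat filtration with enough analytic control to extract a non-zero functional on a single orbit's contribution (rather than a spectral sequence that could fail to converge cleanly), and transferring Theorem \ref{thm p=q} to the Archimedean setting to exclude the $(2,0)$ and $(0,2)$ cases above; both should follow from the Casselman--Wallach formalism combined with the \emph{a priori} bounds on exponents coming from essential square-integrability.
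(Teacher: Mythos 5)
Your overall skeleton is the right one and follows the strategy of \cite{ST}: filter $\pi|_{H_m}$ along the $H_m$-orbits in $P\backslash G$ parametrized by $I_P(m)$, show by a positivity estimate that only some orbits can contribute, and then analyze the surviving orbits block by block to produce the involution $\z$. However there is a genuine gap in the orbit-selection step, and it propagates to your deduction of ``$m$ odd'' in the first case of \eqref{per}.

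Concretely, the paper's Lemma \ref{lem} does \emph{not} merely restrict to orbits $s$ whose matrix $(m_{ij})$ is a generalized permutation matrix (equivalently, $M$ is $\theta_s$-stable). It restricts to the strictly smaller set $J_P(m)$, which additionally imposes the parity condition \eqref{parity} on the partial sums $d_{\le i}=\sum_{j\le i}d_j$. That extra condition is forced by the last term of the eigenvalue formula \eqref{wt}: the key identity
\[
(d\d_{P_s}-2d\d_{P_s^{\theta_s}})(X)
= \sum_{1\leq i\leq r-1}\bigl(d_{\le i}-[m]_2\bigr)\,d_{\leq i}\,(x_{ii}-x_{i+1,i+1})\;\ge\;0,
\]
with equality if and only if \eqref{parity} holds. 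Your positivity argument (via exponents and Proposition \ref{prop}(2)) never extracts this term, so you only get the ``generalized permutation'' constraint. Consequently your claim that ``(P3) alone forces $m$ odd'' for a fixed point with $m_i=1$ is false: e.g.\ $r=4$, all $m_i=1$, $\z(1)=2$, $\z(3)=3$, $\z(4)=4$, $d_1=d_2=0$, $d_3=-d_4\in\{\pm1\}$ satisfies (P3) with $m=4$ even. It is the parity condition \eqref{parity} (not (P3)) that rules out such $s$ and yields ``$m$ odd.'' Your argument as written does not establish it. (The parity condition also makes the twist character on $M^{\theta_s}$ trivial for $s\in J_P(m)$, which is why the paper's bound is against $\triv_{M^{\theta_s}}$ and there is no residual ``$\mu_i$'' to match, contrary to what you suggest in the $m_i=1$ case.)

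Two further remarks. First, for the $m_i=2$ diagonal block with $D=\BR$, you rule out $\{m_{ii}^+,m_{ii}^-\}=\{2,0\}$ by observing that $\delta_i$ would have to be the trivial character, contradicting essential square-integrability; the paper instead deduces $m_{ii}^+=m_{ii}^-=1$ directly from \eqref{parity}. Your variant is fine, and slightly more elementary, but it does not replace the missing parity analysis elsewhere. Second, the framing through ``Archimedean Bruhat filtration with Mackey-inductions'' and Casselman's criterion on Jacquet modules is vaguer than what the paper actually uses: the control comes from the relative Lie algebra homology $H_0((\fm\cap\theta_s\fn)_{\BC},-)$ twisted by $\Sym^k(\fg/(\fg^{\theta_s}+\fp))^{\vee}_{\BC}$ and an explicit $\theta_s$-fixed element $X$ from \cite[Lemma 4.4]{ST} whose eigenvalues on that space are shown to be nonzero unless $A=\emptyset$, $k=0$, and \eqref{parity} holds. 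You should make this explicit; the normal-bundle symmetric powers are essential in the Archimedean setting and do not appear in your sketch.
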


The proof is similar to the one in \cite[Theorem 1.2]{ST}. 
We only sketch the parts which are word by word the same as in \cite{ST}, and explain the other parts in detail.

Let us consider the condition on $s\in I_P(m)$ that
\begin{align}\label{parity}
\begin{cases}
d_i=0&\text{ for $1\le i\le r-1$\quad if $m$ is even},\\
d_{\le i}:=\sum_{1\le j\le i}d_j\in\{0,1\}&\text{ for $1\le i\le r-1$\quad if $m$ is odd}
\end{cases}
\end{align}
and put 
    \[
    J_P(m):=\{s\in I_P(m) \mid  \text{$M$ is $\theta_s$-stable, and \eqref{parity} holds}\}.
    \]
Remark that $M$ is $\theta_s$-stable if and only if $\fm$ is $\theta_s$-stable. 

As in \cite[Theorem 5.8]{ST}, let us prove the following Lemma.
\begin{lem}\label{lem}
    \[
    \dim\Hom_{H_m}(\pi,\triv_{H_m})\le \sum_{s\in J_P(m)}\dim\Hom_{M^{\theta_s}}(\delta_1\hat{\otimes}\cdots\hat{\otimes}\delta_r,\triv_{M^{\theta_s}}).
    \]
\end{lem}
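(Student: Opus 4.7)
The plan is to carry out an Archimedean geometric-lemma argument, closely following \cite[Section 5]{ST} where the analogous bound for twisted linear periods was established. First I would stratify the flag variety $P\backslash G$ by the finitely many $H_m$-orbits $\mathcal{O}_s$, $s\in I_P(m)$, identified in Section \ref{sec orb ST}, and order them $\mathcal{O}_{s_1},\ldots,\mathcal{O}_{s_N}$ so that each initial union is closed. This induces an $H_m$-stable filtration of the smooth Fr\'echet representation $\pi$ whose successive quotients are the smooth sections of $\pi$ restricted to each single orbit, and an exact-sequence argument yields
\[
\dim \Hom_{H_m}(\pi,\triv_{H_m}) \;\le\; \sum_{s \in I_P(m)} \dim E_s,
\]
where $E_s$ denotes the space of continuous $H_m$-equivariant functionals on the stratum supported by $\mathcal{O}_s$.

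Second, by Frobenius reciprocity applied to the smooth section space over the single orbit $\mathcal{O}_s$, combined with Proposition \ref{prop}(1) to rewrite $\delta_P\delta_{P'_{M,s}}$ in terms of $\delta_{P_s}$, one identifies $E_s$ with
\[
\Hom_{M_s^{\theta_s}}\!\bigl(r_{M_s}^M(\delta_1 \hat{\otimes} \cdots \hat{\otimes} \delta_r),\, \chi_s\bigr),
\]
where $r_{M_s}^M$ is the normalized Jacquet functor along $M \cap P_s \subseteq M$ and $\chi_s = \delta_{P_s^{\theta_s}}\delta_{P_s}^{-1/2}$ is the explicit positive character of $M_s^{\theta_s}$ provided by Proposition \ref{prop}(2). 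This is the Archimedean analogue of \cite[Proposition 5.5]{ST} and rests on the meromorphic continuation and extension-of-invariant-functional results of \cite{Brylinski-Delorme-H-inv-form-MeroExtension-Invention} and \cite{Carmona-Delorme-H-inv-form-FE-JFA}.

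Third, I would establish the vanishing $E_s = 0$ for every $s \in I_P(m)\setminus J_P(m)$. If $M$ is not $\theta_s$-stable, then $M\cap P_s$ is a proper parabolic of $M$, so Casselman's criterion applied to the essentially square-integrable factors $\delta_i$ forces every exponent of $r_{M_s}^M(\delta_1 \hat{\otimes} \cdots \hat{\otimes} \delta_r)$ along the central axis of $M\cap P_s$ to have strictly negative real part, which is incompatible with equivariance onto the positive character $\chi_s$ under the dominance hypothesis $\re\lambda(X_\alpha)\ge 0$ on $\Sigma^+$. If instead $M=M_s$ but \eqref{parity} fails, Proposition \ref{prop}(2) exhibits $\chi_s$ restricted to $Z(M)^{\theta_s}$ as a nontrivial $|\det|$-character whose exponents are explicit linear combinations of partial sums $d_{\le i}$ lying outside $\{0\}$ (if $m$ is even) or $\{0,1\}$ (if $m$ is odd); pairing against the central character of $\delta_1 \hat{\otimes} \cdots \hat{\otimes} \delta_r$ encoded by $\lambda$ produces the same contradiction with dominance.

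Finally, for $s\in J_P(m)$ one has $M=M_s$ (so $r_{M_s}^M$ is the identity) and \eqref{parity} together with Proposition \ref{prop}(2) forces $\chi_s|_{M^{\theta_s}}=\triv$; hence $E_s = \Hom_{M^{\theta_s}}(\delta_1 \hat{\otimes} \cdots \hat{\otimes} \delta_r, \triv_{M^{\theta_s}})$, and summing over $s\in J_P(m)$ gives the claimed bound. The main obstacle is the second sub-case of the third step, namely $M=M_s$ with \eqref{parity} violated, where Casselman's criterion does not directly apply and one must pair the explicit twist produced by Proposition \ref{prop}(2) against $\lambda$ on a ray of $\fa$; this is the delicate new contribution in the linear period setting, adapting \cite[proof of Theorem 5.8]{ST}.
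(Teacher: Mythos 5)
Your overall framework (stratify $P\backslash G$ by $H_m$-orbits, bound $\Hom$ by the sum over orbits, then kill the bad orbits by an exponent/positivity argument) is the same as the paper's, which likewise rests on the Archimedean geometric-lemma machinery of \cite{ST}. But your step~2 has a genuine gap: in the Archimedean setting an $H_m$-equivariant continuous functional supported on an orbit $\mathcal{O}_s$ is \emph{not} captured by the dual of smooth sections over $\mathcal{O}_s$ alone --- one must also account for transversal (normal) derivatives, encoded by $\Sym^k(\fg/(\fg^{\theta_s}+\fp))^{\vee}_{\BC}$ for all $k\geq 0$. The paper's proof carries exactly this factor in the expression \eqref{zero}, and the key reduction, delegated to \cite[Theorem~5.8]{ST}, is that it suffices to show the eigenvalues of a specific element $X$ (from \cite[Lemma~4.4]{ST}) on \eqref{zero} are nonzero whenever $s\in I_P(m)\setminus J_P(m)$ \emph{or} $k>0$. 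Your Frobenius-reciprocity identification of $E_s$ with $\Hom_{M_s^{\theta_s}}(r_{M_s}^M(\cdots),\chi_s)$ is the $k=0$ piece only, so the bound asserted in your step~1 does not actually follow from the identification in step~2 as written; the contributions from $k>0$ are never ruled out. In the paper this is handled by condition~(ii) on $X$: all eigenvalues of $\mathrm{ad}(X)$ on $(\fg/(\fg^{\theta_s}+\fp))_{\BC}^{\vee}$ are strictly positive, so the term $\sum_j\beta_j(X)$ in \eqref{wt} is strictly positive for $k>0$.

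On the vanishing itself, your approach via Casselman's criterion is in the right spirit but differs from the paper's and is not carried out. The paper does not invoke Casselman's criterion: it computes the eigenvalue of the fixed element $X$ directly, using the $\SO(2)$-decomposition of each essentially square-integrable $\delta_i$ with lowest $K$-type parameter $k_i>0$, so that the Lie-algebra-homology contribution is $\sum_{i\in A} k_i\alpha_i(X)/2 > 0$ whenever $M$ is not $\theta_s$-stable (i.e.\ $A\neq\emptyset$). The parity-violation case is handled by the explicit identity
\[
(d\delta_{P_s}-2d\delta_{P_s^{\theta_s}})(X)=\sum_{1\leq i\leq r-1}\bigl(d_{\leq i}-[m]_2\bigr)d_{\leq i}\,(x_{ii}-x_{i+1,i+1})\geq 0,
\]
with equality iff \eqref{parity} holds. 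This is the piece you correctly flag as the new content, but it is in fact the more routine part of the computation; the essential point you are missing is the simultaneous treatment of $k>0$ together with $s\notin J_P(m)$ via the single well-chosen $X$.
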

\begin{proof}
Fix $s\in I_P(m)$ and $k\in\BN$. 
Set $\Xi:=(\Sigma_N\cap\theta_s\Sigma_{M})\cup(\theta_s\Sigma_N\cap\Sigma_{M})\cup(\Sigma_N\cap\theta_s\Sigma_N)\subset \Sigma^+$. 
From \cite[Lemma 4.4]{ST}, there exists $X\in \fs^{\theta_s}\cap [\fg,\fg]$ 
commuting with $\fm_s=\fm\cap\theta_s\fm$ such that
\begin{enumerate}
\item\label{i} $X\in\sum_{\a\in\Xi}\BR_{>0}X_{\a}$, 
\item\label{ii} the eigenvalue of the adjoint action of $X$ on $(\fg/(\fg^{\theta_s}+\fp))_{\BC}^{\vee}$ are all positive, and
\item\label{iii} $\a(X)>0$ for any $\a\in\Xi$.
\end{enumerate}
In particular, $X$ is of the form 
    \[
    X=\diag(x_{11}I_{n_{11}},x_{12}I_{n_{12}},\ldots,x_{r,r-1}I_{n_{r,r-1}},x_{rr}I_{n_{rr}})
    \]
with $x_{ij}\in\BR$, $x_{ij}=x_{ji}$ for $1\le i,j\le n$, and $x_{ii}> x_{jj}$ for $i<j$.

Put $K$ to be the maximal compact subgroup $\{g\in G\mid {}^t\overline{g}^{-1}=g\}$ of $G$ (Remark that $\overline{\ \cdot\ }$ denotes the main involution when $D=\BH$). 
From Proposition \ref{prop}(1), $\d_P^{1/2}=\d_{P_s}^{1/2}\d_{P'_{M,  s}}^{-1/2}$. 
As in the proof of \cite[Theorem 5.8]{ST}, it suffices to prove that 
the eigenvalues of the action of $X$ on the $0$-th relative Lie algebra homology
    \begin{align}\label{zero}
    H_0((\fm\cap\theta_s\fn)_{\BC},(\delta_1\hat{\otimes}\cdots\hat{\otimes}\delta_r)_{(K\cap M^s)\text{-fin}})
    \otimes\Sym^k(\fg/(\fg^{\theta_s}+\fp))^{\vee}_{\BC}\otimes(\d_{P_s^{\theta_s}}^{-1}\d_{P_s}^{1/2})\d_{P'_{M,  s}}^{-1/2}
    \end{align}
are nonzero if $s\in I_P(m)\setminus J_P(m)$ or $k>0$.  
Here $(\delta_1\hat{\otimes}\cdots\hat{\otimes}\delta_r)_{(K\cap M^s)\text{-fin}}$ denotes the space of $(K\cap M^s)$-finite vectors in $\delta_1\hat{\otimes}\cdots\hat{\otimes}\delta_r$.
Note that we used $u_sH_mu_s^{-1}\cap P=(P\cap\theta_sP)^{\theta_s}=P_s^{\theta_s}$.
See \cite[Remark 5.1]{ST}.

Set $\fm_i:=\{\diag(x_1I_{n_1},\ldots,x_rI_{n_r})\in\fg\mid x_1,\ldots,x_r\in D, x_j=0 \text{ if $j\neq i$}\}$, $A:=\{i\in \{1,\ldots,r\}\mid \theta_s\fm_i\cap\fn\neq 0\}$. 
Remark that $M$ is $\theta_s$-stable if and only if $A=\emptyset$. 
Since any noncompact simple ideal in $\fm$ is $\sl_2(\BR)$ and $\fm\cap\theta_s\fn$ is contained in $\fp_{M,s}$, there uniquely exists $\a_i\in\Sigma_M\cap\theta_s\Sigma_N$ such that $\fm_i\cap\theta_s\fn=\fg_{\a_i}$ for any $i\in A$. 

Let $\SO(2)^{\vee}$ be the unitary dual of $\SO(2)$. 
We fix an isomorphism $\BZ\cong\SO(2)^{\vee}; k\mapsto \x_k$ of topological groups so that $\sl_2(\BR)\cong \x_{-2}\oplus\x_0\oplus\x_2$ via the adjoint action.
For $1\le i\le r$, define a positive integer $k_i$ so that 
$\delta_i\cong\bigoplus_{\e\in\{\pm1\}}\bigoplus_{k\in\N}\x_{\e(k_i+2k)}$ as representations of $\SO(2)$.
Since the eigenvalue of the action of $X$ on $H_0((\fm_i\cap\theta_s\fn)_{\BC},(\delta_i)_{(K\cap M^s)\text{-fin}})\otimes \d_{P'_{M,  s}}^{-1/2}$ equals $\sum_{i\in A}k_i\a_i(X)/2$ (see the proof of \cite[Proof of Theorem 5.8]{ST}), 
any eigenvalue of the action of $X$ on \eqref{zero} is written as 
    \begin{align}\label{wt}
    \sum_{i\in A}\frac{k_i\a_i(X)}{2}+\l(X)
    +\sum_{1\le j\le k}\b_j(X)+\frac{(d\d_{P_s}-2d\d_{P_s^{\theta_s}})(X)}{2}
    \end{align}
where $\b_j$ denotes an $\fs^{\theta_s}$-weight appearing in $(\fg/(\fg^{\theta_s}+\fp))^{\vee}$ and $d$ denotes differential. 

From $k_i>0$ and \eqref{iii}, we see $k_i\a_i(X)>0$ for any $i\in A$. 
Since the real part of $\l$ is dominant, the condition \eqref{i} shows $\Re\l(X)\ge 0$.
Moreover, we see $\beta_j(X)>0$ by \eqref{ii}. 
Furthermore, Proposition \ref{prop}(2) shows 
    \begin{align}\label{d}
    (d\d_{P_s}-2d\d_{P_s^{\theta_s}})(X)
    & =\sum_{1\le i<j\le r}(-m_{ii}^+d_j+m_{ii}^-d_j)x_{ii}
    +(m_{jj}^+d_i-m_{jj}^-d_i)x_{jj}\\ \notag
    & =\sum_{1\le i<j\le r}d_id_j(-x_{ii}+x_{jj}) =
    \sum_{1\le i\le r}\left(\sum_{1\le j<i}d_j-\sum_{i<j\le r}d_j\right)d_ix_{ii} \\
    \notag
    & =\sum_{1\le i\le r} \{(d_{\le i}-d_i )+(d_{\le i}-[m]_2) \}d_ix_{ii}  \\ \notag
    & =\sum_{1\le i\le r} (2d_{\le i}-d_i -[m]_2)d_ix_{ii},
    \end{align}
where we used $d_{\le r}=[m]_2$ at the third equality. 
On the other hand,  we have
    \begin{align*}
    & \sum_{1\le i\le r-1}\left(d_{\le i}-[m]_2\right)d_{\le i}(x_{ii}-x_{i+1,i+1}) \\
    & \quad = (d_1-[m]_2)d_1x_{11} - \left(d_{\le r-1}-[m]_2\right)d_{\le r-1}x_{rr} \\
    &\qquad+ \sum_{2\leq i\leq r-1} \left\{\left(d_{\le i}-[m]_2\right)d_{\le i} 
    - \left(d_{\le i-1}-[m]_2\right)d_{\le i-1} \right\}x_{ii} \\
    & \quad = \sum_{1\le i\le r} (2d_{\le i}-d_i -[m]_2)d_ix_{ii}.
    \end{align*}
Hence we obtain
    \[
    (d\d_{P_s}-2d\d_{P_s^{\theta_s}})(X) 
    = \sum_{1\leq i\leq r-1}\left(d_{\le i}-[m]_2\right)d_{\leq i}(x_{ii}-x_{i+1,i+1}).
    \]

From $d_{\le i}\in\BZ$ and $[m]_2\in\{0,1\}$, we see 
$\left(d_{\le i}-[m]_2\right)d_{\le i}\ge 0$. 
Hence it follows from $x_{ii}> x_{i+1,i+1}$ that $(d\d_{P_s}-2d\d_{P_s^{\theta_s}})(X)\ge 0$ and the equality holds if and only if the condition \eqref{parity} is satisfied. 
Therefore,  if \eqref{wt} is zero,  then $A=\emptyset$,  $k=0$ and \eqref{parity} is satisfied.
This concludes the claim. 
\end{proof}

\begin{proof}[Proof of Theorem \ref{app}]
Given $s\in J_P(m)$, we have $M_s=M$ and there uniquely exists $\z_s\in S_r$ satisfying 
$\theta_s\diag(h_1,\ldots,h_r)=\diag(h_{\z_s(1)},\ldots, h_{\z_s(r)})$ for any $h_i\in\GL_{m_i}(D)$. In particular, we see $m_i=m_{\z_s(i)}$ for any $1\le i\le r$ and that $\z_s$ is an involution. 

Lemma \ref{lem} implies
    \[
    \dim\Hom_{H_m}(\pi,\triv_{H_m})\le \sum_{s\in J_P(m)}
    \dim\Hom_{M^{\theta_s}}(\delta_1\hat{\otimes}\cdots\hat{\otimes}\delta_r,\triv_{M^{\theta_s}}).
    \]
and the right hand side equals 
    \[
    \sum_{s\in J_P(m)}
    \left(\prod_{\z_s(i)=i}\dim\Hom_{\GL_{m_i}(D)^{\theta_s}}(\delta_i,\triv_{\GL_{m_i}(D)})\right)
    \left(\prod_{i<\z_s(i)}\dim\Hom_{\GL_{m_i}(D)}(\delta_i\hat{\otimes}\delta_{\z_s(i)},\triv_{\GL_{m_i}(D)})\right)
    \]
from the same argument as the proof of \cite[Theorem 5.13]{ST}.

Let us fix $1\le i\le r$. Recall $m_i\in\{1,2\}$. 
Assume $\z_s(i)=i$ and $m_i=1$. Then $m_{ii}=1$ and $\GL_{m_i}(D)^{\theta_s}=\GL_{m_i}(D)$. Hence 
    \begin{align}\label{mult}
    \dim\Hom_{\GL_{m_i}(D)^{\theta_s}}(\delta_i,\triv_{\GL_{m_i}(D)})\le 1
    \end{align}
and the equality holds if and only if $\delta_i\cong \triv_{\GL_{m_i}(D)}$. 

Assume $\z_s(i)=i$ and $n_i=2$, which implies $D=\BR$. 
Since $\theta$ satisfies \eqref{parity}, we have $m_{ii}^+=m_{ii}^-=1$ and $\GL_{m_i}(D)^{\theta_s}=H_2$. 
From the multiplicity one theorem,  we have \eqref{mult} also in this case and the equality holds if and only if $\delta_i$ is $H_2$-distinguished.

Assume $i<\z_s(i)$. By \cite[Lemma 3.9]{ST}, we have 
    \[
    \dim\Hom_{\GL_{m_i}(D)}(\delta_i\hat{\otimes}\delta_{\z_s(i)},\triv_{\GL_{m_i}(D)})\le 1
    \]
and the equality holds if and only if $\delta_{\z_s(i)}\cong\delta_i^{\vee}$. 

Put 
    \begin{align}\label{fT}
    \fT:=
    \{\z\in S_r | 
    \z^2=1, \text{ and \eqref{per} holds}
    \}.
    \end{align}
From the above arguments and the definition \eqref{fT} of $\fT$, we have 
    \[
    \dim\Hom_{H_m}(\pi,\triv_{H_m})\le \#\{s\in J_P(m) | \z_s\in \fT\}.
    \] 
Since the left hand side is positive, we have $\fT\neq \emptyset$ and the assertion follows.
\end{proof}

	\bibliographystyle{alphanum}
	\bibliography{references}

\newcommand{\etalchar}[1]{$^{#1}$}
\def\cprime{$'$}
\begin{thebibliography}{AKM{\etalchar{+}}}

\bibitem[AG]{Aizenbud-Gourevitch}
A.~Aizenbud and D.~Gourevitch.
\newblock Generalized {H}arish-{C}handra descent, {G}elfand pairs, and an
  {A}rchimedean analog of {J}acquet-{R}allis's theorem.
\newblock {\em Duke Math. J.}, 149(3), 2009.

\bibitem[AGJ]{AGJ}
Avraham Aizenbud, Dmitry Gourevitch, and Herv\'{e} Jacquet.
\newblock Uniqueness of {S}halika functionals: the {A}rchimedean case.
\newblock {\em Pacific J. Math.}, 243(2):201--212, 2009.

\bibitem[AKM{\etalchar{+}}]{AKMSS}
U.~K. Anandavardhanan, R.~Kurinczuk, N.~Matringe, V.~S\'{e}cherre, and
  S.~Stevens.
\newblock Galois self-dual cuspidal types and {A}sai local factors.
\newblock {\em J. Eur. Math. Soc. (JEMS)}, 23(9):3129--3191, 2021.

\bibitem[Bad1]{Bjlu}
Alexandru~Ioan Badulescu.
\newblock Jacquet-{L}anglands et unitarisabilit\'{e}.
\newblock {\em J. Inst. Math. Jussieu}, 6(3):349--379, 2007.

\bibitem[Bad2]{Badulescu-Global-JL}
Alexandru~Ioan Badulescu.
\newblock Global jacquet--langlands correspondence, multiplicity one and
  classification of automorphic representations.
\newblock {\em Inventiones mathematicae}, 172:383--438, 2008.

\bibitem[BD1]{BD}
Philippe Blanc and Patrick Delorme.
\newblock Vecteurs distributions $ \mathrm{H} $-invariants de
  repr{\'e}sentations induites, pour un espace sym{\'e}trique r{\'e}ductif $ p
  $-adique $ \mathrm{G/H} $.
\newblock In {\em Annales de l'institut Fourier}, volume~58, pages 213--261,
  2008.

\bibitem[BD2]{Brylinski-Delorme-H-inv-form-MeroExtension-Invention}
Jean-Luc Brylinski and Patrick Delorme.
\newblock Vecteurs distributions h-invariants pour les s{\'e}ries principales
  g{\'e}n{\'e}ralis{\'e}es d'espaces sym{\'e}triques r{\'e}ductifs et
  prolongement m{\'e}romorphe d'int{\'e}grales d'eisenstein.
\newblock {\em Inventiones mathematicae}, 109(1):619--664, 1992.

\bibitem[BH]{BH}
Colin~J. Bushnell and Guy Henniart.
\newblock Generalized {W}hittaker models and the {B}ernstein center.
\newblock {\em Amer. J. Math.}, 125(3):513--547, 2003.

\bibitem[BM]{BM}
P.~Broussous and N.~Matringe.
\newblock {M}ultiplicity one for pairs of {P}rasad–{T}akloo-{B}ighash type.
\newblock {\em Int. Math. Res. Not. IMRN}, 2021(21):16423--16447, 2021.

\bibitem[BPCZ]{BCZ}
Rapha\"{e}l Beuzart-Plessis, Pierre-Henri Chaudouard, and Michal{ } Zydor.
\newblock The global {G}an-{G}ross-{P}rasad conjecture for unitary groups: the
  endoscopic case.
\newblock {\em Publ. Math. Inst. Hautes \'{E}tudes Sci.}, 135:183--336, 2022.

\bibitem[BPW]{BW}
Rapha\"{e}l Beuzart-Plessis and Chen Wan.
\newblock A local trace formula for the generalized {S}halika model.
\newblock {\em Duke Math. J.}, 168(7):1303--1385, 2019.

\bibitem[BR]{Badulescu-Renard}
A.~I. Badulescu and D.~Renard.
\newblock Unitary dual of {${\rm GL}(n)$} at {A}rchimedean places and global
  {J}acquet-{L}anglands correspondence.
\newblock {\em Compos. Math.}, 146(5):1115--1164, 2010.

\bibitem[BZ1]{BZ1}
J.~Bernstein and A.~V. Zelevinskii.
\newblock Representations of the group $\rm{GL}$(n,{F}) where {F} is a
  non-archimedean local field.
\newblock {\em Uspekhi Matematicheskikh Nauk}, 31(3):5--70, 1976.

\bibitem[BZ2]{BZ2}
J.~Bernstein and A.~V. Zelevinsky.
\newblock Induced representations of reductive $\mathfrak{p}$-adic groups. {I}.
\newblock In {\em Annales scientifiques de l'{\'E}cole normale sup{\'e}rieure},
  volume~10, pages 441--472, 1977.

\bibitem[CCL]{CCL}
Li~Cai, Yihua Chen, and Yu~Liu.
\newblock {H}eegner points on modular curves.
\newblock {\em Trans. Amer. Math. Soc.}, 370(5):3721--3743, 2018.

\bibitem[CD]{Carmona-Delorme-H-inv-form-FE-JFA}
Jacques Carmona and Patrick Delorme.
\newblock Base m{\'e}romorphe de vecteurs distributions h-invariants pour les
  s{\'e}ries principales g{\'e}n{\'e}ralis{\'e}es d'espaces sym{\'e}triques
  r{\'e}ductifs: equation fonctionnelle.
\newblock {\em Journal of functional analysis}, 122(1):152--221, 1994.

\bibitem[Cho]{Ch}
Marion Chommaux.
\newblock Distinction of the {S}teinberg representation and a conjecture of
  {P}rasad and {T}akloo-{B}ighash.
\newblock {\em J. Number Theory}, 202:200--219, 2019.

\bibitem[CM]{MatCMRL}
James~W. Cogdell and Nadir Matringe.
\newblock The functional equation of the {J}acquet-{S}halika integral
  representation of the local exterior-square {$L$}-function.
\newblock {\em Math. Res. Lett.}, 22(3):697--717, 2015.

\bibitem[Del]{Del}
Pierre Deligne.
\newblock Les constantes locales de l'\'{e}quation fonctionnelle de la fonction
  {$L$} d'{A}rtin d'une repr\'{e}sentation orthogonale.
\newblock {\em Invent. Math.}, 35:299--316, 1976.

\bibitem[DKV]{DKV}
Pierre Deligne, David Kazhdan, and Marie-France Vign{\'e}ras.
\newblock Repr{\'e}sentations des alg{\`e}bres centrales simples p-adiques.
\newblock {\em Representations of reductive groups over a local field, Travaux
  en Cours, Hermann, Paris}, 33:117, 1984.

\bibitem[Duh]{Duhamel}
N.~Duhamel.
\newblock Formule de plancherel sur $\mathrm{GL_n\times GL_n\backslash
  GL_{2n}}$.
\newblock {\em arXiv:1912.08497}, 2019.

\bibitem[FJ]{FJ}
Solomon Friedberg and Herv{\'e} Jacquet.
\newblock Linear periods.
\newblock {\em J. reine angew. Math}, 443(91):139, 1993.

\bibitem[FQ]{FQ}
A.~Fr\"{o}hlich and J.~Queyrut.
\newblock On the functional equation of the {A}rtin {$L$}-function for
  characters of real representations.
\newblock {\em Invent. Math.}, 20:125--138, 1973.

\bibitem[Gan]{Gan}
Wee~Teck Gan.
\newblock Periods and theta correspondence.
\newblock In {\em Representations of reductive groups}, volume 101 of {\em
  Proc. Sympos. Pure Math.}, pages 113--132. Amer. Math. Soc., Providence, RI,
  2019.

\bibitem[GJ]{GJ}
R.~Godement and H.~Jacquet.
\newblock {\em Zeta functions of simple algebras}, volume 260.
\newblock Springer, 2006.

\bibitem[GR]{GR}
Benedict~H. Gross and Mark Reeder.
\newblock Arithmetic invariants of discrete {L}anglands parameters.
\newblock {\em Duke Math. J.}, 154(3):431--508, 2010.

\bibitem[Guo]{G}
Jiandong Guo.
\newblock Uniqueness of generalized {W}aldspurger model for {${\rm GL}(2n)$}.
\newblock {\em Pacific J. Math.}, 180(2):273--289, 1997.

\bibitem[Hen]{Hllc}
Guy Henniart.
\newblock Une preuve simple des conjectures de {L}anglands pour \rm{GL}(n) sur
  un corps p-adique.
\newblock {\em Inventiones mathematicae}, 139(2):439--455, 2000.

\bibitem[HT]{HTllc}
Michael Harris and Richard Taylor.
\newblock {\em The Geometry and Cohomology of Some Simple Shimura
  Varieties.(AM-151)}, volume 151.
\newblock Princeton university press, 2001.

\bibitem[Jac1]{Jprincipal}
Herv\'{e} Jacquet.
\newblock Principal {$L$}-functions of the linear group.
\newblock In {\em Automorphic forms, representations and {$L$}-functions
  ({P}roc. {S}ympos. {P}ure {M}ath., {O}regon {S}tate {U}niv., {C}orvallis,
  {O}re., 1977), {P}art 2}, volume XXXIII of {\em Proc. Sympos. Pure Math.},
  pages 63--86. Amer. Math. Soc., Providence, RI, 1979.

\bibitem[Jac2]{JrsArch}
Herv\'{e} Jacquet.
\newblock Archimedean {R}ankin-{S}elberg integrals.
\newblock In {\em Automorphic forms and {$L$}-functions {II}. {L}ocal aspects},
  volume 489 of {\em Contemp. Math.}, pages 57--172. Amer. Math. Soc.,
  Providence, RI, 2009.

\bibitem[JR]{JR}
Herv{\'e} Jacquet and Stephen Rallis.
\newblock Uniqueness of linear periods.
\newblock {\em Compositio Mathematica}, 102(1):65--123, 1996.

\bibitem[JS]{JSeuler1}
H.~Jacquet and J.~A. Shalika.
\newblock On {E}uler products and the classification of automorphic
  representations. {I}.
\newblock {\em Amer. J. Math.}, 103(3):499--558, 1981.

\bibitem[Kna]{KnappLLC}
A.~W. Knapp.
\newblock Local {L}anglands correspondence: the {A}rchimedean case.
\newblock In {\em Motives ({S}eattle, {WA}, 1991)}, volume 55, Part 2 of {\em
  Proc. Sympos. Pure Math.}, pages 393--410. Amer. Math. Soc., Providence, RI,
  1994.

\bibitem[KT]{KT}
Shin-ichi Kato and Keiji Takano.
\newblock Subrepresentation theorem for {$p$}-adic symmetric spaces.
\newblock {\em Int. Math. Res. Not. IMRN}, (11):Art. ID rnn028, 40, 2008.

\bibitem[Lan]{LArch}
R.~P. Langlands.
\newblock On the classification of irreducible representations of real
  algebraic groups.
\newblock In {\em Representation theory and harmonic analysis on semisimple
  {L}ie groups}, volume~31 of {\em Math. Surveys Monogr.}, pages 101--170.
  Amer. Math. Soc., Providence, RI, 1989.

\bibitem[Lip]{Lip}
Huang Liping.
\newblock The minimal polynomial of a matrix over a central simple algebra.
\newblock {\em Linear and Multilinear Algebra}, 45(2-3):99--107, 1998.

\bibitem[LM]{LM}
Erez Lapid and Zhengyu Mao.
\newblock Whittaker-{F}ourier coefficients of cusp forms on {$\widetilde{\rm
  Sp}_n$}: reduction to a local statement.
\newblock {\em Amer. J. Math.}, 139(1):1--55, 2017.

\bibitem[Mat1]{MatCRAS}
Nadir Matringe.
\newblock Cuspidal representations of \rm{GL} (n, {F}) distinguished by a
  maximal levi subgroup, with {F} a non-archimedean local field.
\newblock {\em Comptes Rendus Mathematiques}, 350(17-18):797--800, 2012.

\bibitem[Mat2]{MatJNT}
Nadir Matringe.
\newblock Linear and {S}halika local periods for the mirabolic group, and some
  consequences.
\newblock {\em Journal of Number Theory}, 138:1--19, 2014.

\bibitem[Mat3]{MatCRELLE}
Nadir Matringe.
\newblock On the local {B}ump--{F}riedberg {$L$}-function.
\newblock {\em Journal f{\"u}r die reine und angewandte Mathematik (Crelles
  Journal)}, 2015(709):119--170, 2015.

\bibitem[Mat4]{Matringe-Inner+GLn+Steinberg}
Nadir Matringe.
\newblock Distinction of the {S}teinberg representation for inner forms of
  {GL}(n).
\newblock {\em Mathematische Zeitschrift}, 287:881--895, 2017.

\bibitem[Mat5]{MatBF2}
Nadir Matringe.
\newblock On the local {B}ump-{F}riedberg {$L$}-function {II}.
\newblock {\em Manuscripta Math.}, 152(1-2):223--240, 2017.

\bibitem[Mat6]{MatBLMS}
Nadir Matringe.
\newblock Shalika periods and parabolic induction for {$GL(n)$} over a
  non-archimedean local field.
\newblock {\em Bull. Lond. Math. Soc.}, 49(3):417--427, 2017.

\bibitem[Mat7]{MatJFA}
Nadir Matringe.
\newblock Gamma factors of intertwining periods and distinction for inner forms
  of \rm{GL} (n).
\newblock {\em Journal of Functional Analysis}, 281(10):109223, 2021.

\bibitem[MO]{MO}
Nadir Matringe and Omer Offen.
\newblock Intertwining periods and distinction for p-adic galois symmetric
  pairs.
\newblock {\em Proceedings of the London Mathematical Society},
  125(5):1179--1252, 2022.

\bibitem[MOY]{MOY}
Nadir Matringe, Omer Offen, and Chang Yang.
\newblock On local intertwining periods.
\newblock {\em J. Funct. Anal.}, 286(4):Paper No. 110293, 2024.

\bibitem[MW]{MW}
C.~M$\oe$glin and J.-L. Waldspurger.
\newblock Mod\`eles de {W}hittaker d\'{e}g\'{e}n\'{e}r\'{e}s pour des groupes
  {$p$}-adiques.
\newblock {\em Math. Z.}, 196(3):427--452, 1987.

\bibitem[Off1]{Offen}
Omer Offen.
\newblock On local root numbers and distinction.
\newblock {\em J. Reine Angew. Math.}, 652:165--205, 2011.

\bibitem[Off2]{OJNT}
Omer Offen.
\newblock On parabolic induction associated with a {$p$}-adic symmetric space.
\newblock {\em J. Number Theory}, 170:211--227, 2017.

\bibitem[Ok]{Ok}
Youngbin Ok.
\newblock {\em Distinction and gamma factors at 1/2: {S}upercuspidal case}.
\newblock ProQuest LLC, Ann Arbor, MI, 1997.
\newblock Thesis (Ph.D.)--Columbia University.

\bibitem[PR1]{PR}
Dipendra Prasad and A.~Raghuram.
\newblock Kirillov theory for {${\rm GL}_2( D)$} where {$ D$} is a division
  algebra over a non-{A}rchimedean local field.
\newblock {\em Duke Math. J.}, 104(1):19--44, 2000.

\bibitem[PR2]{Pra-Ram}
Dipendra Prasad and Dinakar Ramakrishnan.
\newblock Self-dual representations of division algebras and {W}eil groups: a
  contrast.
\newblock {\em Amer. J. Math.}, 134(3):749--767, 2012.

\bibitem[Pra]{P}
Dipendra Prasad.
\newblock Some applications of seesaw duality to branching laws.
\newblock {\em Math. Ann.}, 304(1):1--20, 1996.

\bibitem[PSP]{PSP}
Dipendra Prasad and Rainer Schulze-Pillot.
\newblock Generalised form of a conjecture of {J}acquet and a local
  consequence.
\newblock {\em J. Reine Angew. Math.}, 616:219--236, 2008.

\bibitem[Rod]{Rgamma}
E.-A. Roditty.
\newblock On gamma factors and bessel functions for representations of general
  linear groups over finite fields.
\newblock {\em Master’s thesis , Tel Aviv University}, 2010.

\bibitem[ST]{ST}
Miyu Suzuki and Hiroyoshi Tamori.
\newblock Epsilon {D}ichotomy for {L}inear {M}odels: {T}he {A}rchimedean
  {C}ase.
\newblock {\em Int. Math. Res. Not. IMRN}, (20):17853--17891, 2023.

\bibitem[Suz1]{SuzJNT}
Miyu Suzuki.
\newblock Classification of standard modules with linear periods.
\newblock {\em J. Number Theory}, 218:302--310, 2021.

\bibitem[Suz2]{su24}
Miyu Suzuki.
\newblock A reformulation of the conjecture of {P}rasad and {T}akloo-{B}ighash.
\newblock {\em arXiv:2312.16393}, 2023.

\bibitem[SV]{SV}
Yiannis Sakellaridis and Akshay Venkatesh.
\newblock Periods and harmonic analysis on spherical varieties.
\newblock {\em Ast\'{e}risque}, (396):viii+360, 2017.

\bibitem[SX]{SX}
Miyu Suzuki and Hang Xue.
\newblock Linear intertwining periods and epsilon dichotomy for linear models.
\newblock {\em to appear in Math. Ann.}, 2020.

\bibitem[SYY]{SYY}
Sheng-Chi Shih, Tse-Chung Yang, and Chia-Fu Yu.
\newblock Embeddings of fields into simple algebras over global fields.
\newblock {\em Asian J. Math.}, 18(2):365--386, 2014.

\bibitem[Sé1]{SPTB}
V.~Sécherre.
\newblock Représentations cuspidales de gl(r,d) distinguées par une
  involution intérieure.
\newblock {\em preprint, Ann. Scient. Éc. Norm. Sup.
  arxiv.org/abs/2005.05615}, 2016.

\bibitem[Sé2]{Sgalois}
Vincent Sécherre.
\newblock Supercuspidal representations of {${\rm GL}_n(\rm F)$} distinguished
  by a {G}alois involution.
\newblock {\em Algebra Number Theory}, 13(7):1677--1733, 2019.

\bibitem[Tad]{Tglna}
Marko Tadic.
\newblock Induced representations of $\rm{GL}$(n, {A}) for p-adic division
  algebras {A}.
\newblock {\em J. reine angew. Math}, 405:48--77, 1990.

\bibitem[Tat]{Tate}
J.~Tate.
\newblock Number theoretic background.
\newblock In {\em Automorphic forms, representations and {$L$}-functions,
  {P}art 2}, Proc. Sympos. Pure Math., XXXIII, pages 3--26. Amer. Math. Soc.,
  Providence, R.I., 1979.

\bibitem[Xue]{X}
Hang Xue.
\newblock Epsilon dichotomy for linear models.
\newblock {\em Algebra Number Theory}, 15(1):173--215, 2021.

\bibitem[XZ]{XZ}
H.~Xue and W.~Zhang.
\newblock Twisted linear periods and a new relative trace formula.
\newblock {\em preprint, to appear in Peking Math J. arXiv:2209.08366}, 2016.

\bibitem[Yan]{Yunitary}
Chang Yang.
\newblock Linear periods for unitary representations.
\newblock {\em Math. Z.}, 302(4):2253--2284, 2022.

\bibitem[Zel]{Z}
A.~V. Zelevinsky.
\newblock Induced representations of reductive p-adic groups. {II}. on
  irreducible representations of $\mathrm{GL}(n)$.
\newblock {\em Annales Scientifiques de l'{\'E}cole Normale Sup{\'e}rieure},
  13(2):165--210, 1980.

\bibitem[Zha]{Zlinear}
Chong Zhang.
\newblock On linear periods.
\newblock {\em Math. Z.}, 279(1-2):61--84, 2015.

\end{thebibliography}
	
\end{document}